\definecolor{darkred}{rgb}{1,0,0}
\newtheorem{maintheorem}{Theorem}	
\newtheorem{theorem}{Theorem}[section]
\newtheorem{thm}[theorem]{Theorem}
\crefname{thm}{Theorem}{Theorems}
\newtheorem{lemma}[theorem]{Lemma}
\newtheorem{lem}[theorem]{Lemma}
\crefname{lem}{Lemma}{Lemmas}
\newtheorem{prop}[theorem]{Proposition}
\newtheorem{cor}[theorem]{Corollary}
\newtheorem{conj}[theorem]{Conjecture}
\newtheorem*{conj:samepattern}{Conjecture \ref{conj:samepattern}}
\crefname{conj}{Conjecture}{Conjectures}
\theoremstyle{remark}
\theoremstyle{definition}
\newtheorem{rmk}[theorem]{Remark}
\newtheorem{example}[theorem]{Example}
\newtheorem{defn}[theorem]{Definition}
\crefname{defn}{Definition}{Definitions}
\crefname{question}{Question}{Questions}
\newtheorem*{claim*}{Claim}
\newtheorem*{defn*}{Definition}
\newtheorem*{thm*}{Theorem}
\def\Mat{\operatorname{Mat}}
\def\bd{\operatorname{Bound}}
\def\sep{\operatorname{Sep}}
\newcommand{\bbc}{\mathbb{C}}
\newcommand{\bbd}{\mathbb{D}}
\newcommand{\bbf}{\mathbb{F}}
\newcommand{\bbm}{\mathbb{M}}
\newcommand{\bbn}{\mathbb{N}}
\newcommand{\bbp}{\mathbb{P}}
\newcommand{\bbr}{\mathbb{R}}
\newcommand{\bbz}{\mathbb{Z}}
\newcommand{\mca}{\mathcal{A}}
\newcommand{\mcb}{\mathcal{B}}
\newcommand{\mcc}{\mathcal{C}}
\newcommand{\mcf}{\mathcal{F}}
\newcommand{\mci}{\mathcal{I}}
\newcommand{\mcj}{\mathcal{J}}
\newcommand{\mcm}{\mathcal{M}}
\newcommand{\mcr}{\mathcal{R}}
\newcommand{\mcs}{\mathcal{S}}
\newcommand{\mct}{\mathcal{T}}
\newcommand{\mcw}{\mathcal{W}}
\renewcommand{\aa}{\alpha}
\newcommand{\bb}{\beta}
\newcommand{\eps}{\epsilon}
\newcommand{\GL}{\textnormal{GL}}
\newcommand{\Gr}{{\rm Gr}}
\def\tGr{\widetilde \Gr}
\def\tPi{\widetilde \Pi}
\newcommand{\cev}[1]{\reflectbox{\ensuremath{\vec{\reflectbox{\ensuremath{#1}}}}}}
\DeclareMathOperator{\Aut}{Aut}
\newcommand{\tPio}{\tPi^{\circ}}
\newcommand{\Pio}{\Pi^{\circ}}
\newcommand{\Mato}{\Mat^{\circ}}
\newcommand{\gnec}{\mathscr{I}} 
\newcommand{\rmci}{\vec{\mci}}
\newcommand{\rightI}{\vec{I}}
\newcommand{\lmci}{\cev{\mci}}
\newcommand{\leftI}{\cev{I}}
\newcommand{\sI}{\overset{ \leftarrow \bullet}{I}}
\newcommand{\tI}{\overset{ \rightarrow \bullet}{I}}
\newcommand{\sourceF}{\overset{ \leftarrow \bullet}{\bbf}}
\newcommand{\targetF}{\overset{ \rightarrow \bullet}{\bbf}}
\newcommand{\rt}{\vec{\tau}}
\newcommand{\lt}{\cev{\tau}}
\newcommand{\ler}{\leq_\circ}
\newcommand{\meas}{\tilde{\mathbb{D}}}
\font\co=lcircle10
\def\jr{\ \smash{\raise2pt\hbox{\co \rlap{\rlap{\char'005} \char'007}}
               \raise6pt\hbox{\rlap{\vrule height6.5pt}}
                \raise2pt\hbox{\rlap{\hskip4pt \vrule
          height0.4pt depth0pt
                width7.7pt}}}}
\def\textcross{\ \smash{\lower4pt\hbox{\rlap{\hskip4.15pt\vrule height14pt}}
                \raise2.8pt\hbox{\rlap{\hskip-3pt \vrule height.4pt depth0pt
                                width14.7pt}}}\hskip12.7pt}
\def\textelbow{\ \hskip.1pt\smash{\raise2.75pt%
                \hbox{\co \hskip 4.15pt\rlap{\rlap{\char'004} \char'006}
                \lower6.8pt\rlap{\vrule height3.5pt}
                \raise3.6pt\rlap{\vrule height3.5pt}}
                \raise2.8pt\hbox{%
                  \rlap{\hskip-7.15pt \vrule height.4pt depth0pt
width3.5pt}%
                  \rlap{\hskip4.05pt \vrule height.4pt depth0pt
width3.5pt}}}
                \hskip8.7pt}
\begin{document}
\title{Positroid cluster structures from relabeled plabic graphs}
\date{\today}
\author{C. Fraser}%
\address{School of Mathematics, University of Minnesota, Minneapolis, USA
}%
\email{cfraser@umn.edu}%
\author{M. Sherman-Bennett}
\address{Department of Mathematics,
            University of California at Berkeley,
            Berkeley, CA
            USA
}%
\email{m\_shermanbennett@berkeley.edu}%

\subjclass[2000]{14N35, 14M17, 57T15} \keywords{cluster algebras, Grassmannian, positroid variety, twist map, quasi-homomorphism}

\thanks{}
\subjclass{}%
\date{\today}

\begin{abstract}
The Grassmannian is a disjoint union of open positroid varieties $\Pio_\mu$, certain smooth irreducible subvarieties whose definition is motivated by total positivity. The coordinate ring $\bbc[\Pio_\mu]$ is a cluster algebra, and each reduced plabic graph $G$ for $\Pio_\mu$ determines a cluster. We study the effect of relabeling the boundary vertices of $G$ by a permutation $\rho$. Under suitable hypotheses on the permutation, we show that the relabeled graph $G^\rho$ determines a cluster for a different open positroid variety $\Pio_{\pi}$. As a key step in the proof, we show that $\Pio_\pi$ and $\Pio_\mu$ are isomorphic by a nontrivial twist isomorphism. Our constructions yield a family of cluster structures on each open positroid variety, given by plabic graphs with appropriately permuted boundary labels. We conjecture that the seeds in all of these cluster structures are related by a combination of mutations and rescalings by Laurent monomials in frozen variables. We establish this conjecture for (open) Schubert and opposite Schubert varieties. As an application, we also show that for certain reduced plabic graphs $G$, the ``source" cluster and the ``target" cluster are related by mutation and Laurent monomial rescalings.
\end{abstract}

\maketitle
\setcounter{tocdepth}{1}
\tableofcontents

\section{Introduction}\label{sec:intro}

This paper investigates the coordinate rings of open positroid varieties in the Grassmannian, and various ways these coordinate rings can be identified with cluster algebras whose initial seeds are given by plabic graphs with permuted boundary vertices. 

\emph{Positroid varieties} are irreducible projective subvarieties of the Grassmannian introduced by Knutson, Lam, and Speyer \cite{KLS} as the algebro-geometric counterparts to Postnikov's {\sl positroid cells} \cite{Postnikov}. They can be defined as the images of Richardson subvarieties of the full flag variety under the projection from the flag variety to the Grassmannian, and were studied in this guise by Lusztig \cite{Lus98} and Rietsch \cite{RieClosure}. In particular, Schubert varieties in the Grassmannian are positroid varieties. Associated to each positroid variety $\Pi$ is its {\sl open positroid variety} $\Pio$, a smooth Zariski-open subset of the positroid variety defined by the non-vanishing of certain Pl\"ucker coordinates. From the perspective of cluster algebras, the natural object to study is the affine cone $\tPio$ over the open positroid variety in the Pl\"ucker embedding.

Positroid varieties in $\Gr(k,n)$ are indexed by \emph{permutations of type} $(k,n)$\footnote{Positroid varieties are usually indexed by {\sl decorated} permutations; to simplify exposition we only work with decorated permutations in which all fixed points are colored white (see \cref{subsecn:positroidcombo}).}; we write $\tPio_\pi$ for the cone over the open positroid variety indexed by permutation $\pi$. Our results concern affine cones over open positroid variaties only, so we frequently drop ``open'' and ``cone'' in what follows. For each positroid variety $\tPio_\pi$, Postnikov introduced a family of \emph{reduced plabic graphs} with \emph{trip permutation} $\pi$. These graphs are planar bicolored graphs drawn in the disk with boundary vertices $1, \dots, n$ (cf. \cref{fig:fourplabics}) satisfying certain conditions. Any two graphs with the same trip permutation are connected to each other by certain explicit local moves.

Fomin and Zelevinsky introduced cluster algebras as an algebraic and combinatorial framework for studying the dual canonical basis and total positivity in Lie theory \cite{CAI}. The definition has subsequently found connections with myriad other fields. Let $V$ be an affine variety with coordinate ring $\mathbb{C}[V]$ and field of functions $\mathbb{C}(V)$. A choice of seed $\Sigma$ determines a {\sl cluster structure} on~$V$ provided we have the equality of algebras $\mca(\Sigma) = \mathbb{C}[V]$, where $\mca(\Sigma)$ is the cluster algebra with frozen variables inverted. Thus frozen variables must be nonvanishing functions on~$V$. A choice of cluster structure determines the {\sl positive part} of $V$: the locus where all mutable and frozen variables are positive. Note that cluster structures are far from canonical. There may be many ways to identify $\bbc[V]$ with a cluster algebra, as indeed will be the case with positroid varieties.

Let $G$ be a plabic graph for the top-dimensional positroid variety in $\Gr(k, n)$. Scott \cite{Scott} gave a recipe to produce a ``source" seed $\Sigma^S_G=(\sourceF(G), Q(G))$ endowing the top-dimensional positroid variety in $\Gr(k,n)$ 
with a cluster structure\footnote{Scott's convention was that frozen variables are not inverted, so this was thought of as a cluster structure on the Grassmannian, rather than on the top-dimensional positroid variety. Also, Scott used the equivalent formalism of alternating strand diagrams.}. The cluster $\sourceF(G)$ consists of Pl\"ucker coordinates, with one cluster variable for each face of the graph~$G$. All such seeds $\Sigma^S_G$ are related by mutation, so that plabic graph seeds give rise to a single cluster structure on $\Gr(k,n)$.

The combinatorics underlying Scott's recipe works for arbitrary reduced plabic graphs~$G$, yielding a candidate seed $\Sigma^S_G=(\sourceF(G), Q(G))$. It was long expected that this candidate seed $\Sigma^S_G$ would determine a cluster structure on $\tPio_G$, see \cite[Conjecture 3.4]{MSAcyclic}. This expectation was recently confirmed by Galashin and Lam \cite{GalashinLam}, building on work of Muller and Speyer \cite{MSAcyclic, MSTwist}, Leclerc \cite{Leclerc}, and of the second author with  Serhiyenko and Williams \cite{SBSW} (cf. \cref{rmk:history} for a history). Again, all seeds from plabic graphs are related by mutations, so the results of \cite{GalashinLam} give rise to only one cluster structure on $\tPio_\pi$.

However, as Muller and Speyer \cite{MSAcyclic} note (and as was prevalent in the literature), there is another equally natural way to assign a collection of Pl\"ucker coordinates to a plabic graph, using the ``target" convention rather than the ``source.'' The source seed $\Sigma_G^S$ and the target seed $\Sigma_G^T$ are typically not related by mutations. Moreover, $\mca(\Sigma_G^S)$ and $\mca(\Sigma_G^T)$ have {\sl different} sets of cluster variables, even though $\mca(\Sigma_G^S) = \mca(\Sigma_G^T)=\bbc[\tPio_\pi]$. Muller and Speyer conjectured the following, in slightly different language.

\begin{conj}[\protect{\cite[Remark 4.7]{MSTwist}}]\label{conj:sourceistarget}
	Let $G$ be a reduced plabic graph. Then $\Sigma_G^S$ and $\Sigma_G^T$ are related by a \emph{quasi-cluster transformation}.
\end{conj}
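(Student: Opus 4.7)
The strategy is to interpret $\Sigma_G^T$ as the pullback of a source seed for a \emph{relabeled} plabic graph through the twist isomorphism provided by the main theorem of the paper, and then to compare this with $\Sigma_G^S$.

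First I would choose a boundary permutation $\rho$ (essentially the trip permutation of $G$, or a suitable inverse thereof) so that the source face-labels of the relabeled graph $G^\rho$ coincide, as $k$-subsets of $[n]$, with the target face-labels of $G$. Under this relabeling, the trip permutation of $G^\rho$ is a conjugate $\pi'$ of $\pi$, so by the paper's main theorem $\Sigma_{G^\rho}^S$ is a cluster seed for $\tPio_{\pi'}$, and there is an explicit twist isomorphism $\tau:\tPio_\pi \to \tPio_{\pi'}$.

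Next I would use the Muller--Speyer formula for the right twist to compare $\tau^*\Sigma_{G^\rho}^S$ with $\Sigma_G^T$. Because the twist sends source Pl\"ucker coordinates to target Pl\"ucker coordinates up to a Laurent monomial in the boundary (frozen) Pl\"ucker coordinates, these two seeds have cluster variables that agree face-by-face up to Laurent-monomial rescalings by frozens, and by construction they carry the same quiver (inherited from $G$). Hence $\tau^*\Sigma_{G^\rho}^S$ and $\Sigma_G^T$ differ by a purely rescaling quasi-transformation. Then I would observe that $\tau^*\Sigma_{G^\rho}^S$ and $\Sigma_G^S$ are both seeds for the same cluster structure on $\bbc[\tPio_\pi]$ given by plabic graphs (the former by transporting $\Sigma_{G^\rho}^S$ along the isomorphism $\tau$, the latter by Galashin--Lam), and that on a given positroid variety any two plabic-graph source seeds are mutation-equivalent. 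Composing the mutation sequence with the rescaling from the previous paragraph yields a quasi-cluster transformation from $\Sigma_G^S$ to $\Sigma_G^T$.

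The principal obstacle is making all of these identifications precise at the level of \emph{quivers}, not merely cluster variables: one must verify that the Muller--Speyer formula produces exactly the Laurent monomials permitted in a quasi-cluster transformation (with exponents dictated by the $c$-vectors of the frozens in the quiver), and that $\tau^*\Sigma_{G^\rho}^S$ genuinely matches a plabic-graph source seed on $\tPio_\pi$ rather than just a seed for the same cluster algebra. A secondary subtlety is the correct choice of $\rho$ when $G$ is not a graph for the top-dimensional positroid cell, since the hypotheses of the main theorem constrain which conjugating permutations are admissible; this is likely why the paper obtains the full conjecture only for Schubert and opposite Schubert varieties, and only a partial statement (restricted to ``certain'' reduced plabic graphs) in general.
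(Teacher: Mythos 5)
The statement you are attempting to prove is labeled a \emph{conjecture} in the paper and is not proved there in general; the authors establish it only for open Schubert and opposite Schubert varieties (Theorem~\ref{thm:SchubGood}) and for toggle-connected $\pi$ (Corollary~\ref{cor:quasiCoinciding}). Your proposal, as you yourself partially sense in the closing paragraph, contains a gap that cannot be closed by the techniques available, and that gap \emph{is} the open content of the conjecture. The problematic step is the claim that $\tau^*\Sigma_{G^\rho}^S$ is mutation-equivalent to $\Sigma_G^S$ because both are ``plabic-graph source seeds.'' After pullback along the twist, the cluster variables of $\tau^*\Sigma_{G^\rho}^S$ are Pl\"ucker coordinates times Laurent monomials in frozens; it is not a plabic-graph seed for $\tPio_\pi$, so the mutation-connectedness of plabic-graph seeds does not apply. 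What the paper does establish (Theorem~\ref{thm:genPlabicClusterStruc}, via the double-twist formula) is precisely the first half of your argument, $\varphi^*(\Sigma_G^S) \sim \Sigma^T_{G^\rho}$; and Remark~\ref{rmk:twistsquared} states explicitly that proving \cref{conj:sourceistarget} is \emph{equivalent} to showing $\rt_\pi^2$ is a quasi-cluster automorphism, adding that ``the methods introduced here do not seem to help.'' Your second half is that missing step.

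There are also some imprecisions worth flagging. Your $\rho$ (``$\pi$ or a suitable inverse thereof'') gives $\pi' = \rho\pi\rho^{-1} = \pi$, so the ``twist isomorphism'' is the Muller--Speyer automorphism of $\tPio_\pi$ rather than an isomorphism between distinct positroid varieties; moreover $\rho = \pi^{-1}$ violates the hypothesis $\pi\rho \ler \pi$ of the main theorem, which is why Remark~\ref{rmk:sourceisrelabeled} uses $\rho = \pi^{-1}\eps_k$. Finally, the paper's partial results are not restricted to ``certain reduced plabic graphs'' but to certain positroid varieties (those with toggle-connected $\pi$), and the proof mechanism is not a one-shot twist argument but a sequence of aligned \emph{toggles} on Grassmannlike necklaces, each of which is shown to be a quasi-cluster transformation (Theorem~\ref{thm:alignedIsQuasiEquiv}), walking from $\rmci_\pi$ down the order ideal toward $\lmci_\pi[k]$; when the toggle graph disconnects, the argument stops.
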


A quasi-cluster transformation \cite{Fra} is a sequence of mutations and well-behaved rescalings of cluster variables by Laurent monomials in frozen variables (cf.~\cref{secn:seedorbits}). If a seed $\Sigma$ determines a cluster structure on~$V$, then any seed related to $\Sigma$ by a quasi-cluster transformation also determines a cluster structure on $V$. Moreover the two cluster structures have the same sets of cluster monomials and the same notion of positive part of $V$.

Our main theorem establishes many different cluster structures on $\tPio_\pi$ via seeds from \emph{relabeled plabic graphs} (cf. \cref{defn:permutedgraph}). Relabeled plabic graph seeds appeared previously in \cite{SBSW} in the course of analyzing Leclerc's cluster structure on positroid varieties \cite{Leclerc}.

If $G$ is a reduced plabic graph with boundary vertices $1, \dots, n$ and $\rho \in S_n$, the relabeled plabic graph $G^\rho$ is the same planar graph but with boundary vertices relabeled to be $\rho(1), \dots, \rho(n)$ (cf. \cref{fig:fourplabics}). The trip permutation, target face labels, etc. of $G^\rho$ are computed using these permuted boundary labels, giving rise to a pair 
$\Sigma_{G ^\rho}^T=(\targetF(G^\rho), Q_{G^\rho})$ as in Scott's recipe. Our next theorem characterizes when a relabeled plabic graph with trip permutation $\pi$ determines a cluster structure on $\tPio_\pi$. We refer the reader to \cref{defn:weakSep} for the important notion of {\sl weak separation} of Pl\"ucker coordinates appearing in (3) of the below theorem, and to \cref{defn:rightweakAffine,defn:rightweakCircular} for the notion $\leq_\circ$ of {\sl circular weak order} on permutations, a ``weak order analogue'' of Postnikov's circular Bruhat order.  
\begin{maintheorem}[{\cref{thm:main}, \cref{cor:posPartsCoincide}}]\label{thm:introSeeds}
	Suppose that $\pi,\rho$ are permutations such that $\pi\rho \ler \pi$. Put $\mu =\rho^{-1}\pi\rho$. Let $G$ be a reduced plabic graph with trip permutation $\mu$ so that $G^\rho$ has trip permutation $\pi$. Then the following four conditions are equivalent:
	\begin{enumerate}
			\item $\Sigma^T_{G^\rho}$ is a seed in $\bbc(\tPio_\pi)$ and $\mca(\Sigma^T_{G^\rho}) = \bbc[\tPio_\pi]$.  
		\item The number of faces of $G^{\rho}$ is $\dim \tPio_\pi$. Equivalently, $\dim \tPio_\pi = \dim \tPio_\mu$. 
		\item The Pl\"ucker coordinates $\targetF(G^\rho)$ associated to the boundary faces (equivalently, to all faces) of $G_{\rho}$ are a weakly separated collection. 	\item The open positroid varieties $\tPio_\pi$ and $\tPio_{\mu}$ are isomorphic.
	\end{enumerate}

Moreover, if any (hence, all) of the above conditions hold, then the positive part of $\tPio_\pi$ determined by the seed $\Sigma^T_{G^\rho}$ is the positroid cell $\tPio_{\pi,>0}$.
\end{maintheorem}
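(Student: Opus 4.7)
The plan is to identify the target seed $\Sigma^T_{G^\rho}$ on $\tPio_\pi$ as the pullback of the Galashin--Lam seed $\Sigma^T_G$ on $\tPio_\mu$ under a ``twist'' isomorphism $\tau_\rho : \tPio_\pi \xrightarrow{\ \sim\ } \tPio_\mu$, whose existence will rely on the hypothesis $\pi \rho \ler \pi$. Once this is set up, (4) $\Rightarrow$ (1) follows because the pullback of a cluster structure along an isomorphism is again a cluster structure; the substance of the argument is constructing $\tau_\rho$ and computing its effect on Pl\"ucker coordinates.

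For the construction of $\tau_\rho$, I would first reduce to the case where $\rho = s_i$ is a simple transposition with $\pi s_i \lerdot \pi$, factoring a general $\rho$ along a saturated chain in $\ler$; the hypothesis $\pi \rho \ler \pi$ is exactly what makes such a factorization possible and forces each intermediate $\mu'$ to again be conjugate to $\pi$ by a prefix of $\rho$. For each elementary step I would exhibit a birational map whose formulas in Pl\"ucker coordinates amount to ``rewiring boundary vertices $i$ and $i+1$'' with a correcting rescaling by frozens, so that each target Pl\"ucker $\Delta_{\target_G(f)}$ on $\tPio_\mu$ pulls back to $\Delta_{\target_{G^\rho}(f)}$ on $\tPio_\pi$ up to an invertible Laurent monomial in frozen Pl\"uckers. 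The quiver $Q_{G^\rho}$ coincides with $Q_G$ as a quiver on faces, so Galashin--Lam's theorem for $G$ transports to the equality $\mca(\Sigma^T_{G^\rho}) = \bbc[\tPio_\pi]$, which is (1).

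The remaining equivalences are more combinatorial. For (1) $\Rightarrow$ (2) the cardinality of any seed equals the Krull dimension of the variety whose coordinate ring it generates, and the number of faces of $G^\rho$ equals that of $G$, which is $\dim \tPio_\mu$. For (2) $\Leftrightarrow$ (3) I would use the Oh--Postnikov--Speyer characterization of maximal weakly separated collections inside a positroid, combined with the fact that target-face labels of any reduced plabic graph form a weakly separated collection inside the positroid of its trip permutation; then matching the maximal cardinality $\dim \tPio_\pi + 1$ is what forces $\targetF(G^\rho)$ to be weakly separated inside the positroid of $\pi$. For (3) $\Rightarrow$ (4), a weakly separated collection of the maximal size simultaneously inside the positroids of $\pi$ and $\mu$ forces the two positroids to share a face-labelled ``reduced plabic'' presentation, from which the geometric isomorphism $\tau_\rho$ is extracted exactly as in the construction above.

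The positive part statement will follow once we know that $\tau_\rho$ restricts to a bijection $\tPio_{\pi,>0} \to \tPio_{\mu,>0}$ between positroid cells. This is automatic from the fact that $\tau_\rho$ is assembled from elementary twists whose Pl\"ucker formulas are subtraction-free on the totally positive locus, and Galashin--Lam (via Postnikov) already identifies $\tPio_{\mu,>0}$ as the positive part of $\Sigma^T_G$. I expect the main obstacle to be the analysis of $\tau_\rho$: one must verify that the elementary twists compose coherently along every saturated chain from $\pi$ down to $\pi\rho$, independently of the chain, and that they act on target Pl\"uckers in the predicted manner. This consistency, together with the identification of pulled-back target Pl\"uckers of $G$ with target Pl\"uckers of $G^\rho$, is the real content of the ``nontrivial twist isomorphism'' advertised in the abstract, and I would expect it to be proved by an explicit subtraction-free rational formula for $\tau_\rho$ together with a Lindstr\"om-type argument for matchings on $G$ versus $G^\rho$.
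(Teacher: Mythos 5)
Your high-level plan — transport the known cluster structure across a twist isomorphism — is the paper's too, but the route you sketch has two genuine gaps and one structural problem.

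\textbf{Elementary factorization of the twist is not safe.} You propose to build $\tau_\rho$ by writing $\rho$ as a product of simple transpositions along a saturated chain and composing ``elementary twists.'' The difficulty is that the condition you eventually want — $\dim\tPio_\pi=\dim\tPio_\mu$, i.e. $\ell(i^{-1}fi)=\ell(f)$ in the affine lift — does \emph{not} propagate to the intermediate prefixes. The paper's $\cref{fig:notreachable}$ exhibits a permutation whose $\ler$-order ideal contains elements failing the length condition, so a chain from $\pi$ to $\pi\rho$ can pass through $\mu'$ with $\dim\tPio_{\mu'}>\dim\tPio_\pi$, and the corresponding elementary twist is then only a surjection, not an isomorphism. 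Your composite would no longer be manifestly an isomorphism, and the chain-independence you flag as needing proof is genuinely false at the level of lengths. The paper avoids this entirely: the Grassmannlike twist $\rt_\gnec$ is defined in one shot by a dual-basis/inner-product formula (\cref{defn:gentwist}), descends to a regular map $\tPio_\pi\to\tPio_\mu$ unconditionally, and is shown to be an isomorphism under condition (2) by producing an explicit inverse $\lt_{\gnec^*}$ along the dual necklace (\cref{thm:twistsareinverses}), using a careful analysis (\cref{lem:notinpositroid}, \cref{prop:twistComposeId}) to control the Pl\"uckers on which $\lt\circ\rt$ is the identity.

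\textbf{The equivalence (2) $\Leftrightarrow$ (3) is not a cardinality count.} You suggest that matching cardinality with $\dim\tPio_\pi$ ``forces'' $\targetF(G^\rho)$ to be weakly separated. That direction does not follow from \cite{OPS} or \cite{FG}: there is no reason a priori that a $\rho$-permuted target collection of the right size inside $\mcm_\pi$ must be weakly separated — permuting the ground set destroys weak separation in general. The paper instead proves (2) $\Leftrightarrow$ (3) by a self-contained combinatorial argument (\cref{secn:conjugationproof}) translating both conditions into statements about two-line notations of affine permutations and showing that a failure of weak separation in $\gnec$ is equivalent to the existence of a transposition in $T_R(r^{-1})\cap T_L(i)$, i.e.\ to a drop in length of $m=r^{-1}i$ (\cref{lem:conjugation}). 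Your sketch of (3) $\Rightarrow$ (4) (``forces the two positroids to share a face-labelled presentation, from which the isomorphism is extracted exactly as above'') is circular: ``as above'' refers to the very construction whose validity is at issue, and in any case the two positroids are genuinely different as matroids (\cref{eg:differentbases}), so no identification of face-labelled presentations exists.

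\textbf{A minor but real inaccuracy.} Galashin--Lam prove that the \emph{source} seed $\Sigma^S_G$ determines a cluster structure on $\tPio_\mu$; the target statement is itself derived via a twist. The paper therefore transports through $\Sigma^S_G$ and the double-twist formula of Muller--Speyer, obtaining $\Sigma^T_{G^\rho}$ only up to quasi-equivalence (\cref{thm:genPlabicClusterStruc}), not on the nose. Your appeal to ``the Galashin--Lam seed $\Sigma^T_G$'' and the bare assertion that pullback equals $\Sigma^T_{G^\rho}$ quietly elides the quasi-equivalence that is the technical heart of \cref{prop:faceLabelsTorus} and the analogue of the Muller--Speyer commutative diagram. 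Finally, the positivity claim does not follow just from subtraction-freeness of formulas; the paper routes it through the same diagram (\cref{cor:twistPositive}, \cref{cor:posPartsCoincide}).
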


\cref{fig:fourplabics} gives an illustration of \cref{thm:introSeeds}.  The assumption $\pi \rho \ler \pi$ is a natural sufficient condition which ensures that the frozen variables in $\Sigma^T_{G^\rho}$ are nonvanishing on $\tPio_\pi$ (c.f. \cref{thm:rightweak}).

In stating (2), we have used the well known fact that $\dim \tPio_G$ is the number of faces of~$G$. This number of faces can in turn be computed in terms of lengths of affine permutations, so we think of (2) as a Coxeter-theoretic compatibility condition between $\rho$ and $\pi$. Condition (3) is a combinatorial interpretation of this Coxeter-theoretic condition in terms of weak separation; the fact that it is enough to check only the boundary faces was proved by Farber and Galashin \cite[Theorem 6.3]{FG}. The isomorphism $\tPio_\pi \to \tPio_\mu$ asserted in (4) is a generalization of the Muller-Speyer 
{\sl twist automorphism} of an open positroid variety \cite{MSTwist}. These generalized twist isomorphisms are nontrivial from the perspective of matroids (cf. \cref{eg:differentbases}). 

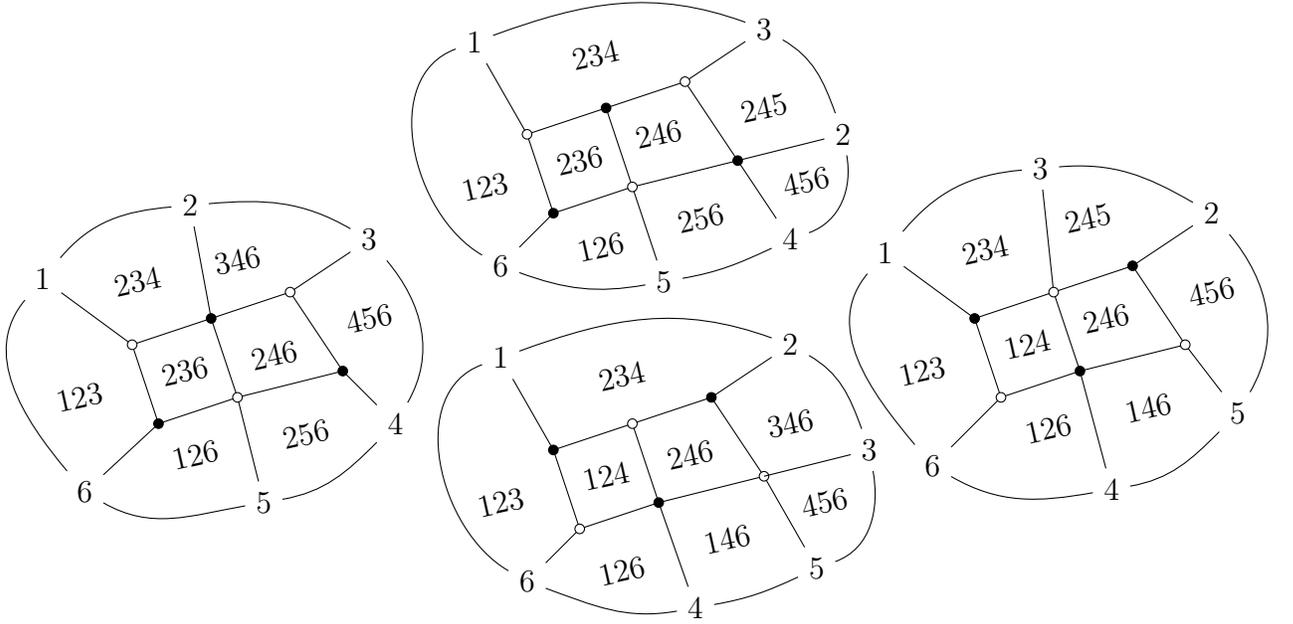
\begin{figure}
	\begin{tikzpicture}[scale = .7]
	\begin{scope}[xshift = -1cm,yshift = -1cm] 
	\node (v10) at (1.1,-2.3) {5};
	\node (v8) at (-2.5,2.25) {1};
	\node (v12) at (3,2.5) {3};
	\node (v9) at (-2,-2) {6};
	\node (v11) at (3.5,-1.5) {4};
	\node (v1) at (-1.5,0.5) {};
	\node (v2) at (0,1) {};
	\node (v5) at (1.5,1.5) {};
	\node (v4) at (-1,-1) {};
	\node (v3) at (0.5,-0.5) {};
	\node (v6) at (2.5,0) {};
	\draw  (v1.center) edge (v2.center);
	\draw  (v2.center) edge (v3.center);
	\draw  (v3.center) edge (v4.center);
	\draw  (v4.center) edge (v1.center);
	\draw  (v2.center) edge (v5.center);
	\draw  (v5.center) edge (v6.center);
	\draw  (v6.center) edge (v3.center);
	\draw  (v1.center) edge (v8);
	\draw  (v4.center) edge (v9);
	\draw  (v3.center) edge (v10);
	\draw  (v6.center) edge (v11);
	\draw  (v5.center) edge (v12);
	\draw[black, fill = white] (v1) circle (.5ex);
	\draw[black, fill = white] (v3) circle (.5ex);
	\draw[black, fill = white] (v5) circle (.5ex);
	\draw[black, fill = black] (v2) circle (.5ex);
	\draw[black, fill = black] (v4) circle (.5ex);
	\draw[black, fill = black] (v6) circle (.5ex);
	\node [rotate = 12.5] at (-.1,-1.65) {$126$};
	\node [rotate = 12.5] at (-2.3,-0.5) {$123$};
	\node [rotate = 12.5] at (-.2,2.0) {$234$};
	\node [rotate = 12.5] at (3.8,-0.4) {$456$};
	\node [rotate = 12.5] at (3,1) {$245$};
	\node [rotate = 12.5] at (1.8,-1.1) {$256$};
	\node [rotate = 12.5] at (-0.5,0) {$236$};
	\node [rotate = 12.5] at (1,0.5) {$246$};
	\node (v7) at (4.5,0.5) {2};
	\draw  (v6.center) edge (v7);
    \draw [out = 20, in = 160] (v8) to (v12);
    \draw [out = -30, in = 110] (v12) to (v7);    
    \draw [out = -80, in = 20] (v7) to (v11);    
    \draw [out = 205, in = 10] (v11) to (v10);    
    \draw [out = 190, in = -20] (v10) to (v9);
    \draw [out = 150, in = 200] (v9) to (v8);        
	\end{scope}
	\begin{scope}[yshift = -7cm,xshift = -.5cm] 
	\node (v10) at (1.2,-2.5) {4};
	\node (v8) at (-2.5,2.25) {1};
	\node (v12) at (3,2.5) {2};
	\node (v9) at (-2,-2) {6};
	\node (v11) at (3.5,-1.75) {5};
	\node (v1) at (-1.5,0.5) {};
	\node (v2) at (0,1) {};
	\node (v5) at (1.5,1.5) {};
	\node (v4) at (-1,-1) {};
	\node (v3) at (0.5,-0.5) {};
	\node (v6) at (2.5,0) {};
	\draw  (v1.center) edge (v2.center);
	\draw  (v2.center) edge (v3.center);
	\draw  (v3.center) edge (v4.center);
	\draw  (v4.center) edge (v1.center);
	\draw  (v2.center) edge (v5.center);
	\draw  (v5.center) edge (v6.center);
	\draw  (v6.center) edge (v3.center);
	\draw  (v1.center) edge (v8);
	\draw  (v4.center) edge (v9);
	\draw  (v3.center) edge (v10);
	\draw  (v6.center) edge (v11);
	\draw  (v5.center) edge (v12);
	\draw[black, fill = white] (v2) circle (.5ex);
	\draw[black, fill = white] (v4) circle (.5ex);
	\draw[black, fill = white] (v6) circle (.5ex);
	\draw[black, fill = black] (v1) circle (.5ex);
	\draw[black, fill = black] (v3) circle (.5ex);
	\draw[black, fill = black] (v5) circle (.5ex);
	\node [rotate = 12.5] at (-.2,-1.8) {126};
	\node [rotate = 12.5] at (-2.5,-0.5) {123};
	\node [rotate = 12.5] at (-.2,1.9) {234};
	\node [rotate = 12.5] at (3.65,-0.5) {456};
	\node [rotate = 12.5] at (3,1) {346};
	\node [rotate = 12.5] at (1.8,-1.2) {146};
	\node [rotate = 12.5] at (-0.5,0) {124};
	\node [rotate = 12.5] at (1.1,0.4) {246};
	\node (v7) at (4.5,0.5) {3};
	\draw  (v6.center) edge (v7);
    \draw [out = 20, in = 160] (v8) to (v12);
    \draw [out = -30, in = 110] (v12) to (v7);    
    \draw [out = -80, in = 20] (v7) to (v11);    
    \draw [out = 205, in = 10] (v11) to (v10);    
    \draw [out = 190, in = -20] (v10) to (v9);
    \draw [out = 150, in = 200] (v9) to (v8);        
	\end{scope}
	\begin{scope}[xshift = -8.5cm, yshift = -5cm] 
	\node (av7) at (-0.4,3.15) {2};
	\node (av10) at (1,-2.5) {5};
	\node (av8) at (-3.2,1.75) {1};
	\node (av12) at (3,2.5) {3};
	\node (av9) at (-2.4,-2.3) {6};
	\node (av11) at (3.5,-1) {4};
	\node (av1) at (-1.5,0.5) {};
	\node (av2) at (0,1) {};
	\node (av5) at (1.5,1.5) {};
	\node (av4) at (-1,-1) {};
	\node (av3) at (0.5,-0.5) {};
	\node (av6) at (2.5,0) {};
	\draw  (av1.center) edge (av2.center);
	\draw  (av2.center) edge (av3.center);
	\draw  (av3.center) edge (av4.center);
	\draw  (av4.center) edge (av1.center);
	\draw  (av2.center) edge (av5.center);
	\draw  (av5.center) edge (av6.center);
	\draw  (av6.center) edge (av3.center);
	\draw  (av2.center) edge (av7);
	\draw  (av1.center) edge (av8);
	\draw  (av4.center) edge (av9);
	\draw  (av3.center) edge (av10);
	\draw  (av6.center) edge (av11);
	\draw  (av5.center) edge (av12);
	\draw[black, fill = white] (av1) circle (.5ex);
	\draw[black, fill = white] (av3) circle (.5ex);
	\draw[black, fill = white] (av5) circle (.5ex);
	\draw[black, fill = black] (av2) circle (.5ex);
	\draw[black, fill = black] (av4) circle (.5ex);
	\draw[black, fill = black] (av6) circle (.5ex);
	\node [rotate = 12.5] at (-.3,-1.6) {$126$};
	\node [rotate = 12.5]  at (-2.5,-0.5) {$123$};
	\node [rotate = 12.5] at (-1.4,1.7) {$234$};
	\node [rotate = 12.5] at (0.5,2.1) {$346$};
	\node [rotate = 12.5] at (3,1) {$456$};
	\node [rotate = 12.5] at (1.8,-1.2) {$256$};
	\node [rotate = 12.5] at (-0.5,0) {$236$};
	\node [rotate = 12.5] at (1.2,0.3) {$246$};
    \draw [out = 50, in = 185] (av8) to (av7);
    \draw [out = 5, in = 150] (av7) to (av12);    
    \draw [out = -50, in = 60] (av12) to (av11);    
    \draw [out = 225, in = 10] (av11) to (av10);    
    \draw [out = 190, in = -30] (av10) to (av9);
    \draw [out = 130, in = 230] (av9) to (av8);        
	\end{scope}
	\begin{scope}[xshift = 7.5cm, yshift = -4.5cm] 
	\node (av7) at (-0.25,3.35) {3};
	\node (av10) at (1.1,-2.75) {4};
	\node (av8) at (-3.2,1.75) {1};
	\node (av12) at (3,2.5) {2};
	\node (av9) at (-2.3,-2.3) {6};
	\node (av11) at (3.5,-1.3) {5};
	\node (av1) at (-1.5,0.5) {};
	\node (av2) at (0,1) {};
	\node (av5) at (1.5,1.5) {};
	\node (av4) at (-1,-1) {};
	\node (av3) at (0.5,-0.5) {};
	\node (av6) at (2.5,0) {};
	\draw  (av1.center) edge (av2.center);
	\draw  (av2.center) edge (av3.center);
	\draw  (av3.center) edge (av4.center);
	\draw  (av4.center) edge (av1.center);
	\draw  (av2.center) edge (av5.center);
	\draw  (av5.center) edge (av6.center);
	\draw  (av6.center) edge (av3.center);
	\draw  (av2.center) edge (av7);
	\draw  (av1.center) edge (av8);
	\draw  (av4.center) edge (av9);
	\draw  (av3.center) edge (av10);
	\draw  (av6.center) edge (av11);
	\draw  (av5.center) edge (av12);
	\draw[black, fill = white] (av2) circle (.5ex);
	\draw[black, fill = white] (av4) circle (.5ex);
	\draw[black, fill = white] (av6) circle (.5ex);
	\draw[black, fill = black] (av1) circle (.5ex);
	\draw[black, fill = black] (av3) circle (.5ex);
	\draw[black, fill = black] (av5) circle (.5ex);
	\node [rotate = 12.5] at (-.1,-1.6) {$126$};
	\node [rotate = 12.5] at (-2.5,-0.5) {$123$};
	\node [rotate = 12.5] at (-1.3,1.8) {$234$};
	\node [rotate = 12.5] at (0.65,2.4) {$245$};
	\node [rotate = 12.5] at (3,1) {$456$};
	\node [rotate = 12.5] at (1.8,-1.2) {$146$};
	\node [rotate = 12.5] at (-0.5,0) {$124$};
	\node [rotate = 12.5] at (1,0.5) {$246$};
    \draw [out = 50, in = 185] (av8) to (av7);
    \draw [out = 5, in = 150] (av7) to (av12);    
    \draw [out = -50, in = 60] (av12) to (av11);    
    \draw [out = 225, in = 10] (av11) to (av10);    
    \draw [out = 190, in = -30] (av10) to (av9);
    \draw [out = 130, in = 230] (av9) to (av8);        
	\end{scope}
	\end{tikzpicture}
	\caption{The left graph is a reduced plabic graph with trip permutation $\pi  = 465213$. It encodes an open positroid variety $\tPio_\pi$. The three graphs on the right are relabeled plabic graphs with trip permutation~$\pi$. Ignoring the permuted boundary labels, the four plabic graphs represent four {\sl different} open positroid varieties, isomorphic to each other by \cref{thm:main}. The face labels of each graph together with the dual quiver give seeds which determine 4 different cluster structures on $\tPio_\pi$. These four seeds are related by quasi-cluster transformations.}\label{fig:fourplabics}
\end{figure}

\cref{thm:introSeeds} provides many seeds $\Sigma_{G ^\rho}^T$ which give a cluster structure on $\tPio_\pi$. When $\tPio_\pi$ is a Schubert or opposite Schubert variety, the boundary vertex relabelings $\rho$ giving seeds for $\tPio_\pi$ form the $\ler$-order ideal below $\pi^{-1}$ (\cref{prop:SchubWholeInterval}). For other $\pi$, the $\rho$ giving seeds are a subset of this order ideal satisfying an explicit but subtle length condition (\cref{defn:TransitiveClosure}).

If $G^\rho$ is a relabeled graph as in the above theorem and $H$ is a usual plabic graph with trip permutation $\pi$, then the seeds $\Sigma^T_{G^\rho}$ and 
$\Sigma^T_H$ are typically mutation-inequivalent. Nonetheless we expect: 
\begin{conj}[Quasi-equivalence conjecture]\label{conj:samepattern}
	If $H$ is a plabic graph with trip permutation $\pi$, and if $G^\rho$ is a relabeled plabic graph with trip permutation $\pi$ satisfying the conditions of \cref{thm:introSeeds}, then $\Sigma^T_{H}$ and $\Sigma^T_{G^\rho}$ are related by {\sl quasi-cluster transformations}. 
\end{conj}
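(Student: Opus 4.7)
The plan is to deduce the conjecture from the generalized twist isomorphism $\tPio_\pi \to \tPio_{\mu}$ furnished by part (4) of \cref{thm:introSeeds}, where $\mu = \rho^{-1}\pi\rho$, combined with Galashin--Lam's theorem that all target-convention seeds from ordinary plabic graphs for a fixed positroid variety are mutation-equivalent. The strategy is to promote this twist to a \emph{quasi-isomorphism of cluster algebras} in the sense of Fraser \cite{Fra}, so that the entire quasi-cluster class of $\Sigma^T_G$ on $\tPio_\mu$ is carried onto the quasi-cluster class of $\Sigma^T_{G^\rho}$ on $\tPio_\pi$. Once that is in place, the conjecture reduces to a single comparison between an ordinary plabic seed on $\tPio_\pi$ and the pullback of an ordinary plabic seed on $\tPio_\mu$.

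\textbf{Step 1 (The twist as a quasi-isomorphism).} Using the explicit construction of the twist isomorphism in the proof of \cref{thm:introSeeds}(4), I would verify that its pullback sends the target face-label Pl\"ucker coordinates $\targetF(G)$ on $\tPio_\mu$ to the target face-label Pl\"ucker coordinates $\targetF(G^\rho)$ on $\tPio_\pi$, each up to multiplication by a Laurent monomial in the boundary-face Pl\"uckers (the frozens), and that it intertwines the corresponding dual quivers $Q(G)$ and $Q(G^\rho)$. Fraser's criterion for a quasi-homomorphism checks only this data on a single seed together with the behavior on frozens, so this upgrades the twist to a quasi-isomorphism $\bbc[\tPio_\mu] \to \bbc[\tPio_\pi]$ of cluster algebras.

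\textbf{Step 2 (Reduction).} By \cite{GalashinLam}, all target seeds $\Sigma^T_{G'}$ from ordinary plabic graphs with trip permutation $\mu$ are mutation-equivalent on $\tPio_\mu$, and analogously all $\Sigma^T_{H'}$ with trip permutation $\pi$ are mutation-equivalent on $\tPio_\pi$. Transporting the first family across the quasi-isomorphism of Step 1, each $\Sigma^T_{G'}$ yields a seed on $\tPio_\pi$ that is quasi-equivalent to $\Sigma^T_{G^\rho}$. Combining with the mutation-equivalence of the $\Sigma^T_{H'}$, the conjecture reduces to finding a \emph{single} favorable pair $(H, G')$, with $H$ of trip permutation $\pi$ and $G'$ of trip permutation $\mu$, for which $\Sigma^T_H$ and the pullback of $\Sigma^T_{G'}$ are related by an explicit sequence of mutations and Laurent monomial rescalings.

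\textbf{Step 3 (Base case; main obstacle).} The principal difficulty is exhibiting such a compatible pair $(H, G')$ together with a finite sequence of mutations and frozen rescalings that identifies the two seeds. I would attempt two complementary routes. The first is inductive: peel a cyclic interval on which $\pi$ and $\mu$ agree, reducing to positroid varieties factoring through smaller Schubert or opposite Schubert pieces, where the conjecture is already established in the paper; the compatibility of the twist with these cyclic restrictions would then be combined with the Schubert case to propagate the quasi-equivalence. The second is direct: choose $H$ and $G'$ so that their face labels already share a maximal weakly separated subcollection after frozen rescaling, and resolve the remaining discrepancies using a short chain of three-term Pl\"ucker relations; the subtlety is that each such relation introduces a nontrivial Pl\"ucker prefactor, and one must show by a careful bookkeeping argument that, along the full chain, these prefactors collapse to an honest Laurent monomial in frozens rather than a more general rational expression. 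Controlling these prefactors uniformly in $\pi$ and $\rho$, and in particular ruling out non-frozen divisors, is in my view the crux of the conjecture and is likely where new ideas beyond the Schubert case are required.
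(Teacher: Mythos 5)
This is a conjecture, and the paper does not prove it in general --- it is established only for (opposite) open Schubert varieties (\cref{thm:SchubGood}) and, for the source-versus-target specialization \cref{conj:sourceistarget}, for toggle-connected $\pi$ (\cref{cor:quasiCoinciding}). Your proposal does not claim a complete proof either; you correctly flag Step 3 as the crux. Taking the proposal as a strategy, Steps 1--2 essentially restate the problem without reducing it: by \cref{thm:OPS} and Galashin--Lam, all ordinary plabic seeds $\Sigma^T_H$ on $\tPio_\pi$ are already mutation-equivalent, and likewise all relabeled seeds $\Sigma^T_{(G')^{\rho}}$ with a fixed boundary relabeling, so the conjecture is already a single-pair statement before any twist is invoked. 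There is also a real subtlety in Step 1: what the paper actually establishes (\eqref{eq:doubletwist} in the proof of \cref{thm:genPlabicClusterStruc}) is that the \emph{source} seed $\Sigma^S_G$ pulls back to a quasi-equivalent of $\Sigma^T_{G^\rho}$ under the double twist $\varphi = \rt_\mu^2 \circ \rho \circ \underline{\eps}$, not that the \emph{target} seed $\Sigma^T_G$ pulls back under the single twist from \cref{thm:main}(4). Passing from $\Sigma^S_G$ to $\Sigma^T_G$ on $\tPio_\mu$ is itself an instance of what you want to prove, so a target-to-target pullback statement requires identifying the right composition of twists and column permutations and then checking Fraser's criterion; it is not the routine ``verification from the explicit construction'' you describe.

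The paper's partial results rest on a mechanism that is absent from your outline: \cref{thm:alignedIsQuasiEquiv} proves, via plabic tilings and square moves at boundary faces, that each \emph{aligned toggle} on Grassmannlike necklaces is directly a quasi-cluster transformation, and the Schubert and toggle-connected cases follow from connectivity of the toggle graph $TG_f$ (\cref{defn:TransitiveClosure}, \cref{prop:SchubWholeInterval}). This route stays entirely within Pl\"ucker coordinates and square moves and produces the mutations and rescalings explicitly, but it stops when $TG_f$ is disconnected (\cref{fig:notreachable}), as the paper acknowledges. Your Step 3 sketches --- cyclic-interval induction, or resolving discrepancies via three-term Pl\"ucker relations while controlling prefactors --- are plausible directions for bridging disconnected components, but they are unproven, and the central problem you correctly identify (showing the prefactors collapse to Laurent monomials in frozens rather than general rational functions) is precisely where the paper's methods also stop. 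So the proposal frames the problem sensibly but does not supply a proof of any portion of the conjecture beyond what the paper already establishes.
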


As we explain in \cref{rmk:sourceisrelabeled}, one can view the source seed $\Sigma_H^S$ as a relabeled seed $\Sigma_{H^\rho}^{T}$ for an appropriate relabeling $\rho$ as in the above theorem. Thus \cref{conj:samepattern} generalizes the Muller-Speyer conjecture (\cref{conj:sourceistarget}) and puts it in a broader framework. We note that in general, seeds $\Sigma$ and $\Sigma'$ giving two different cluster structures on a variety may not be related by quasi-cluster transformations. Zhou \cite{Zhou} gives an example of this for the cluster algebra of the Markov quiver.

We verify \cref{conj:samepattern} for Schubert and opposite Schubert varieties.

\begin{maintheorem}[\cref{thm:SchubGood}]
	Suppose $\tPio_\pi$ is an open Schubert or opposite open Schubert variety. Then \cref{conj:samepattern} holds. In particular, source seeds $\Sigma_H^S$ and target seeds $\Sigma_H^T$ are related by quasi-cluster transformations.
\end{maintheorem}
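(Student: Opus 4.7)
The plan is to establish \cref{conj:samepattern} for open Schubert varieties $\tPio_\pi$ by induction on the length $\ell(\rho)$ of the relabeling in circular weak order, exploiting the fact from \cref{prop:SchubWholeInterval} that \emph{every} $\rho \ler \pi^{-1}$ yields a valid relabeling for a Schubert variety. This rich combinatorial structure is what distinguishes the Schubert case from the general (still conjectural) one. The opposite Schubert statement should then follow by an analogous argument, or by reduction to Schubert via an appropriate symmetry of $\Gr(k,n)$ which intertwines the two classes of positroid varieties and behaves compatibly with plabic graphs and their labelings.

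The base case $\rho = e$ is immediate: $\Sigma^T_{G^e} = \Sigma^T_G$ is an ordinary target seed with trip permutation $\pi$, and hence is mutation-equivalent to $\Sigma^T_H$ via the square-move relations among reduced plabic graphs sharing trip permutation $\pi$. For the inductive step, write $\rho = \rho' s_i$ with $\rho' \lerdot \rho$. By induction $\Sigma^T_{G_{\rho'}^{\rho'}}$ is quasi-equivalent to $\Sigma^T_H$, so it suffices to connect $\Sigma^T_{G_\rho^\rho}$ to $\Sigma^T_{G_{\rho'}^{\rho'}}$ by a quasi-cluster transformation. Since the two underlying trip permutations $\rho^{-1}\pi\rho$ and $(\rho')^{-1}\pi\rho'$ differ by conjugation by $s_i$, I would choose the underlying graphs $G_\rho$ and $G_{\rho'}$ so that they differ by a single local modification supported near boundary vertices $i$ and $i+1$. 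Combined with the swap of boundary labels $\rho'(i) \leftrightarrow \rho'(i+1)$ implemented by the relabeling, the problem reduces to identifying an explicit quasi-cluster transformation --- presumably a single mutation at the face wedged between these two boundary vertices, followed by Laurent-monomial rescalings of a controlled set of cluster variables by frozens --- that realizes the transition on face labels and quiver.

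The main obstacle is the inductive step. The face labels of a relabeled graph $G^\rho$ are the $\rho$-images of the face labels of $G$, so the combinatorial content of the transition is a controlled swap of two indices in all face labels. Writing this swap as a quasi-cluster transformation requires first a case analysis, on the colors of the internal vertices adjacent to boundary vertices $i$ and $i+1$, to identify the correct local move, and then a verification that the induced rescaling factors are Laurent monomials in frozens satisfying the balancing conditions of a quasi-cluster transformation (cf.~\cref{secn:seedorbits}). Once the conjecture is verified for Schubert varieties, the ``in particular'' assertion about source and target seeds follows, since the source seed $\Sigma^S_H$ is itself a relabeled target seed $\Sigma^T_{H^{\rho_0}}$ for an explicit $\rho_0$, as noted in \cref{rmk:sourceisrelabeled}.
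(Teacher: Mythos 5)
Your overall strategy is the same as the paper's: use \cref{prop:SchubWholeInterval} to guarantee that the entire $\leq_\circ$-lower order ideal beneath $\pi$ produces valid relabeled seeds, and then descend through that order ideal one cover relation at a time. The issue is that you leave the inductive step unproven, and that step is precisely where the real content of the argument lies.

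In the paper's framing, a cover relation $\iota \lessdot_\circ \iota s_{a}$ corresponds to an \emph{aligned toggle} of Grassmannlike necklaces (\cref{lem:alignedMovesDown}), and the statement you need for your inductive step is exactly \cref{thm:alignedIsQuasiEquiv}: an aligned toggle between two necklaces, both satisfying the seed-size condition, induces a quasi-cluster transformation between the corresponding seeds. This is not an afterthought; its proof (in \cref{sec:reachablepf}) requires the theory of plabic tilings, \cref{lem:dkkCondition}, \cref{lem:FGintersect}, and a careful analysis of the three-term Pl\"ucker relation at the toggled position. You acknowledge this step as ``the main obstacle'' and propose to handle it via ``presumably a single mutation at the face wedged between these two boundary vertices, followed by Laurent-monomial rescalings.'' That sketch is not quite right even as a heuristic: the face in question is a \emph{boundary} (frozen) face, at which one cannot mutate, and the actual argument first mutates at \emph{interior} faces to reach an auxiliary graph $(G')^\rho$ where the toggle can be realized by a ``square move at a boundary face,'' which is itself a monomial rescaling of frozens rather than a mutation. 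Without invoking \cref{thm:alignedIsQuasiEquiv} (or reproving it), your induction does not close.

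Once that gap is filled, the rest of your outline is fine and matches the paper: \cref{prop:SchubWholeInterval} guarantees every vertex of the order ideal is in $\sep_\pi$, so the toggle graph $TG_\pi$ is the full Hasse diagram of the order ideal and is connected; the opposite Schubert case is handled in the same proposition by a parallel computation (not by appealing to an external duality of $\Gr(k,n)$, though that would also work); and the ``in particular'' clause follows from \cref{rmk:sourceisrelabeled} exactly as you say.
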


We also give partial results towards \cref{conj:samepattern} for arbitrary open positroid varieties in \cref{thm:alignedIsQuasiEquiv}. From these results, we obtain a positive answer to \cref{conj:sourceistarget} for $\tPio_\pi$ where $\pi$ is \emph{toggle-connected} (cf. \cref{defn:TransitiveClosure}).

\begin{maintheorem}[\cref{cor:quasiCoinciding}]
	Suppose $\pi \in S_n$ is toggle-connected, and let $H$ be a reduced plabic graph with trip permutation $\pi$. Then the source seed $\Sigma_H^S$ and the target seed $\Sigma_H^T$ are related by a quasi-cluster transformation.
\end{maintheorem}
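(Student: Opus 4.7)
The plan is to reduce this corollary to \cref{thm:alignedIsQuasiEquiv} by using the source-target correspondence for relabeled plabic graphs. As pointed out in \cref{rmk:sourceisrelabeled}, the source seed $\Sigma_H^S$ can be reinterpreted as a target seed $\Sigma_{H^{\rho_S}}^T$ for a specific relabeling permutation $\rho_S$ determined by $\pi$, and this $\rho_S$ together with $\pi$ satisfies the hypotheses of \cref{thm:introSeeds}. Thus producing a quasi-cluster transformation between $\Sigma_H^S$ and $\Sigma_H^T$ amounts to producing one between the target seeds $\Sigma_{H^{\rho_S}}^T$ and $\Sigma_{H^{e}}^T$, where $e$ is the identity relabeling.

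Next I invoke the toggle-connected hypothesis. By \cref{defn:TransitiveClosure}, toggle-connectedness of $\pi$ means precisely that the set of relabelings $\rho$ satisfying $\pi \rho \ler \pi$ and the length condition of \cref{thm:introSeeds} forms a single equivalence class under the toggle relation. In particular, both $e$ and $\rho_S$ lie in this class, so there is a chain
\[
e = \rho_0,\ \rho_1,\ \ldots,\ \rho_m = \rho_S
\]
in which each consecutive pair $\rho_i,\rho_{i+1}$ differs by a single toggle. By \cref{thm:introSeeds}, every target seed $\Sigma_{H^{\rho_i}}^T$ determines a cluster structure on $\tPio_\pi$ with frozen variables nonvanishing on $\tPio_\pi$.

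Now apply \cref{thm:alignedIsQuasiEquiv} to each consecutive pair: since $\rho_i$ and $\rho_{i+1}$ differ by a single toggle, the target seeds $\Sigma_{H^{\rho_i}}^T$ and $\Sigma_{H^{\rho_{i+1}}}^T$ are related by a quasi-cluster transformation. Quasi-cluster transformations are closed under composition, so chaining these transformations yields the desired quasi-cluster equivalence
\[
\Sigma_H^T \;=\; \Sigma_{H^{\rho_0}}^T \;\sim\; \Sigma_{H^{\rho_1}}^T \;\sim\; \cdots \;\sim\; \Sigma_{H^{\rho_m}}^T \;=\; \Sigma_H^S.
\]

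The substantive input is \cref{thm:alignedIsQuasiEquiv}, which handles the single-toggle step; the toggle-connected hypothesis is then exactly the combinatorial condition needed to telescope these local quasi-equivalences into a global one linking the target and source conventions. The only point requiring verification beyond invoking earlier results is that the particular relabeling $\rho_S$ arising in \cref{rmk:sourceisrelabeled} indeed lies in the toggle-equivalence class of the identity; this follows from the explicit description of $\rho_S$ there together with the fact that, by construction, $\rho_S$ satisfies the hypotheses of \cref{thm:introSeeds} for the same trip permutation $\pi$.
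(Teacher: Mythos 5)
Your strategy is the same as the paper's: pass from the source seed to a relabeled target seed via \cref{rmk:sourceisrelabeled}, then walk along a path in the toggle graph $TG_\pi$ from $\eps_k$ to $\pi$, applying \cref{thm:alignedIsQuasiEquiv} at each edge and composing. Two slips should be cleaned up, though neither is fatal. First, toggle-connectedness does not say that the set of valid relabelings is a \emph{single} toggle-equivalence class; it only says the two particular vertices $\pi$ and $\eps_k$ lie in the same connected component of $TG_\pi$ (the graph may well have other components), which is all you actually use. Second, the notation $\Sigma_{H^{\rho_i}}^T$ with a fixed underlying graph $H$ is incoherent: if $H$ has trip permutation $\pi$, then $H^{\rho_i}$ has trip permutation $\rho_i\pi\rho_i^{-1}$, not $\pi$, so at each step you must take a \emph{different} underlying graph $G_i$ (with trip permutation $\rho_i^{-1}\pi\rho_i$) so that $G_i^{\rho_i}$ has trip permutation $\pi$. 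This is harmless because \cref{thm:alignedIsQuasiEquiv} is stated for two independent underlying graphs, and because all target (resp. source) seeds from plabic graphs with trip permutation $\pi$ are mutation-equivalent, so it suffices to connect \emph{some} target seed to \emph{some} source seed.
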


\textbf{Motivation.} In the special case of the top-dimensional positroid variety (i.e., for the cluster  algebra associated to the Grassmannian itself), the relationship between Pl\"ucker coordinates, cluster variables, plabic graphs, and clusters, is very clean: all Pl\"ucker coordinates are cluster variables and the clusters consisting entirely of Pl\"ucker coordinates are precisely those that come from plabic graphs \cite[Theorem 7.1]{OPS}. Source and target seeds give rise to the same cluster structure. Things are much murkier for lower-dimensional positroid varieties. There are Pl\"ucker coordinates which do not identically vanish on $\tPio_\pi$ but which are not cluster variables in $\mathcal{A}(\Sigma_H^T)$. Moreover, for certain trip permutations $\pi$, there is a unique plabic graph~$H$ with trip permutation~$\pi$, hence the combinatorics of plabic graphs gives us only one seed in $\mathcal{A}(\Sigma_H^T)$.

\cref{thm:main} and ~\cref{conj:samepattern} are meant to rectify this. \cref{thm:main} provides us with a much larger family of seeds for $\mathbb{C}[\tPio_\mu]$ consisting of Pl\"ucker coordinates. \cref{conj:samepattern} asserts that each of these seeds can be rescaled by frozens to give a seed in the ``usual'' cluster structure $\mathcal{A}(\Sigma_H^T)$, thereby providing many more Pl\"ucker clusters in $\mathcal{A}(\Sigma_H^T)=\mathbb{C}[\tPio_\mu]$ (working up to Laurent monomials in the frozen variables).

\medskip

{\bf Outline.} In \cref{sec:background}, we provide background on open positroid varieties and their cluster structures, as well as bounded affine permutations and the partial order $\leq_\circ$, following \cite{Postnikov,KLS,Scott}. We also recall basic notions of cluster algebras and quasi-cluster transformations \cite{CAI,Fra}. \cref{sec:toggles} introduces the main players: relabeled plabic graphs and Grassmannlike necklaces. \cref{sec:thms} introduces the the operation of {\sl toggling} a Grassmannlike necklace, the Unit Necklace \cref{thm:rightweak}, and our main theorem. \cref{sec:twist} establishes isomorphisms of open positroid varieties via twist maps and proves an analogue of the main commutative diagram relating the twist map to the boundary measurement map. It assumes some familiarity with the main constructions in \cite{MSTwist}. \cref{sec:reachable} explores the quasi-equivalence conjecture.  \cref{secn:proofs} collects some longer proofs.

\vskip .2cm

\noindent{\bf Acknowledgements:~} We thank David Speyer and Lauren Williams for helpful conversations on this topic and thank the anonymous referees for careful reading and helpful comments improving the exposition. We thank Pavel Galashin for pointing us towards \cite{FG} and for his plabic tilings applet, which was instrumental for computing many examples.
M.S.B acknowledges support by an NSF Graduate Research Fellowship No. DGE-1752814.
Any opinions, findings and conclusions or recommendations expressed in this material are those of the authors and do not necessarily reflect the views of the National Science Foundation. C.F. is supported by the NSF grant DMS-1745638 and a Simons Travel Grant.

\section{Background on cluster algebras and positroids}\label{sec:background}

We review the positroid stratification of the Grassmannian and the combinatorial structures which underlie this stratification. We recall the source and target cluster structures on open positroid varieties, whose construction we will generalize to the setting of {\sl relabeled plabic graphs} in later sections. We discuss quasi-cluster transformations. We also give background on affine permutations. 

For a $k \times n$ matrix $M$ and a $k$-subset $I \in \binom {[n]} k$, let $\Delta_I(M)$ denote the $I$th {\sl Pl\"ucker coordinate} of $M$, i.e. the determinant of the $k \times k$ submatrix of $M$ in columns $I$. If two matrices $M$ and $M'$ have the same row span, then they are related by left multiplication by an element of $\GL_k$. Therefore, their Pl\"ucker coordinates coincide up to a common scalar multiple. 
If $\mcs \subset \binom{[n]}k$ we sometimes use the notation $\Delta(\mcs) = \{\Delta_S \colon S \in \mcs\}$.

Let $\Gr(k,n)$ denote the Grassmannian of $k$-dimensional subspaces of $\bbc^n$. The {\sl Pl\"ucker embedding} $\Gr(k,n) \hookrightarrow \bbp^{\binom n k-1}$ realizes $\Gr(k,n)$ as a projective variety. This embedding sends a point $x \in \Gr(k,n)$ to the list of Pl\"ucker coordinates $(\Delta_I(x))_{I \in \binom{[n]}k} \in \bbp^{\binom n k-1}$, where $\Delta_I(x)$ is defined as the Pl\"ucker coordinate of any $k \times n$ matrix representative for~$x$. We will be more concerned with the affine cone $\tGr(k,n) \subset \bbc^{\binom n k}$ over the Pl\"ucker embedding, i.e. the set of points in affine space whose coordinates satisfy the Pl\"ucker relations.

A real $k \times n$ matrix $M$ is {\sl totally nonnegative} if $\Delta_I(M) \geq 0$ for all $I \in \binom{[n]}k$. 
A collection of $k$-subsets $\mcm \subset \binom {[n]} k$ is a {\sl positroid} if it is the column matroid of a totally nonnegative matrix; i.e., if there exists 
a real matrix $M$ such that $\Delta_I(M) >0$ for $I \in \mcm $ and $\Delta_I(M) = 0$ for $I \notin \mcm$. The {\sl closed positroid variety} $\Pi_\mcm$ is the subvariety of $\Gr(k,n)$ whose homogeneous ideal is generated by $\{\Delta_I  \colon I \notin \mcm\}$ \cite{KLS}. That is, 
$$ \Pi_{\mcm}=\{x \in \Gr(k, n): \Delta_I(x)=0 \text{ for all } I \notin \mcm\}.$$

This paper is concerned with the \emph{open positroid variety} $\Pi^\circ \subset \Pi$, a Zariski-open subset of $\Pi$ whose definition we delay to \cref{subsecn:positroidgeometry}.

\subsection{Combinatorial objects that index positroids}\label{subsecn:positroidcombo}
Positroid are naturally labeled by several families of combinatorial objects. We focus on {\sl Grassmann necklaces} and {\sl decorated permutations} here. These objects and the results in this section are due to Postnikov \cite{Postnikov} unless otherwise noted.

We make the following expositional choice: we will give definitions only for \emph{loopless} positroids. A positroid $\mcm \subset \binom{[n]}k$ is {\sl loopless} if for every $i \in [n]$, there exists an $I \in \mcm$ with $i \in I$. If a positroid is not loopless, than one can work over the smaller ground set $[n] \setminus \{i\}$ without affecting any of the combinatorial or algebraic structures below in a significant way. Geometrically, if a positroid $\mcm$ has a loop $i$, then $k \times n$ matrix representatives for points in $\Pio_\mcm$ will have the zero vector in column~$i$. One can project away the $i$th column and work instead with an isomorphic positroid subvariety of $\Gr(k,n-1)$.

The first combinatorial object indexing positroids gives rise to frozen variables in the cluster structure(s) on $\tPi^\circ$. 

\begin{defn} A \emph{forward Grassmann necklace} of type $(k,n)$ is an $n$-tuple  $\rmci~=~(\rightI_1,\dots,\rightI_n)$ drawn from $\binom {[n]} k$ such that for all $a \in [n]$, 
\[a \in \rightI_a \quad \text{ and } \quad \rightI_{a+1} = \rightI_a \setminus a \cup \pi(a) \text{ for some }\pi(a) \in [n].\] Dually, a {\it reverse Grassmann necklace}\footnote{We use the terminology of \cite{MSTwist}, but different conventions. Their reverse Grassmann necklace is the tuple $(\leftI_2, \leftI_3, \dots, \leftI_n, \leftI_1)$.} of type $(k,n)$ is an $n$-tuple $\lmci = (\leftI_1,\dots,\leftI_n)$ in $\binom {[n]} k$ such that for all $a \in [n]$,
\[a \in \leftI_{a+1}\quad\text{ and }\quad\leftI_{a} = \leftI_{a+1} \setminus a \cup \sigma(a)\text{ for some }\sigma(a) \in n.\]   
\end{defn}

\begin{rmk}
The objects just defined might more properly be called {\sl loopless Grassmann necklaces}, because they correspond bijectively with loopless positroids. We will drop the adjective loopless. 
\end{rmk}

\begin{defn}
A permutation $\pi \in S_n$ has {\sl type} $(k, n)$ if $\#\{a \in [n] \colon a \leq \pi^{-1}(a)\}=k$.\end{defn}

If $\rmci$ is a forward Grassmann necklace, then it follows from the definition that the map $a \mapsto \pi(a)$ is a permutation of $[n]$. Moreover, the permutation $\pi$ determines the necklace $\rmci$. One has 
\begin{equation}\label{eq:I1}
\rightI_1 = \{a \in [n] \colon a \leq \pi^{-1}(a)\}
\end{equation}
and the remainder of the necklace can be computed from the data of $I_1$ and $\pi$ using the necklace condition. This establishes a bijection between (loopless) forward Grassmann necklaces of type $(k,n)$ and permutations of type $(k,n)$.

Dually, for a reverse Grassmann necklace $\lmci$, the map $i \mapsto \sigma(i)$ is a permutation of $[n]$, and a similar recipe allows one to recover $\lmci$ from the permutation $\sigma$.

Now we explain how (loopless) positroid subvarieties of $\Gr(k,n)$ are in bijection with Grassmann necklaces of type $(k,n)$, hence also with permutations of type $(k,n)$

For any $i \in [n]$, let $<_i$ denote the order on $[n]$ in which $i$ is smallest and $i-1$ is largest, i.e. $i <_i  i+1 <_i \cdots <_i i-1$. For a pair of subsets $S = \{s_1 <_i\dots <_i s_k\},T = \{t_1 <_i \dots <_i t_k\}$, we say that $S \leq_i T$ if $s_j \leq_i t_j$ for all $j$.

\begin{defn}
Let $\mcm \subset \binom{[n]}k$ be a positroid. For $i \in [n]$, let $\rightI_i$ be the $<_i$-minimal subset of $\mcm$, and let $\leftI_i$ be the $<_i$-maximal subset of $\mcm$. The Grassmann necklace of $\mcm$ is defined as $\rmci_\mcm:= (\rightI_1,\dots,\rightI_n)$ and the reverse Grassmann necklace is $\lmci_\mcm := (\leftI_1,\dots,\leftI_n)$. 
\end{defn}

Postnikov and Oh \cite{Suho}, respectively, proved that $\rmci_\mcm$ and $\lmci_\mcm$ are in fact forward and reverse Grassmann necklaces. Moreover, the permutations $\pi$ and $\sigma$ encoding $\rmci_\mcm$ and $\lmci_\mcm$ are related via $\sigma = \pi^{-1}$.

In our proofs, we frequently use the fact that we can read off the positroid $\mcm$ from either of its necklaces $\rmci$ and $\lmci$. This construction is known as Oh's Theorem \cite{Suho}: 
\begin{equation}\label{eq:recoverpositroidfromnecklace}
\mcm = \left\{S \in \binom{[n]} k \colon \, S \geq_i \rightI_i \text{ for $i \in [n]$}\right\} 
= \left\{S \in \binom{[n]} k \colon \, S \leq_i \leftI_i \text{ for $i  \in [n]$}\right\}.
\end{equation}

In summary, (loopless) open positroid varieties $\tPio \subset \tGr(k,n)$ can be bijectively labeled by a forward Grassmann necklace $\rmci$ of type $(k,n)$, or equivalently by a permutation $\pi$ of type $(k,n)$, or equivalently by a reverse Grassmann necklace $\lmci$ of type $(k,n)$\footnote{whose permutation is $\pi^{-1}$}. We write $\tPio_\pi,$ $\rmci_\pi,$ and  $\lmci_\pi$ to indicate the open positroid variety, Grassmann necklace, and reverse Grassmann necklace indexed by $\pi$.

\subsection{Open positroid varieties}\label{subsecn:positroidgeometry}

Let $\mcm$ be a loopless positroid of type $(k,n)$. It corresponds to forward Grassmann necklace $\rmci$ of type $(k,n)$. Then the {\sl open positroid variety} $\Pio_\mcm$
is the Zariski-open subvariety of $\Pi_\mcm$ on which the necklace variables are non-vanishing: 
$$\Pio_{\mcm} = \{x \in \Gr(k, n) \colon \Delta_I(x)= 0 \text{ for all } I \notin \mcm \text{ and }\Delta_I(x) \neq 0 \text{ for } I \in \rmci\}.$$

We let $\tPi,\tPio \subset \tGr(k,n) \subset \bbc^{\binom n k}$ be the affine cone  over $\Pi,\Pio \subset \bbp^{\binom n k -1}$. The remainder of this paper studies cluster structure(s) on $\tPio$. We remind the reader that, in an abuse of terminology, we will usually call $\tPio$ simply a ``positroid variety."

Algebraically, the coordinate ring of $\tPi$ is the quotient $\bbc[\tPi] = \bbc[\Gr(k,n)] / \mcj$ where $\mcj$ is the ideal $\langle \Delta_I \colon I \notin \mcm\rangle$. The coordinate ring $\bbc[\tPio]$ is the localization of $\bbc[\tPi]$ at the Pl\"ucker coordinates $\Delta(\rmci)$.

\begin{rmk}Open positroid varieties can also be obtained by intersecting $n$ cyclically shifted Schubert cells, i.e. by intersecting Schubert cells with respect to the standard ordered basis  $(e_1,\dots,e_n)$ and each of its cyclic shifts \cite{KLS}. 

\end{rmk}

\subsection{Plabic graphs and weak separation}\label{secn:weaksep}
We review Scott's recipe for obtaining a seed in an open positroid variety from a plabic graph and its connection to weakly separated collections. We will apply this recipe to plabic graphs with permuted boundary vertices in what follows.

We assume some familiarity with the notion of a {\sl plabic graph} $G$ in the disk with $n$ boundary vertices $1,\dots,n$ in clockwise order, and also with the technical condition that such a plabic graph is {\sl reduced}. 
See \cite{Postnikov} for details. We only work with reduced plabic graphs in this paper, and will omit the adjective. We also assume that $G$ has no isolated boundary vertices. Since we work with loopless positroids, we also assume henceforth that 
$G$ has no {\sl black lollipops} (interior black vertices of degree one, connected to a boundary vertex). The leftmost graph in \cref{fig:fourplabics} is a reduced plabic graph.  

\begin{defn}[Trip permutation]
Let $G$ be a reduced plabic graph with boundary vertices labeled $1,\dots,n$. For a boundary vertex with label $a$, the \emph{trip} starting at $a$ is the walk along the edges of $G$ which starts at $a$, turns maximally left at every white vertex and maximally right at every black vertex, and ends at a boundary vertex with label $b$. The {\sl 
trip permutation} of $G$, $\pi = \pi(G) \in S_n$, is defined via $\pi(a) = b$. The boundary vertices $a$ and $b$ are referred to as the  \emph{source} and \emph{target} of the trip, respectively. \end{defn} 

For example, the leftmost graph in \cref{fig:fourplabics} has trip permutation $465213 \in S_6$. See \cref{fig:badRelabeled} for an example of a trip.

Reduced plabic graphs are associated to positroid varieties via their trip permutations. That is, $G$ corresponds to the positroid variety $\tPio_{\pi(G)}$. Every permutation $\pi \in S_n$ arises as a trip permutation for some reduced $G$ \cite[Corollary 20.1]{Postnikov}, so each positroid variety is associated to (at least one) reduced plabic graph. If $G$ is reduced, then the number of faces of $G$ is $\dim \tPio_{\pi(G)}$.

Any trip in a reduced graph $G$ is non-self-intersecting and so any face of $G$ is either to the left or to the right of the trip.

\begin{defn}[Collections from plabic graphs]
Let $F$ be a face of a reduced plabic graph $G$. The \emph{target label} $\tI(F) \in \binom{[n]}k$ of a face $F$ is defined by $j \in \tI(F)$ if and only if $F$ is to the left of the trip with target $j$. The set $\targetF(G) = \{\tI(F) \colon \text{$F$ a face of $G$}\}$ is the \emph{target collection} of $G$.  

Dually, the \emph{source label} $\sI(F) \in \binom{[n]}k$ of a face $F$ of $G$ is defined by $j \in \tI(F)$ if and only if $F$ is ito the left of the trip with {\sl source} $j$\footnote{The location of the dot in the notations $\tI(F)$ versus $\sI(F)$ indicates that this labeling records the source vs target of trips; the orientation of the arrow is meant as a reminder that the boundary faces are labeled by the forward or reverse Grassmann necklace}. The {\sl source collection} of $G$ is $\sourceF(G) := \{\sI(F)\colon \text{$F$ a face of $G$}\}$. 
\end{defn}

If the trip permutation $\pi(G)$ has type $(k,n)$, it is a fact that every face of $G$ is on the left of exactly $k$ trips so that $\tI(F), \sI(F) \in \binom{[n]}k$ as claimed. 

The graph $G$ has $n$ boundary faces. The target labels of these boundary faces are the forward Grassmann necklace, $\rmci_{\pi(G)}=(\rightI_1, \dots, \rightI_n)$. In particular, if $F_a$ is the face just before boundary vertex $a$ in clockwise order, then the target label $\tI(F_a)$ is $\rightI_a$. Likewise, source labels of the boundary faces are the reverse Grassmann necklace $\lmci$ of $\mcm$: the source label $\sI(F_a)$ is $\leftI_a$.

The following definition allows us to describe target collections $\targetF(G) \subset \binom{[n]}k$ intrinsically (i.e., without references to graphs $G$).

\begin{defn}[Weak separation]\label{defn:weakSep}
A pair of subsets $I,J \in \binom {[n]} k$ is \emph{weakly separated} if there is no cyclically ordered quadruple $a < b < c < d$ where $a,c \in I\setminus J$ and $b,d \in J\setminus I$. A {\it weakly separated collection} $\mcc \subset \binom{[n]} k$ is a collection whose members are pairwise weakly separated. 

For a positroid $\mcm$, a weakly separated collection $\mcc \subset \mcm$ is called {\sl maximal} if $I \in \mcm \setminus \mcc$ implies that $\{I  \} \cup \mcc$ is not weakly separated.
\end{defn}

Maximal weakly separated collections which contain Grassmann necklaces are exactly the target face labels of plabic graphs.

\begin{thm}[\cite{OPS}]\label{thm:OPS0}
	Let $\mcm$ be a positroid with Grassmann necklace $\rmci$ and decorated permutation $\pi$. The following are equivalent.
	
	\begin{enumerate}
		\item The collection $\mcc \subseteq \mcm$ is a maximal weakly separated collection containing $\rmci$.
		\item The collection $\mcc$ is equal to $\targetF(G)$ for a plabic graph $G$ with trip permutation $\pi$.
	\end{enumerate}
	
\end{thm}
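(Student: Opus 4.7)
The plan is to follow the general strategy of Oh--Postnikov--Speyer, establishing the two implications separately. Throughout, fix a reduced plabic graph $G_0$ with trip permutation $\pi$; such a graph exists by \cite{Postnikov}, and we will use it as an anchor.

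For the direction $(2) \Rightarrow (1)$, let $G$ be a reduced plabic graph with trip permutation $\pi$. First, the boundary faces of $G$ already produce the Grassmann necklace $\rmci$, so $\rmci \subseteq \targetF(G)$. Next, $\targetF(G) \subseteq \mcm$ since each target label $\tI(F)$ indexes a Pl\"ucker coordinate that does not identically vanish on $\tPio_\pi$ (this is a standard consequence of the boundary measurement map, which produces matrix representatives whose Pl\"ucker coordinates indexed by $\tI(F)$ are nonzero Laurent monomials in the edge weights). For weak separation, suppose for contradiction that two target labels $\tI(F_1), \tI(F_2)$ had a cyclically ordered violating quadruple $a<b<c<d$ with $a,c \in \tI(F_1)\setminus \tI(F_2)$ and $b,d \in \tI(F_2)\setminus \tI(F_1)$. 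Then the four trips terminating at $a,b,c,d$ separate $F_1$ from $F_2$ in incompatible ways, forcing two trips to cross twice; this contradicts the defining property that $G$ is reduced. Finally, maximality follows from dimension: the number of faces of $G$ equals $\dim \tPio_\pi$, and this coincides with the size of any maximal weakly separated collection in $\mcm$ containing $\rmci$ (a fact we will establish as part of the reverse direction via connectivity).

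For the harder direction $(1) \Rightarrow (2)$, the strategy is \emph{mutation and connectivity}. Define a (square) mutation of a weakly separated collection: if $\mcc$ contains $Sac$ together with the four "neighbors" $Sab, Sbc, Scd, Sad$ for some $(k-2)$-set $S$ and $a<b<c<d$ disjoint from $S$, then replacing $Sac$ by $Sbd$ yields another weakly separated collection $\mcc'$. On the plabic side, this matches exactly the square move, which preserves the trip permutation and hence the positroid. The plan has three steps:
\begin{enumerate}
    \item[(i)] Show that any two maximal weakly separated collections $\mcc, \mcc' \subseteq \mcm$ both containing $\rmci$ are connected by a sequence of such mutations fixing $\rmci$. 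This is the core combinatorial claim and is the main obstacle.
    \item[(ii)] Deduce that all such maximal collections have the same cardinality, namely the number of faces of $G_0$, which completes the cardinality/maximality assertion used in the other direction.
    \item[(iii)] Conclude by producing $G$: starting from $G_0$ and applying the square moves corresponding to the mutation sequence connecting $\targetF(G_0)$ to the given $\mcc$, we obtain a plabic graph $G$ with $\targetF(G) = \mcc$.
\end{enumerate}

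The main obstacle is step (i). One approach is to induct on the positroid stratification: reduce to a smaller positroid by either deleting or contracting a "boundary" index, which on the combinatorial side corresponds to adding/removing a bridge at the boundary of a plabic graph and on the weakly-separated side corresponds to restricting collections. The base case is the uniform matroid, handled by the original Oh--Postnikov--Speyer result for $\Gr(k,n)$. The inductive step requires checking that mutations on the boundary of the positroid are compatible with the deletion/contraction reductions, and that every maximal collection can be mutated to one having a specified boundary structure. An alternative route, closer to Postnikov's bridge decomposition, is to show that any maximal weakly separated collection contains a "corner" subset that allows peeling off an outer face of a plabic graph, reducing to a positroid whose permutation differs from $\pi$ by multiplication by a simple reflection. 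Either way, the essential ingredient is that local moves at the boundary generate all moves between maximal weakly separated collections within a single positroid; once this is established, the equivalence drops out.
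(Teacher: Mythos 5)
The paper does not prove this theorem; it is cited verbatim from [OPS] (Oh--Postnikov--Speyer) and used as a black box, so there is no internal proof to compare against.

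As a standalone argument your proposal is an outline rather than a proof, and the gap sits exactly where you flag it: step~(i), the mutation-connectivity of all maximal weakly separated collections inside $\mcm$ that contain $\rmci$. That connectivity is itself a theorem of [OPS] --- it appears in the present paper as a separate citation, \cref{thm:OPS} --- and it is the load-bearing content of the entire statement. You offer two candidate routes (induction on the positroid stratification via deletion/contraction, or a bridge/corner-peeling decomposition) but carry out neither, and the real difficulty --- showing that an arbitrary maximal weakly separated collection containing $\rmci$ admits such a reduction --- is precisely the nontrivial combinatorics you do not supply. The actual [OPS] argument does not follow either of your routes; it introduces \emph{plabic tilings}, a planar CW realization of weakly separated collections, and proves purity and connectivity via topological arguments about these tilings, a device your outline never invokes. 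Your $(2)\Rightarrow(1)$ direction is closer to complete, but the weak-separation step (``forcing two trips to cross twice'') is asserted rather than argued, and the maximality assertion is deferred circularly to step~(i). In short, you have correctly reproduced the skeleton of the OPS argument without establishing the key lemma that supports it.
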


The following operation on weakly separated collections is crucial.
\begin{defn}[Square move]
Let $\mcc \subset \binom{[n]}k$ be a weakly separated collection and $I \in \mcc$. Suppose there are  cyclically ordered $a<b<c<d \in [n]$, and a subset $S \in \binom{[n]}{k-2}$, such that $I = Sac$, and moreover each of $Sab,Sbc,Scd,Sad \in \mcc$. (We abbreviate $Sac := S\cup \{a,c\}$.) Then $\mcc':= \mcc \setminus I \cup (Sbd)$ is again a weakly separated collection. The passage $\mcc \to \mcc'$ is referred to as a {\sl square move} on $\mcc$. 
\end{defn}

Thinking of a maximal weakly separated collection $\rmci \subset \mcc \subset \mcm$ as a target collection $\mcc = \targetF(G)$, a square move can be performed at $I = \tI(F) \in \mcc$  when $F$ is a square face of $G$ (whose vertices alternate in color). Performing the square move to $\mcc$ amounts to swapping the colors of the vertices in the face $F$.

\begin{thm}[{\cite{OPS}}]\label{thm:OPS}
Let $\mcm$ be a positroid with Grassmann necklace $\rmci$. 
Let $\mcc_1, \mcc_2 \subset \mcm$ be maximal weakly separated collections satisfying $\rmci \subset \mcc_i$ for $i= 1,2$. Then $\mcc_2$ can be obtained from $\mcc_1$ by a finite sequence of square moves (with each intermediate collection $\mcc$ satisfying $\rmci \subset \mcc \subset \mcm$).
\end{thm}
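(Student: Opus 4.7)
The plan is to deduce \cref{thm:OPS} from the plabic graph description of maximal weakly separated collections given by \cref{thm:OPS0}, combined with Postnikov's local-move connectedness theorem for reduced plabic graphs. The key idea is that square moves on collections correspond exactly to square moves on plabic graphs, while the other local moves on plabic graphs leave the target collection unchanged.

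First, using \cref{thm:OPS0}, realize $\mcc_1 = \targetF(G_1)$ and $\mcc_2 = \targetF(G_2)$ for reduced plabic graphs $G_1, G_2$ whose trip permutation is the decorated permutation $\pi$ of $\mcm$. Postnikov's theorem \cite[Theorem 13.4]{Postnikov} states that any two reduced plabic graphs with the same trip permutation are related by a finite sequence of local moves of three types: (M1) square moves on square faces with alternating vertex colors; (M2) contractions or uncontractions of edges between two same-colored interior vertices; (M3) insertions or removals of bivalent interior vertices. Fix such a sequence $G_1 = H_0 \to H_1 \to \cdots \to H_N = G_2$, and let $\mcd_i := \targetF(H_i)$ so that $\mcd_0 = \mcc_1$ and $\mcd_N = \mcc_2$.

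Second, analyze the effect of each move on the target collection. For (M2) and (M3), the combinatorial structure of the trips is unchanged on every face, so $\mcd_{i+1} = \mcd_i$ and these steps may be dropped. For a square move at a square face $F$ of $H_i$, tracing the four trips passing through $F$ shows that the four faces sharing an edge with $F$ have target labels of the form $Sab, Sbc, Scd, Sad$ for some $(k{-}2)$-subset $S$ and cyclically ordered $a<b<c<d$, while $\tI(F) = Sac$ before the move and $\tI(F) = Sbd$ after (the swap of vertex colors reverses the left/right relationship of $F$ and the two trips passing diagonally through it). Thus $\mcd_i \to \mcd_{i+1}$ is precisely a square move in the sense of \cref{defn:weakSep}.

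Third, verify the intermediate collection condition. Each $H_i$ is a reduced plabic graph with trip permutation $\pi$, so by \cref{thm:OPS0} the collection $\mcd_i = \targetF(H_i)$ is a maximal weakly separated collection containing $\rmci$; and each such $\mcd_i$ is contained in $\mcm$ since face labels of plabic graphs with trip permutation $\pi$ are bases of $\mcm$ (this follows from \eqref{eq:recoverpositroidfromnecklace} together with the fact that $\rmci \subset \targetF(H_i)$ and $\targetF(H_i)$ is weakly separated). Hence $\rmci \subset \mcd_i \subset \mcm$ for every intermediate collection.

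The main obstacle is the detailed verification of step two: unwinding the definitions of trips and target labels to confirm that a square move on a plabic graph induces the corresponding square move on the collection. This is a careful bookkeeping exercise that requires identifying the four trips through a square face and showing that the pairs $\{a,c\}$ and $\{b,d\}$ appear in the predicted pattern among the labels of the four adjacent faces. Once this local correspondence is established, the global statement follows immediately from Postnikov's connectedness theorem, which handles all the nontrivial combinatorial content.
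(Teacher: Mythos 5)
The paper offers no independent proof of this theorem; it simply cites \cite{OPS}, and your argument is essentially the argument there: realize the two collections as face‐label collections of reduced plabic graphs via \cref{thm:OPS0}, invoke Postnikov's move‐connectedness, and observe that (M2)/(M3) leave the target collection fixed while (M1) induces a square move. That part is sound.

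There is, however, one incorrect justification in your third step. You assert that $\targetF(H_i)\subseteq \mcm$ ``follows from \eqref{eq:recoverpositroidfromnecklace} together with the fact that $\rmci \subset \targetF(H_i)$ and $\targetF(H_i)$ is weakly separated.'' The implication ``weakly separated collection containing $\rmci$ $\Rightarrow$ contained in $\mcm$'' is false. In the paper's running example with $\pi = 465213$, the set $345$ is weakly separated from every element of $\rmci_\pi = (123,234,346,456,256,126)$, yet $345\notin\mcm_\pi$ (it fails $345 \geq_3 346$). The fact you actually need is true, but for a different reason: the implication (2)$\Rightarrow$(1) of \cref{thm:OPS0} already asserts that $\targetF(H_i)$ is a maximal weakly separated collection $\subseteq\mcm$; equivalently, it is Postnikov's result that face labels of a reduced plabic graph with trip permutation $\pi$ are bases of $\mcm_\pi$. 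Replace your parenthetical with a citation to \cref{thm:OPS0} (or Postnikov directly) and the argument is complete.
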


\subsection{Cluster algebras}\label{secn:clusters}
We assume familiarity with cluster algebras. We summarize our notation here with a view towards introducing quasi-cluster transformations in \cref{secn:seedorbits}.

Let $V$ be a rational affine algebraic variety with algebra of regular functions $\bbc[V]$ and field of rational functions $\bbc(V)$. Our case of interest is when $V = \tPi^\circ$ is the affine cone over an open positroid variety. 

\begin{defn}\label{defn:seeds}
Let $m = \dim V$. A \emph{seed} of rank $r$ in $\bbc(V)$ is a pair $(\textbf{x}, Q)$ satisfying the following. 

\begin{itemize}
	\item $\textbf{x} = (x_1,\dots,x_{m}) \subset \bbc(V)$ is a transcendence basis for $\bbc(V)$.
	\item $Q$ is a quiver with vertices $1,\dots,m$, the first $r$ of which are designated \emph{mutable}, and the last $m-r$ of which are designated \emph{frozen}. 
	\item Each {\sl frozen variable} $x_i$ for $i = r+1,\dots,m$ is a unit in $\bbc[V]$, i.e. $\frac{1}{x_i} \in \bbc[V] \subset \bbc(V)$. 
	\end{itemize}

The set $\textbf{x}$ is the \emph{cluster} of the seed, and $x_1, \dots, x_r$ are the \emph{cluster variables}.
\end{defn} 

We denote by $\bbp \subset \bbc(V)$ the abelian group of Laurent monomials in the frozen variables. By our assumptions we in fact have $\bbp \subset \bbc[V]$. 

A seed can be {\sl mutated} in any mutable direction $p \in [r]$ to produce a new seed. Seed mutation produces a new quiver $\mu_p(Q)$ and replaces the cluster variable $x_p \in {\bf x}$ with a new cluster variable $x_p'$.  Mutation at $p$ is an involution. If two seeds are related by a sequence of mutations, we call them \emph{mutation-equivalent}.

The {\sl cluster algebra} $\mca(\Sigma)$ is the $\bbc$-algebra generated by the frozen variables, their inverses, and the cluster variables of all seeds mutation-equivalent to $\Sigma$. 

A seed $\Sigma$ in $\bbc(V)$ determines a \emph{cluster structure} on $V$ if $\mca(\Sigma)=\bbc[V]$. If this is true of $\Sigma$ then it is true of any seed mutation-equivalent to $\Sigma$. 

If $\Sigma$ determines a cluster structure on $V$, then $V$ inherits the following structures:
\begin{itemize}
\item A set of {\sl cluster monomials} in $\bbc[V]$. These are elements $f \in \bbc[V]$ that can be expressed in the form $f = \frac{M_1}{M_2}$ where $M_1$ is a monomial in the variables of some cluster in the seed pattern, and $M_2 \in \bbp$ is a monomial in the frozen variables. Thus, our definition of cluster monomial allows frozen variables in the denominator. 
\item A {\sl totally positive part} $V_{>0} \subset V$. This is the subset where all cluster variables (equivalently, all variables in any particular cluster) evaluate positively.
\item For each seed $\Sigma'$ mutation-equivalent to $\Sigma$,  a rational map $V \dashrightarrow (\bbc^*)^{m}$ given by evaluating the cluster variables. We call its domain of definition the {\sl cluster torus} $V_{\Sigma'} \subset V$. By the Laurent phenomenon for cluster algebras, there is
an inverse map $(\bbc^*)^{m} \hookrightarrow V_\Sigma'$, an open embedding we refer to as the {\sl cluster chart}.
\end{itemize}

\subsection{Source and target cluster structures from plabic graphs}\label{secn:seeds}

Let $G$ be a reduced plabic graph. Its {\sl dual quiver} $Q(G)$ has vertex set identified with the faces of $G$, where boundary faces are frozen and internal faces are mutable. There is an arrow $F \to F'$ in $Q(G)$ if the edge $e$ separating $F$ and $F'$ has vertices of opposite color and, when moving from face $F$ to face $F'$ across $e$, one sees the white vertex of $e$ on the left\footnote{If 2-cycles occur in $Q(G)$, delete them.}.

\begin{defn}[Target and source seeds]
Let $G$ be a reduced plabic graph with trip permutation $\pi$ and open positroid variety $\tPio_\pi$. Then the {\sl target seed} is defined as $\Sigma^T_G := (\Delta(\targetF(G),Q(G)) \subset \bbc(\tPio_\pi)$. The forward Grassmann necklace 
$\Delta(\rmci_\pi) \subset \Delta(\targetF(G))$ are the frozen variables.

Dually, one has the {\sl source seed} $\Sigma^S_G = (\Delta(\sourceF(G)),Q(G))\subset \bbc(\tPio_\pi)$ with frozen variables
$\Delta(\lmci) \subset \Delta(\sourceF(G))$.
\end{defn}

Performing a square move at a face Pl\"ucker $\tI(F) \in \targetF(G)$ amounts to performing a mutation at the variable $\tI(F)$ in the seed $\Sigma^T_G$. Thus, all seeds $\{\Sigma_G^T \colon \pi(G) = \pi\}$ are mutation-equivalent in $\bbc[\tPio_\pi]$. We also  have the dual statement for the source seeds. 

\begin{thm}[\cite{GalashinLam}]\label{thm:GL}
If $G$ has trip permutation $\pi$, then the source seed $\Sigma^S_G \subset \bbc(\tPio_\pi)$ determines a cluster structure on $\tPio_\pi$. The positive part $\tPio_{\pi, >0}$ determined by this cluster structure is the positroid cell $\{x \in \tPio_\pi: \Delta_I(x)>0 \text{ for } I \in \mcm_\pi\}$.
\end{thm}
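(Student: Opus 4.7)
The plan is to prove both parts of the theorem by reducing to a single convenient plabic graph and then identifying $\bbc[\tPio_\pi]$ with a cluster algebra already understood from related settings.

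By \cref{thm:OPS}, any two reduced plabic graphs with trip permutation $\pi$ are connected by a sequence of square moves, and at the level of source seeds each such move translates, via the three-term Pl\"ucker relation, into a quiver mutation of $\Sigma_G^S$. Hence the subalgebra $\mca(\Sigma_G^S) \subseteq \bbc(\tPio_\pi)$ does not depend on the choice of $G$, and it suffices to prove the theorem for a single convenient $G$, for instance the reduced plabic graph coming from Postnikov's Le-diagram of $\pi$.

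Next I would verify that $\Sigma_G^S$ is a valid seed in the sense of \cref{defn:seeds}. The cardinality of $\Delta(\sourceF(G))$ matches $\dim \tPio_\pi$ because $G$ has $\dim \tPio_\pi$ faces, and algebraic independence follows from Postnikov's boundary measurement parametrization $\bbd_G \colon (\bbc^{\ast})^{E(G)} \dashrightarrow \tPio_\pi$ together with its subtraction-free Laurent formula expressing each source face Pl\"ucker in the edge weights. To see that the frozen variables $\Delta(\lmci_\pi)$ are nowhere vanishing on $\tPio_\pi$ (the definition of $\tPio_\pi$ only enforces this directly for $\Delta(\rmci_\pi)$), I would invoke the Muller--Speyer twist automorphism $\vec\tau\colon \tPio_\pi \to \tPio_\pi$ of \cite{MSTwist}, under which the pullback of $\Delta_{\leftI_a}$ agrees, up to a monomial in $\Delta(\rmci_\pi)$, with $\Delta_{\rightI_a}$; since $\vec\tau$ is biregular and the right-hand side is a unit, so is the left.

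The substantive step is the equality $\mca(\Sigma_G^S) = \bbc[\tPio_\pi]$, which is the technical heart of \cite{GalashinLam}. Following their strategy, one identifies the open positroid variety with (a cone over) an open Richardson variety $R^{v,w}$ in the full flag variety via the Knutson--Lam--Speyer projection \cite{KLS}, and transports Leclerc's cluster structure on $\bbc[R^{v,w}]$ from \cite{Leclerc} along this identification. Matching Leclerc's initial seed with the source seed $\Sigma_G^S$ for a suitably chosen $G$ uses the plabic-graph/subword combinatorics developed in \cite{SBSW}. The main obstacle is verifying that the upper and lower cluster algebras coincide; this is typically reduced to Muller's ``starfish'' criterion, which requires irreducibility and pairwise coprimality of the initial cluster and frozen variables together with the fact that the initial cluster chart and its adjacent charts cover $\tPio_\pi$ up to codimension one.

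Finally, the positive-part assertion follows by combining Postnikov's parametrization of $\tPio_{\pi,>0}$ as the image of $(\bbr_{>0})^{E(G)}$ under $\bbd_G$ with the subtraction-free Laurent expansions of the source face Pl\"uckers in edge weights: together these force the positive locus determined by $\Sigma_G^S$ to coincide with the positroid cell.
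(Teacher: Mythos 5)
This statement is not proved in the paper: it is imported verbatim from Galashin--Lam \cite{GalashinLam}, and the paper only references it (see also \cref{rmk:history} for a brief account of the history). So there is no proof here to compare your argument against, and what you have written is necessarily a reconstruction of the cited reference's strategy rather than an independent proof.

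As a reconstruction it is broadly plausible but contains a couple of inaccuracies worth flagging. First, you attribute the hard step to verifying that the upper and lower cluster algebras coincide via ``Muller's starfish criterion.'' For plabic-graph seeds, the equality $\mca(\Sigma) = \mcu(\Sigma)$ was already known from Muller--Speyer's local-acyclicity result \cite{MSAcyclic}; the genuine obstacle in \cite{GalashinLam} is the \emph{other} equality $\mcu(\Sigma_G^S) = \bbc[\tPio_\pi]$, i.e.\ showing the upper cluster algebra exhausts the coordinate ring, which Leclerc's inclusion $\mca(\Sigma) \subseteq \bbc[\mcr_{v,w}]$ alone does not give. Your framing understates where the real work lies. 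Second, your positivity argument only establishes one inclusion: subtraction-free boundary measurement formulas show $\bbd_G((\bbr_{>0})^{E(G)}) \subseteq \tPio_\pi$ lands in the cluster-positive locus, but the converse --- that a point whose source cluster is positive has $\Delta_I > 0$ for \emph{all} $I \in \mcm_\pi$ --- requires a separate argument (not every such $\Delta_I$ is a cluster monomial a priori), and you do not supply one. Finally, you invoke the Muller--Speyer twist to see that $\Delta(\lmci_\pi)$ are units; this works, but note that the present paper provides the same conclusion more directly via the Unit Necklace \cref{thm:rightweak} and its \cref{eg:groupscoincide}, so the appeal to the twist is not strictly necessary in the framework at hand.
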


We call the cluster structure on $\tPio_\pi$ given by source seeds $\Sigma^S_G$ the \emph{source cluster structure}.

\begin{rmk}\label{rmk:history}
Leclerc \cite{Leclerc} established that for open Richardson varieties $\mcr_{v, w} \subset \mcf\ell_n$, there exists a seed $\Sigma \subset \bbc(\mcr_{v,w})$ such that the inclusion $\mca(\Sigma) \subseteq \bbc[\mcr_{v, w}]$ holds. In some cases, he showed that in fact $\mca(\Sigma)=\bbc[\mcr_{v, w}]$. Applying any isomorphism $\phi:\bbc[\mcr_{v, w}]\to \bbc[\tPio]$, Leclerc's results imply that $\phi(\mca(\Sigma))$ is equal to $\bbc[\tPio]$ for some positroid varieties, including Schubert and ``skew Schubert" varieties, and is a cluster subalgebra of $\bbc[\tPio]$ in general. For a particular choice of $\phi$, Serhiyenko, Williams, and the second author showed that for Schubert varieties, $\phi(\Sigma)$ is a target seed $\Sigma_G^T$; for skew-Schubert varieties, $\phi(\Sigma)$ is $\Sigma_{G^\rho}^T$ for $G^\rho$ a relabeled plabic graph with a particular boundary (c.f. \cref{defn:permutedgraph}) \cite{SBSW}. Galashin and Lam later showed that, under a different isomorphism $\psi:~\bbc[\mcr_{v, w}]\to \bbc[\tPio]$, $\psi(\Sigma)$ is a source seed $\Sigma_G^S$. They also showed that $\psi(\mca(\Sigma))$ is the entire coordinate ring $\bbc[\tPio]$.
\end{rmk}

\begin{rmk}\label{rmk:targetpreference} 
Our aesthetic preference is for forward Grassmann necklaces, so we choose to work with target-labeled seeds $\Sigma^T_G$ rather than source-labeled ones as in \cref{thm:GL}. Using twist maps, one can deduce from \cref{thm:GL} that the target-labeled seeds also determine a cluster structure on $\tPio_\pi$, which we call the \emph{target cluster structure}. We give a more general version of this style of argument in \cref{thm:genPlabicClusterStruc}.  
\end{rmk}

A motivating fact for us (observed by Muller-Speyer and Leclerc) is that the seeds $\Sigma_G^T$ and $\Sigma_G^S$ are typically {\sl not} mutation-equivalent, i.e. they do not lie in the same seed pattern. The next section discusses a conjectural remedy.

\subsection{Quasi-equivalent seeds and cluster algebras}\label{secn:seedorbits}
Muller and Speyer conjectured that the source and target cluster structures are ``the same" for an appropriate notion of equivalence in which one is allowed suitable Laurent monomial transformations involving frozen variables. Such a notion was systematized by the first author in \cite{Fra}).

For a seed $\Sigma$ of rank $r$ and a mutable index $i \in [r]$, consider the \emph{exchange ratio}
\begin{equation}\label{eq:yhat}
\hat{y}_i = \prod_{j \in [m]} x_j^{(\textrm{no. arrows } j\to i) - (\textrm{no. arrows } i\to j)}.
\end{equation}
This is the ratio of the two terms on the right hand side of the exchange relation when one mutates at $x_i$. 

\begin{defn}[{\cite{Fra}}]\label{defn:seedorbit} Let $\Sigma$ and 
$\Sigma'$ be seeds of rank $r$ in $\bbc(V)$. Let $\mathbf{x},\tilde{Q},x_i,\hat{y}_i$ denote the cluster, quiver, etc. in $\Sigma$ and use primes to denote these quantities in $\Sigma'$. Then $\Sigma$ and $\Sigma'$ are {\sl quasi-equivalent}, denoted $\Sigma \sim \Sigma'$, if 
the following hold: 
\begin{itemize}
\item The groups $\bbp,\bbp'\subset \bbc[V]$ of Laurent monomials in frozen variables coincide. That is, each frozen variable $x'_i$ is a Laurent monomial in  
$\{x_{r+1},\dots,x_m\}$ and vice versa. 
\item Corresponding mutable variables coincide up to multiplication by an element of~$\bbp$: for $i\in [r]$, there is a Laurent monomial $M_i \in \bbp$ such that $x_i = M_i x'_i \in \bbc(V)$. 
\item The exchange ratios \eqref{eq:yhat} coincide: 
$\hat{y}_i =\hat{y}'_i$ for $i\in [r]$.
\end{itemize} 

Quasi-equivalence is an equivalence relation on seeds.  

Seeds $\Sigma, \Sigma'$ are related by a \emph{quasi-cluster transformation} if there exists a finite sequence $\underline{\mu}$ of mutations such that $\underline{\mu}(\Sigma) \sim \Sigma'$.
\end{defn}

\begin{defn}
We say that cluster algebras $\mca(\Sigma_1)$ and $\mca(\Sigma_2)$ are quasi-equivalent if $\Sigma_1$ and $\Sigma_2$ are related by a quasi-cluster transformation or, equivalently, if any seed in $\mca(\Sigma_1)$ is related to any seed in $\mca(\Sigma_2)$ by a quasi-cluster transformation. 
\end{defn}

By \cite[Section 2]{Fra}, if $p \in [r]$ is a mutable vertex, then seeds $\Sigma \sim \Sigma'$ if and only if $\mu_p(\Sigma) \sim \mu_p(\Sigma')$. This justifies the equivalent formulations in the above definition. 

Geometrically, replacing a seed $\Sigma$ by a quasi-equivalent seed $\Sigma'$ amounts to reparameterizing the domain of the cluster chart $(\bbc^*)^m \hookrightarrow V$ by a Laurent monomial transformation. This does not change the image of this chart (i.e., the cluster torus).

The following lemma is immediate from the definitions.

\begin{lem}
If $\mca(\mcs_1)$ and $\mca(\mcs_2)$ are quasi-equivalent cluster algebras, both determining a cluster structure on $V$, then $\mca(\mcs_1)$ and $\mca(\mcs_2)$ have the same sets of cluster monomials and give rise to the same totally positive part $V_{>0} \subset V$. Each cluster of $\mca(\mcs_1)$ can be obtained from 
a cluster of $\mca(\mcs_2)$ by rescaling its cluster variables by appropriate Laurent monomials in frozen variables.
\end{lem}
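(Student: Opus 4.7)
The plan is to unpack \cref{defn:seedorbit} and read off each of the three conclusions directly. First I would reduce to the case of a single pair of directly quasi-equivalent seeds $\Sigma \sim \Sigma'$, one drawn from $\mca(\mcs_1)$ and one from $\mca(\mcs_2)$: the set of cluster monomials and the totally positive part are intrinsic invariants of a cluster algebra (they do not depend on the choice of initial seed within the seed pattern), so it suffices to verify that they coincide for any one such choice. Fix the notation $\textbf{x} = (x_1, \dots, x_m)$, $\textbf{x}' = (x_1', \dots, x_m')$, and $M_i \in \bbp = \bbp'$ from the definition, so that $x_i = M_i x'_i$ for each mutable $i$ and the frozen variables of $\textbf{x}$ and $\textbf{x}'$ generate the same group of Laurent monomials $\bbp \subset \bbc[V]^\times$.

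The cluster-rescaling assertion is read off immediately from $x_i = M_i x'_i$, together with the fact that the frozen variables of $\Sigma$ are themselves Laurent monomials in those of $\Sigma'$. To upgrade from this one pair of clusters to arbitrary clusters of $\mca(\mcs_1)$ and $\mca(\mcs_2)$, I would invoke that quasi-equivalence is preserved by simultaneous mutation (\cite[Section 2]{Fra}): mutating $\Sigma$ to any chosen cluster of $\mca(\mcs_1)$ and performing the corresponding mutations to $\Sigma'$ yields a new quasi-equivalent pair exhibiting the same frozen-rescaling description. For cluster monomials, take any $f = M_1/M_2 \in \mca(\mcs_1)$ with $M_1$ a monomial in a cluster $\textbf{x}$ and $M_2 \in \bbp$, and mutate simultaneously so that a quasi-equivalent cluster $\textbf{x}'$ of $\mca(\mcs_2)$ is available. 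Substituting $x_i = M_i x'_i$ rewrites $M_1$ as $L \cdot M_1'$, where $M_1'$ is an ordinary monomial in $\textbf{x}'$ and $L \in \bbp = \bbp'$; absorbing $L/M_2 \in \bbp'$ exhibits $f$ as a cluster monomial of $\mca(\mcs_2)$, and the symmetric argument gives the reverse inclusion.

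For the totally positive part, I would fix $p \in V$ and a cluster $\textbf{x}$ of $\mca(\mcs_1)$ with quasi-equivalent counterpart $\textbf{x}'$ of $\mca(\mcs_2)$. The identity $x_i(p) = M_i(p)\, x'_i(p)$, together with the fact that $M_i$ lies in the group $\bbp = \bbp'$ generated by the frozen variables of either seed, shows that positivity of every coordinate of $\textbf{x}$ at $p$ is equivalent to positivity of every coordinate of $\textbf{x}'$ at $p$; hence the two positive parts coincide. The only real bookkeeping, and therefore the main (rather mild) obstacle, is tracking that every substitution $x_i = M_i x'_i$ lands inside the shared group $\bbp = \bbp'$ so that the monomial and positivity manipulations translate cleanly between the two seed patterns. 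Once this is noted, all three conclusions follow immediately, confirming the authors' claim that the lemma is essentially a restatement of \cref{defn:seedorbit}.
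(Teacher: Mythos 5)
Your proposal is correct and is exactly the unpacking the paper has in mind; the authors do not supply a proof at all, noting only that the lemma is immediate from the definitions, and your three steps (reduce to a single quasi-equivalent pair via simultaneous mutation, apply the rescaling relations $x_i = M_i x_i'$ with $M_i \in \bbp = \bbp'$, then translate cluster monomials and positivity accordingly) are the natural implementation of that remark. The one place worth being slightly more explicit, as you yourself flag, is in the cluster-monomial direction: after substituting, you should split the resulting element of $\bbp'$ into its positive- and negative-exponent parts so that the numerator $M_1'$ has non-negative exponents throughout and the denominator lies in $\bbp'$, matching the paper's definition of cluster monomial.
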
 

We remind the reader of \cref{conj:sourceistarget}, which states that the source and target cluster structures on $\tPio_\pi$ are quasi-equivalent.

\begin{rmk} \label{rmk:twistsquared}
The target and source collections of a plabic graph $G$ are related by a permutation of indices: we have 
$\tI(F) = \pi(\sI(F))$ for any face $F$. The 
permutation $\pi$ determines the automorphism of $\Gr(k,n)$ by column permutation. We warn that this automorphism {\sl does not} preserve the subvariety $\tPio_\pi$. On the other hand, Muller and Speyer defined a more subtle automorphism $\rt_\pi \in \Aut(\tPio_\pi)$, the {\sl right twist map}. By straightforward calculation using \cite[Proposition 7.13]{MSTwist}, the pullback of a source seed along $\rt_\pi^2$ is quasi-equivalent to a target seed: one has $(\rt^2_\pi)^*(\Sigma^S_G) \sim \Sigma^T_G$. Thus, 
establishing \cref{conj:sourceistarget} is the same as establishing that $\rt_\pi^2 \in \Aut(\tPio_\pi)$ is a quasi-cluster transformation, or that $\rt_\pi^2$ is a {\sl quasi-automorphism} in the language of \cite{Fra}. It is widely expected that $\rt_\pi$, not merely its square $\rt_\pi^2$, is a quasi-cluster automorphism (more specifically that it is the Donaldson-Thomas transformation of $\tPio_\pi$). The methods introduced here do not seem to help in proving this stronger statement. 
\end{rmk}

\subsection{Affine permutations}
The notions of positroid and Grasmmann necklace bear cyclic symmetry that is hidden when we label them by permutations $\pi$. To make this cyclic symmetry more apparent, we also index positroids by certain affine permutations following \cite{KLS}. We collect here the basic notions concerning affine permutations for use in our constructions and proofs.  

{\bf Convention:} we use Greek letters $\pi,\rho,\iota,\mu\dots$ for ordinary permutations and use Roman letters $f,r,i,m,\dots,$ for affine permutations.

\begin{defn}
Let $\tilde{S}_n$ denote the group of bijections $f \colon \bbz \to \bbz$ which are $n$-periodic: $f(a+n) = f(a)+n$ for all $a \in \bbz$. There is a group homomorphism ${\rm av} \colon \tilde{S}_n \twoheadrightarrow \bbz$ sending $f \mapsto \frac{1}{n}\sum_{a=1}^n (f(a)-a)$. We denote by $\tilde{S}_n^k : = \{f  \in \tilde{S}_n\colon {\rm av}(f) = k\}$. 
We say that $f \in \tilde{S}_n$ is {\sl bounded}\footnote{Our definition of bounded differs slightly from the standard definitions since we work with loopless positroids.} if $a <\pi(a) \leq a+n$ for all $a \in \bbz$. We denote by 
$\bd(k, n) \subset \tilde{S}_n^k$ those bounded $f$ with av$(f)=k$.
\end{defn}

By $n$-periodicity, any $f \in \tilde{S}_n$ is determined by its {\sl window notation} $[f(1),\dots,f(n)]$, i.e. its values on $[n] \subset \bbz$. 

For $f \in \tilde{S}_n$, the \emph{length} of $f$ is
$$\ell(f):= \# \{i \in [n], j \in \bbz \colon i < j \text{ and } f(i) > f(j)\}.$$

We have a group homomorphism $\tilde{S}_n \twoheadrightarrow S_n$ sending $f$ to the permutation 
$$\overline{f} \colon a \mapsto f(a)~\mod n.$$ The restriction of this map to $\bd(k, n)$ gives a bijection $$\bd(k, n)\to \{\text{permutations of type }(k, n)\}.$$ We say that $f \in \bd(k, n)$ is the {\sl lift} of its associated permutation $\overline{f} \in S_n$.

One advantage of working with affine permutations is the following dimension formula. If $f \in {\rm Bound}(k,n)$ is the lift of a permutation $\pi$, then 
\begin{equation}\label{eq:lengthiscodim}
\dim \tPio_\pi = \text{ \# of faces in a graph G with trip perm. $\pi$ } = k(n-k)+1-\ell(f).
\end{equation}
The first of these equalities was already discussed in \cref{secn:weaksep}.

\subsection{Right weak order on $\tilde{S}_n^k$ and circular weak order}

The kernel of the map av, $\tilde{S}^0_n$, is a Coxeter group of type $\tilde{A}_{n-1}$ (cf.~\cite[Section 8.3]{BjornerBrenti}). The Coxeter generators are the simple transpositions $s_i = [1,\dots,i+1,i,\dots,n]$  for $i=1,\dots,n-1$, together with $s_0 = [0,2,\dots,n-1,n+1]$. The transpositions $T \subset \tilde{S}^0_n$ are the affine permutations $t_{ab}$ swapping values $a+jn \leftrightarrow b+jn$ for all $j \in \bbz$. The Coxeter length function is the restrition of the length function defined above to $\tilde{S}^0_n$.

\begin{defn}
	Let $f, u, v \in \tilde{S}_n$ satisfying $f=uv$. The factorization $f=uv$ is {\sl length-additive} if 
$\ell(f)=\ell(u)+\ell(v)$.
\end{defn}

The Coxeter group $\tilde{S}^0_n$ is partially ordered by the
{\sl right weak order} $\leq_R$. For $f, u \in \tilde{S}_n^0$ and $v = u^{-1}f$, one has $u \leq_R f$ if and only if $f=uv$ is length-additive. Cover relations in the right weak order on $\tilde{S}_n^0$ correspond to (n-periodically) sorting adjacent values of~$f$. Each such cover relation amounts to right multiplication by an appropriate Coxeter generator $s_i$. 

The cosets of $\tilde{S}_n / \tilde{S}^0_n$ are $\{\tilde{S}_n^k: k \in \bbz\}$. We choose 
$$e_k:a \mapsto a+k \text{ for } a \in \mathbb{Z}$$ as the distinguished coset representative for $\tilde{S}_n^k$. The map $\tilde{S}_n^0 \to \tilde{S}_n^k$ given by $w \mapsto e_k w$ is a length-preserving bijection.

\begin{defn}\label{defn:rightweakAffine}
	Suppose $u,f \in \tilde{S}_n^k$, and let $v:=u^{-1}f$. Then $u \leq_R f$ if and only if $e_k^{-1} u \leq_R e_k^{-1} f$ in $\tilde{S}_n^0$, or, equivalently, if and only if $f=uv$ is length-additive.
\end{defn}

The equivalence of the two definitions follows immediately from the fact that $v \in \tilde{S}_n^{0}$ and that multiplying by $e_k$ does not change length.

Moving down in the right weak order on $\tilde{S}_n^k$ corresponds to (n-periodically) sorting the values of $f$. The minimal element in $(\tilde{S}_n^k, \leq_R)$ is the permutation $e_k$. Following our convention on Greek letters, we set
$$\eps_k := \overline{e_k}=k+1 \dots n 1 \dots k.$$ 

By (a similar argument to) \cite[Lemma 3.6]{KLS}, the subset $\bd(k, n) \subset \tilde{S}^k_n$ is a lower order ideal of the poset $(\tilde{S}_n^k, \leq_R)$. That is, if $f \in \bd(k, n)$ and $g \in \tilde{S}_n^k$ with $g \leq_R f$, then in fact $g \in \bd(k, n)$.

To streamline theorem statements, we also consider the partial order on permutations of type $(k, n)$ induced by $(\bd(k, n), \leq_R)$.

\begin{defn}[Circular weak order]\label{defn:rightweakCircular}
	Suppose $\iota, \pi$ are permutations of type $(k, n)$, with lifts $i, f \in \bd(k, n)$, respectively. Then we define $\iota \ler \pi$ if and only if $i \leq_R f$. The partial order $\ler$ is the \emph{circular weak order} on permutations of type $(k, n)$.
\end{defn}

\begin{rmk} Postnikov defined a circular Bruhat order on permutations of type $(k, n)$ \cite[Section 17]{Postnikov}. Its cover relations involve turning alignments in chord diagrams into crossings. There is a natural ``weak order'' version of Postnikov's order in which one only turns ``simple alignments'' into ``simple crossings''; our circular weak order is the dual of that order.
\end{rmk}

Finally, we give more details on length-additivity and the right weak order on $\tilde{S}_n^k$. It has a characterization in terms of left and right associated reflections, as in the $\tilde{S}_n^0$ case.

For $f \in \tilde{S}_n$, the set of \emph{right associated reflections} of $f$ is

\[T_R(f):= \{t_{a, b}: \ell(f t) < \ell(f)\}.
\]

The set of left associated reflections $T_L(f)$ is defined similarly. It is not hard to see that if $i<j$ with $i \in [n]$ and $j \in \bbz$ satisfies $f(i)>f(j)$, then $t_{i, j} \in T_R(f)$, and vice versa. We have $|T_R(f)|=|T_L(f)|= \ell(f)$.

\begin{lem} \label{lem:lengthAddAndReflections}
	Let $x, y \in \tilde{S}_n$. Then $\ell(xy)=\ell(x)+\ell(y)$ if and only if $T_R(x) \cap T_L(y)= \emptyset$.
\end{lem}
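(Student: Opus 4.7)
The plan is to prove the sharper \emph{inversion identity}
\[
\ell(xy) \;=\; \ell(x) + \ell(y) - 2\,|T_R(x) \cap T_L(y)|,
\]
from which the lemma follows immediately: since $\ell(xy) \leq \ell(x) + \ell(y)$ always holds, we get equality precisely when the intersection is empty.

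First I would establish the auxiliary identity $T_L(y) = T_R(y^{-1})$. If $t$ is a reflection (hence an involution), then $\ell(ty) = \ell((ty)^{-1}) = \ell(y^{-1} t)$, so $t \in T_L(y)$ iff $\ell(y^{-1}t) < \ell(y) = \ell(y^{-1})$ iff $t \in T_R(y^{-1})$. Combined with the correspondence $t_{a,b} \in T_R(w) \Leftrightarrow (a,b)$ is an inversion of $w$ (stated in the excerpt immediately before the lemma), this realizes $T_R(x) \cap T_L(y)$ as the set of (equivalence classes of) pairs $(a,b)$ with $a<b$ and $a\not\equiv b\pmod n$ that are simultaneously inversions of $x$ and of $y^{-1}$.

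Next I would count $\Inv(xy)$ directly, parametrizing its elements as equivalence classes $[i,j]$ with $i<j$ and $x(y(i))>x(y(j))$, and splitting into two types according to the sign of $y(j)-y(i)$. In the first type $y(i)<y(j)$, so $[i,j]\notin\Inv(y)$ and $[i,j]\in\Inv(xy)$ iff $[y(i),y(j)]\in\Inv(x)$; since $y$ is $n$-periodic the assignment $[i,j]\mapsto[y(i),y(j)]$ descends to a bijection onto $\{(a,b)\in\Inv(x):y^{-1}(a)<y^{-1}(b)\}$. In the second type $y(i)>y(j)$, so $[i,j]\in\Inv(y)$ and $[i,j]\in\Inv(xy)$ iff $[y(j),y(i)]\notin\Inv(x)$; a complementary count shows the number of type-two inversions equals $\ell(y) - |\{(a,b)\in\Inv(x):y^{-1}(a)>y^{-1}(b)\}|$. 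Adding the two counts, and using that $\{y^{-1}(a)<y^{-1}(b)\}$ and $\{y^{-1}(a)>y^{-1}(b)\}$ partition $\Inv(x)$ into sets of total size $\ell(x)$, yields the displayed identity, with the error term identified as $|T_R(x) \cap T_L(y)|$ via the auxiliary identity $T_L(y)=T_R(y^{-1})$.

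The only substantive obstacle is the bookkeeping for $n$-periodicity: every pair $(a,b)$ must be understood modulo the diagonal $n$-shift action (because $x$ and $y$ are $n$-periodic bijections of $\bbz$), and one must verify that the two bijections above descend cleanly to these equivalence classes. After that the computation is mechanical, and no deeper Coxeter-theoretic input is required.
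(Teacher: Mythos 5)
Your proposal is correct and takes a genuinely different route from the paper. The paper's proof factors $x = e_p w$ and $y = v e_q$ with $w,v \in \tilde{S}_n^0$, notes that left or right multiplication by the translations $e_b$ preserves length and the relevant reflection sets (so $T_R(x) = T_R(w)$ and $T_L(y) = T_L(v)$), and then cites the standard Coxeter-theoretic fact \cite[Exercise 1.13]{BjornerBrenti} for the Coxeter group $\tilde{S}_n^0$. Your argument instead works with the inversion-counting definition of $\ell$ on all of $\tilde{S}_n$ from the start and proves the quantitative refinement
\[
\ell(xy) = \ell(x) + \ell(y) - 2\,\lvert T_R(x) \cap T_L(y)\rvert
\]
by partitioning the inversions of $xy$ according to whether $y$ preserves or reverses the pair, pushing one class forward by $y$ to land in $\Inv(x)$, doing a complementary count on the other, and recognizing the defect via the identity $T_L(y)=T_R(y^{-1})$. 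The $n$-periodicity bookkeeping you flag is routine since the maps $[i,j]\mapsto[y(i),y(j)]$ and $[i,j]\mapsto[y(j),y(i)]$ visibly commute with the diagonal $n$-shift. What your route buys is self-containedness and a strictly sharper statement — the exact defect rather than just its vanishing — at the cost of a longer, though elementary, computation; what the paper's reduction buys is brevity, by offloading the core combinatorics to a textbook exercise about a Coxeter group.
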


\begin{proof}
	Suppose av$(x)=p$ and av$(y)=q$. Then there exist $w, v \in \tilde{S}_n^0$ such that $x=e_p w$ and $y=v e_q$. Because right and left multiplying by $e_b$ does not change length, $\ell(xy)=\ell(x)+\ell(y)$ if and only if $\ell(wv)=\ell(w)+\ell(v)$. It is a standard fact from Coxeter theory that $\ell(wv)=\ell(w)+\ell(v)$ if and only if $T_R(w) \cap T_L(v)= \emptyset$ \cite[Exercise 1.13]{BjornerBrenti}. Since $T_R(w)=T_R(x)$ and $T_L(v)=T_L(y)$, we are done.
\end{proof}

\section{Relabeled plabic graphs and Grassmannlike necklaces} 
\label{sec:toggles}
We introduce relabeled plabic graphs and Grassmannlike necklaces, the combinatorial objects which will give rise to seeds and frozen variables, respectively, for cluster structures on open positroid varieties. We explain that the two constructs are related: Grassmannlike necklaces are exactly the target labels of boundary faces in a relabeled plabic graph. We introduce three conditions (P0),(P1),(P2) which are necessary for a relabeled plabic graph seed to determine a cluster structure.

\subsection{Plabic graphs with relabeled boundary}
Recall that every reduced plabic $G$ for an open positroid variety $\tPio$ gives rise to two seeds, $\Sigma_G^S$ and $\Sigma_G^T$, both of which determine cluster structures on $\tPio$.

 \begin{defn}[Relabeled plabic graph]\label{defn:permutedgraph}
Let $G$ be a reduced plabic graph of type $(k,n)$ and $\rho \in S_n$ a permutation. (Thus, $G$ has boundary vertices $1,\dots,n$ in clockwise order.) The {\sl relabeled plabic graph $G^\rho$ with boundary} $\rho$ is the graph obtained by relabeling the boundary vertex $i$ in $G$ with $\rho(i)$. The plabic graph $G$ is the \emph{underlyling graph} of $G^\rho$. 

	The {\sl trip permutation} $\pi$ of $G^\rho$, {\sl target labels} $\tI(F)$ for $F \in G^\rho$, and {\sl target collection} $\targetF(G^\rho) \subset \binom{[n]}k$	are defined in the same way as in \cref{sec:background}, taking into account the relabeling of boundary vertices\footnote{That is, if $\rho(i) \leadsto \rho(j)$ is a trip of $G^\rho$, then $\pi(\rho(i)) = \rho(j) $, and one puts the value $\rho(j)$ in $\tI(F)$ for every face $F \in G^\rho$ to the left of this trip. Again, $\targetF(G^\rho): = \{\Delta_{\tI(F)} \colon F \in G^\rho\} $.} (see \cref{fig:badRelabeled}). The \emph{target seed} is $\Sigma_{G^\rho}^T = (\Delta(\targetF(G^\rho)),Q(G))$, with $\Delta_{\tI(F)}$ declared frozen when $F$ is a boundary face.
\end{defn} 

Although we use the terminology ``target seed,'' we are not yet viewing the data $\Sigma_{G^\rho}^T$ as a seed on a particular positroid variety. 

\cref{fig:fourplabics} shows three examples of relabeled plabic graphs with their target collections. Each of these graphs has trip permutation $465213$. The 
trip permutations of the underlying graphs are $456312$, $564123$, and $546132$ respectively (in the order top center, bottom center, right). 

\begin{figure}
	\includegraphics[width=0.8\textwidth]{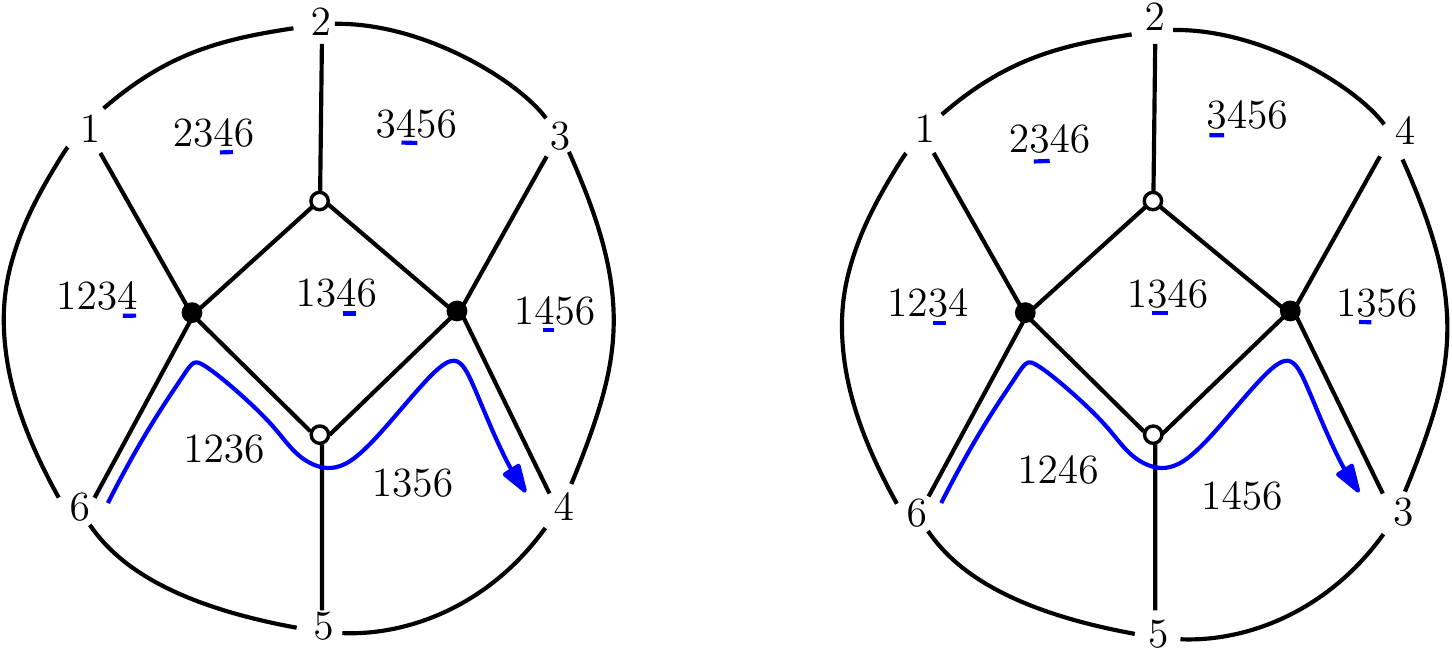}
	\caption{\label{fig:badRelabeled}Left: A reduced plabic graph $G$ with trip permutation $\mu=651324$, shown with target face labels. The trip $6 \leadsto 4$ is in blue. Right: the relabeled plabic graph $G^{s_3}$ with trip permutation $\pi=654123$, shown with target face labels. The trip $6 \leadsto 3$ is in blue.}
\end{figure}

\begin{rmk}[$G$ vs. $G^\rho$]\label{rmk:relationships}
We have the following relationships between $G$ and $G^\rho$. If $G$ has trip permutation $\mu$ then $G^\rho$ has trip permutation $\pi(G^\rho) = \rho \mu \rho^{-1}$. The face collections are related by a permutation of the ground set $\targetF(G^\rho) = \rho(\targetF(G))$. In particular the boundary faces of $G^\rho$ are given by $\rho(\rmci_\mu)$. 
\end{rmk}

\begin{example} \label{eg:sourceisrelabeled}
Let $G$ be a reduced plabic graph with trip permutation $\pi$. The relabeled graph $G^{\pi^{-1}}$ also has trip permutation $\pi$, and the face labels $\targetF(G^{\pi^{-1}})$ are $\pi^{-1}(\targetF(G))=\sourceF(G)$. Thus, the source seed $\Sigma_G^S$ is equal to the target seed $\Sigma_{G^{\pi^{-1}}}^T$, and so \cref{defn:permutedgraph} includes both the target and source seeds of usual plabic graphs.
\end{example}

We adopt the following setup throughout the rest of the paper. Let $\pi$ be a permutation of type $(k,n)$ with open positroid variety $\tPio_\pi$. Let $\bbp_\pi \subset \bbc[\tPio_\pi]$ denote the abelian group in the frozen variables $\Delta(\rmci_\pi)$. Let $f \in {\rm Bound}(k,n)$ be the lift of $\pi$.

Let $G^\rho$ be a relabeled plabic graph with trip permutation $\pi$, whose underlying plabic graph $G$ therefore has trip permutation $\mu = \rho^{-1}\pi\rho$. The goal of the present paper is to investigate conditions under which the target seed $\Sigma_{G^\rho}^T$ determines a cluster structure on the cone~$\tPio_\pi$ over the open positroid variety.  The following conditions are clearly necessary:  
\begin{itemize}

	\item[(P0)][$k$-subsets] The graph $G$, or equivalently the permutation $\mu$, has type $(k,n)$. In particular, the lift of $\mu$ is an affine permutation $m \in {\rm Bound}(k,n)$. 
	\item[(P1)] [Units] If a boundary face of $G^\rho$ has target label $I$, then $\Delta_I$ is a unit in $\bbc[\tPio]$.
	\item[(P2)][Seed size] The underlying graph $G$ has $\dim \tPio_\pi$ many faces. Equivalently by \eqref{eq:lengthiscodim},  $\ell(m) = \ell(f)$. 	

\end{itemize}

The conditions (P0), (P1), and (P2) are certain compatibility conditions between permutations $\pi,\rho \in S_n$. In \cref{subsec:unitlabels} we show that (P0) and (P1) hold when $\pi\rho \ler \pi$. In \cref{subsec:correctlength} we completely characterize when (P2) holds, assuming that $\pi\rho \ler \pi$. 

\begin{example}[Failure of (P0)]\label{eg:P0fails}
To illustrate that the condition (P0) does not always hold, consider the permutation $\pi = 654123$ which has type $(3,6)$ and determines an open positroid variety $\tPio_\pi \subset \tGr(3,6)$. Let $\rho$ be the simple transposition $s_3 = (34)$. Then $\mu = \rho^{-1}\pi \rho = 651324$ is a permutation of type $(4,6)$. That is, the target collection $\targetF(G^\rho)$ of a relabeled plabic graph $G^\rho$ with trip permutation~$\pi$ (e.g. the collection on the right in \cref{fig:badRelabeled}) is contained in $\binom{[6]}4$ rather than in $\binom{[6]}3$. Such a target collection does not determine a set of Pl\"ucker coordinates for $\tPio_\pi$. 
\end{example}

\subsection{Grassmannlike necklaces}
The following combinatorial objects generalize forward and reverse Grassmann necklaces, and axiomatize the possible boundary faces of a relabeled plabic graph (cf. \cref{gnecFaceLabels}).

\begin{defn}[Grassmannlike necklace]\label{defn:gendnecklace}
	A \emph{Grassmannlike necklace} of type $(k,n)$
	is an $n$-tuple $\gnec = (I_1,\dots,I_n)$ of subsets $I_j \in \binom {[n]} k$, with the property that for some permutation $\rho \in S_n$, we have 
\begin{equation} \label{eq:necklace}
	I_{a+1} = I_a \setminus \rho_a \cup \iota_a \text{ for all } a \in [n]
\end{equation}
where $\rho_a \in I_a$ for all $a$\footnote{the index $a$ is considered modulo $n$ here and throughout.}.

The permutation $\rho: a \mapsto \rho_a$ is the \emph{removal permutation} of $\gnec$. It follows that the map $\iota:a \mapsto \iota_a$ is also a permutation of $[n]$, called the {\sl insertion permutation}. We define the {\sl trip permutation} of $\gnec$ as $\pi = \iota\rho^{-1}$, which maps $\rho_a \to \iota_a$ for all $a \in [n]$.  

We write $\gnec = \gnec_{\rho,\iota,\pi}$ to summarize that a Grassmannlike necklace $\gnec$ has removal, insertion, and trip permutations $\rho,\iota,\pi$. Since any two of these permutations determine the third, we sometimes write $\gnec_{\bullet, \iota, \pi}$, $\gnec_{\rho, \bullet, \pi}$ or $\gnec_{\rho, \iota, \bullet}$ for this necklace. Of the three permutations, the trip permutation $\pi$ is the ``most important,'' because we ultimately aim to view $\gnec$ as a set of frozen variables for a cluster structure on $\tPio_\pi$.
\end{defn}

\begin{rmk}
	Our \cref{defn:gendnecklace} is closely related to the cyclic patterns of Danilov, Karzanov and Koshevoy \cite{DKK} and also to Grassmann-like necklaces as defined by  Farber and Galashin \cite{FG}. We have borrowed the latter terminology, although we stress that \cref{defn:gendnecklace} {\sl does not} require that $\gnec$ is a weakly separated collection, as was required in \cite{DKK,FG}. 
\end{rmk}

We depict Grassmannlike necklaces by writing 
\begin{equation}\label{eq:chordsnotation}
\gnec = I_1 \, \substack{\iota_1 \\ \rightleftarrows \\ \rho_1}  \, I_2 \, \substack{\iota_2 \\ \rightleftarrows \\ \rho_2} \, I_3 \, \substack{\iota_3 \\ \rightleftarrows \\ \rho_3}\cdots \substack{\iota_{n-1} \\ \rightleftarrows \\ \rho_{n-1}}\, I_n \, \substack{\iota_n \\ \rightleftarrows \\ \rho_n} I_1,
\end{equation}
i.e., by indicating the removal and insertion permutations in the picture. It is helpful to think of this picture wrapping around cyclically. The trip permutation can be read by reading up the ``columns''  of this picture. 

\begin{example}
	A forward Grassmann necklace 
	\[\rmci_\pi = \rightI_1 \, ~\substack{\pi_1 \\ \rightleftarrows \\ 1}~  \, \rightI_2 \, ~\substack{\pi_2 \\ \rightleftarrows \\ 2} \cdots \substack{\pi_{n-1} \\ \rightleftarrows \\ {n-1}}\, ~\rightI_n \, ~\substack{\pi_n \\ \rightleftarrows \\ n}~ \rightI_1\]
	 is a Grassmannlike necklace with removal permutation the identity and insertion permutation $\pi$. A reverse Grassmann necklace
	 \[\lmci= \leftI_1 \, ~\substack{1 \\ \rightleftarrows \\ \pi^{-1}_1}  \, ~\leftI_2 \, ~\substack{2 \\ \rightleftarrows \\ \pi^{-1}_2} \, \cdots \substack{{n-1} \\ \rightleftarrows \\ \pi^{-1}_{n-1}}\, ~\leftI_n~ \, \substack{n \\ \rightleftarrows \\ \pi^{-1}_n}~ \leftI_1\]
	is a Grassmannlike necklace with insertion permutation the identity and removal permutation $\pi^{-1}$. Both necklaces have trip permutation $\pi$.
\end{example}

\begin{example}\label{ex:shiftedReverseGN}	
	Let $\lmci_\pi=(\leftI_1, \dots, \leftI_n)$ be a reverse Grassmann necklace. It will sometimes be convenient for us to consider instead the Grassmannlike necklace	
\begin{equation}\label{eq:revGrneckconvention}
	\lmci_\pi[k]=(\leftI_{k+1}, \leftI_{k+2}, \dots,\leftI_n,\leftI_1,\dots,\leftI_{k}).
	\end{equation}
which is a rotation of the reverse Grassmann necklace. Rotation does not affect trip permutation, so the trip permutation of $\lmci_\pi[k]$ is $\pi$. The insertion permutation of $\lmci_\pi[k]$ is the Grassmannian permutation $\iota = \eps_k$ and the removal permutation is $\rho = \pi^{-1}\eps_k$.

	\end{example}

Our next three lemmas collect basic properties of Grassmannlike necklaces. 

\begin{lem}
	Let $\gnec_{\rho, \iota, \pi}=(I_1, \dots, I_n)$ be a Grassmannlike necklace. Then $\gnec_{\rho, \iota, \pi}$ is uniquely determined by $\rho$ and $\iota$. In particular, 
	\begin{equation}\label{eq:typekn}
	I_1 = \{a \in [n] \colon \rho^{-1}(a) \leq \iota^{-1}(a) \}
	\end{equation}
	 and the remaining elements are determined from $I_1$ using $\rho$, $\iota$, and \eqref{eq:necklace}.
\end{lem}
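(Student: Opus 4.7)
The plan is to verify the explicit formula for $I_1$ by tracking, for each $a \in [n]$, the history of its membership through the cyclic sequence $I_1 \to I_2 \to \cdots \to I_n \to I_1$. The second assertion, that $I_2, \ldots, I_n$ are determined by $I_1$, $\rho$, and $\iota$, is immediate from \eqref{eq:necklace} applied iteratively, so all of the content lies in pinning down $I_1$.

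First I would observe that, because the recurrence closes up after $n$ steps, each element $a \in [n]$ is both inserted and removed the same number of times as we traverse the cycle $I_1 \to I_2 \to \cdots \to I_n \to I_1$. Since $\rho$ and $\iota$ are permutations of $[n]$, the element $a$ appears exactly once as a removal value (at step $b := \rho^{-1}(a)$, where $\rho_b = a$) and exactly once as an insertion value (at step $c := \iota^{-1}(a)$, where $\iota_c = a$). Hence the presence of $a$ in $I_1$ is determined entirely by which of $b, c$ comes first along the cycle.

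Next I would perform a three-case analysis. If $b < c$, then $a \in I_b$ is forced by the axiom $\rho_b \in I_b$ from \cref{defn:gendnecklace}, and since $a$ is neither inserted nor removed at any of the steps $1, \ldots, b-1$, membership propagates backward to give $a \in I_1$. If $b > c$, then symmetrically $a$ is inserted at step $c$ and removed at step $b$, so $a \notin I_1, \ldots, I_c$; in particular $a \notin I_1$. If $b = c$, then $\rho_b = \iota_b = a$ and \eqref{eq:necklace} gives $I_{b+1} = I_b \setminus a \cup a = I_b$; the axiom $\rho_b \in I_b$ again forces $a \in I_b$, and since $a$ is neither inserted nor removed elsewhere, $a$ belongs to every $I_j$, in particular to $I_1$.

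Combining the three cases yields $I_1 = \{a \in [n] : \rho^{-1}(a) \leq \iota^{-1}(a)\}$, which together with the recurrence establishes the uniqueness claim. The only mild obstacle is the equality case $b = c$, where one must remember to invoke the axiom $\rho_a \in I_a$ rather than arguing purely by ``first event'' bookkeeping; the two strict-inequality cases are a matter of routine tracking along the cyclic orbit.
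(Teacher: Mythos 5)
Your proof is correct and takes essentially the same approach as the paper: track for each $a$ the unique step $b=\rho^{-1}(a)$ at which it is removed and the unique step $c=\iota^{-1}(a)$ at which it is inserted, conclude that $a$ lies in the cyclic run of subsets from $I_{c+1}$ through $I_b$, and observe that this run contains $I_1$ exactly when $b\leq c$. Your explicit three-case split (in particular spelling out $b=c$, the fixed-point case $\pi(a)=a$, via the axiom $\rho_b\in I_b$) is a touch more careful than the paper's one-line ``cyclic interval includes $1$'' argument, which silently treats the degenerate full-cycle interval, but the underlying reasoning is the same.
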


\begin{proof}
	The only thing to show is \eqref{eq:typekn}. Consider $a\in [n]$. Then $a$ is removed from $I_{\rho^{-1}(a)}$ only and inserted into $I_{\iota^{-1}(a)+1}$ only. This means that $a$ is in $I_j$ for $j=\iota^{-1}(a)+1, \iota^{-1}(a)+2, \dots, \rho^{-1}(a)$. This cyclic interval includes 1 exactly when $\rho^{-1}(a)\leq \iota^{-1}(a)$.
\end{proof}

Besides the three above permutations 
$\rho$, $\iota$, and $\pi = \iota \rho^{-1}$, there is a fourth permutation 
$\mu$ which is also naturally associated to any Grassmannlike necklace:

\begin{defn}[Underlying permutation]
	Let $\gnec_{\rho, \iota, \pi}$ be a Grassmannlike necklace. The permutation
		\begin{equation}\label{eq:threewaysofwriting}
		\mu:=\rho^{-1}\iota = \rho^{-1}\pi\rho = \iota^{-1}\pi \iota
	\end{equation}
 is the {\sl underlying permutation} of $\gnec_{\rho, \iota, \pi}$.
\end{defn}

As discussed in \cref{rmk:relationships}, if $G^\rho$ is a relabeled plabic graph whose trip permutation is $\pi$, then $\mu$ is the trip permutation of the underlying graph $G$, justifying its name.  The interplay between the trip permutation $\pi$ and the underlying permutation $\mu$ is key in our subsequent results.

We have an $S_n$-action on Grassmannlike necklaces by permuting the ground set: 
$\sigma(\gnec):=(\sigma(I_1), \dots, \sigma(I_n))$ for $\sigma\in S_n$ and a necklace $\gnec=(I_1, \dots, I_n)$. 

\begin{lem}\label{lem:gnecPermuted}
	Let $\gnec_{\rho, \iota, \pi}$ be a Grassmannlike necklace with underlying permutation $\mu$. Then $\gnec=\rho(\rmci_\mu)=\iota(\lmci_\mu)$. In particular, $\gnec_{\rho, \iota, \pi}$ is of type $(k, n)$ if and only if $\rmci_\mu$ (and hence $\mu$ itself) is of type $(k, n)$.
\end{lem}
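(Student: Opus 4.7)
The plan is to invoke the uniqueness lemma proved just above: a Grassmannlike necklace of type $(k,n)$ is determined by its removal and insertion permutations. Hence it suffices to verify that the tuples $\rho(\rmci_\mu) = (\rho(\rightI_1), \ldots, \rho(\rightI_n))$ and $\iota(\lmci_\mu) = (\iota(\leftI_1), \ldots, \iota(\leftI_n))$ each satisfy the recursion \eqref{eq:necklace} with removal permutation $\rho$ and insertion permutation $\iota$, and that the elements purportedly being removed at each step actually belong to the corresponding set.

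For the first identity, recall from \cref{subsecn:positroidcombo} that $\rightI_{a+1} = \rightI_a \setminus a \cup \mu(a)$ and $a \in \rightI_a$. Applying $\rho$ to both sides gives $\rho(\rightI_{a+1}) = \rho(\rightI_a) \setminus \rho(a) \cup \rho(\mu(a))$. Using $\mu = \rho^{-1}\pi\rho$, one has $\rho \mu = \pi \rho$, so in particular $\rho(\mu(a)) = \pi(\rho(a)) = \pi(\rho_a) = \iota_a$, while $\rho(a) = \rho_a$ by definition. The membership $\rho_a = \rho(a) \in \rho(\rightI_a)$ is immediate from $a \in \rightI_a$. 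Thus $\rho(\rmci_\mu)$ is a Grassmannlike necklace with removal permutation $\rho$ and insertion permutation $\iota$, and so equals $\gnec_{\rho,\iota,\pi}$.

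The second identity is entirely analogous. From $\leftI_a = \leftI_{a+1} \setminus a \cup \mu^{-1}(a)$ (equivalently $\leftI_{a+1} = \leftI_a \setminus \mu^{-1}(a) \cup a$) and applying $\iota$, we get $\iota(\leftI_{a+1}) = \iota(\leftI_a) \setminus \iota(\mu^{-1}(a)) \cup \iota(a)$. Using $\mu = \iota^{-1}\pi\iota$, we have $\iota \mu^{-1} = \pi^{-1} \iota$, so $\iota(\mu^{-1}(a)) = \pi^{-1}(\iota(a)) = \pi^{-1}(\iota_a) = \rho_a$ (because $\pi = \iota \rho^{-1}$), while $\iota(a) = \iota_a$. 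Since $\mu^{-1}(a) \in \leftI_a$, we also have $\rho_a = \iota(\mu^{-1}(a)) \in \iota(\leftI_a)$. By uniqueness, $\iota(\lmci_\mu) = \gnec_{\rho,\iota,\pi}$.

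For the final claim, since $\rho$ is a bijection of $[n]$, it preserves cardinalities of subsets, so the entries of $\gnec = \rho(\rmci_\mu)$ have size $k$ if and only if those of $\rmci_\mu$ do. Combined with the standing bijection between (loopless) forward Grassmann necklaces of type $(k,n)$ and permutations of type $(k,n)$, this gives the ``in particular'' assertion. There is no substantive obstacle here — the entire argument is a direct manipulation of the three formulas $\mu = \rho^{-1}\iota = \rho^{-1}\pi\rho = \iota^{-1}\pi\iota$ once one sets up the recursions and appeals to uniqueness.
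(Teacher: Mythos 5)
Your proof is correct and takes essentially the same approach as the paper: apply $\rho$ (resp.\ $\iota$) to the forward (resp.\ reverse) Grassmann necklace for $\mu$, check it satisfies the necklace recursion \eqref{eq:necklace} with removal permutation $\rho$ and insertion permutation $\iota$, and conclude by uniqueness. The only cosmetic difference is that the paper verifies $I_1 = \rho(\rightI_1)$ directly from \eqref{eq:typekn} before running the recursion, whereas you let the preceding uniqueness lemma absorb that check; both are sound and amount to the same argument.
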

Thus any Grassmannlike necklace is related to a forward Grassmann necklace (likewise a reverse Grassman necklace) by a permutation of the ground set. 

\begin{proof}
	Let $\gnec_{\rho, \iota, \pi}=(I_1, \dots, I_n)$ and $\rmci_\mu=(\rightI_1, \dots, \rightI_n)$.	Setting $a=\rho(b)$ in \eqref{eq:typekn}, we obtain $I_1=\{\rho(b) \in [n]: b \leq (\rho^{-1}\iota)^{-1}(b) \}$. This is $\rho(\rightI_1)$. Now, $\rho(\rightI_{j+1})$ is $\rho(\rightI_j) \setminus \rho(a) \cup \rho(\mu(a))$. Since $\mu=\rho^{-1} \iota$, this is exactly the necklace condition, and $\gnec= \rho(\rmci_\mu)$. The second equality is similar. 
	
	
\end{proof}

We next connect Grassmannlike necklaces with relabeled plabic graphs. 

\begin{lem}[Grassmannlike necklaces as face labels]\label{gnecFaceLabels}
	Let $G^\rho$ be a relabeled plabic graph of type $(k, n)$ with trip permutation $\pi$. Let $F_1, \dots, F_n$ be the boundary faces of $G^\rho$ in clockwise order with $F_1$ the face immediately before vertex $\rho(1)$.
	
	Then $(\tI(F_1), \dots, \tI(F_n))$ is the Grassmannlike necklace $\gnec=\gnec_{\rho, \bullet, \pi}$. Moreover, every Grassmannlike necklace arises in this way as the boundary face labels of a relabeled plabic graph $G^\rho$, read clockwise.
\end{lem}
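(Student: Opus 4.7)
The plan is to reduce both statements to the standard plabic graph case by tracking how target labels transform under the boundary relabeling. Let $\mu = \rho^{-1}\pi\rho$ be the trip permutation of the underlying graph $G$, and let $F'_1,\dots,F'_n$ be the boundary faces of $G$ in clockwise order with $F'_1$ just before vertex $1$. By the standard theory recalled in \cref{secn:weaksep}, the target labels of these faces are exactly the forward Grassmann necklace $\rmci_\mu = (\rightI_1,\dots,\rightI_n)$. Under the relabeling, the boundary vertex $1$ of $G$ becomes the vertex $\rho(1)$ of $G^\rho$, so the face $F_a$ of the statement (defined as the $a$-th boundary face clockwise from $\rho(1)$) is the same face of the planar graph as $F'_a$. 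By \cref{rmk:relationships}, the target label in $G^\rho$ is obtained from the target label in $G$ by applying $\rho$, giving $\tI(F_a) = \rho(\rightI_a)$ for each $a$.

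Next I verify that $(\rho(\rightI_1),\dots,\rho(\rightI_n))$ is a Grassmannlike necklace with removal permutation $\rho$ and trip permutation $\pi$. Applying $\rho$ to the recursion $\rightI_{a+1} = \rightI_a \setminus \{a\} \cup \{\mu(a)\}$ yields
\[
\rho(\rightI_{a+1}) = \rho(\rightI_a) \setminus \{\rho(a)\} \cup \{\rho\mu(a)\}.
\]
In the language of \cref{defn:gendnecklace}, the removal permutation is $a \mapsto \rho(a)$ and the insertion permutation is $a \mapsto \rho\mu(a) = \pi\rho(a)$, so the trip permutation is $\iota\rho^{-1} = \pi\rho\rho^{-1} = \pi$. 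This identifies the face labels of $G^\rho$ with $\gnec_{\rho,\bullet,\pi}$ and amounts to applying \cref{lem:gnecPermuted} to $\rho(\rmci_\mu)$, finishing the first claim.

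For surjectivity, given an arbitrary Grassmannlike necklace $\gnec_{\rho,\iota,\pi}$ of type $(k,n)$, let $\mu = \rho^{-1}\pi\rho$ be its underlying permutation; by \cref{lem:gnecPermuted}, $\mu$ has type $(k,n)$ because $\gnec$ does. Postnikov's realizability result (every permutation of type $(k,n)$ arises as the trip permutation of some reduced plabic graph) then produces a reduced plabic graph $G$ with trip permutation $\mu$. Applying the first half of the lemma to $G^\rho$ shows that the boundary face target labels are $\rho(\rmci_\mu)$, which equals $\gnec$ by \cref{lem:gnecPermuted}. The only real obstacle is keeping the clockwise indexing straight: one must confirm that the face "just before $\rho(1)$" in $G^\rho$ corresponds, under the relabeling, to the face "just before $1$" in $G$, so that the forward Grassmann necklace indexing of $G$ transports to the required starting point for $\gnec$. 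This matching is built into the definition of the relabeling and so presents no real difficulty.
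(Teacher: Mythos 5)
Your proof is correct and follows essentially the same route as the paper's: use \cref{rmk:relationships} to see that $\tI(F_a)=\rho(\rightI_a)$ where $\rmci_\mu=(\rightI_1,\dots,\rightI_n)$ is the forward Grassmann necklace of the underlying graph, identify this with $\gnec_{\rho,\bullet,\pi}$ via \cref{lem:gnecPermuted}, and invoke Postnikov's realizability for the converse. The only difference is that you re-derive the content of \cref{lem:gnecPermuted} inline by pushing $\rho$ through the necklace recursion, which is harmless but redundant given that you cite the lemma anyway.
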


\begin{proof}
	The underlying graph $G$ of $G^\rho$ has trip permutation $\mu:=\rho^{-1}\pi\rho$ (\cref{rmk:relationships}). Let $(\rightI_1, \dots, \rightI_n)$ be the forward Grassmann necklace with permutation $\mu$.
	
	By \cref{rmk:relationships}, $\tI(F_j)$ is equal to $\rho(\rightI_j)$. By \cref{lem:gnecPermuted}, this is equal to the $j$th subset in $\gnec$. So $\gnec=(\tI(F_1), \dots, \tI(F_n))$. 
	
	Conversely, if $\gnec=\gnec_{\rho, \iota, \pi}$ is a Grassmannlike necklace, consider any plabic graph $G$ with trip permutation $\rho^{-1}\pi\rho$.  The boundary face labels of the relabeled plabic graph $G^\rho$ (which has trip permutation $\pi$) will be $\gnec$.
\end{proof}

\begin{example}\label{eg:morePofailure}
The type of a Grassmannlike necklace need not match the type of its trip permutation. Indeed, this occurs whenever the $k$-subsets condition (P0) fails. Continuing \cref{eg:P0fails}, the boundary faces of the relabeled graph $G^\rho$ (see \cref{fig:badRelabeled}) are the Grassmannlike necklace 
$$ 1234 \, ~\substack{6 \\ \rightleftarrows \\ 1}~  \, 2346 \, ~\substack{5 \\ \rightleftarrows \\ 2}~  \, 3456 \, ~\substack{1 \\ \rightleftarrows \\ 4}~  \, 1356 \, ~\substack{4 \\ \rightleftarrows \\ 3}~  \, 1456 \, ~\substack{2 \\ \rightleftarrows \\ 5}~  \, 1246\, ~\substack{3 \\ \rightleftarrows \\ 6} 1234.$$
This is a necklace of type (4,6) whose trip permutation $\pi = 654123$ has 
type (3,6). 
\end{example}

\section{When relabeled plabic graphs give seeds}\label{sec:thms}
We begin by identifying a condition which is sufficient to guarantee that a Grassmannlike necklace could serve as frozen variables in a cluster structure, i.e. that the units condition (P1) holds. Then, in the cases that this sufficient condition holds, we state our main theorem characterizing when a seed from a relabeled plabic graph gives rise to a cluster structure on the open positroid variety. 

\subsection{Toggles and the units condition}\label{subsec:unitlabels} We define \emph{toggling}, a local move on Grassmannlike necklaces, and then use toggles to produce a large, well-behaved class of necklaces satisfying the units condition.

\begin{defn}[Unit necklace]
	Let $\gnec$ be a Grassmannlike necklace with trip permutation $\pi$ and let $\bbp_\pi \subset \bbc[\tPio_\pi]$ be the free abelian group of Laurent monomials in the target frozen variables $\Delta(\rmci_\pi)$. We say $\gnec$ is a {\sl unit necklace} if $\Delta(\gnec) \subset \bbp_\pi$.\end{defn}

We are interested in such necklaces because their corresponding Pl\"ucker coordinates could serve as frozen variables for a cluster structure on $\tPio_\pi$. 
\begin{conj} 
The group of units of the algebra $\bbc[\tPio_\pi]$ coincides with the group $\bbp_\pi$. 
\end{conj}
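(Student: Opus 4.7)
The inclusion $\bbp_\pi \subseteq \bbc[\tPio_\pi]^\times$ is immediate from the definition of $\tPio_\pi$, on which each Pl\"ucker coordinate $\Delta_{\rightI_i}$ is nonvanishing. (The literal statement needs a minor correction by the scalars $\bbc^\times$: every nonzero constant is a unit, so the equality as written is really an equality modulo $\bbc^\times$, or, equivalently, $\bbp_\pi$ should be widened to include nonzero scalar coefficients.) For the reverse inclusion, the plan is to exploit the cluster algebra structure on $\bbc[\tPio_\pi]$ whose frozens are $\Delta(\rmci_\pi)$ and to apply a general fact: in any cluster algebra with inverted frozen variables, every unit is a $\bbc^\times$-multiple of a Laurent monomial in the frozens.

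The first step is to identify a cluster structure on $\tPio_\pi$ whose frozens are exactly $\Delta(\rmci_\pi)$. By \cref{thm:GL}, the source seed $\Sigma^S_G$ of a reduced plabic graph $G$ with trip permutation $\pi$ determines such a structure, but with frozens $\Delta(\lmci_\pi)$. Following the sketch in \cref{rmk:targetpreference}, one pulls the source cluster structure back along an appropriate power of the Muller--Speyer twist automorphism $\rt_\pi \in \Aut(\tPio_\pi)$ to convert source-labeled seeds into target-labeled ones, yielding $\bbc[\tPio_\pi] = \mca(\Sigma^T_G)$ with frozens $\Delta(\rmci_\pi)$.

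The second step is a general cluster-algebraic lemma. Let $u \in \mca(\Sigma^T_G)$ be a unit, and fix any cluster $\mathbf{x}=(x_1,\dots,x_m)$ in the seed pattern. By the Laurent phenomenon, both $u$ and $u^{-1}$ are Laurent polynomials in $\mathbf{x}$. Since the only units of the Laurent polynomial ring $\bbc[x_1^{\pm 1},\dots,x_m^{\pm 1}]$ are $\bbc^\times$-multiples of monomials, one has $u = c\, x_1^{a_1}\cdots x_m^{a_m}$ for some $c \in \bbc^\times$ and $a_i \in \bbz$. Suppose some $a_p \neq 0$ with $p$ a mutable index. Mutation at $p$ substitutes $x_p \mapsto (M_+ + M_-)/x'_p$, where $M_+$ and $M_-$ are the two distinct exchange monomials in the remaining cluster variables. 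Substituting into $u$ produces a factor $(M_+ + M_-)^{a_p}$, which is neither zero nor a Laurent monomial (and has no Laurent inverse) for $a_p \neq 0$, since the quiver has no $2$-cycles and the binomial $M_++M_-$ is therefore not a monomial. This contradicts the requirement that $u$ remain a Laurent monomial in the mutated cluster. Hence $a_p = 0$ for every mutable $p$, so $u \in \bbc^\times \cdot \bbp_\pi$, as desired.

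The step I expect to be the main obstacle is the first one: making precise the passage from the source cluster structure of \cref{thm:GL} to a target cluster structure with frozens $\Delta(\rmci_\pi)$, as sketched in \cref{rmk:targetpreference}. Once that identification of cluster algebras is in place, the unit argument in the second step is essentially soft, depending only on the Laurent phenomenon and the absence of $2$-cycles in cluster quivers, and applies verbatim to any cluster algebra with inverted frozens.
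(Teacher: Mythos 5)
The statement you are proving is labeled a Conjecture in the paper, and the authors offer no proof, so there is nothing of theirs to compare against. Your route---combine the Galashin--Lam theorem, in its target-labeled form \cref{thm:genPlabicClusterStruc} with $\rho = \mathrm{id}$ (which gives $\mca(\Sigma^T_G)=\bbc[\tPio_\pi]$ with frozens $\Delta(\rmci_\pi)$), with the general principle that units of a cluster algebra with inverted frozens lie in $\bbc^\times\cdot\bbp$---is the natural approach, and the step you flagged as the likely obstacle is in fact not one: the paper itself supplies exactly the target cluster structure you need.

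There is, however, a genuine gap in the ``soft'' second step as you have written it: ``no $2$-cycles'' does not by itself imply $M_+\neq M_-$. If the mutable vertex $p$ is \emph{isolated} in the quiver (no incident arrows at all), then $M_+=M_-=1$, the exchange relation reads $x_p x_p'=2$, and $x_p$ is a bona fide unit not lying in $\bbc^\times\cdot\bbp$; so your lemma, in the generality in which you state it, is false. To close the gap you must verify that the dual quiver $Q(G)$ of a reduced plabic graph has no isolated interior vertex. This does hold---the arrows across consecutive edges of an interior face of a reduced bipartite plabic graph alternate in orientation, so each interior face has both incoming and outgoing arrows, and in a reduced graph these cannot all be annihilated by $2$-cycle cancellation without forcing a removable bigon or degree-two configuration---but it needs to be argued, or one can instead argue directly that each mutable face Pl\"ucker vanishes on a nonempty divisor of $\tPio_\pi$ and hence is not a unit. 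With that patch, and the $\bbc^\times$ correction to the statement you already flagged, your argument does appear to settle the conjecture, which suggests the authors' caution in stating it as such may simply reflect that the verification was tangential to their main line of argument.
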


By definition, the forward Grassmann necklace $\rmci_\pi$ is a unit necklace. We 
will construct many more examples of unit necklaces, starting with $\rmci_\pi$ and repeatedly applying the following basic move.
\begin{defn}[Toggling a necklace]\label{defn:toggle}
	Let 
	$\gnec = \gnec_{\rho,\iota,\pi}$ be a Grassmannlike necklace satisfying $\rho_{a-1} \neq \iota_a$ and $\rho_{a} \neq \iota_{a-1}$ for some $a \in [n]$. The operation of {\sl toggling} $\gnec$ {\sl at position}~$a$ yields a new necklace $\gnec'$ whose permutations are given by $(\rho',\iota',\pi') = (\rho \cdot s_{a-1},\iota \cdot s_{a-1},\pi)$. 
\end{defn}

In other words, if 
$$\gnec = I_1 \, \substack{\iota_1 \\ \rightleftarrows \\ \rho_1}  \, I_2 \, \substack{\iota_2 \\ \rightleftarrows \\ \rho_2} \cdots \, \substack{\iota_{a-1} \\ \rightleftarrows \\ \rho_{a-1}} \, I_a \, \substack{\iota_a \\ \rightleftarrows \\ \rho_a}\cdots \substack{\iota_{n-1} \\ \rightleftarrows \\ \rho_{n-1}}\, I_n \, \substack{\iota_n \\ \rightleftarrows \\ \rho_n} I_1,$$ 
then toggling at $a$ produces the Grassmannlike necklace 

$$\gnec' = I_1 \, \substack{\iota_1 \\ \rightleftarrows \\ \rho_1}  \, I_2 \, \substack{\iota_2 \\ \rightleftarrows \\ \rho_2} \cdots \, \substack{\iota_{a} \\ \rightleftarrows \\ \rho_{a}} \, I'_a \, \substack{\iota_{a-1} \\ \rightleftarrows \\ \rho_{a-1}}\cdots \substack{\iota_{n-1} \\ \rightleftarrows \\ \rho_{n-1}}\, I_n \, \substack{\iota_n \\ \rightleftarrows \\ \rho_n} I_1,$$

where $I'_a = I_{a-1} \setminus \rho_a \cup \iota_a = I_{a+1} \setminus \iota_{a-1} \cup \rho_{a-1}$.

\begin{rmk}
	Toggling does not affect the trip permutation or the type of a Grassmannlike necklace. However, it changes the underlying permutation $\mu$ via conjugation by $s_{a-1}$. Toggling at position $a$ is an involution.
\end{rmk}

\begin{defn}[Aligned chords]\label{defn:noncrossing}
	Let $w \neq z ,y \neq x \in [n]$ and consider a pair of chords $w \mapsto x$ and $y \mapsto z$ drawn in the circle with boundary vertices $1,\dots,n$. These chords are called {\sl noncrossing} if they do not intersect (including at the boundary). Two noncrossing chords $w \mapsto x$ and $y \mapsto z$ are  
	{\sl aligned} if we either have $w <_w y <_w z <_w x$ or $w <_w x <_w z <_w y$ (or, if $w=x$, we have $w<_w y <_w z$). See \cref{fig:strandsEx} for an example.
	We say that toggling in position $a$
	is {\sl noncrossing} (resp. \emph{aligned}) if the chords $\rho_{a-1} \mapsto \iota_{a-1}$ and $\rho_{a} \mapsto \iota_{a}$ are noncrossing (resp. aligned). 
\end{defn}

\begin{figure}
	\includegraphics[width=0.7\textwidth]{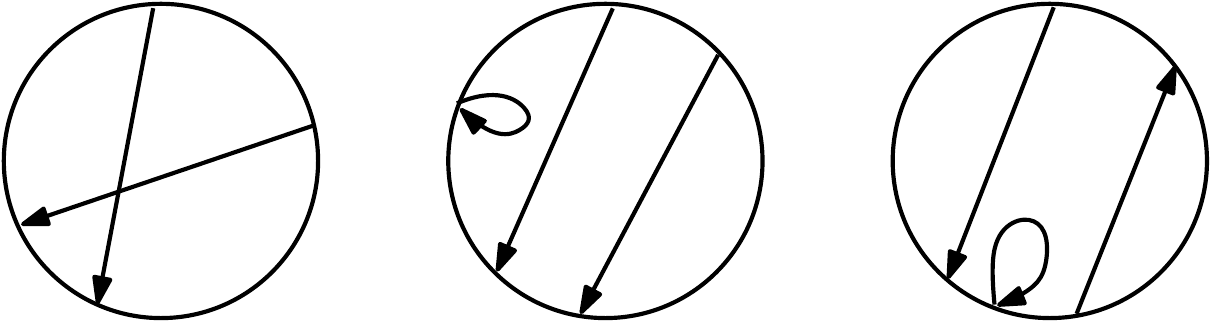}
	\caption{\label{fig:strandsEx} Left: Two crossing chords. Center: Three noncrossing chords, any two of which are aligned. Right: Three noncrossing chords, any two of which are not aligned.}
\end{figure}

\begin{example}\label{ex:easyToggles}
	Consider the Grassmann necklace of type (3, 6)
	\begin{equation}\label{eg:pi}
	\rmci_\pi= 
	123 \, \substack{4\\ \rightleftarrows \\ 1}  \, 234\substack{6 \\ \rightleftarrows \\ 2} \,  
	346\substack{5 \\ \rightleftarrows \\ 3} \,
	456 \, \substack{2\\ \rightleftarrows \\ 4}  \, 256\substack{1 \\ \rightleftarrows \\ 5} \, 126\substack{3\\ \rightleftarrows \\ 6}
	\end{equation}
	whose trip permutation and insertion permutation are $\pi  = \iota =465213$ (see the left of \cref{fig:fourplabics} for a reduced plabic graph with this trip permutation).

	The toggles of $\rmci$ at $3$ and $5$ are aligned, and all other toggles are crossing. Toggling $\rmci$ at $3$ yields the Grassmannlike necklace $\gnec_1$ pictured below while toggling $\rmci$ at $5$ yields the necklace $\gnec_2$.	
	\begin{equation}\label{eq:bottomneck}
	\gnec_1:= 
	123 \, \substack{4\\ \rightleftarrows \\ 1}  \, 234\substack{5 \\ \rightleftarrows \\ 3} \,  
	245\substack{6 \\ \rightleftarrows \\ 2} \,
	456 \, \substack{2\\ \rightleftarrows \\ 4}  \, 256\substack{1 \\ \rightleftarrows \\ 5} \, 126\substack{3\\ \rightleftarrows \\ 6}.
	\end{equation}
	\begin{equation}\label{eq:topneck}
	\gnec_2:= 
	123 \, \substack{4\\ \rightleftarrows \\ 1}  \, 234\substack{6 \\ \rightleftarrows \\ 2} \,  
	346\substack{5 \\ \rightleftarrows \\ 3} \,
	456 \, \substack{1\\ \rightleftarrows \\ 5}  \, 146\substack{2 \\ \rightleftarrows \\ 4} \, 126\substack{3\\ \rightleftarrows \\ 6}.
	\end{equation}

	Relabeled plabic graphs whose boundaries are these necklaces are shown in the top center and bottom center of \cref{fig:fourplabics}.
\end{example}

Oh's Theorem \eqref{eq:recoverpositroidfromnecklace} describes the positroid $\mcm_\pi$ in terms of the Grassmann necklace $\rmci_\pi$ (and also in terms of $\lmci_\pi$). Our next lemma is a weak version of this: for a Grassmannlike necklace $\gnec = \gnec_{\bullet, \iota, \pi}$ obtained from $\rmci_{\pi}$ by a sequence of noncrossing toggles, we can obtain some information about $\mcm_\pi$ directly from the necklace $\gnec$. This somewhat technical lemma is used in the proof of \cref{prop:monomialtransform}. We defer its proof to \cref{secn:rightweakproof}.

\begin{lem}\label{lem:notinpositroid}
	Let $\gnec = (I_1,\dots,I_n)$ be a Grassmannlike necklace that can be obtained from the forward Grassmann necklace $\rmci_\pi$ by a finite sequence of noncrossing toggles.

If $y<_z \pi(z)$ and $y \notin I_{\rho^{-1}(z)}$, then $I_{\rho^{-1}(z)} \setminus z \cup y \notin \mcm_\pi$. Likewise, if $\pi(z)<_z y$ and $y \in I_{\rho^{-1}(z)}$, then $I_{\rho^{-1}(z)} \setminus y \cup \pi(z) \notin \mcm_\pi$.
\end{lem}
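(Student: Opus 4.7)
The plan is to argue by induction on the number $N$ of noncrossing toggles needed to transform $\rmci_\pi$ into $\gnec$.

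For the base case $N = 0$, we have $\gnec = \rmci_\pi$, so that $\rho = \mathrm{id}$, $I_a = \rightI_a$, and the subset in question is $S := \rightI_z \setminus z \cup y$. I would prove Claim 1 directly from Oh's Theorem \eqref{eq:recoverpositroidfromnecklace}: the hypothesis $y <_z \pi(z)$ and $y \notin \rightI_z$ forces $z <_z y <_z \pi(z)$ (since $z$ is the $<_z$-minimum of $\rightI_z$). The key input is the cyclic-interval description $y \in \rightI_a$ iff $a$ lies in the cyclic interval $[\pi^{-1}(y)+1,\,y]$. This locates some $i$ in the complementary interval $[y+1,\,\pi^{-1}(y)]$ (a natural candidate is $i = \pi^{-1}(y)$) where $y \notin \rightI_i$, and where the element-wise comparison in $<_i$-order fails: replacing $z$ by $y$ inserts $y$ into a ``too-large'' slot relative to $\rightI_i$. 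Checking the concrete example of $\pi=465213$ with $z=4$, $y=1$ shows that the failure occurs exactly at $i = \pi^{-1}(y) = 5$, consistent with this strategy. Claim 2 is dual: one can either repeat the argument symmetrically, or invoke the reverse Grassmann necklace $\lmci_\pi$ and use the second characterization in~\eqref{eq:recoverpositroidfromnecklace}.

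For the inductive step, let $\gnec'$ arise from $\gnec$ by a noncrossing toggle at position $a$. Then $\rho' = \rho s_{a-1}$, $\iota' = \iota s_{a-1}$, $I'_b = I_b$ for $b \neq a$, and $I'_a = I_{a-1} \setminus \rho_a \cup \iota_a$. I would split into cases according to $(\rho')^{-1}(z) = s_{a-1}(\rho^{-1}(z))$. When $\rho^{-1}(z) \notin \{a-1,\,a\}$, both the subset $I'_{(\rho')^{-1}(z)}$ and the index involved in the claim coincide verbatim with their counterparts for $\gnec$, and the inductive hypothesis applies directly. The remaining case $\rho^{-1}(z) \in \{a-1,\,a\}$ is genuinely new: the ``$z$-role'' has swapped between positions $a$ and $a-1$, and the new set $I'_a$ differs from $I_a$ by exchanging $\iota_{a-1} \leftrightarrow \iota_a$ on top of the common piece $I_{a-1} \setminus \{\rho_{a-1},\rho_a\}$. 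Here one would apply the inductive hypothesis to $\gnec$ with a different element (namely $\rho_{a-1}$ in place of $\rho_a$, or vice versa) and then transfer the failed-domination certificate.

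The main obstacle is precisely this final case, and the noncrossing hypothesis is indispensable here. The condition constrains the cyclic $<_w$-orders of the four boundary points $\rho_{a-1},\iota_{a-1},\rho_a,\iota_a$, and it is this geometric/combinatorial constraint that forces the witness index $i$ for non-membership in $\mcm_\pi$ under $\gnec$ to also serve (or to be replaceable by a nearby index) for $\gnec'$. I expect a secondary sub-division into the aligned versus non-aligned but noncrossing subcases, since these produce different cyclic orderings of the four points; in each subcase one verifies that the element-wise $<_i$-comparison in Oh's Theorem still degenerates at the same coordinate after the toggle. The bulk of the technical work is this verification.
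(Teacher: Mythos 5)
Your approach (induction on the number of toggles, directly on the lemma's conclusion) is genuinely different from the paper's. The paper does not induct on the conclusion; instead it proves a stronger structural invariant about the pair of subsets $L := I_{\rho^{-1}(z)}$ and $R := I_{\rho^{-1}(z)+1}$, namely that $L = \rightI_z \setminus (\pi^{-1}\mct \cup \mcs) \coprod (\mct \cup \pi^{-1}\mcs)$ and $R = \rightI_{z+1} \setminus (\pi^{-1}\mct \cup \mcs) \coprod (\mct \cup \pi^{-1}\mcs)$ for some $\mcs,\mct \subset [n]\setminus\{z,\pi(z)\}$ with the property that each swapped pair $\{s,\pi^{-1}(s)\}$ (resp.\ $\{t,\pi^{-1}(t)\}$) lies entirely on one side of the chord $z \mapsto \pi(z)$. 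This invariant propagates cleanly under noncrossing toggles (precisely because those toggles swap one such pair) and yields the lemma in one shot via a counting argument with Oh's theorem: the number of elements of $R$ in the cyclic interval $(z,\pi(z))$ equals the corresponding count for $\rightI_{z+1}$, so replacing $\pi(z)$ by $y \in (z,\pi(z))$ forces $J \not\geq_{z+1} \rightI_{z+1}$.

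The gap in your proposal is exactly the case you flag as ``the main obstacle.'' When $\rho^{-1}(z) \in \{a-1,a\}$, the set $L$ changes to $L' = L \setminus u \cup v$ for some pair $\{u,v\}$ on one side of the $z$-chord, and $L' \setminus z \cup y$ is no longer of the form $L \setminus z \cup y'$ for any $y'$ -- it differs by an additional $u \leftrightarrow v$ swap. So the inductive hypothesis, applied at any single element, does not produce the set you need; the ``failed-domination certificate'' does not transfer for free. Making your transfer precise would require tracking how the certificate index and the counting estimate interact with that extra swap, which is essentially rediscovering the paper's structural claim. As written the proposal outlines this step and asserts it is doable (``the bulk of the technical work is this verification'') without carrying it out, so the proof is incomplete at its central point.

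A small secondary issue: in the base case, the natural certificate index is $i = z+1$ (one checks $\rightI_z \setminus z \cup y <_{z+1} \rightI_{z+1}$ directly from $y<_z\pi(z)$, $y\neq z$), not $i = \pi^{-1}(y)$; the two happen to coincide in your test example $\pi = 465213$, $z=4$, $y=1$, which is why it checked out, but $\pi^{-1}(y)$ is not the right index in general.
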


\begin{rmk}\label{rmk:conclusionstillholds}
Suppose that $\gnec = (I_1,\dots,I_n)$ and let $\gnec[r] = (I_{r},\dots,I_n,I_1,\dots,I_{r})$ be a cyclic shift of this necklace. The conclusion of \cref{lem:notinpositroid} is invariant under cyclic shift. Thus, if the conclusion holds for $\gnec$, it holds for its cyclic shift  $\gnec[r]$. We use this in the proof of \cref{prop:twistComposeId}. 
\end{rmk}

The following observation underpins the relationship between toggles and unit necklaces. 

\begin{rmk} \label{rmk:togglePluckerRel}
	Toggling is related to three-term Pl\"ucker relations as follows. Consider a Grassmannlike necklace $\gnec=(I_1, \dots, I_n)$ and a position~$a$ at which a toggle can be performed, involving two chords which are not loops. Let $S:= I_{a-1} \setminus \{\rho_{a-1},\rho_a\} \in \binom{[n]}{k-2}$.
Nearby the toggle, the subsets $I_{a-1},I_a,I_{a+1}$ take the form 
$$S\rho_{a-1}\rho_a \, \substack{\iota_{a-1} \\ \rightleftarrows \\ \rho_{a-1}} \, S\iota_{a-1}\rho_a \substack{\iota_a \\ \rightleftarrows \\ \rho_a} \, S\iota_{a-1}\iota_a.$$
Let $I_a' = S\rho_{a-1}\iota_a$ be the result of toggling, and let $S_1 = S\iota_{a-1}\rho_{a-1}$ and $S_2 = S\iota_{a}\rho_{a}$. We have the following Pl\"ucker relation in $\bbc[\Gr(k, n)]$:
\begin{equation}\label{eq:signs}
\Delta_{I_a}\Delta_{I_a'} = 
\begin{cases}
\Delta_{I_{a-1}}\Delta_{I_{a+1}} + \Delta_{S_1}\Delta_{S_2} & \text{ if the toggle at $a$ is aligned} \\
\Delta_{I_{a-1}}\Delta_{I_{a+1}} - \Delta_{S_1}\Delta_{S_2} & \text{ if the toggle at $a$ is noncrossing and nonaligned} \\
\Delta_{S_1}\Delta_{S_2}-\Delta_{I_{a-1}}\Delta_{I_{a+1}} & \text{ if the toggle at $a$ is crossing.}
\end{cases}
\end{equation}
\end{rmk}

\begin{prop}[Noncrossing toggles and unit necklaces]\label{prop:monomialtransform}
	Suppose that by a finite sequence of noncrossing toggles, we move from the forward Grassmann necklace $\rmci_\pi$ to a Grassmannlike necklace $\gnec = (I_1,\dots,I_n)$. Let $\gnec' = (I'_1
	,\dots,I_n')$ be the result of performing a noncrossing toggle to $\gnec$ in position~$a$. Then 
	\begin{equation}\label{eq:monomialtransform}
	\Delta(I_a') = \frac{\Delta(I_{a-1})\Delta(I_{a+1})}{\Delta(I_{a})} \in \bbc[\tPio_\pi]
	\end{equation}
	 and $\gnec'$ is a unit necklace.
\end{prop}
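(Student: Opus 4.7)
The plan is to prove the exchange formula \eqref{eq:monomialtransform} and then deduce that $\gnec'$ is a unit necklace, by induction on the number $N$ of noncrossing toggles used to reach $\gnec$. In the base case $N=0$ we have $\gnec = \rmci_\pi$, which is a unit necklace by definition of $\bbp_\pi$; the inductive hypothesis at stage $N$ will supply the fact that $\gnec$ is already a unit necklace, so that the right-hand side of \eqref{eq:monomialtransform} makes sense in $\bbc[\tPio_\pi]$, and combining \eqref{eq:monomialtransform} with this inductive hypothesis will yield that $\gnec'$ is a unit necklace as well.

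To establish \eqref{eq:monomialtransform}, I would invoke the three-term Pl\"ucker relation of \cref{rmk:togglePluckerRel}: since the toggle at position $a$ is noncrossing (aligned or not), that remark gives
\[
\Delta_{I_a}\,\Delta_{I_a'} \;=\; \Delta_{I_{a-1}}\,\Delta_{I_{a+1}} \;\pm\; \Delta_{S_1}\,\Delta_{S_2}
\]
in $\bbc[\Gr(k,n)]$, with $S_1 = S\cup\{\iota_{a-1},\rho_{a-1}\}$ and $S_2 = S\cup\{\iota_a,\rho_a\}$. Hence \eqref{eq:monomialtransform} will follow once $\Delta_{S_1}\Delta_{S_2}$ is known to vanish on $\tPio_\pi$, equivalently once $S_1 \notin \mcm_\pi$ or $S_2 \notin \mcm_\pi$. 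The loop cases (in which one of the two chords is trivial and \cref{rmk:togglePluckerRel} does not directly apply) should be handled separately: for instance if $\rho_a = \iota_a$ then $I_{a+1} = I_a$ and a direct check shows $I_a' = I_{a-1}$, reducing \eqref{eq:monomialtransform} to a tautology; the case $\rho_{a-1}=\iota_{a-1}$ is symmetric.

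To produce the required nonmembership in $\mcm_\pi$, I would apply \cref{lem:notinpositroid}, whose hypothesis on $\gnec$ is exactly our setup. Using the three equivalent descriptions
\[
S_1 = I_a \setminus \rho_a \cup \rho_{a-1} = I_{a-1} \setminus \rho_a \cup \iota_{a-1},\qquad S_2 = I_{a-1} \setminus \rho_{a-1} \cup \iota_a,
\]
one gets three useful applications: (i) if $\rho_{a-1}<_{\rho_a}\iota_a$, the first part of \cref{lem:notinpositroid} at $z=\rho_a$ applied to $I_a$ gives $S_1\notin\mcm_\pi$; (ii) if $\iota_{a-1}<_{\rho_{a-1}}\rho_a$, the second part at $z=\rho_{a-1}$ applied to $I_{a-1}$ gives $S_1\notin\mcm_\pi$; and (iii) if $\iota_a<_{\rho_{a-1}}\iota_{a-1}$, the first part at $z=\rho_{a-1}$ applied to $I_{a-1}$ gives $S_2\notin\mcm_\pi$. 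The auxiliary membership side-conditions ($\rho_{a-1}\notin I_a$, $\iota_a\notin I_{a-1}$, $\rho_a\in I_{a-1}$) follow immediately from the necklace relations together with the toggle conditions $\rho_a\neq\iota_{a-1}$, $\rho_{a-1}\neq\iota_a$, and the non-loop assumption.

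The main obstacle, and the technical heart of the argument, is the geometric case analysis verifying that at least one of (i), (ii), (iii) must hold whenever the chords $\rho_{a-1}\to\iota_{a-1}$ and $\rho_a\to\iota_a$ are noncrossing. Noncrossing forces $\{\rho_{a-1},\iota_{a-1}\}$ to lie on a single one of the two arcs cut out by the chord $\rho_a\to\iota_a$. If they lie on the arc from $\rho_a$ clockwise to $\iota_a$ then (i) holds; otherwise the four endpoints appear on the opposite arc in one of two cyclic patterns, yielding (ii) and (iii) respectively (these two patterns corresponding to the two shapes of ``aligned'' chords). Once this dichotomy is verified, \eqref{eq:monomialtransform} holds in $\bbc[\tPio_\pi]$, and the inductive hypothesis $\Delta_{I_{a-1}}, \Delta_{I_a}, \Delta_{I_{a+1}}\in\bbp_\pi$ forces $\Delta_{I_a'}\in\bbp_\pi$. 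Since $\gnec'$ agrees with $\gnec$ outside position $a$, every Pl\"ucker coordinate of $\gnec'$ lies in $\bbp_\pi$, closing the induction.
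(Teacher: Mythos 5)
Your proof is correct and follows essentially the same route as the paper: reduce via \cref{rmk:togglePluckerRel} to showing one of $S_1$, $S_2$ lies outside $\mcm_\pi$, then invoke \cref{lem:notinpositroid} after a case analysis on the cyclic order of the chord endpoints, with the unit-necklace conclusion following by induction. The only cosmetic differences are that the paper splits into two cases by locating $\{\rho_a,\iota_a\}$ relative to the chord $\rho_{a-1}\to\iota_{a-1}$ (its first case catches $S_2$ and second catches $S_1$), whereas you split into three by locating $\{\rho_{a-1},\iota_{a-1}\}$ relative to $\rho_a\to\iota_a$, and you explicitly dispose of the degenerate loop chords where $\rho_{a-1}=\iota_{a-1}$ or $\rho_a=\iota_a$, which the paper's write-up leaves implicit.
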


\begin{proof}
	From \eqref{eq:signs}, it suffices by induction to show that when we perform a noncrossing toggle on $\gnec$, either $S_1 \notin \mcm$ or $S_2 \notin \mcm$ (using the notation of \cref{rmk:togglePluckerRel}). 
	
	Suppose we wish to perform a noncrossing toggle at the necklace $\gnec = \gnec_{\rho,\iota,\pi}$ reachable from $\rmci$ by a sequence of noncrossing toggles. Let $\substack{\pi(a) \\ \rightleftarrows \\ a}$
	be the insertion and removal values to the left of the subset which is going to be toggled, i.e. we are toggling at the subset $I_{\rho^{-1}(a)+1} \in \gnec$. Let $L=I_{\rho^{-1}(a)}$ and $R =I_{\rho^{-1}(a)+1}$ so that we are toggling at $R$, and locally the necklace looks like 
	$L \substack{\pi(a) \\ \rightleftarrows \\ a}R \substack{t \\ \rightleftarrows \\ \pi^{-1}(t)}X$ for some $t \in [n]$ and $X \in \binom{[n]}k$.
	
	Since the toggle is noncrossing, we either have that 
	$\{t,\pi^{-1}(t)\} \subset (a,\pi(a))$ or 
	$\{t,\pi^{-1}(t)\} \subset (\pi(a),a)$, where $(a, \pi(a))$ denotes the cyclic interval $a<_a a+1 <_a \dots, <_a \pi(a)$ and similarly for $(\pi(a),a)$.

	In the first situation the subset $S_2$ can be written as $R \setminus \pi(a) \cup t$ with $t <_a \pi(a)$. Hence $S_2 \notin \mcm_\pi$ via \cref{lem:notinpositroid}. 
	
	In the second situation we can write $S_1 = L \setminus \pi^{-1}(t) \cup \pi(a)$. Since we are in the second situation, we have $\pi(a)<_a \pi^{-1}(t)$ so that $S_1 \notin \mcm_\pi$ by \cref{lem:notinpositroid}. 
\end{proof}

\begin{example}[Toggle as monomial transformation]
	Consider the Grassmann necklace $\rmci_\pi$ from \cref{ex:easyToggles}. By Oh's Theorem \eqref{eq:recoverpositroidfromnecklace}, the positroid corresponding to $\pi $ is $\mcm_\pi = \binom{[6]}{3} \setminus \{345,156\}$. A reduced plabic graph with trip permutation $\pi$ is on the left of \cref{fig:fourplabics}.
	
	Performing an aligned toggle on $\rmci_\pi$ at 3 replaces 
	$346$ with $245$. By a 3-term Pl\"ucker relation we have 
	$$
	\Delta_{346}\Delta_{245} = \Delta_{234}\Delta_{456}+ \Delta_{246}\Delta_{345}.
	$$
	However, $\Delta_{345}$ vanishes on $\tPio_\pi$, so in $\bbc[\tPio_\pi]$ we have the relation
	$$\Delta_{346}\Delta_{245} =\Delta_{234}\Delta_{456}.$$
	In other words, the new variable $\Delta_{245}$ is the Laurent monomial $\frac{\Delta_{234}\Delta_{456}}{\Delta_{346}}$ in $\bbc[\tPio_\pi]$, which is the monomial given in 
	\eqref{eq:monomialtransform}. Similarly, toggling $\rmci_\pi$ at 5 replaces $256$ with $146$, and we have $\Delta_{146} = \frac{\Delta_{456}\Delta_{126}}{\Delta_{256}}$ in $\bbc[\tPio_\pi]$. 
\end{example}

\cref{prop:monomialtransform} gives us a large class of unit Grassmannlike necklaces---obtained from $\rmci_\pi$ by noncrossing toggles---and thus many candidates for frozen variables of a cluster structure on $\tPio$. However, for the rest of the paper we restrict our attention necklaces obtained from $\rmci_{\pi}$ by \emph{aligned} toggles. As justification for this we have:

\begin{lem}\label{lem:muchisgained}
Suppose that $\gnec = (I_1,\dots,I_n)$ is a weakly separated Grassmannlike necklace whose trip permutation $\pi$ is a derangement. Then any noncrossing toggle is an aligned toggle. 
\end{lem}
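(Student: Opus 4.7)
The plan is to show that if a noncrossing toggle at position $a$ fails to be aligned, then the two neighboring subsets $I_{a-1}$ and $I_{a+1}$ in $\gnec$ must violate weak separation.

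First, I would translate the local combinatorics of the toggle into set-membership information. Write $w = \rho_{a-1}$, $x = \iota_{a-1}$, $y = \rho_a$, $z = \iota_a$. The necklace relations give $I_a = I_{a-1} \setminus w \cup x$ and $I_{a+1} = I_a \setminus y \cup z$. Since $\pi$ is a derangement we have $w \neq x$ and $y \neq z$; the toggle hypothesis gives $w \neq z$ and $y \neq x$; and $\rho, \iota$ being permutations yields $w \neq y$ and $x \neq z$. So $\{w,x,y,z\}$ are four distinct elements. From the two necklace equations one checks directly that
\[
w,y \in I_{a-1} \setminus I_{a+1} \qquad \text{and} \qquad x,z \in I_{a+1} \setminus I_{a-1}.
\]

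Next, I would enumerate the six cyclic arrangements of $\{w,x,y,z\}$ starting from $w$. Two of them, namely $w,y,x,z$ and $w,z,x,y$, correspond to the chords $w\to x$ and $y\to z$ crossing. Two of them, namely $w,y,z,x$ and $w,x,z,y$, are exactly the two cases of \cref{defn:noncrossing} and hence describe aligned noncrossing toggles. That leaves precisely two cyclic arrangements that are noncrossing but not aligned: the orders $w,x,y,z$ and $w,z,y,x$.

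The main step is then immediate inspection: in both of the remaining arrangements, the cyclic quadruple alternates between $I_{a-1}\setminus I_{a+1}$ and $I_{a+1}\setminus I_{a-1}$. In the arrangement $w,x,y,z$ take $(p,q,r,s) = (w,x,y,z)$; in the arrangement $w,z,y,x$ take $(p,q,r,s) = (w,z,y,x)$. In either case $p,r \in I_{a-1}\setminus I_{a+1}$ and $q,s \in I_{a+1}\setminus I_{a-1}$ in cyclic order, contradicting the weak separation of $I_{a-1}$ and $I_{a+1}$. Hence no noncrossing toggle at $\gnec$ can be non-aligned, which is the claim.

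The only thing that requires any care is the bookkeeping in the first step; in particular, the derangement hypothesis is used to ensure $w\neq x$ and $y\neq z$, so that both chords are genuine and the case ``$w=x$'' of \cref{defn:noncrossing} (a loop) is avoided. I do not anticipate any real obstacle beyond this.
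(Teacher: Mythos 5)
Your proof is correct and follows essentially the same approach as the paper's: use the toggle and derangement hypotheses to show the four values $w,x,y,z$ are distinct, deduce $w,y\in I_{a-1}\setminus I_{a+1}$ and $x,z\in I_{a+1}\setminus I_{a-1}$, and observe that the two noncrossing non-aligned cyclic arrangements force $I_{a-1}, I_{a+1}$ to violate weak separation. Your version is slightly more explicit in enumerating the six cyclic orders, but the argument is the same.
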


\begin{proof}
Let $\substack{x \\ \rightleftarrows \\ w} I_a \substack{z \\ \rightleftarrows \\ y}$ be the chords nearby a noncrossing toggle. By definition of toggle and the derangement assumption, we have $\{w,y\} \cap \{x,z\} = \emptyset$. Thus, $x,z \in I_{a+1} \setminus I_{a-1}$ while $w,y \in I_{a-1} \setminus I_{a+1}$. If the toggle is not aligned, then the numbers $w,x,y,z$ have cyclic order either $w<x<y<z $ or $w<z<y<x$. So $I_{a+1}$ and $I_{a-1}$ are not weakly separated. 
\end{proof}

Thus, in the world of weakly separated necklaces, there is no difference between aligned and noncrossing toggles. As a second justification, the set of necklaces that can be reached from $\rmci_\pi$ by a sequence of aligned toggles is easy to describe: they are the necklaces $\gnec_{\bullet, \iota, \pi}$ with $\iota \ler \pi$, as the next lemma shows.

\begin{lem}[Aligned toggles and weak order]\label{lem:alignedMovesDown}
	Let $\iota,\pi$ be permutations of type $(k, n)$ with lifts $i, f \in \bd(k, n)$. Consider the Grassmannlike necklace $\gnec=\gnec_{\bullet,\iota,\pi}$.
	
	Suppose $\iota \ler \pi$. Then the toggle of $\gnec$ at $a$ is aligned if and only if $is_{a-1} \leq_R f$, or equivalently, if $\iota \overline{s_{a-1}} \ler \pi$. 
	
\end{lem}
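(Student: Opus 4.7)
First, I set up the translation. Write $v := i^{-1}f \in \tilde{S}_n^0$ and, for $j \in \{1,2\}$, put $p_j := v^{-1}(a - 2 + j)$ and $q_j := i(a - 2 + j) = f(p_j)$. Then the chord at necklace position $a - 2 + j$ lifts to the affine segment $p_j \to q_j$, with $p_j < q_j \leq p_j + n$ by boundedness of $f$. The Coxeter side is quick: $\iota \ler \pi$ gives length-additivity of $f = iv$, hence $T_R(i) \cap T_L(v) = \emptyset$ by \cref{lem:lengthAddAndReflections}; using the decomposition $T_R(f) = v^{-1}T_R(i)v \sqcup T_R(v)$ for length-additive products, standard Coxeter manipulation yields that $is_{a-1} \leq_R f$ if and only if $s_{a-1} \in T_R(i) \cup T_L(v)$, equivalently $q_1 > q_2$ or $p_1 > p_2$; moreover these two conditions cannot both hold.

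The geometric heart is to show that the two disk chords are aligned if and only if exactly one of $q_1 > q_2$, $p_1 > p_2$ holds. A key preliminary observation is that boundedness of $i$ forces $q_2 - q_1 < n$: the equality $q_2 = q_1 + n$ would require $\iota(a-1) = a = \iota(a)$. Granting this, I do case analysis on the signs of $q_1 - q_2$ and $p_1 - p_2$. In the case $p_1 < p_2$ with $q_1 > q_2$, boundedness forces $p_1 < p_2 < q_2 < q_1 \leq p_1 + n$, so all four integers sit in a length-$n$ window and the cyclic order of their images on the disk is $\overline{p_1}, \overline{p_2}, \overline{q_2}, \overline{q_1}$, matching alignment condition~1. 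The mirror case $p_1 > p_2$ with $q_1 < q_2$ reduces to this by swapping the labels of the two chords, which is a symmetry of alignment. In the remaining case $p_1 < p_2$ and $q_1 < q_2$, I split on whether $p_2 \lessgtr q_1$ and whether $q_2 \leq p_1 + n$: in each sub-case the four disk endpoints either interleave (chords cross) or sit in an order matching neither alignment condition, giving not aligned. Loop cases (when some $q_j = p_j + n$, equivalently $\iota(a-2+j) = a-2+j$) are handled via the special clause of \cref{defn:noncrossing}.

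Combining these steps, toggle aligned if and only if $is_{a-1} \leq_R f$. The equivalence with $\iota \overline{s_{a-1}} \ler \pi$ follows from the standard fact that $\bd(k,n) \subset \tilde{S}_n^k$ is a lower order ideal in the right weak order: if $is_{a-1} \leq_R f$ then $is_{a-1} \in \bd(k,n)$, so it is the lift of $\iota \overline{s_{a-1}}$ and the definition of $\ler$ gives $\iota \overline{s_{a-1}} \ler \pi$; the converse is immediate. The main obstacle is the case analysis in the second paragraph: accurately tracking cyclic order on the disk from the integer data, especially when $q_2 > p_1 + n$ so that chord~2 appears to wrap past chord~1. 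The bound $q_2 - q_1 < n$ is the critical ingredient that rules out the long-wrap configurations which would otherwise produce alignment on the disk without any Coxeter-theoretic inversion.
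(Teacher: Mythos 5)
Your Coxeter-side reduction is correct: $i\leq_R f$ gives $T_R(i)\cap T_L(v)=\emptyset$, and a direct length-parity computation does show that $is_{a-1}\leq_R f$ holds iff $s_{a-1}\in T_R(i)\cup T_L(v)$, i.e.\ iff exactly one of $q_1>q_2$, $p_1>p_2$ holds. That matches the paper's translation (they write $r=f^{-1}i=v^{-1}$, so their $r(a-1),r(a)$ are your $p_1,p_2$). The gap is in the geometric half.

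In the remaining case $p_1<p_2$, $q_1<q_2$, you claim that a split on $p_2\lessgtr q_1$ and $q_2\lessgtr p_1+n$ always yields chords that cross or are non-aligned. This is false in the sub-case $p_2>p_1+n$. Assuming no loops, boundedness gives $q_1<p_1+n$ and $p_2<q_2$, and your preliminary bound $q_2-q_1<n$ puts $q_1<p_1+n<p_2<q_2<q_1+n$, so all four representatives sit in the length-$n$ window $[q_1,q_1+n)$. Reducing mod $n$, the cyclic order from $\overline{p_1}$ is $\overline{p_1},\overline{p_2},\overline{q_2},\overline{q_1}$, which is \emph{exactly} the first alignment condition ($w<_w y<_w z<_w x$). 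So the geometric picture here says \emph{aligned}, even though neither inversion holds on the Coxeter side. Your preliminary bound $q_2-q_1<n$ does not rule out $p_2-p_1>n$ (you bound the insertion values, not the removal values), and it is the removal-side "long wrap" that produces this configuration.

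The lemma is nevertheless true because the configuration $p_1<p_2$, $q_1<q_2$, $p_2>p_1+n$ is incompatible with $i\leq_R f$ and boundedness, and \emph{that} requires a separate Coxeter-theoretic argument, which is what the paper supplies. Concretely: in $f$'s one-line notation, position $p_1$ holds value $q_1$ and position $p_2$ holds value $q_2$; since $p_2-p_1>n$ and $f$ is bounded, the value $p_2$ appears at a position strictly between $p_1$ and $p_2$, and its value satisfies $q_1<p_2<q_2$. So in $f$ the three values $q_1,p_2,q_2$ appear left-to-right in increasing order. Right-multiplying $f$ by length-decreasing simple transpositions never swaps an increasing adjacent pair, so this left-to-right order is preserved while sorting $f$ down to $i$. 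But in $i$ the values $q_1=i(a-1)$ and $q_2=i(a)$ occupy adjacent positions, leaving no room for $p_2$ between them — a contradiction. Without this argument (or an equivalent one), your case analysis does not close the $(\Rightarrow)$ direction: it would mistakenly classify the $p_2>p_1+n$ configuration as "aligned but no inversion," apparently contradicting the lemma, when in fact that configuration simply never occurs.
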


\begin{proof} Let $r:=f^{-1}i$, so that $f: r(a) \mapsto i(a)$. We are assuming that $i \leq_R f$.

	$(\Leftarrow):$ Suppose that $i s_{a-1} \leq_R f$. Since the toggle of $\gnec$ at $a$ and the toggle of $\gnec_{\bullet, \iota \overline{s_{a-1}}, \pi}$ at $a$ involve the same chords, we may exchange $i$ and $is_{a-1}$ if necessary. So without loss of generality, we have $i(a-1)<i(a)$ and $\ell(is_{a-1})> \ell(i)$. 
	
	By assumption $f=(i s_{a-1})(s_{a-1}r^{-1})$ is length-additive. So we have $\ell(s_{a-1}r^{-1})<\ell(r^{-1})$ or, equivalently, $\ell(rs_{a-1})<\ell(r)$. This means that $r(a-1)>r(a)$. Combining this with the boundedness of $f$, we have $r(a)<r(a-1)<i(a-1)<i(a)\leq r(a)+n$. Reducing modulo $n$, we see the chords $\rho_{a-1} \mapsto \iota_{a-1}$ and $\rho_a \mapsto \iota_a$ are aligned.
	
	$(\Rightarrow):$ Now, suppose the chords $\rho_{a-1} \to \iota_{a-1}$ and $\rho_{a} \to \iota_{a}$ are aligned. If $i(a-1)>i(a)$, then we have $i s_{a-1} \lessdot_R i \leq_R f$, so we assume $i(a-1)<i(a)$. We would like to show that $f=(i s_{a-1})(s_{a-1}r^{-1})$ is length-additive, which is equivalent to $r(a-1)>r(a)$.
	
    By the boundedness of $f$ and $i$, we have the following situation in $\bbz$:
    \begin{center}
    	\includegraphics[width=0.7\textwidth]{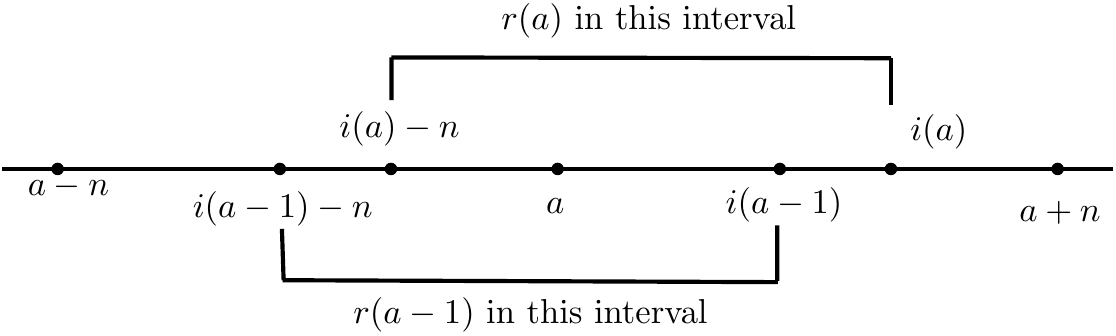}
    \end{center}
    
    Consider the chords in the lower half plane from $r(a-1)$ to $i(a-1)$ and from $r(a)$ to $i(a)$. If the endpoints of these chords are contained in the interval $[i(a)-n, i(a)]$, we can obtain $\rho_{a-1} \mapsto \iota_{a-1}$ and $\rho_a \mapsto \iota_a$ in the disk by gluing $i(a)-n$ and $a$ (see figure below). With this in mind, suppose that $r(a-1)\geq i(a)-n$. The assumption that $\rho_{a-1} \mapsto \iota_{a-1}$ and $\rho_a \mapsto \iota_a$ are aligned forces $r(a-1)>r(a)$ (see figure below).
    
    \begin{center}
    	\includegraphics[width=\textwidth]{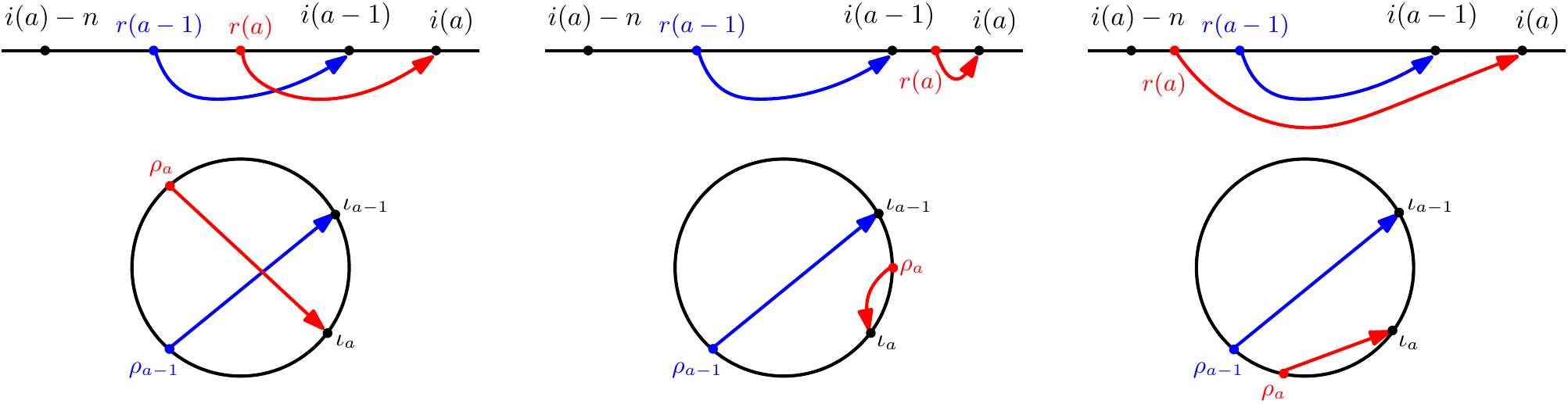}
    \end{center}
    
    Now suppose $r(a-1)<i(a)-n$. We will instead consider the chords in the lower half plane from $r(a-1)+n$ to $i(a-1)$ and from $r(a)$ to $i(a)$; again, gluing $i(a)-n$ and $i(a)$, we obtain the chords $\rho_{a-1} \mapsto \iota_{a-1}$ and $\rho_a \mapsto \iota_a$. Because $\rho_{a-1} \mapsto \iota_{a-1}$ and $\rho_a \mapsto \iota_a$ are aligned, we must have $i(a-1)< r(a-1)+n < r(a)$. Now, consider $f$ written in one-line notation. In positions $r(a-1)$ and $r(a)$, we see the values $i(a-1)$ and $i(a)$, respectively. Because positions $r(a-1)$ and $r(a)$ are more than $n$ apart and $f$ is bounded, the value $r(a)$ occurs somewhere between $i(a-1)$ and $i(a)$ and we see
   \[f= \dots i(a-1) \dots r(a) \dots i(a)\dots.\]
Because $i(a-1)<r(a)<i(a)$, after right-multiplying $f$ by a sequence of length-decreasing simple transpositions, we will always see
\[i(a-1)\dots r(a)\dots i(a)\]
in that order.
 But by assumption, $i$ can be obtained from $f$ by right-multiplication by such a sequence, and the values $i(a-1), i(a)$ are adjacent in $i$. This is a contradiction.
\end{proof}

Combining \cref{prop:monomialtransform,lem:alignedMovesDown}, we obtain the following result on unit necklaces.

\begin{thm}[Unit necklace theorem]\label{thm:rightweak}
Let $\pi, \iota$ be permutations of type $(k, n)$ such that $\iota \ler \pi$. Then the Grassmannlike necklace $\gnec = \gnec_{\bullet,\iota,\pi}$ is of type $(k, n)$ and is a unit necklace in $\tPio_\pi$. 	
	Moreover, $\Delta(\gnec)$ is a basis for the free abelian group $\bbp_\pi \subset \bbc[\tPio_\pi]$. 
\end{thm}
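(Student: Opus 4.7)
The plan is to induct on the length of a chain of aligned toggles leading from the forward Grassmann necklace $\rmci_\pi$ down to $\gnec_{\bullet,\iota,\pi}$. Letting $i, f \in \bd(k,n)$ be the lifts of $\iota$ and $\pi$, the hypothesis $\iota \ler \pi$ gives a length-additive factorization $f = i v$ with $v \in \tilde{S}_n^0$. Choosing a reduced word $v^{-1} = s_{a_1} \cdots s_{a_\ell}$ and setting $\iota^{(0)} = \pi$ and $\iota^{(j)} = \pi \, \overline{s_{a_1}} \cdots \overline{s_{a_j}}$, we obtain a saturated chain $\pi = \iota^{(0)} \gtrdot_\circ \iota^{(1)} \gtrdot_\circ \cdots \gtrdot_\circ \iota^{(\ell)} = \iota$ in circular weak order, with each $\iota^{(j)} \ler \pi$. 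By \cref{lem:alignedMovesDown}, for each $j$ the corresponding position-$a_j$ toggle on $\gnec_{\bullet,\iota^{(j-1)},\pi}$ is aligned and produces $\gnec_{\bullet,\iota^{(j)},\pi}$. The type $(k,n)$ statement is then immediate: the base necklace $\rmci_\pi$ is of type $(k,n)$, and toggling preserves the size of each subset and the ground set (see the remark following \cref{defn:toggle}).

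For the base case of the main induction, $\gnec_{\bullet,\pi,\pi} = \rmci_\pi$ and $\Delta(\rmci_\pi)$ is by definition a $\bbz$-basis of the free abelian group $\bbp_\pi \subset \bbc[\tPio_\pi]$. For the inductive step, assume $\Delta(\gnec_{\bullet,\iota^{(j-1)},\pi})$ is a $\bbz$-basis of $\bbp_\pi$. Because aligned toggles are noncrossing and $\gnec_{\bullet,\iota^{(j-1)},\pi}$ is reachable from $\rmci_\pi$ by a sequence of noncrossing toggles, \cref{prop:monomialtransform} applies: the toggle to $\gnec_{\bullet,\iota^{(j)},\pi}$ replaces one subset $I_a$ by $I_a'$ and gives the identity
\[
\Delta_{I_a'} = \frac{\Delta_{I_{a-1}}\Delta_{I_{a+1}}}{\Delta_{I_a}} \in \bbc[\tPio_\pi].
\]
Viewed on the exponent lattice of $\bbp_\pi$, this step exchanges the basis vector corresponding to $I_a$ for the vector $\mathbf{e}_{I_{a-1}} + \mathbf{e}_{I_{a+1}} - \mathbf{e}_{I_a}$, a unimodular column operation with determinant $-1$. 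Hence $\Delta(\gnec_{\bullet,\iota^{(j)},\pi})$ is again a $\bbz$-basis of $\bbp_\pi$; in particular each of its elements is a unit in $\bbc[\tPio_\pi]$.

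The only real work is in combining the Coxeter-theoretic information (existence of the chain of cover relations in circular weak order) with the combinatorial content of \cref{lem:alignedMovesDown}; the rest is an easy bookkeeping of unimodular changes of basis. The main potential obstacle is the subtle point that \cref{prop:monomialtransform} requires the necklace to be reachable from $\rmci_\pi$ by noncrossing toggles, not merely by any toggles, so one must verify that the aligned toggles constructed from the reduced word stay within this class at every stage --- but this is automatic by induction, since aligned toggles are noncrossing.
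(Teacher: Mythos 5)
Your proof is correct and takes essentially the same route as the paper, which literally proves the theorem by ``combining \cref{prop:monomialtransform,lem:alignedMovesDown}'': you spell out that combination via a reduced word for $i^{-1}f$ and a unimodularity observation for the basis claim. One minor slip: since toggling at position $p$ right-multiplies $\iota$ by $\overline{s_{p-1}}$, the toggle realizing $\iota^{(j-1)} \mapsto \iota^{(j-1)}\overline{s_{a_j}}$ occurs at position $a_j+1$, not $a_j$.
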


That is, if $\iota \leq_\circ \pi$ and if we set $\rho = \pi^{-1}\iota$ as usual, then we get a pair of permutations $\rho,\pi$ satisfying the $k$-subsets condition (P0) and the units condition (P1).

\begin{rmk}\label{rmk:exhibiting}
\cref{thm:rightweak} provides us with {\sl many} $n$-tuples of Pl\"ucker coordinates which are bases for the abelian group $\bbp_\pi \subset \bbc[\tPio_\pi]$. We get one such $n$-tuple for each element in the lower order ideal beneath $f$ in $({\rm Bound}(k,n),\leq_R)$. In particular, we obtain an explicit construction of {\sl many} Pl\"ucker coordinates which are units in $\bbc[\tPio]$. Any such Pl\"ucker coordinate cannot be a mutable cluster variable in any cluster structure on $\tPio_\pi$. 
\end{rmk}

We have the following corollary of \cref{thm:rightweak}\footnote{This was known, but we do not think it has been stated explicitly previously.}:

\begin{cor}[Source vs. target frozens]\label{eg:groupscoincide}
	Let $\rmci=\rmci_\pi$ be a forward Grassmann necklace and let $\lmci$ be the reverse Grassmann necklace with permutation $\pi^{-1}$. Then $\Delta(\lmci)$ is a basis for $\bbp_\pi$. That is, the group of Laurent monomials in the target frozens $\Delta(\rmci)$ coincides with group of Laurent monomials in the source frozens $\Delta(\lmci)$ inside $\bbc[\tPio]$.
\end{cor}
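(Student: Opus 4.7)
The plan is to deduce this directly from the Unit Necklace Theorem (\cref{thm:rightweak}) by applying it to (a rotation of) $\lmci$, so that no new combinatorial work is required.

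First I would recall from \cref{ex:shiftedReverseGN} that the cyclic rotation
\[
\lmci_\pi[k]=(\leftI_{k+1},\leftI_{k+2},\dots,\leftI_n,\leftI_1,\dots,\leftI_k)
\]
is a Grassmannlike necklace with trip permutation $\pi$ and insertion permutation $\iota=\eps_k$. To invoke \cref{thm:rightweak} with these data I need the circular weak order inequality $\eps_k\ler\pi$. But the lift of $\eps_k$ is $e_k$, which is by definition the minimum of $(\tilde S_n^k,\leq_R)$; since the lift $f$ of $\pi$ lies in $\bd(k,n)\subset\tilde S_n^k$, we automatically have $e_k\leq_R f$, hence $\eps_k\ler\pi$.

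With this inequality verified, \cref{thm:rightweak} applies and yields that $\Delta(\lmci_\pi[k])$ is a basis for the free abelian group $\bbp_\pi\subset\bbc[\tPio_\pi]$. Since $\lmci_\pi[k]$ and $\lmci$ consist of the same $n$ Pl\"ucker labels (only the cyclic order differs), $\Delta(\lmci)=\Delta(\lmci_\pi[k])$ as sets, and the conclusion follows immediately.

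There is essentially no obstacle here, since the heavy lifting has already been done in \cref{prop:monomialtransform}, \cref{lem:alignedMovesDown}, and \cref{thm:rightweak}. The only thing to verify is the matching of conventions: that the necklace $\lmci_\pi[k]$ indeed has insertion permutation $\eps_k$ (which is recorded in \cref{ex:shiftedReverseGN}) and that cyclically rotating a Grassmannlike necklace does not affect either its trip permutation or the set of Pl\"ucker coordinates it carries. Both are immediate from the definitions.
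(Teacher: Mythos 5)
Your proof is correct and follows essentially the same route as the paper's: both establish $\eps_k\ler\pi$ by noting that $e_k$ is minimal, identify $\gnec_{\bullet,\eps_k,\pi}$ with the rotated reverse necklace $\lmci_\pi[k]$, and then invoke the Unit Necklace Theorem (\cref{thm:rightweak}). The only difference is a small expository one — you verify minimality via $e_k$ being the bottom of $(\tilde S_n^k,\leq_R)$ while the paper cites its minimality within $\bd(k,n)$ — but these amount to the same fact.
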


\begin{proof}
Recall that $\eps_k=\overline{e_k}$ is the Grassmannian permutation $k+1 \cdots n 1 \cdots k$. Because $e_k$ is the minimal element of $\bd(k, n)$, $\eps_k$ is the minimal element of $\leq_\circ$, and in particular $\eps_k\ler \pi$. The Grassmannlike necklace $\gnec_{\bullet,\eps_k,\pi}$ is the shifted reverse Grassmann necklace $\lmci_\pi[k]$. By \cref{thm:rightweak}, we conclude that $\lmci$ is a unit necklace and that $\Delta(\lmci)$ is a basis for $\bbp_\pi$.
\end{proof}

\subsection{The seed size condition and the main theorem}\label{subsec:correctlength} 
Continuing our general setup, consider a permutation $\pi$ of type $(k,n)$ and a permutation $\rho$ with the property that $\pi \rho \ler \pi$. Consider a relabeled plabic graph~$G^\rho$ with trip permutation $\pi$. The boundary face target labels are the Grassmannlike necklace $\gnec=\gnec_{\rho,\bullet,\pi}$ whose corresponding Pl\"ucker coordinates $\Delta(\gnec) \subset \mathbb{C}[\tPio_\pi]$ are a basis for $\mathbb{P}_\pi$ by the unit necklace theorem. Thus the conditions (P0) and (P1) are satisfied. In this context, it turns out that the seed size condition (P2) is also a sufficient condition for $\Sigma_{G^\rho}^T$ to give a cluster structure on $\tPio_\pi$. In this section, we discuss the seed size condition and then state our main theorem characterizing when $\Sigma_{G^\rho}^T$ gives a cluster structure.

Now, let $\mu:= \rho^{-1}\pi\rho= \iota^{-1}\pi \iota$ be the underlying permutation of $\gnec$ (i.e., the trip permutation of the underlying graph $G$). It also has type $(k,n)$ by \cref{lem:gnecPermuted}. Let $m, f \in {\rm Bound}(k,n)$ be the lifts of $\mu$ and $\pi$. The seed size condition (P2) is that these two bounded affine permutations $m$ and $f$ have the same length.

To more easily analyze this condition, we give a more direct formula for this lift~$m$.

\begin{lem}[Lifting $\mu$] \label{lem:conjugationLift}
	Let $\pi$ and  $\iota$ be permutations of type $(k, n)$ with $\iota \ler \pi$.  Set $\mu:=\iota^{-1} \pi \iota$. Let $f, i, m \in \bd(k,n)$ be the respective lifts of $\pi$, $\iota$, and  $\mu$. Then $m=i^{-1}fi$.
\end{lem}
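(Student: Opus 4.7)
The plan is to verify that $m' := i^{-1}fi$ is bounded, i.e.\ lies in $\bd(k,n)$; since $m'$ visibly projects to $\iota^{-1}\pi\iota = \mu$ under $\tilde{S}_n \twoheadrightarrow S_n$ and has average ${\rm av}(m') = {\rm av}(f) = k$, this identifies $m'$ with the unique bounded lift $m$ of $\mu$. So the whole content is boundedness.

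Setting $r := i^{-1}f \in \tilde{S}_n^0$, the hypothesis $\iota \ler \pi$ translates via \cref{defn:rightweakCircular,defn:rightweakAffine} into $i \leq_R f$, which by definition means $f = ir$ is a length-additive factorization. By \cref{lem:lengthAddAndReflections}, this is equivalent to $T_R(i) \cap T_L(r) = \emptyset$; since $T_L(r) = T_R(r^{-1})$, this precisely says that no pair $c < d$ with $i(c) > i(d)$ also satisfies $r^{-1}(c) > r^{-1}(d)$. This will be the only tool I need.

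Substituting $b = i(c)$ rewrites the bounded condition $c < m'(c) \leq c + n$ as the two inequalities $i^{-1}(b) < r(b) \leq i^{-1}(b) + n$ for every $b \in \mathbb{Z}$. I plan to prove each one by contradiction, in both cases manufacturing an inversion of $i$ whose image under $r^{-1}$ forces $f^{-1}$ outside of its allowed window $[b-n,b)$. For the lower bound, assuming $r(b) \leq i^{-1}(b)$ and excluding the equality case (which would force $f(b)=b$, violating boundedness of $f$), the pair $(r(b),i^{-1}(b))$ is an inversion of $i$ because $i(r(b))=f(b)>b=i(i^{-1}(b))$; length-additivity and the identity $f^{-1}=r^{-1}i^{-1}$ then give $b<f^{-1}(b)$, contradicting $f^{-1}(b)<b$. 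For the upper bound, assuming $r(b) > i^{-1}(b)+n$, I would consider the shifted pair $(i^{-1}(b),\,r(b)-n)$; by $n$-periodicity of $i$ and the bound $f(b)\leq b+n$, this is an inversion of $i$, and length-additivity together with the $n$-periodicity of $r^{-1}$ then forces $f^{-1}(b) < b-n$, again contradicting $f \in \bd(k,n)$.

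The main subtlety is setting up the upper-bound inversion: the ``bad'' index $r(b)$ lies outside the cyclic window of $i^{-1}$ near $b$, so one has to shift it down by $n$ before the inversion of $i$ becomes visible, and then carefully track how the $n$-periodicity of $r^{-1}$ propagates the resulting inequality back into a violation of boundedness of $f$. Once this shifting trick is in place, both contradictions run by essentially the same two-line computation.
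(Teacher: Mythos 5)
Your proof is correct. You verify boundedness of $m' := i^{-1}fi$ directly, rewriting it as $i^{-1}(b) < r(b) \leq i^{-1}(b)+n$ with $r = i^{-1}f$, then in each of the two contradiction cases extracting a concrete pair $(c,d)$ with $c<d$, $i(c)>i(d)$, and, by $T_R(i)\cap T_R(r^{-1})=\emptyset$ (from \cref{lem:lengthAddAndReflections}), $r^{-1}(c)<r^{-1}(d)$; composing then places $f^{-1}(b)$ outside $[b-n,b)$. The edge cases you isolate are handled correctly: $r(b)=i^{-1}(b)$ would force $f(b)=b$, and $r(b)>i^{-1}(b)+n$ forces $f(b)<b+n$ so the shifted pair is a genuine inversion. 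This is a genuinely different route from the paper's proof, which takes $r = f^{-1}i$ and argues via the two-line ``bubble sort'' picture — applying the same sequence of adjacent column swaps to $(f,e_0)$ to reach $(i,r)$, and observing that the relative positions of the numerator symbol $x$ and the denominator symbols $x$ and $x-n$ are never disturbed because the relevant swaps would increase length. Both proofs pivot on length-additivity of $f = ir$, but the paper deduces boundedness from a monotonicity-of-positions argument through an explicit sorting process, whereas you invoke \cref{lem:lengthAddAndReflections} once and work entirely with inversions of affine permutations. Your version is more concise, avoids the bookkeeping of the swap sequence, and makes the role of each hypothesis (boundedness of $f$, length-additivity) explicit in each contradiction; the paper's version makes the ``shadowing'' of $i$ by $r$ visually apparent, which is helpful for the intuition behind the lemma. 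Either argument would serve.
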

\begin{proof}
	Because reducing modulo $n$ is a group homomorphism, it's clear that $\overline{i^{-1}fi}=\mu$. It is also clear that $i^{-1}fi \in \tilde{S}_n^k$, so all that remains to prove is that $i^{-1}fi$ is bounded.
	
	Let $r :=f^{-1}i \in \tilde{S}^0_n$ so that $i^{-1}fi=r^{-1}i$.
	
	We start with the two line notation for $f$; that is, the numerator is $f$ and the denominator is the identity permutation $e_0 \in \tilde{S}^0_n$. Since $i \leq_R f$, we can obtain $i$ in the numerator of this array by repeatedly swapping adjacent numbers $b>a$ in the numerator. We obtain $r$ in the denominator by applying the same sequence of swaps.
	
	Focusing on any particular value $x \in \bbz$, using 
	boundedness of $f$, before performing any swaps we see 
	$$\begin{matrix}
	\cdots & \cdots & x & \cdots & \cdots \\
	\cdots & x-n & \cdots & \cdots & x
	\end{matrix} $$
	appearing in $f$ in that order (specifically, $x$ in the numerator appears strictly left of $x$ and weakly right of $x-n$ in the denominator). Also, note that $f(x-n) \leq x < f(x)$. Thus, we will never swap $f(x)$ and $x$ in the numerator, or $x$ and $f(x)$. So the relative order of these three symbols ($x$ in the numerator, $x, x-n$ in the denominator) is preserved for all $i$ and $r$.
	
	Let $a$ be given. 
	Let $x = i(a)$ be the value in the numerator at position $a$. By the argument above, the $x$ in the denominator is in a position strictly right of position $a$. The position of $x$ in the denominator is $r^{-1}(x)$, so we have $a<r^{-1}i(a)$. And the 
	$x-n$ in the denominator is weakly to the left of $a$, so we have 
	$r^{-1}(i(a)-n)=r^{-1}i(a)-n \leq a$. So $r^{-1}i$ is bounded.
\end{proof}

\begin{rmk}\label{rmk:mshorterf}
In the situation of \cref{lem:conjugationLift}, we have 
$$\ell(m) = \ell(i^{-1}f i) \leq \ell(i^{-1}f)+\ell(i) = \ell(f) $$
where in the last step we have used $i \leq_R f$. Our main theorem characterize the cases in which the inequality $\ell(m) 
\leq \ell(f)$ is in fact an equality. In our running setup, the inequality $\ell(m) 
\leq \ell(f)$ means that the relabeled plabic graphs $G^{\rho}$ we consider always have {\sl at least} $\dim \tPio_\pi$ many faces. 
\end{rmk}

Before stating our main theorem characterizing when the seed size condition (P2) holds, we recall the following result of Farber and Galashin.

\begin{thm}[{\cite[Theorem 6.3]{FG}}]\label{thm:FG}
	Let $G^\rho$ be a relabeled plabic graph with trip permutation $\pi$, and let $\gnec=\gnec_{\rho, \bullet, \pi}$ be the Grassmannlike necklace of target labels of boundary faces of $G^\rho$. 
	If $\gnec$ is weakly separated, then so is the target collection $\targetF(G^\rho)$. 
\end{thm}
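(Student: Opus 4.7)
The plan is to reduce the statement to an induction on square moves of the underlying graph, relying on \cref{thm:OPS}. By \cref{rmk:relationships} and \cref{lem:gnecPermuted}, we have $\targetF(G^\rho) = \rho(\targetF(G))$ and $\gnec = \rho(\rmci_\mu)$, where $\mu = \rho^{-1}\pi\rho$ is the trip permutation of the underlying graph $G$. Since $G$ is an ordinary reduced plabic graph, $\targetF(G)$ is a maximal weakly separated collection in $\binom{[n]}{k}$ extending $\rmci_\mu$ by \cref{thm:OPS0}. The theorem thus becomes: if $\rho(\rmci_\mu)$ is weakly separated in the standard cyclic order on $[n]$, then so is $\rho(\targetF(G))$.

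With $\rho$ fixed, I would induct on the minimum number of square moves transforming $\targetF(G)$ into some conveniently chosen base collection $\targetF(G_0)$, where $G_0$ is a reduced plabic graph with trip permutation $\mu$. By \cref{thm:OPS}, the set of maximal weakly separated collections extending $\rmci_\mu$ is connected via square moves (with every intermediate collection still extending $\rmci_\mu$), so this distance is well-defined. For the base case I would take $G_0$ coming from a reduced word for (the lift of) $\mu$, whose face labels admit a simple combinatorial description (essentially, cyclic intervals obtained by successive transpositions) which can be matched directly against the hypothesis on $\gnec$ to verify weak separation of $\rho(\targetF(G_0))$ by inspection.

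For the inductive step, suppose $G \to G'$ is a single square move swapping $Sac \in \targetF(G)$ for $Sbd \in \targetF(G')$, with $a<b<c<d$ in cyclic order and $Sab,Sbc,Scd,Sad \in \targetF(G)$. Under $\rho$, this replaces $\rho(Sac)$ by $\rho(Sbd)$ in $\targetF(G^\rho)$. The inductive hypothesis says $\rho(\targetF(G))$ is weakly separated, so it suffices to show that $\rho(Sbd)$ is weakly separated with every $\rho(T) \in \rho(\targetF(G)) \setminus \{\rho(Sac)\}$.

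The main obstacle is this inductive step. Since $\rho(a),\rho(b),\rho(c),\rho(d)$ need not lie in cyclic order in $[n]$, the move $\rho(Sac) \to \rho(Sbd)$ is not a standard square move, and one cannot simply invoke OPS-style preservation of weak separation. I would argue by contradiction: assume $\rho(Sbd)$ and some $\rho(T)$ fail to be weakly separated, with witness quadruple $a'<b'<c'<d'$. Using the planarity of $G'$, I would trace a path of faces from the face labeled $Sbd$ to the boundary, where at each step two adjacent face labels differ by swapping a single element. Along such a path, the obstruction provided by the quadruple $(a',b',c',d')$ propagates, and one extracts a pair of \emph{boundary} face labels---hence two elements of $\gnec$---whose non-weak separation is forced. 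This would contradict the hypothesis that $\gnec$ is weakly separated. The most delicate point is controlling this propagation: one must carefully argue that some ``active'' pair from $\{a',b',c',d'\}$ survives each local step and ultimately reaches the boundary, which I expect to require viewing $G^\rho$ through the lens of its associated plabic tiling and using the connectivity properties of edges labeled by a given value.
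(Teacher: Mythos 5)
This theorem is cited from Farber--Galashin \cite{FG} rather than reproved in the present paper, so there is no in-text proof to compare against; I will assess the proposal on its own merits. The serious gap is in the inductive step, and you flag it yourself without resolving it. After a square move $Sac \leftrightarrow Sbd$ on $\targetF(G)$, the collections $\rho(\targetF(G))$ and $\rho(\targetF(G'))$ differ by swapping $\rho(Sac)$ for $\rho(Sbd)$. But $\rho$ generally does not respect the cyclic order of $\{a,b,c,d\}$. If, say, the cyclic order of the images is $\rho(a) < \rho(c) < \rho(b) < \rho(d)$, then $\rho(Sac)$ and $\rho(Sbd)$ are two \emph{corners} of the octahedron configuration in the permuted cyclic order, not the two diagonals; the standard local lemma then relates the weak-separation status of $\rho(Sab)$ and $\rho(Scd)$ rather than of $\rho(Sac)$ and $\rho(Sbd)$. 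So the replacement you are performing is not a square move in $\binom{[n]}{k}$ with the ordinary cyclic order, and \cref{thm:OPS} gives no purchase. Your proposed remedy---a contradiction argument that traces a path of faces out to the boundary---is not a proof: no rule is given for choosing the path, no invariant is exhibited that survives each step, and the claim that ``a pair of boundary face labels whose non-weak separation is forced'' is exactly the global-to-local transfer that constitutes the theorem. The base case is also underspecified: for general $\mu$, reduced-word plabic graphs do not have face labels that are cyclic intervals, and no inspection-level verification of $\rho(\targetF(G_0))$ being weakly separated is available.

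For comparison, Farber and Galashin's argument (\cite[Theorem 6.3]{FG} and surrounding material) is geometric rather than inductive: they relabel the vertices of the plabic tiling of $\targetF(G)$ by $\rho$, show that weak separation of $\gnec$ is equivalent to the polygonal curve $\zeta(\gnec)$ being (essentially) simple, and deduce weak separation of the full collection from the fact that the tiling then embeds inside this curve. The paper records the upshot of this in \cref{rmk:FGinside}. That plabic-tiling/topological framework is precisely what replaces the ``propagation'' your sketch gestures at; without importing something comparable (or a global length/affine-permutation argument in the spirit of \cref{secn:conjugationproof}), the square-move induction does not close.
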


Recall that $\targetF(G^\rho) = \rho(\targetF(G))$. Farber and Galashin worked directly with the right hand side of this equality, i.e. they did not frame their results in terms of relabeled plabic graphs. 

\begin{rmk}\label{rmk:FGinside} In the setting of \cref{thm:FG}, let $\mu$ be the trip permutation of the underlying graph $G$. Recall that the permutation $\rho$ sends 
$\targetF(G) \to \targetF(G^\rho)$ and sends $\rmci_\mu$ to $\gnec$. Farber and Galashin prove moreover that $\rho$ induces a bijection from
	\[\mathscr{C}= \{ \mcc  \colon \rmci_{\mu} \subset \mcc \subset \mcm_\mu \text{ and } \mcc \text{ is weakly separated and maximal by inclusion}\}
	\]
	to
		\[\mathscr{C}'= \{ \mcc' : \gnec \subset \mcc \subset D^{\rm in}_\gnec\text{ and } \mcc \text{ weakly separated, maximal by inclusion}\}
	\]
where $D^{\rm in}_\gnec \subset \binom{[n]}k$ denotes the $k$-subsets lying inside the closed curve defined by $\gnec$ in the plabic tiling (cf.~\cite[Definition 4.3]{FG}). We expect that $\mathscr{C}'$ consists of all maximal weakly separated collections $\mcc'$ with $\gnec \subset \mcc' \subset \mcm_\pi$.
\end{rmk}

With these preparations in hand, we restate our main theorem:
\begin{thm}\label{thm:main}
	Let $G^\rho$ be a relabeled plabic graph with trip permutation $\pi$. Let $\mu$ be the trip permutation of the underlying plabic graph $G$. Let $\gnec$ be the Grassmannlike necklace given by the target labels of boundary faces of $G^\rho$.
	
	Suppose that $\pi \rho \ler \pi$. 
	Then the following are equivalent:
	\begin{enumerate}
\item The seed $\Sigma_{G ^\rho}^T$ determines a cluster structure on $\tPio_\pi$.
\item The number of faces of $G^\rho$ is $\dim \tPio_\pi$. Equivalently, $\dim \tPio_\mu = \dim \tPio_\pi$.
\item The Grassmannlike necklace $\gnec_{\rho, \pi \rho, \pi}$ (equivalently, the target collection $\targetF(G^\rho)$) is a weakly separated collection. 
\item The open positroid varieties $\tPio_\pi$ and $\tPio_{\mu}$ are isomorphic.  
\end{enumerate}
\end{thm}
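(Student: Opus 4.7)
The plan is to prove the cyclic chain of implications $(1) \Rightarrow (2) \Rightarrow (3) \Rightarrow (4) \Rightarrow (1)$. The implication $(1) \Rightarrow (2)$ is immediate: any cluster in a cluster structure on an affine variety $V$ has size $\dim V$, and the cluster of $\Sigma_{G^\rho}^T$ has size equal to the number of faces of $G^\rho$ (which equals the number of faces of the underlying graph $G$, hence equals $\dim \tPio_\mu$ by~\eqref{eq:lengthiscodim}).

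For $(2) \Rightarrow (3)$, set $\iota := \pi\rho$, so $\iota \ler \pi$ and the necklace in question is $\gnec = \gnec_{\rho,\iota,\pi}$. By \cref{lem:conjugationLift} and \cref{rmk:mshorterf}, $\ell(m) \leq \ell(f)$ always, with $(2)$ equivalent to equality. I would argue by induction on $\ell(i^{-1}f)$, moving down from $\iota = \pi$ (where $\gnec = \rmci_\pi$ is weakly separated and $m = f$) by a sequence of aligned toggles, which is legal by \cref{lem:alignedMovesDown}. At each inductive step, one uses the chord-diagram interpretation of aligned toggles together with the three-term Pl\"ucker relations~\eqref{eq:signs} to verify that such a toggle preserves the equality $\ell(m) = \ell(f)$ if and only if it preserves weak separation of the necklace. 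The equivalence between weak separation of the boundary necklace and of the full target collection $\targetF(G^\rho)$ claimed in (3) is supplied by \cref{thm:FG}.

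The implication $(3) \Rightarrow (4)$ is the technical heart of the argument. Under the weak separation hypothesis, I would construct a generalized twist isomorphism $\tPio_\pi \xrightarrow{\sim} \tPio_\mu$ using the framework of \cref{sec:twist}. This extends the Muller--Speyer twist to a map between potentially different positroid varieties; weak separation of $\gnec$ is needed to verify that the twist of a matrix representing a point of $\tPio_\pi$ has the correct Pl\"ucker vanishing/non-vanishing pattern to represent a point of $\tPio_\mu$, and conversely. Inverses on each side are produced by the ``other'' (left vs.\ right) twist, mimicking \cite{MSTwist}.

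For $(4) \Rightarrow (1)$ we transport the known cluster structure. The isomorphism $\phi : \tPio_\pi \to \tPio_\mu$ supplied by $(4)$ lets us pull back the target cluster structure on $\tPio_\mu$ determined by $\Sigma_G^T$ (which exists by \cref{thm:GL} and \cref{rmk:targetpreference}). A direct boundary-measurement computation, in the spirit of \cite[Proposition 7.13]{MSTwist}, should show that $\phi^\ast \Delta_I$ coincides with $\Delta_{\rho(I)}$ up to a Laurent monomial in frozen variables, so that $\phi^\ast \Sigma_G^T$ is quasi-equivalent to $\Sigma_{G^\rho}^T$ and hence determines the same cluster structure on $\tPio_\pi$; in particular $\mca(\Sigma_{G^\rho}^T) = \bbc[\tPio_\pi]$. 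The main obstacle will be the construction and verification of the generalized twist in $(3) \Rightarrow (4)$: one needs to show that the matrix produced by the twist procedure indeed lies in (and parameterizes all of) the ``other'' positroid variety, which requires substantial new input beyond the automorphism case of \cite{MSTwist}, and crucially uses the weak separation hypothesis to control which Pl\"ucker coordinates vanish after twisting.
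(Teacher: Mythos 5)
Your high-level decomposition into a cycle $(1)\Rightarrow(2)\Rightarrow(3)\Rightarrow(4)\Rightarrow(1)$ is a legitimate alternative to the paper's approach, which instead places condition (2) at the center and proves $(2)\Leftrightarrow(3)$, $(2)\Rightarrow(4)$, $(2)\Rightarrow(1)$ directly, together with the easy $(1)\Rightarrow(2)$ and $(4)\Rightarrow(2)$. Your $(1)\Rightarrow(2)$ and the broad outline of $(3)\Rightarrow(4)\Rightarrow(1)$ via a Grassmannlike twist and a quasi-equivalence computation are consistent with what the paper actually does in \cref{thm:twistsareinverses}, \cref{prop:faceLabelsTorus}, and \cref{thm:genPlabicClusterStruc} (though the mechanism in $(3)\Rightarrow(4)$ is really about inverting the twist map via the \emph{dual necklace} $\gnec^*$ being a unit necklace, not directly about matching Pl\"ucker vanishing patterns).

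The step that has a real gap is $(2)\Rightarrow(3)$. You propose to induct on $\ell(f)-\ell(i)$, starting from $\iota=\pi$ and descending one aligned toggle at a time, and ``verify that such a toggle preserves the equality $\ell(m)=\ell(f)$ if and only if it preserves weak separation.'' The problem is that along a downward chain from $\pi$ to $\varepsilon_k$ both conditions can fail at an intermediate $\iota'$ and then \emph{recover} further down (indeed they both hold at $\varepsilon_k$ automatically). Writing $m_\iota=s_{a-1}m_{\iota'}s_{a-1}$ when $\iota=\iota'\overline{s_{a-1}}$, conjugation by a simple reflection can change length by $0$ or $\pm2$; so knowing that $\ell(m_{\iota'})<\ell(f)$ tells you nothing about $\ell(m_\iota)$, and similarly a necklace that is not weakly separated can become weakly separated after a single toggle. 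Your proposed local invariant (``the toggle flips one condition iff it flips the other'') is logically equivalent to the global statement you are trying to prove, and the cited tools (chord diagrams, the sign table \eqref{eq:signs}) do not by themselves furnish a proof of it. The paper proves $(2)\Leftrightarrow(3)$ by an entirely different, non-inductive argument: it reduces to \cref{lem:conjugation}, which says $\ell(m)<\ell(f)$ iff some transposition $t$ satisfies $\ell(tr)<\ell(r)$ and $\ell(ti)<\ell(i)$, and then matches such a $t$ against a witness $(a,b,c,d)$ for non-weak-separation using the two-line notation for $i$ over $r=f^{-1}i$ (see \cref{secn:conjugationproof}). You would need to supply something of comparable substance here; the toggle-induction as stated does not close. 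Note also that \cref{fig:notreachable} shows the toggle graph restricted to $\sep_\pi$ can be disconnected, which is a warning sign that arguments built purely on local toggle moves within the ``good'' region cannot reach everything.

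One smaller point on $(4)\Rightarrow(1)$: the quasi-equivalence computation there compares $\varphi^*(\Sigma^S_G)$ (the \emph{source} seed on $\tPio_\mu$, pulled back along a double twist $\varphi=\rt_\mu\circ\rt_\gnec\circ\underline\eps$) with $\Sigma^T_{G^\rho}$, not the pullback of the target seed; and you need the $\hat y$-invariance check in \cref{defn:seedorbit} as well as the monomial rescaling. This is all doable once you have the concrete twist isomorphism from the previous step, but it is more than a boundary-measurement identity.
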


\begin{proof}[Outline of proof]
The equivalence of the two formulations of (2) is Equation \eqref{eq:lengthiscodim}. The equivalence of the two formulations of (3) is \cref{thm:FG}. 

We give a direct combinatorial proof that the Coxeter-theoretic condition (2) and the combinatorial condition (3) are equivalent in \cref{secn:conjugationproof}. 
The algebro-geometric equalities (4) and  (1) both clearly imply the dimension condition (2). 

Condition (1) implies (4): condition (1) says that $\bbc[\tPio_\pi] = \mca(\Sigma^T_{G^\rho})$. By \cref{thm:GL}, $\bbc[\tPio_\mu] = \mca(\Sigma^S_G)$. Since these seeds have the same quiver (including the frozen vertices!), the cluster algebras are isomorphic as rings, so (4) holds. 

We show that (2) implies (3) in \cref{thm:twistsareinverses} by identifying an explicit ``Grassmannlike'' {\sl twist isomorphism} $\tPio_\pi \to \tPio_\mu$ generalizing the twist automorphisms of Muller and Speyer \cite{MSTwist}. 

To show that (2) implies (1), we first establish that $\Sigma_{G^\rho}^T$ is a seed in $\bbc(\tPio_\pi)$ in \cref{prop:faceLabelsTorus} and then deduce that $\mca(\Sigma_{G^\rho}^T)= \bbc[\tPio_\pi]$ in \cref{thm:genPlabicClusterStruc}.
\end{proof}

When $\rho$ is the identity permutation, \cref{thm:main} says that target seeds $\Sigma^T_G$ determine cluster structures in $\tPio_\pi$ as claimed in \cref{rmk:targetpreference}.

\begin{rmk}\label{rmk:whenapplies}
When $\gnec$ is not weakly separated, we see no good way of creating a seed whose frozen variables are $\gnec$ and whose mutable variables are Pl\"ucker coordinates. Perhaps there is some kind of construction of seeds with frozen variables $\gnec$ and with non-Pl\"ucker mutable variables. 
\end{rmk}

\begin{example}\label{eg:differentbases}
Consider the bottom center graph $G^\rho$ from \cref{fig:fourplabics}. Its boundary face labels are the Grassmannlike necklace $\gnec_2$ from \cref{ex:easyToggles}. The underlying plabic graph $G$ has trip permutation $\mu=564123$ with forward Grassmann necklace 
$$\rmci_\mu=(123, 235, 356, 456, 156, 126).$$
By Oh's theorem, the corresponding positroid is $\mcm_\mu = \binom {[6]} 3 \setminus \{134,234,345,346\}$, with $16$ bases in total. The open positroid variety $\tPio_\mu$ consists exactly of points where columns 3 and 4 are parallel in any matrix representative and the Pl\"ucker coordinates $\Delta(\rmci_{\mu})$ are nonvanishing. 

The trip permutation of $\gnec_2$ is the permutation $\pi = 465213$ whose forward Grassmann necklace is given in \eqref{eg:pi}. Using Oh's theorem again, the corresponding positroid is $\mcm_\pi = \binom{[6]}3 \setminus \{345,156\}$ which has 18 bases. 

There does not seem to be any relationship between the positroids $\mcm_\mu$ and $\mcm_\pi$. On the other hand, one can easily check that the necklace $\gnec_2$ is weakly separated. The removal permutation is $s_4$ and one can check that $\pi s_4 \leq_\circ \pi$. Thus the implication (2)$\implies$(3) from \cref{thm:main} says that the open positroid varieties $\tPio_\pi$ and $\tPio_\mu$  are isomorphic. 
\end{example}

\begin{example}\label{eg:someeasytoggles2}
Continuing the previous example, the lift of the trip permutation $\pi$ of $\gnec_2$ is $f=[4, 6, 5, 8, 7, 9]$ in window notation. The lift of the underlying permutation $\mu$ is $m = [5,6,4,7,8,9]$. One can check that these two affine permutations have the same length (each has two inversions), in agreement with the implication (2)$\implies$(1) from \cref{thm:main}. 
\end{example}

Each of the red necklaces in \cref{fig:notreachable} is an example of a unit necklace (for the open positroid variety indexed by $\pi = 5761432$) 
which is not weakly separated. 

\begin{rmk}
Let $f \in \bd(k,n)$ with corresponding $\pi = \overline{f} \in S_n$. Notice that $e^{-1}_kfe_k \in {\rm Bound}(k,n)$ 
has the same length as $f$ because right or left multiplication by the elements $\{e_b \colon b \in \bbz\}$ does not affect length. Thus, the implication (1)$\implies$(2) from \cref{thm:main} says that the Grassmannlike necklace $\gnec_{\bullet, \eps_k, \pi}$ is weakly separated. But $\gnec_{\bullet, \eps_k, \pi}$ is just the shifted reverse Grassmann necklace $\lmci_{\pi}[k]$ (cf. \cref{ex:shiftedReverseGN}). So this special case of \cref{thm:main} is the well-known statement that reverse Grassmann necklaces are weakly separated.
\end{rmk}

\section{Twist isomorphisms from necklaces}\label{sec:twist}

We explain in this section how, in the setting of Theorem~\ref{thm:rightweak}, a Grassmannlike necklace $\gnec$ encodes a ``Grassmannlike twist map''
{\sl between} two open positroid varieties. These varieties need not be of the same dimension. The definition is an extension of the twist {\sl automorphisms} given by Muller and Speyer \cite{MSTwist}. 
We show that if $\gnec$ satisfies \cref{thm:main} (2), then our twist map is an isomorphism. Moreover, this isomorphism fits into a commutative diagram which is in the spirit of the chamber ansatz \cite{BFZ} and of the main commutative diagram from \cite{MSTwist}. Using this commutative diagram, we reach our conclusions concerning cluster structures and total positivity. 

\subsection{Grassmannlike twist maps}
Endow $\bbc^k$ with an inner product $\langle \cdot , \cdot\rangle$, and let $\Mato(k, n)$ denote the space of full-rank $k \times n$ matrices. Let $\gnec = (I_1,\dots,I_n)$ be a Grassmannlike necklace. 
We use the notation 
$$D(\gnec) = \{M \in \Mato(k,n)\colon \Delta_I(M) \neq 0 , \text{ for all $I \in \gnec$}\}$$ to denote the Zariski-open subset of $\Mato(k,n)$ defined by the non-vanishing of Pl\"ucker coordinates $\Delta(\gnec)$. We use the same notation $D(\gnec) \subset \Gr(k,n)$ to denote the image of these matrices in the Grassmannian. 

For $M\in \Mato(k, n)$, we use $M_a$ to denote the $a$th column of $M$. 
\begin{defn}[$\gnec$-twist maps]\label{defn:gentwist}
Let $\gnec = (I_1,\dots,I_n)$ be a Grassmannlike necklace of type $(k,n)$ with removal permutation $\rho$ and insertion permutation $\iota$. Let $M \in D(\gnec) \subset \Mato(k, n)$ have columns $M_1,\dots,M_n$. Then the {\sl right $\gnec$-twist of $M$} is the matrix $\rt_\gnec(M) \in \Mat^\circ(k,n)$ whose $a$th column $\rt_\gnec(M)_a$ is the unique vector such that for all $b \in I_a$,

\begin{equation}\label{eq:twistdefn}
\langle (\rt_\gnec(M))_a,M_b \rangle  = \begin{cases}
1 & \text{ if } \rho(a)=b\\
0 & \text{ else}.
\end{cases}
\end{equation}

Similarly, the \emph{left $\gnec$-twist of $M$} is the matrix $\lt_\gnec(M)$ such that for all $a$, $\lt_\gnec(M)_a$ is the unique vector such that for all $b \in I_{a+1}$,  

\begin{equation}\label{eq:lefttwistdefn}
\langle (\lt_\gnec(M))_a,M_b \rangle  = \begin{cases}
1 & \text{ if } \iota(a)=b\\
0 & \text{ else}.
\end{cases}
\end{equation}
 
Both \cref{eq:twistdefn,eq:lefttwistdefn} do define unique vectors, since by assumption the columns of $M$ indexed by $I_a$ form a basis of $\bbc^k$.

\end{defn}

An identical argument as the proof of \cite[Prop. 6.1]{MSTwist} shows that the $\gnec$-twist maps descend from ${\rm Mat}^\circ(k,n)$ to $\Gr(k,n)$. 

For a permutation $\pi$, we abbreviate $\rt_\pi := \rt_{\rmci_\pi}$ where the latter denotes the right $\gnec$-twist in the case $\gnec = \rmci_\pi$ is a forward Grassmann necklace. Similarly we set $\lt_\pi := \lt_{\lmci_\pi}$.

\begin{rmk}
Marsh and Scott defined a version of a twist map for the top-dimensional open positroid variety \cite{MarshScott}. Muller and Speyer slightly modified this definition and extended it to all positroid varieties. More specifically, they introduced right and left twist maps $\rt$ and $\lt$, which they viewed as piecewise-continuous maps on $\Gr(k,n)$ whose domains of continuity are the open positroid varieties. 
The right and left twist maps are mutual inverses. 

The restriction of the Muller-Speyer right twist map to a given open positroid variety $\tPio_\pi$ is the map $\rt_\pi$ we have defined above, and similarly for the left twist\footnote{Our definition looks slightly different because of our slightly different indexing conventions for reverse Grassmann necklaces.}.  Unlike Muller and Speyer, however, we view the maps $\rt_\pi$ and $\lt_\pi$ as rational maps on $\tGr(k,n)$ whose domains of definition are $D(\rmci_\pi)$ and $D(\lmci_\pi)$ respectively. These maps are {\sl not} mutual inverses when viewed on the larger domain $D(\rmci_\pi) \cap D(\lmci_\pi)$ on which they are defined. Despite this, a key technical step in our forthcoming arguments is to show that 
$ \Delta_I(\lt_\pi \circ \rt_\pi(x))=\Delta_I(x)$ for specific $I \in \binom{[n]}k$ which happen to arise as face labels of certain plabic graphs, see \cref{lem:twistisalmostinverse}. 
\end{rmk}

\begin{example}\label{eg:thetwotwists}
Let $\gnec= \gnec_2$ be the necklace from \cref{ex:easyToggles} and \cref{eg:differentbases}. Let $M_1,\dots,M_6 \in \mathbb{C}^3$ be columns of a $3 \times 6$ matrix representative for a point in $\tGr(3,6)$. Let $\times \colon \mathbb{C}^3 \otimes \mathbb{C}^3 \to \mathbb{C}^3$ denote cross product of vectors. Then 
\begin{equation}
\rt_\gnec(M) = \begin{bmatrix}
	\frac{M_2 \times M_3}{\Delta_{123}(M)} &
	\frac{M_3 \times M_4}{\Delta_{234}(M)} &
	\frac{M_4 \times M_6}{\Delta_{346}(M)}&
	\frac{M_4 \times M_6}{\Delta_{456}(M)}&
	\frac{M_1 \times M_6}{\Delta_{146}(M)}&
	\frac{M_1 \times M_2}{\Delta_{126}(M)}
\end{bmatrix}
\end{equation}
while 
\begin{equation}
\lt_\gnec(M) = \begin{bmatrix}
	\frac{M_2 \times M_3}{\Delta_{234}(M)}&
	\frac{M_3 \times M_4}{\Delta_{346}(M)}&
	\frac{M_4 \times M_6}{\Delta_{456}(M)}&
	\frac{M_4 \times M_6}{\Delta_{146}(M)}&
	\frac{M_1 \times M_6}{\Delta_{126}(M)}&
	\frac{M_1 \times M_2}{\Delta_{123}(M)}
\end{bmatrix}.
\end{equation}
\end{example}

Our next result describes the image of the right and left twist maps $\rt_\pi$ and $\lt_\pi$. It is based on \cite[Prop. 6.6]{MSTwist}.
\begin{thm}[Based on Muller-Speyer]\label{thm:MStwist}
Let $\pi$ be a permutation of type $(k, n)$. Then the right twist map $\rt_\pi$ descends to a regular map $D(\rmci_\pi) \twoheadrightarrow \tPio_\pi$. Similarly, the left twist map $\lt_\pi$ descends to a regular map $D(\lmci_\pi) \twoheadrightarrow \tPio_\pi$.
\end{thm}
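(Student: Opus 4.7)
The statement is essentially \cite[Prop.~6.6]{MSTwist} rewritten in the present paper's conventions. By the symmetry between left and right twists, my plan is to establish the assertion for $\rt_\pi$ and then indicate the analogous steps for $\lt_\pi$. The argument has three main steps, with surjectivity couplng the two twists at the end.

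First, I would verify that $\rt_\pi$ descends from $\Mato(k,n)$ to $\Gr(k,n)$ and is regular on $D(\rmci_\pi)$. For $A \in \GL_k$, the defining equations \eqref{eq:twistdefn} with $M$ replaced by $AM$ become $\langle v, AM_b\rangle = \langle A^T v, M_b\rangle$, so the uniqueness clause in the twist's definition forces $\rt_\pi(AM) = (A^T)^{-1}\rt_\pi(M)$. Since this is again a left $\GL_k$-action, the map descends to the Grassmannian. Regularity on $D(\rmci_\pi)$ follows immediately from Cramer's rule: the linear system defining column $a$ has coefficient matrix of determinant $\Delta_{I_a}(M) \neq 0$.

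Second---and this is the heart of the argument---I would show that the image lies in $\tPio_\pi$. Two families of Pl\"ucker identities are needed. By Cramer's rule applied to \eqref{eq:twistdefn}, one obtains the explicit formula $\Delta_{I_a}(\rt_\pi(M)) = \pm \Delta_{I_a}(M)^{-1}$, so every necklace Pl\"ucker is nonvanishing on the image. For the vanishing half, I would use Oh's theorem \eqref{eq:recoverpositroidfromnecklace}: any $J \notin \mcm_\pi$ fails $J \geq_i \rightI_i$ for some $i \in [n]$, and this combinatorial inequality can be converted into a linear dependence among the $k$ columns of $\rt_\pi(M)$ indexed by $J$, forcing $\Delta_J(\rt_\pi(M))$ to vanish. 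This is the content of the Muller--Speyer computation, and it identifies the image of $\rt_\pi$ with a subset of $\tPio_\pi$.

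Third, I would establish surjectivity by bringing in $\lt_\pi$. Applying the analogue of the first two steps to $\lt_\pi$ produces a regular map $\lt_\pi \colon D(\lmci_\pi) \to \tPio_\pi$. Since $\tPio_\pi \subseteq D(\rmci_\pi) \cap D(\lmci_\pi)$ by definition of the open positroid variety, both twists restrict to endomorphisms of $\tPio_\pi$, and the computation behind \cite[Prop.~7.13]{MSTwist} (alluded to in \cref{rmk:twistsquared}) shows these restrictions are mutually inverse automorphisms of $\tPio_\pi$. In particular each twist is surjective onto $\tPio_\pi$. The main obstacle is the vanishing assertion in the second step---converting the combinatorial failure $J \not\geq_i \rightI_i$ into an algebraic linear dependence among columns of $\rt_\pi(M)$; beyond this every ingredient is routine bookkeeping, with the only genuine novelty being to confirm that our reindexing of reverse Grassmann necklaces (cf.~\eqref{eq:revGrneckconvention}) does not disturb any sign or shift in the Muller--Speyer computations.
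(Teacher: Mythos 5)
Your high-level structure matches the paper's, and steps 1 and 3 (descent/regularity, surjectivity) are handled essentially as the paper does: step 1 by inspecting the $\GL_k$-equivariance of \eqref{eq:twistdefn}, step 3 by observing that $\rt_\pi$ restricts to an automorphism of $\tPio_\pi$ (the paper simply cites this; you route through the mutual-inverse property with $\lt_\pi$, which amounts to the same appeal to \cite{MSTwist}).

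The genuine gap is in step 2, the vanishing assertion. You treat it as a direct citation of ``the Muller--Speyer computation'' and locate the only novelty in sign/shift bookkeeping for your reverse-necklace indexing. But \cite[Prop.~6.6]{MSTwist} is about the twist restricted to $\tPio_\pi$; the present theorem enlarges the domain to all of $D(\rmci_\pi)$, which strictly contains $\tPio_\pi$. This is not a rewording: MS's proof of the vanishing invokes linear independence of columns of a matrix representative $A$ for a point $x \in \tPio_\pi$, an argument that does not literally apply to a general $x \in D(\rmci_\pi)$. The paper's proof is short precisely because it identifies and resolves this exact subtlety: the implication $i_d \in \rightI_{j_c}$ appearing in MS's argument is in fact a combinatorial property of the Grassmann necklace and positroid $\mcm_\pi$, independent of the choice of matrix representative or of whether $x$ lies in the smaller domain, so the argument extends to $D(\rmci_\pi)$ unchanged. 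Your proposal never flags that MS's determinantal argument is set up for the smaller domain, and the phrase ``converting the combinatorial failure into a linear dependence among columns of $\rt_\pi(M)$'' elides precisely the step where one must justify the extension. Identifying and explaining that justification is the mathematical content of this proof; without it, the step you call ``routine bookkeeping'' is unsupported.
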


\begin{proof}
	Let $\rmci=\rmci_\pi$ and $\lmci=\lmci_\pi$.
	
	We already know that $\rt_\pi: D(\rmci)\to \widetilde{\Gr(k, n)}$ is a regular map.
	
To show that $\rt_\pi$ lands in $\tPio_\pi$ we need to show that all coordinates in $\Delta(\binom{[n]}k \setminus \mcm_\pi)$ vanish on the image and show also that all coordinates in $\Delta(\rmci)$ do not vanish. We have the same determinantal formula \cite[Lemma 6.5]{MSTwist} describing Pl\"ucker coordinates of $\rt_\pi(x)$. Using this formula we see 
that $\Delta(\rmci)$ is non-vanishing on the image (cf.~\cite[Equation (9)]{MSTwist}). 

We use the same formula to see that $\Delta_J$ vanishes on the the image of $\rt_\pi$ when $J \notin \mcm_\pi$. Let us clarify a confusing point. In the proof of \cite[Proposition 6.6]{MSTwist}, Muller and Speyer argue an implication $i_d \in \rightI_{j_c}$, by taking a representative matrix~$A$ of $x \in \tPio$ and making an argument about linear independence of columns of $A$. The assumption $x \in \tPio$ is more restrictive than the assumption $x \in D(\rmci)$. However, the implication $i_d \in \rightI_{j_c}$ is a property of the necklace $\rmci_\pi$ and the positroid $\mcm_\pi$, i.e. it is not a property of matrix representative~$A$.  
The rest of the proof of \cite[Proposition 6.6]{MSTwist} proceeds without change. 

Finally, the map is surjective because it is an automorphism when restricted to 
$\tPio \subset D(\rmci)$. 

The claims about $\lt_\pi$ follow by a symmetric argument.
\end{proof}

A permutation $\rho \in S_n$ determines an automorphism of $\Mato(k,n)$, and likewise $\Gr(k,n)$, by column permutation:
$$ \begin{bmatrix}
A_1 & \cdots & A_n
\end{bmatrix}
\mapsto 
 \begin{bmatrix}
A_{\rho(1)} & \cdots & A_{\rho(n)}
\end{bmatrix}.
$$
We denote these automorphisms by the same symbol $\rho$.  
This automorphism acts on Pl\"ucker coordinates via $\Delta_I(\rho(X)) = \pm\Delta_{\rho(I)}(X)$ where the extra sign $\pm$ is the sign associated with sorting the values $\rho(i_1),\dots,\rho(i_k)$.

The $\gnec$-twist maps can be described completely in terms of the right and left twists $\rt_\pi$ and $\lt_\pi$ together with column permutations:
\begin{lem}[$\gnec$-twists and column permutation]\label{lem:gentwist}	Let $\pi$ be a permutation of type $(k, n)$, and consider the Grassmannlike necklace $\gnec=\gnec_{\rho, \iota, \pi}$. Let $\mu:= \rho^{-1}\iota$ be the underlying permutation.%

We have 	
$$\rt_\gnec = \rt_\mu \circ \rho \text{ and }  \lt_\gnec = \lt_\mu \circ \iota$$ 
as rational maps on ${\rm Mat}(k,n)$ or $\widetilde{\Gr(k,n)}$. In particular, the image of $\rt_\gnec$ and $\lt_\gnec$ is contained in $\tPio_\mu$.
\end{lem}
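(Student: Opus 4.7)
The plan is to prove both identities by direct substitution, using the fact from \cref{lem:gnecPermuted} that $\gnec = \rho(\rmci_\mu) = \iota(\lmci_\mu)$, together with the observation that a forward (resp.\ reverse) Grassmann necklace has removal (resp.\ insertion) permutation equal to the identity. This reduces the general $\gnec$-twist to the special case of Muller--Speyer twists after a column permutation.

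First I would verify the identity $\rt_\gnec = \rt_\mu \circ \rho$. Since $\rmci_\mu$ has removal permutation the identity and insertion permutation $\mu$, the column $\rt_\mu(N)_a$ is characterized by $\langle \rt_\mu(N)_a, N_b\rangle = \delta_{a,b}$ for $b \in \rightI_a$. Applying this with $N = \rho(M)$, so $N_b = M_{\rho(b)}$, and reindexing via $c = \rho(b)$ (noting $\rho(\rightI_a) = I_a$ by \cref{lem:gnecPermuted}), I obtain
\[
\langle \rt_\mu(\rho(M))_a, M_c\rangle = \begin{cases} 1 & \text{if } c = \rho(a), \\ 0 & \text{else},\end{cases}
\qquad\text{for all } c \in I_a.
\]
This is precisely the defining property of $\rt_\gnec(M)_a$, proving the equality. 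The verification of $\lt_\gnec = \lt_\mu \circ \iota$ is parallel: $\lmci_\mu$ has insertion permutation the identity, so an analogous substitution $c = \iota(b)$ with $\iota(\leftI_{a+1}) = I_{a+1}$ converts the defining property of $\lt_\mu(\iota(M))_a$ into that of $\lt_\gnec(M)_a$.

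Finally, for the image assertion, I would combine the factorizations above with \cref{thm:MStwist}. For any $M \in D(\gnec)$, the relation $\Delta_J(\rho(M)) = \pm \Delta_{\rho(J)}(M)$ together with $\rho(\rmci_\mu) = \gnec$ shows $\rho(M) \in D(\rmci_\mu)$, so $\rt_\mu(\rho(M)) \in \tPio_\mu$ by \cref{thm:MStwist}; the argument for $\lt_\gnec$ is identical with $\iota$ and $\lmci_\mu$ in place of $\rho$ and $\rmci_\mu$.

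There is no substantive obstacle here: the lemma amounts to careful bookkeeping of indices, and the only subtle point is to keep straight the column-permutation convention $(\rho(M))_a = M_{\rho(a)}$ when translating between $\gnec$ and $\rmci_\mu$ (resp.\ $\lmci_\mu$).
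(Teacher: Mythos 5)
Your proof is correct and takes essentially the same approach as the paper's: both reduce the defining equations of the $\gnec$-twist to those of the Muller--Speyer twist for $\rmci_\mu$ (resp.\ $\lmci_\mu$) via the reindexing $c = \rho(b)$ (resp.\ $c = \iota(b)$), using $\gnec = \rho(\rmci_\mu) = \iota(\lmci_\mu)$ from \cref{lem:gnecPermuted}, and then invoke \cref{thm:MStwist} for the image assertion.
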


As an example, the matrices $\rt_\gnec(M)$ and $\lt_\gnec(M)$ computed in \cref{eg:thetwotwists} are elements of $\Pio_\mu$ (with $\mu$ as in \cref{eg:differentbases}): columns $3$ and $4$ are parallel and the minors $\Delta(\rmci_{\mu})$ are nonzero.

\begin{proof}
	First we discuss the equality $\rt_\gnec = \rt_\mu \circ \rho$. Let $\rmci_\mu = (\rightI_1,\dots,\rightI_n)$, and
	$\gnec = (I_1,\dots,I_n)$. By \cref{lem:gnecPermuted}, $\rho(\rightI_a) = I_a$.
	
	The domain of $\rt_\mu \circ \rho$ is $\rho^{-1} (D(\rmci_\mu))$. We have that $\Delta_{\rightI_a}(\rho(x)) = \pm \Delta_{\rho(\rightI_a)}(x)= \pm\Delta_{I_a}(x)$ for any $x \in \Gr(k,n)$. Thus 
	$\rho^{-1}(D(\rmci_\mu)) = D(\gnec)$, so $\rt_\mu \circ \rho$ and $\tau_\gnec$ have the same domain of definition. 
	
	Let $x \in D(\gnec)$ be represented by the matrix $M \in \Mat(k, n)$, so $\rho(x)$ is represented by the matrix $[M_{\rho(1)}\cdots M_{\rho(n)}]$.
	
	Let $\delta$ denote the Kronecker delta. The definition of $\rt_\mu\circ\rho(x)$ is that 
	\begin{align}
	\langle (\rt_\mu\circ\rho(M))_a,\rho(M)_b \rangle  &= \delta_{a,b} \text{ for all } b \in \rightI_a, \text{ i.e. } \\
	\langle (\rt_\mu\circ\rho(M))_a,M_{\rho(b)} \rangle  &= \delta_{a,b} \text{ for all } b \in \rightI_a, \text{ i.e. } \\
	\langle (\rt_\mu\circ\rho(M))_a,M_{s} \rangle  &= \delta_{a,\rho^{-1}(s)} \text{ for all } s \in \rho(\rightI_a), \text{ i.e. } \\
	\langle (\rt_\mu\circ\rho(M))_a,M_{s} \rangle  &= \delta_{a,\rho^{-1}(s)} \text{ for all } s \in I_a, \text{ i.e. } \\
	\langle (\rt_\mu\circ\rho(M))_a,M_{s} \rangle  &= \delta_{\rho(a),s} \text{ for all } s \in I_a, \label{eq:preindex}
	\end{align}
	so that the condition defining $(\rt_\mu\circ\rho(M))_a$ is exactly the condition \eqref{defn:gentwist} defining 
	$\rt_\gnec(M)_a$. So $\tau_\gnec$ and $\rt_\mu \circ \rho$ are the same map.
	
	The second equality holds by a similar argument, noting that $\iota(\leftI_{a+1})=I_{a+1}$ by \cref{lem:gnecPermuted}.
\end{proof}

To show that our twist maps are invertible, we introduce the following notion.

\begin{defn}[Dual necklace]
	Let $\gnec=\gnec_{\rho, \iota, \pi}$ be a Grassmannlike necklace. The \emph{dual necklace} $\gnec^*$ is the Grassmannlike necklace with removal permutation $\iota^{-1}$ and with insertion permutation $\rho^{-1}$. 
\end{defn}

\begin{example}
	The dual necklace of a Grassmann necklace $\rmci_\pi$ is the reverse Grassmann necklace $\lmci_\pi$. The dual of the necklace $\gnec_2$ from \cref{ex:easyToggles} and \cref{eg:differentbases} is 
	$$\gnec_2^* = 	(456, 156, 126, 123, 235,245).$$ 
\end{example}

Clearly taking the dual is an involutive operation on Grassmannlike necklaces. One can check that $\gnec^*$ has trip permutation $\rho^{-1}\iota$ and has underlying permutation $\iota\rho^{-1}$, i.e. the duality operation interchanges the trip and underlying permutations. In particular $\gnec^*$ has the same type as the trip permutation $\pi$ of $\gnec$. Using \cref{lem:gentwist}, we see that the images of the $\gnec^*$-twists are contained in $\tPio_\pi$.

We can now invert the $\gnec$-twist maps.

\begin{thm}[Inverting the $\gnec$-twist map]\label{thm:twistsareinverses}
	Let $\gnec=\gnec_{\rho,\iota,\pi}$ be a Grassmannlike necklace with $\iota 
	\ler \pi$. Let $\gnec^*$ be the dual necklace with trip permutation $\mu = \rho^{-1}\iota$. 
	
	Suppose that $\dim(\tPio_\pi)=\dim(\tPio_\mu)$. Then $\rt_\gnec \colon \tPio_\pi \to \tPio_\mu$ is an isomorphism of open positroid varieties with inverse $\lt_{\gnec^*}: \tPio_\mu \to \tPio_\pi$. 
\end{thm}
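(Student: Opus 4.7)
The proof will construct both halves of the inverse pair via the factorizations $\rt_\gnec = \rt_\mu \circ \rho$ and $\lt_{\gnec^*} = \lt_\pi \circ \rho^{-1}$ from \cref{lem:gentwist} (noting that $\gnec^*$ has underlying permutation $\pi$ and insertion permutation $\rho^{-1}$), and then verify they compose to the identity. Throughout, the argument rides on Muller--Speyer's \cref{thm:MStwist}.

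\emph{Well-definedness.} For $\rt_\gnec \colon \tPio_\pi \to \tPio_\mu$, the domain inclusion $\tPio_\pi \subseteq D(\gnec)$ follows immediately from the Unit Necklace Theorem (\cref{thm:rightweak}) since $\iota \ler \pi$, and \cref{lem:gentwist,thm:MStwist} then place the image in $\tPio_\mu$. For $\lt_{\gnec^*} \colon \tPio_\mu \to \tPio_\pi$, the image containment is again immediate from \cref{lem:gentwist}. The delicate point is the domain containment $\tPio_\mu \subseteq D(\gnec^*)$: the insertion permutation $\rho^{-1}$ of $\gnec^*$ need not be of type $(k,n)$, so the Unit Necklace Theorem does not apply to $\gnec^*$ directly. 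My plan is to produce a Grassmannlike necklace $\gnec_{\bullet, \iota', \mu}$ with $\iota' \ler \mu$ (hence a unit necklace on $\tPio_\mu$) that has the same underlying multiset of Pl\"ucker subsets as $\gnec^*$, so that $\Delta(\gnec^*) \subseteq \bbp_\mu$. By \cref{lem:conjugationLift} and \cref{rmk:mshorterf}, the dimension hypothesis $\ell(m) = \ell(f)$ is equivalent to the length-additivity of $m = v i$ with $v = i^{-1}f$, and I will exploit this factorization to build $\iota'$ via a sequence of aligned toggles (\cref{lem:alignedMovesDown}) starting from $\rmci_\mu$, cyclically realigning to match $\gnec^*$.

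\emph{Mutual inverses.} I will verify $\lt_{\gnec^*} \circ \rt_\gnec = \mathrm{id}_{\tPio_\pi}$ directly; the other composition is symmetric. Given $x \in \tPio_\pi$ with matrix representative $M$ and $v_b := \rt_\gnec(M)_b$, the $a$-th column $v_a^*$ of $\lt_{\gnec^*}(\rt_\gnec(M))$ is characterized by $\langle v_a^*, v_b \rangle = \delta_{b, \rho^{-1}(a)}$ for $b \in I^*_{a+1}$, so it suffices to verify these conditions for $M_a$. Cramer's rule applied to the defining relations of $v_b$ yields
\[
\langle M_a, v_b \rangle \;=\; \pm \frac{\Delta_{I_b \setminus \rho(b) \cup a}(M)}{\Delta_{I_b}(M)}.
\]
For $b \in I^*_{a+1}$ with $a \in I_b$, the right-hand side reduces to $\pm\delta_{a, \rho(b)} = \delta_{b, \rho^{-1}(a)}$, matching the requirement. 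For the residual $b \in I^*_{a+1}$ with $a \notin I_b$ (necessarily $b \neq \rho^{-1}(a)$, since $a \in I_{\rho^{-1}(a)}$), I will prove the combinatorial lemma $I_b \setminus \rho(b) \cup a \notin \mcm_\pi$ in the style of \cref{lem:notinpositroid}, which forces the numerator Pl\"ucker coordinate to vanish on $\tPio_\pi$.

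The main obstacle is establishing $\tPio_\mu \subseteq D(\gnec^*)$: the dimension hypothesis must be used essentially, and the cleanest path is to interpret $\ell(f) = \ell(m)$ as producing an aligned toggle sequence connecting $\rmci_\mu$ to a cyclic shift of $\gnec^*$. The residual combinatorial vanishing in the mutual-inverses step is a secondary but nontrivial point, and ultimately requires an analog of \cref{lem:notinpositroid} adapted to the Grassmannlike setting.
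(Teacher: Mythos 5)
Your plan follows essentially the same route as the paper: the paper also establishes $\tPio_\mu \subseteq D(\gnec^*)$ by showing the cyclic shift of $\gnec^*$ with insertion permutation $\rho^{-1}\eps_k$ is a unit necklace on $\tPio_\mu$ (using the length-additive factorization $m = (r^{-1}e_k)(e_k^{-1}i)$), and it proves the mutual-inverse statement by checking that the columns of $M$ satisfy the defining orthogonality relations of the composed twist, with the residual case handled by exactly the vanishing $I_{\rho^{-1}(z)}\setminus z \cup y \notin \mcm_\pi$ from \cref{lem:notinpositroid}. The only presentational differences: the paper modularizes the mutual-inverse check into \cref{prop:twistComposeId} (stated so it can be applied symmetrically to both $(\gnec,\gnec^*)$ and $(\gnec^*,\gnec)$), and it phrases your Cramer's rule vanishing equivalently as ``$M_s$ lies in the span of $\{M_a : a \in I_{\rho^{-1}(b)}\setminus b\}$.'' Also, you do not need a new ``analog'' of \cref{lem:notinpositroid}: the lemma as stated applies to $\gnec$ (and after shifting, to $\gnec^*$) via \cref{rmk:conclusionstillholds}, and the needed hypothesis $y <_z \pi(z)$ is supplied by a rank argument from the membership $b \in \leftI_{s+1}$.
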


One has similarly that $\lt_\gnec \colon \tPio_\pi \to \tPio_\mu$ is an isomorphism of open positroid varieties with inverse $\rt_{\gnec^*}: \tPio_\mu \to \tPio_\pi$.

\begin{rmk}
	In the situation of \cref{thm:twistsareinverses}, let $f, m \in \bd(k, n)$ be the respective lifts of $\pi$ and $\mu$. We always have $\ell(m) \leq \ell(f)$ (c.f. \cref{rmk:mshorterf}), and hence $\dim \tPio_\mu \geq \dim \tPio_\pi$, but this inequality may be strict. When this inequality is strict, there is no chance for the $\gnec$-twist to determine an isomorphism $\tPio_\pi \to \tPio_\mu$. \cref{thm:twistsareinverses} asserts that this dimension constraint is the only obstruction to $\gnec$-twists being isomorphisms.
\end{rmk}

The invertability of $\gnec$-twist maps in \cref{thm:twistsareinverses} will be deduced from the following proposition. 
Recall the permutation $\eps_r\in S_n$ sends $a \mapsto a+r$ (taken modulo $n$). The awkward appearance of $\eps_r$ below is because, if $\gnec=\gnec_{\rho, \iota, \pi}$ satisfies $\iota \ler \pi$, the dual necklace $\gnec^*=\gnec_{\iota^{-1}, \rho^{-1}, \mu}$ will not satisfy $\rho^{-1} \ler \mu$ on the nose.

\begin{prop} \label{prop:twistComposeId}
Let $\pi$ be a permutation of type $(k, n)$, and let $\gnec=\gnec_{\rho, \iota, \pi}$ be a Grassmannlike necklace with underlying permutation $\mu:= \rho^{-1}\iota$ and dual $\gnec^*$. Suppose that for some $r \in [n]$, we have $\iota \circ \eps_r \ler \pi$, and further that $\gnec^*$ is a unit necklace in $\tPio_\mu$.

Then on $\tPio_\pi \subset D(\gnec)$, both compositions
 $$\lt_{\gnec^*} \circ \rt_\gnec \text{ and } \rt_{\gnec^*} \circ \lt_\gnec
 $$  
 are the identity map ${id}_{\tPio_\pi}$.
\end{prop}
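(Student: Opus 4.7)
The plan is to prove both identities by showing that, for any matrix representative $M$ of a point in $\tPio_\pi$, the column $M_a$ itself satisfies the defining linear equations of $\lt_{\gnec^*}(\rt_\gnec(M))_a$ (respectively of $\rt_{\gnec^*}(\lt_\gnec(M))_a$). Because $\gnec^*$ is a unit necklace in $\tPio_\mu$ and, by \cref{lem:gentwist}, $N := \rt_\gnec(M) \in \tPio_\mu$, the columns of $N$ indexed by $I^*_{a+1}$ form a basis of $\bbc^k$, so the defining equations have a unique solution and equality is forced. This reduces the first identity to verifying
\[
\langle M_a,\,N_b\rangle \;=\; \delta_{\rho^{-1}(a),\,b} \qquad \text{for every } a\in[n] \text{ and every } b\in I^*_{a+1}.
\]

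First I would translate the combinatorial condition $b\in I^*_{a+1}$: using $\gnec^* = \iota^{-1}(\rmci_\pi)$ from \cref{lem:gnecPermuted} together with $\pi^{-1}\iota = \rho$, a short computation rewrites this as $a <_{\rho(b)} \iota(b)$. Then I split into cases. When $a\in I_b$, the defining property of $\rt_\gnec$ in \cref{defn:gentwist} immediately gives $\langle N_b, M_a\rangle = \delta_{\rho(b),a} = \delta_{b,\rho^{-1}(a)}$, as required. When $a\notin I_b$, I expand $M_a$ in the basis $\{M_d : d\in I_b\}$; Cramer's rule identifies $\langle N_b, M_a\rangle$ with the $M_{\rho(b)}$-coefficient, which up to sign equals $\Delta(I_b\setminus \rho(b)\cup a)(M)/\Delta(I_b)(M)$. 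Since $\rho^{-1}(a)=b$ would force $a=\rho(b)\in I_b$, we are in the subcase $\rho^{-1}(a)\neq b$, so the Kronecker delta must vanish; the problem thus reduces to showing $I_b\setminus \rho(b)\cup a \notin \mcm_\pi$ whenever $a\notin I_b$ and $a <_{\rho(b)} \iota(b)$.

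This is precisely the conclusion of \cref{lem:notinpositroid} applied with $z=\rho(b)$ and $y=a$ (noting $\pi(\rho(b))=\iota(b)$). The subtlety is that \cref{lem:notinpositroid} requires $\gnec$ to be reachable from $\rmci_\pi$ by noncrossing toggles; by \cref{lem:alignedMovesDown} (iterated) this would be automatic from $\iota \ler \pi$, but we only have the weaker hypothesis $\iota\eps_r \ler \pi$. The resolution is to pass to the cyclic shift $\gnec[r+1]$, whose insertion permutation is $\iota\eps_r$ and which is therefore reachable from $\rmci_\pi$ by aligned (hence noncrossing) toggles; by \cref{rmk:conclusionstillholds} the conclusion of \cref{lem:notinpositroid} is invariant under cyclic shift, so the non-membership statement transfers back to $\gnec$ itself.

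The companion identity $\rt_{\gnec^*}\circ \lt_\gnec = \mathrm{id}_{\tPio_\pi}$ goes through in parallel with the roles of $\rho$ and $\iota$ (and of the indexing subsets $I_b$ versus $I_{b+1}$) swapped: the Cramer expansion of $M_a$ in the basis $\{M_d : d\in I_{b+1}\}$ produces the same forbidden subset $I_b\setminus \rho(b)\cup a = I_{b+1}\setminus \iota(b)\cup a$, so the identical non-membership statement suffices. I expect the main obstacle to be the combinatorial bookkeeping in the cyclic-shift step, namely pinning down the correspondence $b\in I^*_{a+1} \Leftrightarrow a<_{\rho(b)} \iota(b)$ and verifying that $\gnec[r+1]$ has insertion permutation exactly $\iota\eps_r$; a sign or off-by-one indexing error in these translations would derail the invocation of \cref{lem:notinpositroid}. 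The linear-algebra and Cramer-rule steps, by contrast, are entirely routine once the correct subset is identified.
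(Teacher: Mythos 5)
Your proof is correct and takes essentially the same route as the paper's: verify the defining linear equations column by column, reduce the nontrivial case via Cramer's rule to the non-membership $I_b\setminus\rho(b)\cup a\notin\mcm_\pi$, obtain it from \cref{lem:notinpositroid}, and handle the weakened hypothesis $\iota\eps_r\ler\pi$ by the same cyclic-shift maneuver via \cref{rmk:conclusionstillholds}. The only surface difference is bookkeeping: you work with $\rt_\gnec$ and the dual necklace $\gnec^*$ directly and extract the needed inequality $a<_{\rho(b)}\iota(b)$ combinatorially from $b\in I^*_{a+1}$, whereas the paper unpacks $\rt_\gnec=\rt_\mu\circ\rho$ and $\lt_{\gnec^*}=\lt_\pi\circ\rho^{-1}$ into column permutations and derives the equivalent inequality $s<_b\pi(b)$ from $b\in\leftI_{s+1}$ by a short linear-algebra span argument.
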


\begin{proof}	
	Suppose $\gnec=(I_1, \dots, I_n)$. The Grassmannlike necklace with insertion permutation $\iota\circ \eps_r$ and removal permutation $\rho\circ \eps_r$ is a rotation $\gnec[r] = ( I_{r+1}, \dots, I_n, I_1, \dots, I_{r})$ of $\gnec$. The two necklaces have the same trip permutation, $\pi$. By \cref{thm:rightweak}, since $\iota \circ \eps^r \ler \pi$, $\gnec$ is a unit necklace in $\tPio_\pi$ (since this is true of the shifted necklace).

	This means that $\tPio_\pi$ is indeed a subset of $D(\gnec)$. By \cref{lem:gentwist}, the image of $\rt_\gnec$ and $\lt_\gnec$ are both contained in $\tPio_\mu$. By assumption, $\gnec^*$ is a unit necklace, so $\rt_{\gnec^*}$ and $\lt_{\gnec^*}$ are defined on $\tPio_\mu$. In particular, the compositions are well-defined.
	
	We focus on the composition $\lt_{\gnec^*} \circ \rt_\gnec$. Note that by \cref{lem:gentwist} and the definition of $\gnec^*$, $\lt_{\gnec^*}= \lt_\pi \circ \rho^{-1}$. So the image of the composition is contained in $\tPio_\pi$.

Let $M \in \tPio_\pi$ be given. We would like to show that $M$ is the image of $\rho^{-1} \circ \rt_\mu\circ\rho(M)$ under $\lt_\pi$.

Rewriting the equality \eqref{eq:preindex} in terms of $b = \rho(a)$, we have 
\begin{align}
\langle (\rho^{-1} \circ \rt_\mu\circ\rho(M))_b,M_{s} \rangle  &= \delta_{b,s} \text{ for all } s \in I_{\rho^{-1}(b)} \textnormal{ and for all $b$.} \label{eq:oneside}
\end{align}

We need to show that $M$ satisfies the defining equalities of $\lt_\pi(\rho^{-1} \circ \rt_\mu\circ\rho(M))$ given in \cite[Section 6.1]{MSTwist}, namely the equalities
\begin{align}
\langle (\rho^{-1} \circ \rt_\mu\circ\rho(M))_b,M_{s} \rangle  &= \delta_{b,s} \text{ for all } b \in \leftI_{s+1} \textnormal{ and for all $s$.} \label{eq:inversecheck}
\end{align}

Noting that $b \in I_{\rho^{-1}(b)}$ from the necklace property, we can set $s = b$ in \eqref{eq:oneside} and conclude that \eqref{eq:inversecheck} holds when $b=s$. \cref{eq:oneside} also implies that \eqref{eq:inversecheck} holds for $b \in \leftI_{s+1} \setminus s$ when $s \in I_{\rho^{-1}(b)}$.

It remains to show that when $b \in \leftI_{s+1} \setminus s$ and $s \notin I_{\rho^{-1}(b)}$, then $(\rho^{-1} \circ \rt_\mu\circ\rho(M))_b$ is perpendicular to $M_s$. By \eqref{eq:oneside}, it would suffice to show that $M_s$ is in the span of $\{M_a \colon a \in I_{\rho^{-1}(b)} \setminus b\}$. 

To show that this is true, we apply \cref{lem:notinpositroid} to $I_{\rho^{-1}(b)}$ with $z= b$ and $y = s$. Note that we can apply this lemma to $\gnec$ since it holds for $\gnec[r]$, cf.~\cref{rmk:conclusionstillholds}. The hypothesis of the lemma requires that that we show $s<_b \pi(b)$. Since $b \in \leftI_s$, 
we have that $M_b \notin {\rm span}(M_{b+1},\dots,M_s)$. On the other hand, from the definition of $\pi(b)$, we have $M_b \in {\rm span}(M_{b+1},\dots,M_{\pi(b)})$. It follows that $s <_b \pi(b)$. So by \cref{lem:notinpositroid}, we have   
$I_{\rho^{-1}(b)}\setminus b\cup s \notin \mcm_\pi$, or in other words, $M_s$ is in the span of $\{M_a \colon a \in I_{\rho^{-1}(b)} \setminus b\}$ (since these vectors are independent).

A symmetric argument shows that the second composition is the identity on $\tPio_\pi$. 

\end{proof}

\begin{proof}[Proof of \cref{thm:twistsareinverses}]
	Let $f, m, i \in \bd(k, n)$ be the lifts of $\pi, \mu$ and $\iota$, respectively, and set $r:=f^{-1}i \in \tilde{S}^0_n$. Our two assumptions imply that $f=ir^{-1}$ is length-additive and that $\ell(m)=\ell(f)$. By \cref{lem:conjugationLift}, $m=i^{-1}fi=r^{-1}i$. The assumption that $\ell(m)=\ell(f)$ implies that $m=r^{-1}i$ is also length-additive.

	We would like to apply \cref{prop:twistComposeId}, so first we need to show that $\gnec^*$ is a unit necklace in $\tPio_\mu$. We consider instead the shifted Grassmannlike necklace $\mathcal{L}$ with insertion permutation $\rho^{-1} \circ \eps_k$ and trip permutation $\mu$. It suffices to show that $\mathcal{L}$ is a unit necklace since $\gnec^*=(L_{n-k+1}, \dots, L_n, L_1, \dots, L_{n-k})$.
	
The lift of the permutation $\rho^{-1} \circ \eps_k$ is $r^{-1}e_k$. Note that $m = (r^{-1}e_k)(e_k^{-1}i)$. By the unit necklace theorem applied to $\gnec^*$ and $\rmci_\mu$, it suffices to show the length-addivitity of this factorization. And indeed, using our two assumptions, we have that
$$\ell(m) = \ell(f) = \ell(i)+\ell(r^{-1}) = \ell(e_k^{-1}i)+\ell(r^{-1}e_k),$$
so that $\mathcal{L}$ hence $\gnec^*$ is indeed a unit necklace for $\tPio_\mu$.
	
	The statements now follow immediately from applying \cref{prop:twistComposeId} to the pair of necklaces $\gnec, \gnec^*$ and to $\gnec^*,( \gnec^*)^*=\gnec.$
		
\end{proof}

\subsection{Inverting boundary measurements}
In this section, we use twist maps along necklaces to deduce that $\Sigma_{G^\rho}^T$ gives a cluster structure on $\tPio_\pi$, relying on the fact that this is true of the source structure as proved in \cite{GalashinLam}.

The following observation is used several times in this section. Consider $f \in {\rm Bound}(k,n)$ with $\overline{f} = \pi$. Consider $a<b \in \bbz$ such that 
$\ell(t_{ab}f) < \ell(f)$, and let $a',b' \in [n]$ be their reductions mod $n$.

\begin{lem}[{\cite[Lemma 4.5]{MSTwist}}]\label{lem:aimpliesb}
With $f,\pi,a',b'$ as above, let  $G$ be a reduced plabic graph with trip permutation $\pi$ and let $I \in \targetF(G)$ be a target label. If $b' \in I$, then $a' \in I$ also. \end{lem}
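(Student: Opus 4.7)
The plan is to adapt the argument of \cite[Lemma 4.5]{MSTwist} to our conventions, translating the Coxeter-theoretic hypothesis into a geometric statement about trips in the disk. First, I would rewrite $\ell(t_{ab}f) < \ell(f)$ with $a < b$ in $\bbz$ as the statement $f^{-1}(a) > f^{-1}(b)$ in $\bbz$. Combined with boundedness ($c < f(c) \leq c+n$ for every $c$), this pins down the cyclic arrangement of the four boundary points $\pi^{-1}(a'), a', \pi^{-1}(b'), b' \in [n]$, which are the endpoints of the trips $\tau_{a'}$ and $\tau_{b'}$ in $G$.

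Next, I would argue that under this arrangement, the chord traced by $\tau_{b'}$ lies weakly to the left of the chord traced by $\tau_{a'}$: either the two chords are disjoint with $\tau_{b'}$ enclosed on the left side of $\tau_{a'}$, or they cross transversally exactly once with orientation such that the region to the left of $\tau_{b'}$ is contained in the region to the left of $\tau_{a'}$. The key inputs are that $G$ is reduced, so strands are non-self-intersecting and any two strands cross at most once, which rigidifies the possible relative positions of $\tau_{a'}$ and $\tau_{b'}$ given the cyclic data extracted from $f$.

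Once this containment of left-regions is established, the conclusion is immediate from the definition of the target label: $j \in \tI(F)$ means $F$ lies on the left of $\tau_j$, so $b' \in I$ forces $F$ on the left of $\tau_{b'}$, which in turn forces $F$ on the left of $\tau_{a'}$, yielding $a' \in I$. The main obstacle will be the bookkeeping for degenerate configurations --- when $a \equiv b \pmod n$ (where $a'=b'$ makes the statement tautological), when some of the four boundary points coincide (so that a trip is a loop at the boundary, or two trips share a source or a target), or when the Coxeter inequality is witnessed by $f^{-1}(a)$ and $f^{-1}(b)$ lying in different copies of the window $[n]$. Each of these is handled by reducedness of $G$ together with boundedness of $f$, exactly as in the Muller--Speyer proof.
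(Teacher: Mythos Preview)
The paper does not supply its own proof; the lemma is quoted from \cite[Lemma~4.5]{MSTwist}. Your plan --- rewriting $\ell(t_{ab}f)<\ell(f)$ with $a<b$ as $f^{-1}(a)>f^{-1}(b)$, combining with boundedness to pin down the cyclic positions of $\pi^{-1}(a'),a',\pi^{-1}(b'),b'$ on the boundary circle, and then reading off a containment of left-regions of the two trips --- is precisely the Muller--Speyer argument, so your approach matches the paper's (outsourced) proof.

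One correction: the clause ``or they cross transversally exactly once with orientation such that the region to the left of $\tau_{b'}$ is contained in the region to the left of $\tau_{a'}$'' cannot be right. If two simple arcs in a disk cross exactly once, the disk is cut into four pieces and neither left-region contains the other. Fortunately this case never occurs. From $a<b$, $f^{-1}(a)>f^{-1}(b)$, and $c<f(c)\le c+n$ one gets $f^{-1}(b)<f^{-1}(a)<a<b\le f^{-1}(b)+n$ in $\bbz$, so the reductions sit in cyclic order $\pi^{-1}(b'),\,\pi^{-1}(a'),\,a',\,b'$ on the boundary. The chords $\pi^{-1}(a')\!\to\! a'$ and $\pi^{-1}(b')\!\to\! b'$ therefore do not alternate and hence do not cross; since $G$ is reduced, the trips themselves cross zero times. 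Drop the crossing case and argue nestedness directly.

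A word of caution on direction: with that cyclic order, the clockwise arc from $\pi^{-1}(a')$ to $a'$ lies inside the clockwise arc from $\pi^{-1}(b')$ to $b'$, which yields $a'\in I\Rightarrow b'\in I$ rather than the implication as literally printed. When you execute the argument, reconcile the containment you obtain with the conventions in \cite{MSTwist} and with how the conclusion is actually invoked in the proofs of \cref{lem:signssigns} and \cref{lem:twistisalmostinverse}; small examples such as $f=[4,3,5,6]\in\bd(2,4)$ are useful for checking which way the implication runs.
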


We can partially order $[n]$ according to whether the conclusion of \cref{lem:aimpliesb} holds. This partial order shows up in several of our proofs in this section.

By \cref{lem:gentwist}, twist maps along necklaces involve column permutation, which introduce unwanted signs in Pl\"ucker coordinates. Our next lemma allows us to compensate for these signs in our constructions. 
\begin{lem}[Taking care of signs]\label{lem:signssigns}
	Let $G^\rho$ be a relabeled plabic graph as in the statement of \cref{thm:main}(2). Then there exists an involutive automorphism $\underline{\eps} \in \Aut(\Gr(k,n))$ with the property that 
	for any $I \in \targetF(G)$ and $y \in \Gr(k,n)$ we have 
	\begin{equation}\label{eq:signssigns}
	\Delta_{\rho(I)}(y) = \Delta_{I}(\rho(\underline{\eps}(y))).  
	\end{equation}
\end{lem}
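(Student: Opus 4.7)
The plan is to take $\underline{\eps}$ to be a column-sign automorphism of $\Gr(k,n)$, namely $\underline{\eps}(y)=\mathrm{diag}(\lambda_1,\ldots,\lambda_n)\cdot y$ for a sign vector $\lambda\in\{\pm 1\}^n$. Such an automorphism is automatically involutive, acts on Pl\"ucker coordinates by $\Delta_J(\underline{\eps}(y)) = \bigl(\prod_{j\in J}\lambda_j\bigr)\Delta_J(y)$, and combined with the identity $\Delta_I(\rho(x))=\sgn(\rho|_I)\,\Delta_{\rho(I)}(x)$ reduces the desired equation \eqref{eq:signssigns} to the requirement that
\[
\prod_{j\in \rho(I)}\lambda_j \;=\; \sgn(\rho|_I)\qquad \text{for every } I \in \targetF(G).
\]
So it suffices to show that this $\mathbb{F}_2$-linear system admits a solution $\lambda\in\{\pm 1\}^n$.

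The first step is to reduce to a local check. By the equivalence (2)$\Leftrightarrow$(3) in \cref{thm:main} combined with \cref{thm:FG}, the image $\targetF(G^\rho) = \rho(\targetF(G))$ is a maximal weakly separated collection containing the Grassmannlike necklace $\gnec=\gnec_{\rho,\bullet,\pi}$. By \cref{thm:OPS}, any two such maximal weakly separated collections in $\mcm_\pi$ are linked by a sequence of square moves, so the consistency of the system on $\targetF(G^\rho)$ reduces to (i) solvability on the necklace $\gnec$ itself, and (ii) preservation of consistency under a single square move. Part (i) is immediate: consecutive entries of $\gnec$ differ in one element, giving a first-order recursion on the $\lambda_j$'s which is always solvable up to an overall global sign ambiguity $\lambda \mapsto -\lambda$ (which is irrelevant on $\Gr(k,n)$).

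The main obstacle is (ii). A square move $Sac\leftrightarrow Sbd$ with $a<b<c<d$ in $\targetF(G^\rho)$ yields the two $\mathbb{F}_2$-relations
\[
\mathbb{1}_{Sac}+\mathbb{1}_{Sbd}+\mathbb{1}_{Sab}+\mathbb{1}_{Scd}\equiv 0 \quad\text{and}\quad \mathbb{1}_{Sac}+\mathbb{1}_{Sbd}+\mathbb{1}_{Sad}+\mathbb{1}_{Sbc}\equiv 0 \pmod 2,
\]
which force the corresponding products of the signs $\sgn(\rho|_{\rho^{-1}(\cdot)})$ to be $+1$. An explicit accounting of inversions shows that each such sign product equals $(-1)^P$ for a specific parity expression $P$ in the quadruple $(x,y,z,w):=(\rho^{-1}(a),\rho^{-1}(b),\rho^{-1}(c),\rho^{-1}(d))$, e.g.\ $P=[x>y]+[x>z]+[y>w]+[z>w]$ for the first relation. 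I would verify $P\equiv 0 \pmod 2$ in all cases by combining: weak separation of $\rho^{-1}(Sac)$ and $\rho^{-1}(Sbd)$ inside $\targetF(G)$, which restricts $(x,y,z,w)$ to a non-crossing cyclic configuration; and the double length-additivity coming from $\pi\rho\ler\pi$ together with hypothesis~(2) (equivalently, that the lifts satisfy both $f=iv$ and $m=r^{-1}i$ length-additively; cf.~\cref{lem:conjugationLift}), combined with \cref{lem:aimpliesb} applied to suitable affine reflections in the lift of $\pi$. Executing this case analysis to rule out every residual non-crossing configuration of odd parity is the crux of the argument.
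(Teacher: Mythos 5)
Your outer framework — take $\underline{\eps}$ to be a column-sign automorphism with sign vector $\lambda\in\{\pm1\}^n$, and reduce \eqref{eq:signssigns} to the $\mathbb{F}_2$-linear system $\prod_{j\in\rho(I)}\lambda_j=\sgn(\rho|_I)$ — is the same as the paper's. But the inner argument is genuinely different: the paper never poses an abstract solvability problem. It constructs the sign per column \emph{explicitly}, by lifting $\rho$ to the affine permutation $r=f^{-1}i$ acting on an $n$-periodic bi-infinite matrix, and then showing via \cref{lem:aimpliesb} plus the length-additivity $\ell(m)=\ell(r^{-1})+\ell(i)$ that the inversion count $\#\{a<b\in I\times I:\ell(rt_{ab})<\ell(r)\}$ may be replaced by $\#\{a<b\in[n]\times I:\cdots\}$ (any $b\in I$ with such an inversion forces $a\bmod n\in I$). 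This is precisely what makes the sign factor as a product over $b\in I$, i.e.\ come from a column rescaling. Your route, by contrast, leaves the existence question abstract and tries to solve it by propagation.

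There are real gaps in the propagation plan. First, part~(i) is not ``immediate.'' The relation between consecutive necklace entries gives you a recursion on the products $\prod_{j\in I_a}\lambda_j$ (hence uniqueness of these products once one is chosen), but that is $n$ equations in $n$ unknowns $\lambda_j$, and solvability requires the right-hand side to be compatible with whatever $\mathbb{F}_2$-linear dependencies exist among the vectors $\{\mathbb{1}_{\rho(\rightI_a)}\}$; you offer no argument for that. Second, the reduction via \cref{thm:OPS} does not actually connect $\gnec$ to $\targetF(G^\rho)$: the necklace $\gnec$ is a \emph{subcollection} of every such $\targetF(G^\rho)$, not a node in the square-move graph, so ``solvability on the necklace'' supplies no base case for an induction over square moves. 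What you would need is that the constraints at interior face labels are linear consequences of the constraints on $\gnec$ — a genuinely different, and stronger, statement than square-move invariance. Third, part~(ii), which you explicitly call ``the crux,'' is not carried out: the parity verification $P\equiv 0$ under weak separation and the double length-additivity is precisely where all the positroid-specific content must enter, and it is left as a case analysis to be done. As written, the proposal is a plan, not a proof.
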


The automorphism $\underline{\eps}$ rescales the columns of $k \times n$ matrix representatives by appropriately chosen signs. The argument is similar to \cite[Proposition 7.14]{MSTwist}.

\begin{proof}
	Let $f,m,i \in {\rm Bound}(k,n)$ be the lifts of $\pi,\mu,\iota$ and set $r = f^{-1}i$. From the assumption (2) we have length-additivity $\ell(m) = \ell(r^{-1}i) = \ell(r^{-1})+\ell(i)$. 
	
	Consider an infinite matrix $z$ with $k$ rows and with columns $(z_i)_{i \in \bbz}$. Let $r$ act on $z$ by permuting columns, so $r(z)_i = z_{r(i)}$. 
	Then 
	\begin{align}
	\Delta_{I}(r(z)) &= \det(z_{r(i_1)},\dots,z_{r(i_k)}) \\
	&= (-1)^{\# \{a < b \in I \times I \colon \ell(rt_{ab}) < \ell(r) \}}\Delta_{r(I)}(z) \\
	&= (-1)^{\# \{a < b \in I \times I \colon \ell(t_{ab}r^{-1}) < \ell(r^{-1}) \}}\Delta_{r(I)}(z) \\
	&= (-1)^{\# \{a < b \in [n] \times I \colon \ell(t_{ab}r^{-1}) < \ell(r^{-1}) \}}\Delta_{r(I)}(z) \\
	&= \prod_{b \in I}(-1)^{\# \{a \in [n] \colon \ell(rt_{ab}) < \ell(r)\}}\Delta_{r(I)}(z).
	\end{align}
	The second equality is sorting the columns; the third is the statement that inverting affine permutations does not affect length. The fourth equality follows from \cref{lem:aimpliesb} and the length-additivity assumption. Indeed, 	
	$$\ell(t_{ab}m) = \ell(t_{ab}r^{-1}i) \leq \ell(t_{ab}r^{-1})+\ell(i) < \ell(m), $$
    so that the condition that $b \in I$ implies already that $a \mod n$, which is $a$ itself, is also in $I$. The last line is just regrouping terms. We set $\eps_1(b) = (-1)^{\# \{a \in [n] \colon \ell(rt_{ab}) < \ell(r)\}}$; we will combine this sign with other signs momentarily.     
	
	Now we specialize the matrix $z$ to the matrix $y^\infty$ with blocks $\cdots | y | y | y | \cdots$. We compare the sign relating $\Delta_{r(I)}(y^\infty)$ to $\Delta_{\rho(I)}(y)$. Write $r(I) = r_1 < r_2 < \cdots < r_k$. If a given $r_b <1,$ then $r_b$ must sort past $|r(I) \cap [1,r_b+n)|$ many values in order for it to occupy the correct place in $\rho(I)$. By the dual argument when $r_b>n$, we have: 
	$$\Delta_{r(I)}(z) = \prod_{b \colon r_b<1} (-1)^{\# r(I) \cap [1,r_b+n)} \prod_{b \colon r_b > n}(-1)^{\# r(I) \cap (r_b-n, n]}\Delta_{\rho(I)}(y).$$
   We define a sign $\eps_2(b)$ which is 1 when $r_b \geq 1$ and which is otherwise equal to the factor $(-1)^{\# r(I) \cap [1,r_b+n)}$ appearing in the first exponent on the right hand side. We define a third sign $\eps_3(b)$ which is 1 when $r_b \leq n$ and which is otherwise equal to $(-1)^{\# r(I) \cap r(I) \cap (r_b-n, n]}$. Finally we combine these signs to get a sign $\eps(b) = \eps_1(b)\eps_2(b)\eps_3(b)$. 

   We define an automorphism 
	$\underline{\eps}' \in \Aut(\Gr(k,n))$ rescaling the $b$th column by the sign $\eps(b)$ (for $b=1,\dots,n$) and define a similar  
	$\underline{\eps} \in \Aut(\Gr(k,n))$ rescaling the $b$th column by the sign $\eps(\rho(b))$. Thus $\underline{\eps}' \circ \rho = \rho \circ \underline{\eps}$. By the second calculation above we have $\Delta_{\rho(I)}(y) = \prod_{b \in I}\epsilon_2(b)\epsilon_3(b)\Delta_{r(I)}(y^\infty) $. By the first calculation above we have 
   $\Delta_{r(I)}(y^\infty) = \prod_{b \in I}\varepsilon_1(b) \Delta_I(r(y^\infty))$. Clearly $\Delta_I(r(y^\infty)) = \Delta_I(\rho(y))$. 
Putting these together, we have $$\Delta_{\rho(I)}(y) = \Delta_{I}(\underline{\eps}'\rho(y)) = \Delta_{I}(\rho\underline{\eps}(y))$$ as desired. 
\end{proof}

We follow the notation in \cite{MSTwist}, denoting by $(\bbc^*)^E / (\bbc^*)^{V-1}$ the algebraic torus of edge weights on $G^\rho$ modulo restricted gauge transformations at vertices, and by $\bbc^F$ an algebraic torus whose coordinates are indexed by the faces of $G^\rho$. (Neither of these algebraic tori is sensitive to the relabeling of the boundary vertices.)

For a plabic graph $G$, there is a {\sl boundary measurement map} $\meas_G$ from $(\bbc^*)^E/ (\bbc^*)^{V-1} \to \tGr(k,n)$. The Pl\"ucker coordinate $\Delta_I$ of a point in the image is the weight-generating function for matchings of $G$ with boundary $I$. 

For a relabeled plabic graph $G^\rho$, we define $\meas_{G^\rho}:= \rho^{-1} \circ \meas_G$. Up to sign, $\meas_{G^\rho}$ is the weight-generating function for matchings of $G^\rho$ with given boundary (taking into account the relabeling of vertices according to $\rho$).

We also have a rational map $\targetF_{G^\rho}(\bullet) \colon \tGr(k,n) \to (\bbc^*)^F$ given by evaluating the Pl\"ucker coordinates $\Delta(\targetF(G^\rho))$. This evaluation map agrees with the composition $\targetF_{G}(\bullet) \circ \rho$ up to sign. More specifically, by \cref{lem:signssigns} we have $\targetF_{G^\rho}(\bullet) =\targetF_{G}(\bullet)   \circ \rho  \circ \underline{\eps}$.

Muller and Speyer defined an invertible Laurent monomial map $\vec{\partial} \colon (\bbc^*)^F \to (\bbc^*)^E / (\bbc^*)^{V-1}$ whose inverse is denoted $\cev{\mathbb{M}}$. We do not use any properties of either $\vec{\partial}$ or $\cev{\mathbb{M}}$ beyond that they fit into the commutative diagram \cite[Theorem 7.1]{MSTwist} and are monomial maps.

\begin{prop}[Main commutative diagram] \label{prop:faceLabelsTorus}
Suppose $G^\rho$ is a relabeled plabic graph with trip permutation $\pi$ satisfying \cref{thm:main}(1) and let $\mu$ be the trip permutation of the underlying graph $G$. Then we have a commutative diagram
\begin{equation}\label{eq:commutative}
\begin{tikzcd}
(\mathbb{C}^*)^F \arrow[r, "\vec{\partial}"] & (\mathbb{C}^*)^E/(\mathbb{C}^*)^{V-1} \arrow[d, "\tilde{\mathbb{D}}_{G^\rho}"] \\
\tPio_\pi \arrow[u, "\targetF_{G^\rho}(\bullet)", dotted]               & \rho^{-1}(\tPio_\mu) \arrow[l, "\underline{\eps}\circ \lt_\pi"]                              
\end{tikzcd}.
\end{equation}
In particular the domain of definition of $\targetF_{G^\rho}(\bullet)$ is an algebraic torus and $\Delta(\targetF_{G^\rho}) \subset \bbc(\tPio_\pi)$ is a seed. 
\end{prop}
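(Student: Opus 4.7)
The plan is to deduce commutativity from the corresponding Muller--Speyer diagram \cite[Theorem 7.1]{MSTwist} for the underlying graph $G$ and its trip permutation $\mu$, which in our notation reads $\targetF_G \circ \lt_\mu \circ \tilde{\mathbb{D}}_G \circ \vec{\partial} = \mathrm{id}_{(\mathbb{C}^*)^F}$. The translation rests on three ingredients: the definition $\tilde{\mathbb{D}}_{G^\rho} = \rho^{-1} \circ \tilde{\mathbb{D}}_G$; the sign-corrected factorization $\targetF_{G^\rho}(\bullet) = \targetF_G(\bullet) \circ \rho \circ \underline{\eps}$ supplied by \cref{lem:signssigns}, together with the involutivity $\underline{\eps}^2 = \mathrm{id}$; and the twist compatibility identity
\[
\rho \circ \lt_\pi \circ \rho^{-1} = \lt_\mu \qquad \text{on } \tPio_\mu,
\]
which I isolate as the main step.

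To establish this twist compatibility, let $\gnec = \gnec_{\rho, \pi\rho, \pi}$ be the Grassmannlike necklace with trip permutation $\pi$ and underlying permutation $\mu$, and let $\gnec^*$ be its dual. \cref{lem:gentwist} applied to $\gnec$ and to $\gnec^*$ yields the factorizations $\rt_\gnec = \rt_\mu \circ \rho$ and $\lt_{\gnec^*} = \lt_\pi \circ \rho^{-1}$. Under the hypotheses of the proposition, \cref{thm:twistsareinverses} produces the inversion $\rt_\gnec \circ \lt_{\gnec^*} = \mathrm{id}_{\tPio_\mu}$, which rewrites as $\rt_\mu \circ \rho \circ \lt_\pi \circ \rho^{-1} = \mathrm{id}_{\tPio_\mu}$. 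Precomposing with $\lt_\mu$ on the left and invoking the Muller--Speyer inversion $\lt_\mu \circ \rt_\mu = \mathrm{id}_{\tPio_\mu}$ then delivers the identity. This step is the main obstacle: it is precisely where the new $\gnec$-twist isomorphisms from \cref{sec:twist}, between open positroid varieties with distinct matroidal descriptions, do all the work.

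With the twist compatibility in hand, the commutativity of \eqref{eq:commutative} follows by a direct composition: expanding $\targetF_{G^\rho} \circ \underline{\eps} \circ \lt_\pi \circ \tilde{\mathbb{D}}_{G^\rho} \circ \vec{\partial}$ using the definitional identities above, cancelling $\underline{\eps}^2 = \mathrm{id}$, and applying the twist compatibility reduces the expression to $\targetF_G \circ \lt_\mu \circ \tilde{\mathbb{D}}_G \circ \vec{\partial}$, which equals $\mathrm{id}_{(\mathbb{C}^*)^F}$ by Muller--Speyer. (The image of $\tilde{\mathbb{D}}_G$ lies in $\tPio_\mu$, so the twist compatibility is being applied on its domain of validity.)

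For the ``in particular'' assertions, commutativity exhibits $\targetF_{G^\rho}(\bullet)$ as the rational inverse, on its image, to the composition $\underline{\eps} \circ \lt_\pi \circ \tilde{\mathbb{D}}_{G^\rho} \circ \vec{\partial}$, whose individual factors are birational isomorphisms between algebraic tori and between open positroid varieties: $\vec{\partial}$ by its Laurent monomial description; $\tilde{\mathbb{D}}_G$ by Muller--Speyer; and $\lt_\pi \circ \rho^{-1} = \lt_{\gnec^*}$ by \cref{thm:twistsareinverses}. Thus the domain of definition of $\targetF_{G^\rho}(\bullet)$ contains an open dense torus, and $\Delta(\targetF(G^\rho))$ provides a birational equivalence $\tPio_\pi \dashrightarrow (\mathbb{C}^*)^F$. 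Since $|F| = \dim \tPio_\pi$ by \cref{thm:main}(2), these Pl\"uckers form a transcendence basis of $\bbc(\tPio_\pi)$. Combined with \cref{thm:rightweak}, which guarantees that the boundary-face Pl\"uckers are units in $\bbc[\tPio_\pi]$, this shows that $\Sigma_{G^\rho}^T = (\Delta(\targetF(G^\rho)), Q(G))$ is indeed a seed in $\bbc(\tPio_\pi)$.
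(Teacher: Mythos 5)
The central step of your argument asserts the identity
\[
\rho \circ \lt_\pi \circ \rho^{-1} \;=\; \lt_\mu \quad\text{on } \tPio_\mu,
\]
but this is false, and the paper explicitly warns against it in the proof of \cref{prop:faceLabelsTorus} (``We caution the reader that $\lt_\mu \neq \rho \circ \lt_\pi \circ \rho^{-1}$ on the domain $\tPio_\mu$'') and exhibits a concrete counterexample in \cref{eg:crossproduct}, where $\rho \lt_\pi \rho^{-1}(M) \notin \tPio_\mu$ for generic $M\in\tPio_\mu$. The flaw in your derivation is the cancellation step: from $\rt_\mu \circ \rho \circ \lt_\pi \circ \rho^{-1} = \mathrm{id}_{\tPio_\mu}$ you left-compose with $\lt_\mu$ and ``invoke $\lt_\mu \circ \rt_\mu = \mathrm{id}_{\tPio_\mu}$.'' But that cancellation requires the intermediate point $\rho \lt_\pi \rho^{-1}(y)$ (for $y\in\tPio_\mu$) to itself lie in $\tPio_\mu$. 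The maps $\lt_\mu$ and $\rt_\mu$ are defined on the larger loci $D(\lmci_\mu)$ and $D(\rmci_\mu)$, and on that larger domain $\lt_\mu\circ\rt_\mu$ is emphatically \emph{not} the identity (see the remark following \cref{defn:gentwist}). So the composition cannot be simplified as claimed.

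The paper circumvents this by proving the strictly weaker statement that is actually needed: the two maps $\lt_\mu$ and $\rho\lt_\pi\rho^{-1}$ agree after post-composing with $\targetF_G(\bullet)$, i.e. their Pl\"ucker coordinates agree on every target face label. This is the content of the Triangularity \cref{lem:twistisalmostinverse}, which shows $\Delta_I(z) = \Delta_I(\lt_\pi\rt_\pi(z))$ for $I$ a target face label and $z\in D(\rmci_\pi)$ by exhibiting a unitriangular change of basis in the columns indexed by $I$. Combined with \cref{thm:twistsareinverses} (to identify $\lt_\mu(y)$ with $\lt_\mu\rt_\mu(\rho\lt_\pi\rho^{-1}(y))$), this gives commutativity after taking face Pl\"uckers, which is all the diagram requires. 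Your setup, factorization via \cref{lem:signssigns} and \cref{lem:gentwist}, and the reduction to the underlying Muller--Speyer diagram are all correct; the missing ingredient is precisely the triangularity lemma, without which the argument does not go through. The ``in particular'' assertions are fine once the commutativity is repaired.
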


We strengthen the above birational statements (i.e., that $\targetF_{G^\rho}(\bullet)$ determines an algebraic torus chart on $\tPio_\pi$) to a stronger statement about cluster structures in \cref{thm:genPlabicClusterStruc} below. 

Before the proof of \cref{prop:faceLabelsTorus}, we mention a corollary related to total positivity. Recall that $\tPio_{\pi, >0}=\{x \in \tPio_\pi: \Delta_I(x)>0 \text{ for all } I \in \mcm_\pi\}$.

\begin{cor}[$\gnec$-twist respects positivity]\label{cor:twistPositive}
		Suppose $\iota \ler \pi$ are permutations of type $(k, n)$ and the Grassmannlike necklace $\gnec=\gnec_{\rho, \iota, \pi}$ is weakly separated. Let $\mu:=\rho^{-1}\iota$. Then $\rt_\gnec \circ \underline{\eps}(\tPio_{\pi,>0}) =\tPio_{\mu,>0}$. 
\end{cor}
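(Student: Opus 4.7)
The plan is to rewrite $\tPio_{\pi,>0}$ using the commutative diagram of \cref{prop:faceLabelsTorus}, then apply $\rt_\gnec\circ\underline{\eps}$ and simplify using the fact that $\rt_\gnec$ and $\lt_{\gnec^*}$ are mutual inverses by \cref{thm:twistsareinverses}.

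First I would invoke \cref{thm:main}: the hypothesis that $\gnec$ is weakly separated is precisely condition~(3), so all four equivalent conditions hold. In particular, $\Sigma^T_{G^\rho}$ is a seed on $\tPio_\pi$ whose totally positive part coincides with $\tPio_{\pi,>0}$ by \cref{cor:posPartsCoincide}, and the dimension condition~(2) makes \cref{thm:twistsareinverses} applicable, giving the isomorphism $\rt_\gnec:\tPio_\pi \to \tPio_\mu$ with inverse $\lt_{\gnec^*}:\tPio_\mu \to \tPio_\pi$.

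Next, I would unpack the commutative diagram of \cref{prop:faceLabelsTorus}: the cluster chart of $\Sigma^T_{G^\rho}$ is the composition $\underline{\eps}\circ\lt_\pi\circ\tilde{\mathbb{D}}_{G^\rho}\circ\vec{\partial}$, and the image of $\mathbb{R}_{>0}^F$ under this chart is $\tPio_{\pi,>0}$. Since $\vec{\partial}$ is a Laurent monomial map, it carries $\mathbb{R}_{>0}^F$ onto $\mathbb{R}_{>0}^{E/V-1}$; and because $\tilde{\mathbb{D}}_{G^\rho}=\rho^{-1}\circ\tilde{\mathbb{D}}_G$ by construction while $\tilde{\mathbb{D}}_G(\mathbb{R}_{>0}^{E/V-1})=\tPio_{\mu,>0}$ by the standard total-positivity parameterization of plabic graphs, I obtain
\[
\tPio_{\pi,>0} \;=\; \underline{\eps}\circ\lt_\pi\bigl(\rho^{-1}(\tPio_{\mu,>0})\bigr).
\]

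Finally, I would apply $\rt_\gnec\circ\underline{\eps}$ to both sides and use $\underline{\eps}^2=\mathrm{id}$. \cref{lem:gentwist} applied to the dual necklace $\gnec^*$ (whose insertion permutation is $\rho^{-1}$ and whose trip permutation is $\mu$) gives $\lt_{\gnec^*}=\lt_\pi\circ\rho^{-1}$, so
\[
\rt_\gnec\circ\underline{\eps}(\tPio_{\pi,>0}) \;=\; \rt_\gnec\circ\lt_\pi\circ\rho^{-1}(\tPio_{\mu,>0}) \;=\; \rt_\gnec\circ\lt_{\gnec^*}(\tPio_{\mu,>0}) \;=\; \tPio_{\mu,>0},
\]
where the last equality uses \cref{thm:twistsareinverses}. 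The hardest step I anticipate is justifying $\tilde{\mathbb{D}}_{G^\rho}(\mathbb{R}_{>0}^{E/V-1})=\rho^{-1}(\tPio_{\mu,>0})$ honestly, since the column permutation $\rho^{-1}$ introduces signs in Pl\"ucker coordinates that must be tracked carefully; this is exactly what the auxiliary sign automorphism $\underline{\eps}$ of \cref{lem:signssigns} is designed to compensate for, which is why it appears symmetrically on both sides of the argument.
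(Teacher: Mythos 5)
Your overall strategy — read positivity off the main commutative diagram of \cref{prop:faceLabelsTorus} — is the right one, and your algebraic bookkeeping with $\lt_{\gnec^*}=\lt_\pi\circ\rho^{-1}$, $\underline{\eps}^2=\mathrm{id}$, and $\rt_\gnec\circ\lt_{\gnec^*}=\mathrm{id}$ is correct. But the argument as written is circular. You invoke \cref{cor:posPartsCoincide} at the outset to conclude ``the image of $\mathbb{R}_{>0}^F$ under this chart is $\tPio_{\pi,>0}$,'' and this is the engine of your proof. However, in the paper \cref{cor:posPartsCoincide} appears \emph{after} \cref{cor:twistPositive}, and its proof consists precisely of invoking \cref{cor:twistPositive} together with its dual $\underline{\eps}\circ\lt_{\gnec^*}(\tPio_{\mu,>0})=\tPio_{\pi,>0}$. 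So you are assuming what you set out to prove.

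The paper avoids this by going around the square in the opposite direction: by the commutative diagram one has
\[
\rt_\gnec\circ\underline{\eps}\;=\;\tilde{\mathbb{D}}_G\circ\vec{\partial}\circ\targetF_{G^\rho}(\bullet)
\]
on $\tPio_\pi$, and one simply traces positivity through these three factors. The inclusion $\rt_\gnec\circ\underline{\eps}(\tPio_{\pi,>0})\subseteq\tPio_{\mu,>0}$ is immediate because $\targetF(G^\rho)\subset\mcm_\pi$ (so all face labels are positive on $\tPio_{\pi,>0}$), $\vec{\partial}$ is a positive Laurent monomial bijection, and $\tilde{\mathbb{D}}_G$ is Postnikov's boundary measurement map onto $\tPio_{\mu,>0}$. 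The reverse inclusion comes from the identical argument applied to the dual necklace $\gnec^*$, giving $\underline{\eps}\circ\lt_{\gnec^*}(\tPio_{\mu,>0})\subseteq\tPio_{\pi,>0}$; since the two maps are mutual inverses, equality follows. That dual argument is what you would need in place of the forward reference to \cref{cor:posPartsCoincide}.

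Two smaller remarks. First, the step you flag as hardest — $\tilde{\mathbb{D}}_{G^\rho}(\mathbb{R}_{>0}^{E/V-1})=\rho^{-1}(\tPio_{\mu,>0})$ — is actually a definition-chase: $\tilde{\mathbb{D}}_{G^\rho}:=\rho^{-1}\circ\tilde{\mathbb{D}}_G$ and $\tilde{\mathbb{D}}_G(\mathbb{R}_{>0}^{E/V-1})=\tPio_{\mu,>0}$ is Postnikov's theorem, so the signs introduced by $\rho^{-1}$ are a red herring at this step (they only matter when comparing Pl\"ucker coordinates, which is precisely what $\underline{\eps}$ from \cref{lem:signssigns} handles). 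Second, \cref{cor:posPartsCoincide} concerns the set $\{x\in\tPio_\pi:\Delta_{\tI(F)}(x)>0\}$, which is a priori larger than the image of $\mathbb{R}_{>0}^F$ under the cluster chart; you quietly conflate the two, which is harmless in the end but is an additional elision on top of the circularity.
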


\begin{proof}
	By \cref{prop:faceLabelsTorus}, 
	the map $\rt_\gnec \circ \underline{\eps} \colon \tPio_\pi \to \tPio_\mu$ can also be expressed as a composition $\widetilde{\bbd}_G\circ \vec{\partial}\circ \targetF_{G^\rho}$. Each of the maps in the composition sends $\bbr_{>0}$-points to $\bbr_{>0}$-points and is surjective on such points. 
\end{proof}

\begin{proof}[Proof of \cref{prop:faceLabelsTorus}]
From the commutativity of the left square in \cite[Theorem 7.1]{MSTwist}, we have
\begin{align}
\cev{\mathbb{M}} &= \targetF_G(\bullet) \circ \lt_\mu \circ \tilde{\mathbb{D}}_G
\end{align}
as maps $(\mathbb{C}^*)^E/(\mathbb{C}^*)^{V-1} \to (\bbc^*)^F$.

We seek to prove the commutativity: 
\begin{align}
\cev{\mathbb{M}} &= \targetF_{G^\rho}(\bullet) \circ \underline{\eps} \circ \lt_\pi \circ \tilde{\mathbb{D}}_{G^\rho} = \targetF_{G}(\bullet)\circ \rho \circ \lt_\pi \circ \rho^{-1} \circ \tilde{\mathbb{D}}_G.
\end{align}

Since $\tilde{\mathbb{D}}_G$ is injective, it suffices to prove that 
\begin{align}\label{eq:reducetothis}
\targetF_G(\bullet) \circ \lt_\mu &= \targetF_{G}(\bullet) \circ \rho \circ \lt_\pi \circ \rho^{-1}
\end{align}
as maps $\tPio_\mu \to (\bbc^*)^F$. We caution the reader that $\lt_\mu \neq \rho \circ \lt_\pi \circ \rho^{-1}$ on the domain $\tPio_\mu$ (see \cref{eg:crossproduct} below). Nonetheless, the Pl\"ucker coordinates in which these two maps disagree do not arise as target face labels of plabic graphs $G$ with trip permutation $\mu$, as we presently explain.

Let $F$ be a face of $G$ and $y \in \tPio_\mu$. Set $x' = \lt_\mu(y)$ and $x = \rho \lt_\pi \rho^{-1}(y)$. By \cref{thm:twistsareinverses} we have $\lt_\mu\rt_\mu(x) = x'$. So to establish \eqref{eq:reducetothis} we need to prove that if 
$I = \tI(F)$ is a target label of a plabic graph with trip permutation $\mu$ and if $x \in D(\rmci)$, then $\Delta_I(x) = \Delta_I(\lt_\mu\rt_\mu(x))$. We prove this in \cref{lem:twistisalmostinverse} below. The commutativity of \eqref{eq:commutative} is proved. 

By the commutativity of the diagram, the domain of definition of $\targetF_{G^\rho}(\bullet)$ is the image of $\lt_\pi\tilde{\bbd}_{G^\rho} = \lt_{\gnec^*} \circ \tilde{\bbd}_{G}$. Thus, it is an algebraic torus because $\lt_{\gnec^*}$ is an isomorphism by \cref{thm:twistsareinverses}. Any regular function on $\tPio$ restricts to a regular function on this algebraic torus, hence to a Laurent polynomial in its basis of characters 
$\Delta(\targetF_{G^\rho})$. This shows that every regular function can be expressed as  Laurent polynomial in $\Delta(\targetF_{G^\rho})$. Thus, $\Delta(\targetF_{G^\rho})$ generates the function field $\bbc(\tPio_\pi)$. Since $\dim \tPio_\pi = \# \targetF_{G^\rho}$ we conclude that $\Delta(\targetF_{G^\rho})$ is algebraically independent and thus $\Sigma_{G ^\rho}^T$ is a seed in $\bbc(\tPio_\pi)$. 
\end{proof} 

The proof of \cref{prop:faceLabelsTorus} promised the following lemma which we now state and prove. 
\begin{lem}[Triangularity lemma]\label{lem:twistisalmostinverse}
If $I = \tI(F)$ is the target label of a reduced plabic graph with trip permutation $\pi$, and if $z \in D(\rmci_\pi)$, then $\Delta_I(z) = \Delta_I(\lt_\pi\rt_\pi(z))$. 
\end{lem}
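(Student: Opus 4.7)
Let $M$ represent $z$ and set $N := \rt_\pi(M)$, $w := \lt_\pi(N)$. By \cref{thm:MStwist}, both $N$ and $w$ represent points in $\tPio_\pi$. The column $w_a$ is the unique vector in $\bbc^k$ satisfying $\langle w_a, N_b \rangle = \delta_{a, b}$ for $b \in \leftI_{a+1}$. Since $\Delta_{\rightI_a}(z) \neq 0$, the columns $\{M_c : c \in \rightI_a\}$ form a basis of $\bbc^k$, so I may expand $w_a = \sum_{c \in \rightI_a} \lambda_{a, c}\, M_c$. Pairing this expansion with $N_a$ and using that $\langle N_a, M_c \rangle = \delta_{a, c}$ for $c \in \rightI_a$ together with $a \in \leftI_{a+1}$ yields $\lambda_{a, a} = 1$. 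When $M \in \tPio_\pi$, the argument of \cref{prop:twistComposeId} additionally shows that all remaining $\lambda_{a, c}$ vanish, giving $w = M$; however, on $D(\rmci_\pi) \setminus \tPio_\pi$ they need not vanish because $\langle M_a, N_b \rangle = \pm\, \Delta_{\rightI_b \setminus b \cup a}(z)/\Delta_{\rightI_b}(z)$ can be nonzero for ``bad'' $b \in \leftI_{a+1}$ with $a \notin \rightI_b$.

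The plan is to establish the following structural claim: for each $a$, one has $\lambda_{a, c} \neq 0$ only if $c \in C_a$, where $C_a \subseteq [n] \setminus \{a\}$ consists of those $c$ admitting lifts $\tilde c < \tilde a \in \bbz$ of $c, a$ with $\ell(t_{\tilde c \tilde a}\, f) < \ell(f)$, where $f \in \bd(k, n)$ is the lift of $\pi$. Granting this, fix $I = \tI(F) \in \targetF(G)$. By \cref{lem:aimpliesb}, $a \in I$ forces $C_a \subseteq I$, so $w_a - M_a \in \spann\{M_c : c \in I \setminus a\}$ for each $a \in I$. Writing $w|_I = M|_I \cdot U$ for a $k \times k$ matrix $U$, and ordering the elements of $I$ compatibly with the length partial order, makes $U$ unit lower triangular; hence $\det U = 1$ and $\Delta_I(w) = \det(w|_I) = \det(M|_I) = \Delta_I(z)$, as desired.

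The main obstacle is the structural claim. The defining equations $\langle w_a, N_b \rangle = \delta_{a, b}$ for $b \in \leftI_{a+1}$ translate, via the expansion $w_a = \sum_{c \in \rightI_a} \lambda_{a, c} M_c$, into a linear system $P_a\, \vec\lambda = \vec\delta_a$ where $(P_a)_{b, c} = \langle N_b, M_c \rangle$ is either $\delta_{b, c}$ (when $c \in \rightI_b$) or $\pm\, \Delta_{\rightI_b \setminus b \cup c}(z)/\Delta_{\rightI_b}(z)$ (when $c \notin \rightI_b$), by Cramer's rule. Row $b = a$ is $e_a$, which yields $\lambda_{a, a} = 1$; the remaining rows form a $(k-1) \times (k-1)$ subsystem with inhomogeneous term $(\langle N_b, M_a \rangle)_{b \in B_a}$, $B_a := \{b \in \leftI_{a+1} \setminus a : a \notin \rightI_b\}$, whose entries involve only Pl\"uckers $\Delta_{\rightI_b \setminus b \cup a}(z)$ indexed by subsets outside $\mcm_\pi$ (by \cref{lem:notinpositroid}). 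Inverting and rearranging, each $\lambda_{a, c}$ is a rational expression in such Pl\"uckers, and the combinatorial content is to show, using the Bruhat-type characterizations of $\rightI_b$ and $\leftI_{a+1}$ in terms of $f$ (parallel to the arguments in \cref{prop:twistComposeId}), that any $c$ appearing with nonzero coefficient belongs to $C_a$. The key input is that $b \in \leftI_{a+1}$ together with $a \notin \rightI_b$ forces the pair $(b, a)$ to satisfy the length inequality defining $C_a$, and similarly for the elements of $\rightI_b \setminus b$ contributing to the expansion.
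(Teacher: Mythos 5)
The high-level skeleton of your proof matches the paper's: you identify the correct triangularity claim (your structural claim about $\lambda_{a,c}$ is precisely the paper's key claim, Eq. (\ref{eq:triangularity}), after reindexing $x=\pi(a)$ and noting that your $C_a$ is exactly $\{c: a\prec c\}$ for the partial order the paper extracts from \cref{lem:aimpliesb}), you correctly observe that the coefficient of $M_a$ is $1$, and your determinant argument at the end (unit lower triangular change of basis $\Rightarrow$ equal Pl\"uckers) is the same as the paper's.

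The problem is that you do not prove the structural claim; you only sketch a plan for it, and that plan is both incomplete and follows a genuinely different route from the paper. The paper establishes the claim by a direct chain of containments and an induction on intersections of column spans (using the definitions of $\rt$ and $\lt$, the equivalence \eqref{eq:zisgood}, and the Grassmann necklace property to control $\bigcap_{x} \spann(M_y)_{y \in \rightI_x \setminus x}$). You instead propose to write down the $k\times k$ linear system $\langle w_a, N_b\rangle=\delta_{a,b}$, peel off the row $b=a$, and invert the remaining $(k-1)\times(k-1)$ subsystem, then argue ``combinatorially'' that the resulting Cramer-type expressions for $\lambda_{a,c}$ vanish when $c\notin C_a$. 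This is left entirely as a plan: you have not shown the $(k-1)\times(k-1)$ submatrix is invertible, you have not exhibited the vanishing pattern of the inverse, and the sentence beginning ``The key input is that $b\in\leftI_{a+1}$ together with $a\notin\rightI_b$ forces...'' is a statement of what needs to be true, not a verification. Moreover, the claim that the off-diagonal entries of $P_a$ and the inhomogeneous terms involve only Pl\"uckers outside $\mcm_\pi$ requires an argument that $a <_b \pi(b)$ whenever $b\in\leftI_{a+1}$ (the hypothesis of \cref{lem:notinpositroid}); the paper proves this in the setting of \cref{prop:twistComposeId} using that $M\in\tPio$, and you are precisely in the regime where $M\notin\tPio$, so you would need a purely combinatorial version. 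In short, the conclusion and the reduction to the key claim are right, but the heart of the lemma — the proof that $w_a$ lies in the ``lower-triangular'' span — is missing.
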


That is, while the maps $\rt_\pi$ and $\lt_\pi$ are not inverse rational maps, the composition $\lt_\pi \rt_\pi$ preserves {\sl certain} Pl\"ucker coordinates. We call the lemma a triangularity lemma because, as we show below, the passage from $z$ to $\lt_\pi \rt_\pi(z)$ amounts to a triangular change of basis (in columns $I$). 

\begin{proof}
Since we always deal with $\lt_\pi$ and $\rt_\pi$ in this proof, we omit the subscript $\pi$. As discussed above, the conclusion of \cref{lem:aimpliesb} endows $[n]$ with a partial order which we denote by $\prec$ during this proof. Thus, if $a \prec b$, then $b$ appears in the target label $I$ of a plabic graph whenever $a$ does.

Let $M$ be a matrix with columns $M_1,\dots,M_n$, representing a point $z \in D(\rmci_\pi)$ . The key claim which underlies our argument is the following. For any $a \in [n]$, we have 
 \begin{align}\label{eq:triangularity}
(\lt \rt M)_{\pi(a)} &\in {\rm span}\{M_{\pi(b)} \colon \pi(a) 
\prec \pi(b\}.
\end{align}
We first explain that this key claim implies the desired statement. Clearly $\pi(a) \in \rightI_{\pi(a)}$ and hence $\{\pi(b) \colon \pi(a) \prec \pi(b)\} \subset \rightI_{\pi(a)}$ using \cref{lem:aimpliesb}. Since $z \in D(\rmci_\pi)$, it follows that the column vectors on the right hand side of \eqref{eq:triangularity} are linearly independent. By the definition of $\lt$ we have $\langle (\lt \rt M)_{\pi(a)}, (\rt M)_{\pi(a)}\rangle = 1$. Assuming \eqref{eq:triangularity}, and using the perpendicularity statement in the definition of $\rt$, it follows that when we expand 
$(\lt \rt M)_{\pi(a)}$ in terms of the vectors on the right hand side of \eqref{eq:triangularity}, the coefficient of $M_{\pi(a)}$ equals~1.  Choosing an ordering of columns that refines the partial order $\prec$, it follows that the matrices $(M_a)_{ a \in \tI(F)}$ and $((\lt \rt M)_a)_{ a \in \tI(F)}$ are related by a triangular matrix with ones on the diagonal. Thus the two matrices have the same determinant. That is, we have the desired equality of  Pl\"ucker coordinates 
$$\Delta_I(z) =: \det \left((M_a)_{ a \in \tI(F)}\right) \overset{\eqref{eq:triangularity}}{=}  \det \left(((\lt \rt M)_a)_{ a \in \tI(F)}\right) := \Delta_I(\lt_\pi\rt_\pi(z)).$$

What remains then is to argue the key claim \eqref{eq:triangularity}. Using the definition of $\lt$ in column $\pi(a)+1$ and then using the definition of $\rt$ for each column in $\leftI_{\pi(a)+1} \setminus \pi(a)$, we have 
\begin{align}
(\lt \rt(M))_{\pi(a)} \in \left({\rm span}(\rt(M)_x)_{x \in \leftI_{\pi(a)+1} \setminus \pi(a)}\right)^\perp  
&\subseteq \bigcap_{x \in \leftI_{\pi(a)+1} \setminus \pi(a)} {\rm span}(M_{y})_{y \in \rightI_x \setminus x} \\
& \subseteq \bigcap_{x \in \leftI_{\pi(a)+1} \cap(a,\pi(a))} {\rm span}(M_{y})_{y \in \rightI_x \setminus x}
\label{eq:anyx}.
\end{align}
In the last step we simply took an intersection over a smaller indexing set. 

We list the elements $x$ in $\leftI_{\pi(a)+1} \cap(a,\pi(a))$ in cyclic order as $a< x_s < \cdots < x_1 < \pi(a)$. Let $w$ be an arbitrary number in the cyclic interval $(a,\pi(a))$. It follows by comparing the definitions of $\leftI_{\pi(a)+1}$ and $\rightI_w$ that 
\begin{equation}\label{eq:zisgood}
\pi(w) \in (w,\pi(a)) \text{ if and only if } w \notin \{x_1,\dots,x_s\}.
\end{equation}

We will now compute the right hand side of \eqref{eq:anyx} directly. We claim inductively, for $0 \leq t \leq s$, that 
\begin{align*}
\bigcap_{j=1}^t {\rm span}(M_{y})_{y \in \rightI_{x_j} \setminus x_j}  &= {\rm span}(M_y)_{y \in \rightI_{\pi(a)} \setminus \{\pi(x_1),\dots,\pi(x_t)\}}. 
\end{align*}
When $t = 0$ we interpret the left hand side as an empty intersection (hence, as all of $\bbc^k$) and the base case holds since $M \in D(\rmci)$. Evaluating the inductive claim when $t = s$ and using \eqref{eq:zisgood} we see
that the right hand side is spanned by $M_{\pi(a)}$ as well as various $M_{\pi(b)}$'s where $b<a< \pi(a) < \pi(b)$ which is the key claim \eqref{eq:triangularity}. 

Assuming the inductive claim for a given $t \in [0,s)$, we have 
\begin{align*}
\bigcap_{j=1}^{t+1} {\rm span}(M_{y})_{y \in \rightI_{x_j} \setminus x_j}  &= {\rm span}(M_y)_{y \in \rightI_{\pi(a)} \setminus \{\pi(x_1),\dots,\pi(x_t)\}} \cap 
{\rm span}(M_{y})_{y \in \rightI_{x_{t+1}} \setminus x_{t+1}} \\
&= {\rm span}(M_y)_{y \in \left(\rightI_{\pi(a)} \setminus \{\pi(x_1),\dots,\pi(x_t)\}\right) \cap \left(\rightI_{x_{t+1}} \setminus x_{t+1}\right)}\\
&= {\rm span}(M_y)_{y \in \rightI_{\pi(a)} \setminus \{\pi(x_1),\dots,\pi(x_{t+1})\}}.
\end{align*}
The first equality is the inductive assumption. To establish the second equality, we claim that $(\rightI_{x_{t+1}} \setminus x_{t+1}) \cup (\rightI_{\pi(a)} \setminus \{\pi(x_1),\dots,\pi(x_t)\})) \subseteq \rightI_{x_{t+1}+1}$. The first containment is the definition of Grassmann necklace and the second containment follows from \eqref{eq:zisgood}. Since $M \in D(\rmci)$, the vectors $\{M_y \colon y \in \rightI_{x_{t+1}+1}\}$ are independent. Thus, we can replace the intersection of spans with the span of the intersections, which is what we have done in the second equality. The third equality is another instance of \eqref{eq:zisgood}, noting that $\pi(x_{t+1}) \in \rightI_{\pi(a)}$ but is not in $\rightI_{x_t}$. This completes the inductive proof, establishing \eqref{eq:triangularity}. 
\end{proof}

\begin{example}[$\lt_\mu$ vs. $\rho\lt_\pi\rho^{-1}$]\label{eg:crossproduct}
Continuing \cref{eg:differentbases}, we demonstrate our assertion from the proof of \cref{prop:faceLabelsTorus}, namely that the maps $\lt_\mu$ and $\rho\lt_\pi\rho^{-1}$ do not coincide on $\tPio_\mu$. The map $\lt_\mu$ restricts to an automorphism of $\tPio_\mu$. On the other hand, we will show that $\rho\lt_\pi\rho^{-1}(\tPio_\mu)$ is not contained in $\tPio_\mu$. Recall that any point of $\tPio_\mu$ has the property that any matrix representative has parallel 3rd and 4th columns.

We have $\rho = 123546$ in one line notation and $\lmci_\pi = (456, 146, 126, 123, 234, 245)$. If $M_1,\dots,M_6$ are the columns of a matrix $M$ representing a point in $\tPio_\mu$, with similar notation as in \cref{eg:thetwotwists}, we have 
$$\rho \lt_\pi \rho^{-1}(M) = 
\begin{bmatrix}
\frac{M_5 \times M_6}{\Delta_{156}(M)} & \frac{M_1 \times M_6}{\Delta_{126}(M)}& \frac{M_1 \times M_2}{\Delta_{123}(M)} & \frac{M_2 \times M_5}{\Delta_{245}(M)} & \frac{M_2 \times M_3}{\Delta_{235}(M)} & \frac{M_4 \times M_5}{\Delta_{456}(M)}
\end{bmatrix}.$$
Note that the Pl\"ucker coordinates appearing in the denominators of these formulas are nonvanishing using the fact that $M \in \tPio_\mu$. Provided $\Delta_{125}(M) \neq 0$, which is generically true on $\tPio_\mu$, one can see that the cross product of the third and fourth columns of the above matrix is a nonzero scalar multiple of $M_2$. Thus, 
one has $\rho \lt_\pi \rho^{-1}(M) \notin \tPio_\mu$  for generic $M \in \tPio_\mu$.
\end{example}

Now we extract the stronger statement about cluster structures from the commutative diagram \eqref{eq:commutative}. 
\begin{thm}[$G^\rho$ gives a cluster structure]\label{thm:genPlabicClusterStruc}
Suppose $\pi \rho \ler \pi$ and let $G^\rho$ be a reduced plabic graph with trip permutation $\pi$. Suppose $G^\rho$ satisfies \cref{thm:main}(1). Then we have the equality of cluster algebras $\mca(\Sigma^T_{G^\rho}) = \bbc[\tPio_\pi]$. 
\end{thm}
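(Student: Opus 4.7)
The plan is to transport the target cluster structure on $\tPio_\mu$ to $\tPio_\pi$ along an isomorphism built from Grassmannlike twist maps, reducing the statement to the Galashin--Lam theorem. By \cref{prop:faceLabelsTorus} we already know that $\Sigma_{G^\rho}^T$ is a seed in $\bbc(\tPio_\pi)$, so $\mca(\Sigma_{G^\rho}^T)$ is a well-defined subalgebra of $\bbc(\tPio_\pi)$; the goal is to prove it equals $\bbc[\tPio_\pi]$.

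First I will establish the target cluster structure for the underlying plabic graph $G$, namely $\mca(\Sigma_G^T)=\bbc[\tPio_\mu]$. By \cref{thm:GL} applied to $G$ we have $\mca(\Sigma_G^S)=\bbc[\tPio_\mu]$. The Muller--Speyer right twist $\rt_\mu$ is an automorphism of $\tPio_\mu$, and \cref{rmk:twistsquared} asserts $(\rt_\mu^2)^*(\Sigma_G^S)\sim \Sigma_G^T$. Since quasi-equivalent seeds generate the same cluster algebra and pullback along an automorphism is a $\bbc$-algebra automorphism of $\bbc[\tPio_\mu]$,
\[
\mca(\Sigma_G^T) = \mca\bigl((\rt_\mu^2)^*(\Sigma_G^S)\bigr) = (\rt_\mu^2)^*\bbc[\tPio_\mu] = \bbc[\tPio_\mu].
\]

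Next I construct an isomorphism of varieties $\Phi\colon \tPio_\mu\to\tPio_\pi$ pulling back $\Sigma_{G^\rho}^T$ to $\Sigma_G^T$. Let $\iota := \pi\rho$, $\gnec := \gnec_{\rho,\iota,\pi}$, and set $\Phi := \underline{\eps}\circ \lt_\pi \circ \rho^{-1}\circ \rt_\mu$; by \cref{lem:gentwist}, $\lt_\pi \circ \rho^{-1}=\lt_{\gnec^*}$. The hypotheses $\pi\rho \ler \pi$ and $\dim\tPio_\pi=\dim\tPio_\mu$ let us invoke \cref{thm:twistsareinverses}, which gives that $\lt_{\gnec^*}\colon\tPio_\mu\to\tPio_\pi$ is an isomorphism; composing with the isomorphisms $\rt_\mu$ and $\underline{\eps}$ makes $\Phi$ an isomorphism of varieties. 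The quivers satisfy $Q(G^\rho)=Q(G)$ by definition, so to obtain $\Phi^*(\Sigma_{G^\rho}^T)=\Sigma_G^T$ it remains only to compare cluster variables. The commutative diagram \eqref{eq:commutative} of \cref{prop:faceLabelsTorus}, combined with the analogous Muller--Speyer diagram for $G$ and the relation $\tilde{\bbd}_{G^\rho}=\rho^{-1}\circ\tilde{\bbd}_G$, yields
\[
\Delta_{\tI(F^\rho)}\circ\underline{\eps}\circ\lt_\pi\circ\rho^{-1} = \Delta_{\tI(F)}\circ\lt_\mu
\]
on $\tPio_\mu$, where $F^\rho$ corresponds to $F$ and $\tI(F^\rho)=\rho(\tI(F))$ by \cref{rmk:relationships}. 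Post-composing with $\rt_\mu$ and using $\lt_\mu\circ\rt_\mu=\mathrm{id}_{\tPio_\mu}$ gives $\Phi^*(\Delta_{\tI(F^\rho)})=\Delta_{\tI(F)}$ as required.

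Since $\Phi$ is an isomorphism of varieties, $\Phi^*$ is an isomorphism of function fields that carries $\bbc[\tPio_\pi]$ onto $\bbc[\tPio_\mu]$ and cluster algebras onto cluster algebras. Hence $\Phi^*(\mca(\Sigma_{G^\rho}^T))=\mca(\Sigma_G^T)=\bbc[\tPio_\mu]=\Phi^*(\bbc[\tPio_\pi])$, and injectivity of $\Phi^*$ yields $\mca(\Sigma_{G^\rho}^T)=\bbc[\tPio_\pi]$. The main obstacle is extracting the pullback identity $\Phi^*(\Delta_{\tI(F^\rho)})=\Delta_{\tI(F)}$ cleanly from the two commutative diagrams: they already give the identity on the level of boundary measurement tori, but lifting this to an exact equality of cluster variables (rather than equality merely up to Laurent monomials in frozens, i.e.\ quasi-equivalence) requires careful bookkeeping with the sign automorphism $\underline{\eps}$ and the column permutation $\rho$.
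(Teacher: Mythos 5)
Your proposal is correct, and it reorganizes the argument in a genuinely different way from the paper. The paper's proof goes directly: it defines $\varphi := \rt_\mu \circ \rt_\gnec \circ \underline{\eps} = \rt_\mu^2 \circ \rho \circ \underline{\eps} : \tPio_\pi \to \tPio_\mu$, replays the proof of \cite[Proposition 7.13]{MSTwist} against the combined commutative diagram to derive the generalized double-twist formula \eqref{eq:doubletwist}, and then verifies that the correction factors are in $\bbp_\pi$ and that the exchange ratios $\hat{y}_i$ agree — thereby proving the quasi-equivalence $\varphi^*(\Sigma_G^S) \sim \Sigma_{G^\rho}^T$ and concluding by $\mca(\Sigma_G^S) = \bbc[\tPio_\mu]$. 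You instead factor this into two steps: first the $\rho = e$ case, $\mca(\Sigma_G^T) = \bbc[\tPio_\mu]$, by quoting the Muller--Speyer identity $(\rt_\mu^2)^*(\Sigma_G^S) \sim \Sigma_G^T$ from \cref{rmk:twistsquared}; second, a ``relative'' step showing that $\Phi := \underline{\eps} \circ \lt_{\gnec^*} \circ \rt_\mu$ gives an \emph{exact} seed equality $\Phi^*(\Sigma_{G^\rho}^T) = \Sigma_G^T$, extracted from the two commutative diagrams plus $\lt_\mu \circ \rt_\mu = \mathrm{id}_{\tPio_\mu}$. Because the relative step yields equality of seeds (the quivers agree and the cluster variables literally pull back), there is no $\hat{y}$-check or $\bbp_\pi$-bookkeeping needed there; the only quasi-equivalence you rely on is the one already recorded in \cref{rmk:twistsquared}. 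The ``obstacle'' you flag at the end — upgrading agreement at the torus level to exact agreement of cluster variables — is in fact already handled, since the automorphism $\underline{\eps}$ is constructed via \cref{lem:signssigns} precisely so that $\targetF_{G^\rho}(\bullet) = \targetF_G(\bullet) \circ \rho \circ \underline{\eps}$ holds on the nose, and the cancellation of $\lt_\mu$ against $\rt_\mu$ is an honest identity on $\tPio_\mu$. The tradeoff versus the paper's proof: the paper carries out the double-twist calculation in full generality (hence is more self-contained), whereas your argument offloads the $\rho = e$ base case to the cited remark and in exchange gets a cleaner, formula-free relative step. One small slip in terminology: you want to \emph{pre}-compose with $\rt_\mu$ (i.e., substitute $\rt_\mu(y)$ for $y$), not post-compose.
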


\begin{proof}
First we claim that the analogue of the double twist formula \cite[Proposition 7.13]{MSTwist} holds. Let 
$\varphi := \rt_\mu \circ \rt_\gnec \circ \underline{\eps} = \rt_\mu^2 \circ \rho \circ \underline{\eps} \colon \tPio_\pi \to \tPio_\mu$. We claim that $\varphi^*(\Sigma^S_G) \sim \Sigma^T_{G^\rho}$, i.e. that the seed obtained by pulling back 
the source collection $\Sigma^S_G \subset \bbc[\tPio_\mu]$
along $\varphi$ is quasi-equivalent to the target seed $\Sigma^T_{G^\rho}$. Indeed, we can rewrite our commutative diagram \eqref{eq:commutative} and combine it with the right diagram in \cite[Theorem 7.1]{MSTwist} to obtain a commutative diagram
\begin{equation}\label{eq:commutative2}
\begin{tikzcd}
(\mathbb{C}^*)^F \arrow[r, "\vec{\partial}"]         & (\mathbb{C}^*)^E/(\mathbb{C}^*)^{V-1} \arrow[d, "\tilde{\mathbb{D}}_G"] \arrow[r, "\vec{\bbm}"] & (\mathbb{C}^*)^F                \\
\tPio_\pi \arrow[u, "\targetF(G^\rho)(\bullet)", dotted] \arrow[r, "\rt_{\gnec} \circ \underline{\eps}"] & \tPio_\mu \arrow[r, "\vec{\tau}_\mu"]                                                         & \tPio_\mu \arrow[u, "\sourceF(G)", dotted].
\end{tikzcd}\end{equation}
Repeating the proof of \cite[Proposition 7.13]{MSTwist} using this diagram we obtain the formula 
\begin{equation}\label{eq:doubletwist}
\Delta_{\sI(F)}(\varphi(y)) = \Delta_{\rho(\tI(F))}(y)\prod_{i \in \sI(F)}\frac{\Delta_{I_{i}}(y)}{\Delta_{I_{i+1}}(y)}.
\end{equation}

The Pl\"ucker coordinate on the left is in $\sourceF(G)$ whereas the Pl\"ucker coordinate $\rho(\tI(F)) \in \targetF(G^\rho)$ and likewise for the necklace variables $I_i,I_{i+1} \in \gnec$. Since $\gnec$ is a unit necklace, the multiplicative factor on the right hand side of \eqref{eq:doubletwist} is in $\bbp_\pi$ as required by \cref{defn:seedorbit}. To complete the proof that $\varphi^*(\Sigma^S_G) \sim \Sigma^T_{G^\rho}$, we need to check that the $\hat{y}$'s in these two seeds coincide. That is, we need to show that the multiplicative $\bbp_\pi$-factors on the right hand side of \eqref{eq:doubletwist} cancel out when we compute the Laurent monomial $\hat{y}$. This follows from the  well-known fact that each of the $\hat{y}$ monomials is homogeneous with respect to the $\bbz^n$-grading on $\bbc[\tPio_\pi]$ given by the degree in the column vectors: the number of times that a given $i$ appears in the numerator of each $\hat{y}$ cancels with the number of times it appears in the denominator, and thus the same is true of each $\Delta_{I_{i}}/\Delta_{I_{i+1}}$. 

Since $\varphi^*(\Sigma^S_G) \sim \Sigma^T_{G^\rho}$, we have then that 
$$\mca(\Sigma^T_{G^\rho}) = \mca(\varphi^*(\Sigma^S_G)) = \varphi^*(\mca(\Sigma^S_G)) = \varphi^*(\bbc[\tPio_\mu]) = \bbc[\tPio_\pi].$$
\end{proof}

This completes the proof of \cref{thm:main} (leaving the equivalence of the combinatorial conditions (2) and (3) for \cref{secn:proofs}) . We also have the following corollary about the positive parts of $\tPio_\pi$ determined by relabeled plabic graph seeds.

\begin{cor}[$G^\rho$ gives a positivity test]\label{cor:posPartsCoincide}
	Suppose $\pi\rho \ler \pi$. Suppose $G^\rho$ is a relabeled plabic graph with trip permutation $\pi$ and satisfies \cref{thm:main}(1). Then the positive part of $\tPio_\pi$ determined by $\Sigma_{G ^\rho}^T$ is equal to $\tPio_{\pi,>0}$. That is, 
	$$\{x \in \tPio_\pi: \Delta_{\tI(F)}(x)>0 \text{ for all faces } F \text{ of } G^\rho\}=\{x \in \tPio_\pi: \Delta_{I}(x)>0 \text{ for all } I \in \mcm_\pi\}.
	$$
\end{cor}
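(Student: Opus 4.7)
The plan is to combine the quasi-equivalence $\varphi^*(\Sigma^S_G) \sim \Sigma^T_{G^\rho}$ established in the proof of \cref{thm:genPlabicClusterStruc} (where $\varphi := \rt_\mu \circ \rt_\gnec \circ \underline{\eps} \colon \tPio_\pi \to \tPio_\mu$) with the compatibility of twist maps with total positivity from \cref{cor:twistPositive}. Recall that by the lemma at the end of \cref{secn:seedorbits}, quasi-equivalent cluster structures determine the same totally positive part of a variety. Hence the positive part of $\tPio_\pi$ determined by $\Sigma^T_{G^\rho}$ coincides with the one determined by $\varphi^*(\Sigma^S_G)$; and by \cref{thm:GL}, the latter equals $\varphi^{-1}(\tPio_{\mu, >0})$.

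It then remains to show that $\varphi^{-1}(\tPio_{\mu,>0}) = \tPio_{\pi,>0}$. First, applying \cref{cor:twistPositive} to the Grassmannlike necklace $\gnec=\gnec_{\rho,\pi\rho,\pi}$ (which is weakly separated by the equivalent form \cref{thm:main}(3) of our hypothesis), we find that $\rt_\gnec \circ \underline{\eps}$ restricts to a bijection $\tPio_{\pi,>0} \to \tPio_{\mu,>0}$. Next, applying \cref{cor:twistPositive} to the forward Grassmann necklace $\rmci_\mu = \gnec_{\mathrm{id},\mu,\mu}$ --- in which case the accompanying sign automorphism is trivial, since inspection of the construction in \cref{lem:signssigns} shows that $r = f^{-1}i$ reduces to the identity affine permutation and every sign factor evaluates to $+1$ --- yields that $\rt_\mu$ restricts to an automorphism of $\tPio_{\mu,>0}$. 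Composing the two, $\varphi$ itself restricts to a bijection $\tPio_{\pi,>0} \to \tPio_{\mu,>0}$, so $\varphi^{-1}(\tPio_{\mu,>0}) = \tPio_{\pi,>0}$, completing the proof.

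The main obstacle has already been addressed upstream: it is \cref{cor:twistPositive}, itself a consequence of the commutative diagram \eqref{eq:commutative}. Given that corollary together with \cref{thm:GL} and the quasi-equivalence from the proof of \cref{thm:genPlabicClusterStruc}, the present statement is essentially a formal assembly. The only delicate point is the triviality of $\underline{\eps}$ in the application to $\rmci_\mu$, which is immediate from the defining sign formula once $\rho$ is taken to be the identity.
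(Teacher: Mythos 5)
Your proof is correct, but it takes a genuinely different route from the paper's. The paper establishes the two set containments directly: the containment of the right-hand set in the left-hand set is immediate from $\targetF(G^\rho) \subset \mcm_\pi$ (going up the left side of the commutative diagram), and the reverse containment uses \cref{cor:twistPositive} together with its dual statement $\underline{\eps}\circ\lt_{\gnec^*}(\tPio_{\mu,>0}) = \tPio_{\pi,>0}$: if $x$ lies in the left-hand set then $\rt_\gnec\circ\underline\eps(x) \in \tPio_{\mu,>0}$, so applying the dual twist recovers $x \in \tPio_{\pi,>0}$. Your argument instead goes through the cluster-theoretic machinery: you invoke the quasi-equivalence $\varphi^*(\Sigma^S_G) \sim \Sigma^T_{G^\rho}$ from the proof of \cref{thm:genPlabicClusterStruc}, apply the fact that quasi-equivalent cluster structures determine the same positive part, and then reduce to showing $\varphi^{-1}(\tPio_{\mu,>0}) = \tPio_{\pi,>0}$, which you verify by applying \cref{cor:twistPositive} twice (once to $\gnec$ and once to $\rmci_\mu$ with the sign automorphism trivial). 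Both proofs are valid; the paper's is more self-contained and does not need the quasi-equivalence fact or \cref{thm:GL}'s positivity statement, whereas yours cleanly separates the cluster-algebraic step from the geometric one, at the cost of invoking the double-twist formula machinery. One small point worth making explicit in your version is the step ``positive part of $\varphi^*(\Sigma^S_G)$ equals $\varphi^{-1}(\tPio_{\mu,>0})$,'' which uses that pullback of a seed along an isomorphism sends the positive part to the preimage of the positive part; this is straightforward but is elided in your writeup.
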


\begin{proof}
	
	Let $\gnec=\gnec_{\rho, \iota, \pi}$. By going up the left hand side of the main commutative diagram \eqref{eq:commutative}, we see that $\targetF(G^\rho) \subset \mcm_\pi$, thus the 
right left hand set above is contained in the left hand set. 
	
	The containment of the left hand side in the right hand side follows from \cref{cor:twistPositive} and its dual statement, which is that $\underline{\eps}\circ\lt_{\gnec^*}(\tPio_{\mu,>0}) = \tPio_{\pi,>0}$.  Indeed, if $x$ is in the left hand side, then by \cref{cor:twistPositive}, $\rt_{\gnec}\circ \underline{\eps}(x) \in \tPio_{\mu,>0}$. From the dual statement, we have that $$\underline{\eps} \circ \lt_{\gnec^*}(\rt_{\gnec}\circ \underline{\eps} (x))=x\in \tPio_{\pi,>0}.$$
\end{proof}

\section{Quasi-equivalence and cluster structures from relabeled plabic graphs}
\label{sec:reachable}
In this section, we investigate the quasi-equivalence of the different cluster structures on $\tPio_\pi$ given by \cref{thm:main}. We verify the quasi-equivalence of source and target cluster structures (\cref{conj:sourceistarget}) for a class of positroids we call ``toggle-connected.'' We also prove the quasi-equivalence of all cluster structures given by \cref{thm:main} for (open) Schubert and opposite Schubert varieties, verifying a special case of \cref{conj:samepattern}.

First we recall our main conjecture on quasi-equivalence:
\begin{conj:samepattern}[Quasi-equivalence conjecture]
Let $G^\rho$ be a relabeled plabic graph satisfying the conditions of \cref{thm:main}, determining a cluster structure on $\tPio_\pi$. Let $H$ be a plabic graph with trip permutation $\pi$. Then the seeds $\Sigma^T_{G^\rho}$ and $\Sigma^T_H$ are related by a quasi-cluster transformation.\footnote{In the language of \cite{Fra}, this conjecture says that the map $\rt_\gnec \circ \underline{\eps}$ from \cref{sec:twist} is a {\sl quasi-isomorphism} of the target structures on $\tPio_\pi$ and $\tPio_\mu$.}
 \end{conj:samepattern}

\begin{rmk}\label{rmk:sourceisrelabeled}
In \cref{eg:sourceisrelabeled} we explained that $\Sigma^S_G = \Sigma^T_{G^{\pi^{-1}}}$. However, the seed $\Sigma^T_{G^{\pi^{-1}}}$ does not fit into the framework of Theorem \cref{thm:main} and \cref{conj:samepattern} because when $\rho = \pi^{-1}$, $\iota := \pi \rho$ does not have type $(k,n)$. This is not a problem: if we instead choose $\rho=\pi^{-1} \eps_k$ and $\iota= \eps_k$, then $\iota \ler \pi$ as desired. If $H^\rho$ has trip permutation $\pi$, then the target labels of the boundary faces of $H^\rho$ give the shifted reverse Grassmann necklace $\lmci_{\pi}[k]$.  It is not hard to see that $\Sigma_{H ^\rho}^T$ is equal to the source seed $\Sigma_{G}^S$ for some plabic graph\footnote{in fact, for a rotation of the underlying graph $H$} $G$ with trip permutation $\pi$. So the source-labeled seed of $G$ can be realized as a relabeled plabic graph seed satisfying the assumptions of \cref{thm:main} and \cref{conj:samepattern}.
\end{rmk}

\begin{rmk}
If indeed \cref{conj:samepattern} holds, then the two cluster structures on $\tPio_\pi$ given by the seeds $\Sigma_{G ^\rho}^T$ would and $\Sigma_H^T$ would give rise to the same notion of positive part of $\tPio_\pi$. \cref{cor:posPartsCoincide} confirms that this is indeed the case, supporting \cref{conj:samepattern}.
\end{rmk}

We now state our main result in the direction of \cref{conj:samepattern}. As preparation, let $G^\rho$ and $H^\sigma$ be reduced plabic graphs with trip permutation $\pi$, both satisfying 
\cref{thm:main}(1) (i.e., satisfying both the hypotheses of that theorem and the equivalent conditions (1) through (4)). Thus the seeds $\Sigma_{G^\rho}^T$ and 
$\Sigma_{H^\sigma}^T$ give rise to cluster structures on $\tPio_\pi$. The boundary faces of these graphs yield Grassmannlike necklaces $\gnec_{\rho,\bullet,\pi}$ and $\gnec_{\sigma,\bullet,\pi}$.

\begin{thm}[Toggling as quasi-cluster transformation] \label{thm:alignedIsQuasiEquiv}  Suppose, in the setting of the previous paragraph, that the Grassmannlike necklaces $\gnec_{\rho,\bullet,\pi}$ and $\gnec_{\sigma,\bullet,\pi}$
are related by an aligned toggle. Then the seeds  $\Sigma^T_{G^\rho}$ and 
$\Sigma^T_{H^\sigma}$ are related by a quasi-cluster transformation. 
\end{thm}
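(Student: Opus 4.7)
The argument I would give breaks into two parts. First, I would verify that the frozen subalgebras of $\Sigma^T_{G^\rho}$ and $\Sigma^T_{H^\sigma}$ coincide. The boundary necklaces $\gnec_{\rho,\bullet,\pi}$ and $\gnec_{\sigma,\bullet,\pi}$ agree in all positions except the $a$-th, where $I_a$ is replaced by a new subset $I_a'$. Because $\sigma = \rho\cdot s_{a-1}$ and both $\pi\rho, \pi\sigma \ler \pi$, iterating \cref{lem:alignedMovesDown} shows that each necklace can be reached from the forward Grassmann necklace $\rmci_\pi$ via a sequence of aligned toggles. \cref{prop:monomialtransform} then supplies the identity
\[
\Delta_{I_a'} = \frac{\Delta_{I_{a-1}}\,\Delta_{I_{a+1}}}{\Delta_{I_a}} \in \bbc[\tPio_\pi],
\]
so the abelian groups $\bbp_\pi$ generated by the two sets of frozens are equal.

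For the second part, I would pass through the main commutative diagram \eqref{eq:commutative2} and the generalized double-twist formula \eqref{eq:doubletwist} from the proof of \cref{thm:genPlabicClusterStruc}. That proof yields quasi-equivalences $\Sigma^T_{G^\rho} \sim \varphi_G^*(\Sigma^S_G)$ and $\Sigma^T_{H^\sigma} \sim \varphi_H^*(\Sigma^S_H)$, for the explicit regular maps $\varphi_G = \rt_\mu \circ \rt_{\gnec_{\rho,\bullet,\pi}} \circ \underline{\eps}$ and $\varphi_H = \rt_{\mu'} \circ \rt_{\gnec_{\sigma,\bullet,\pi}} \circ \underline{\eps}$, where $\mu = \rho^{-1}\pi\rho$ and $\mu' = s_{a-1}\mu s_{a-1}$. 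Hence the theorem reduces to showing that the composition $\varphi_H \circ \varphi_G^{-1}\colon \tPio_\mu \to \tPio_{\mu'}$, an explicit chain of Grassmannlike twists, transports $\Sigma^S_G$ to a seed quasi-equivalent to $\Sigma^S_H$.

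To analyze this transport concretely, I would use \cref{thm:OPS} to select representatives $G$ and $H$ within their mutation classes that differ by a single local modification near boundary vertices $a-1, a$---a ``bridge'' insertion/removal in the alternating strand diagram, which is the plabic-graph counterpart of swapping the two chords in the aligned toggle. Having fixed such representatives, I would verify the three conditions of \cref{defn:seedorbit}: the matching of frozen groups is the first part above; the mutable variables coincide up to multiplication by an element of $\bbp_\pi$, which reduces to the three-term Pl\"ucker relation from \cref{rmk:togglePluckerRel} together with the vanishing of $\Delta_{S_1}\Delta_{S_2}$ on $\tPio_\pi$ guaranteed by \cref{lem:notinpositroid}; finally, the matching of the $\hat y$ monomials follows from the fact that the two plabic graphs have isomorphic interior structure away from the local modification, so the corresponding pieces of the two quivers agree up to local edge flips compatible with the monomial rescalings.

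\textbf{Main obstacle.} The crux is pinning down the local plabic-graph modification and checking the resulting combinatorics carefully. Because $G$ and $H$ have genuinely different trip permutations, they are not related by the usual square moves on plabic graphs sharing a trip permutation, and so the comparison must exploit the external geometry of the twist isomorphism rather than an intrinsic combinatorial move. The sign bookkeeping introduced by $\underline{\eps}$ in \eqref{eq:commutative2}, together with ensuring that the local modification has the predicted effect on both the frozens (via the monomial identity) and the dual quiver, is where I expect the real technical work of the proof to lie.
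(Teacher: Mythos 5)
Your first part (the monomial identity for the new frozen variable, and hence the equality of frozen groups $\bbp_\pi$) is correct and matches the start of the paper's proof. Your second part has a genuine gap.

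You propose to pass back through the twist isomorphisms $\varphi_G, \varphi_H$ and reduce to comparing $\Sigma^S_G$ to the transport of $\Sigma^S_H$ under $\varphi_H\circ\varphi_G^{-1}\colon \tPio_\mu\to\tPio_{\mu'}$. But showing that this twist is a quasi-isomorphism of source structures is essentially the same difficulty as the original statement, just conjugated by $\varphi_G$; the reduction doesn't simplify anything. You then try to make the comparison concrete by ``selecting representatives $G$ and $H$ within their mutation classes that differ by a single local modification (bridge insertion/removal)'' via \cref{thm:OPS}. This is where the proposal breaks: $G$ and $H$ have different trip permutations $\mu$ and $\mu'$, and \cref{thm:OPS} only connects weakly separated collections, and hence plabic graphs, that share the \emph{same} trip permutation. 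There is no square-move path from $G$ to $H$, and the paper defines no ``bridge move'' you could fall back on. You correctly flag this as the crux, but you haven't supplied the missing combinatorial mechanism.

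The paper bypasses the obstacle entirely by working in the plabic tiling of the \emph{ambient} Grassmannian rather than comparing plabic graphs for $\mu$ and $\mu'$. Writing $I_j' = Suw$ and noting that the toggle's Plücker relation introduces two extra terms $Sux, Svw$, the paper uses \cite[Lemma 5.1]{OS} to show $\gnec \cup \{Sux, Svw\}$ is weakly separated and then takes \emph{any} maximal weakly separated collection $\mcc$ containing it. Then $\mcc' = \mcc\setminus I_j\cup I_j'$ is a single square move of $\mcc$, and the restricted collections $\mcc\cap D^{\rm in}_\gnec$ and $\mcc'\cap D^{\rm in}_{\gnec'}$ are target collections $\targetF((G')^\rho)$ and $\targetF((H')^\sigma)$ of relabeled plabic graphs differing \emph{only} at the $j$th boundary face label. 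From there the seeds have literally identical mutable variables, and the exchange-ratio check is a short local computation at the unique adjacent mutable vertex $Sux$. It is this plabic-tiling construction of specific representatives $(G')^\rho$, $(H')^\sigma$ that replaces your unsourced ``bridge'' move and makes the local quiver comparison rigorous; without it, the approach cannot be completed as stated.
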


The proof of \cref{thm:alignedIsQuasiEquiv} is in \cref{sec:reachablepf}. 
Informally, the argument is as follows. By a sequence of square moves (at interior faces), one can pass from the given graph $G^\rho$ to another graph $(G')^\rho$ with the same trip permutation $\pi$, and with the property that the aligned toggle relating $\gnec_1$ and $\gnec_2$ is carried out by ``performing a square move" at the corresponding boundary face of $(G')^\rho$. By a simple argument, performing square moves at boundary faces in this way is a quasi-cluster transformation. 

We now define a \emph{toggle graph} to summarize the various quasi-equivalences which follow from \cref{thm:alignedIsQuasiEquiv}.

\begin{defn}[Toggle graph]\label{defn:TransitiveClosure}
Fix $f \in {\rm Bound}(k,n)$ and let $\sep_f$ be the set of $i \leq_R f$ such that $\ell(i^{-1}fi)= \ell(f)$. Define an (undirected) graph $TG_f$ on $\sep_f$ by putting an edge between $i$ and $w$ if $w=i s_a$ for some $a$. That is, $TG_f$ is obtained from the Hasse diagram of the lower order ideal of $f$ in $(\bd(k, n), \leq_R)$ by deleting all elements $i \leq_R f$ with $\ell(i^{-1}fi) \neq \ell(f)$ (see \cref{fig:notreachable}).

We define an analogous graph for permutations of type $(k, n)$ by applying the map $f \mapsto \overline{f}$ everywhere, and use the same notation.

We say that $f$ is \emph{toggle-connected} if $f$ and $e_k$ are in the same connected component of $TG_f$.
\end{defn}

\begin{rmk}\label{rmk:connectedComp}
	Each vertex of $TG_f$ corresponds to a Grassmannlike necklace satisfying condition (2) of \cref{thm:main}, thus to a cluster structure on $\tPio_\pi$. The edges of $TG_f$ record when two such necklaces are related by an aligned toggle. By \cref{thm:alignedIsQuasiEquiv}, any two necklaces in the same connected component of $TG_f$ determine quasi-equivalent cluster structures. 
\end{rmk}

\begin{example} For the affine permutation $f=[4, 6, 5, 8, 7, 9]$ appearing in \cref{fig:fourplabics}, \cref{ex:easyToggles,eg:someeasytoggles2}, the weak order lower order ideal beneath $f$ consists of 4 permutations, namely $f$ itself together with $[4, 5, 6, 8, 7, 9]$, $[4, 6, 5, 7, 8, 9]$, and $e_3$. Each of these permutations satisfies the length condition $\ell(i^{-1}fi) = \ell(f)$, i.e. $\sep_f$
coincides with the lower order ideal in this case. The toggle graph $TG_f$ is a 4-cycle, a connected graph, so the source and target cluster structures are quasi-equivalent in this case. On the other hand, \cref{fig:notreachable} illustrates a positroid whose toggle graph has two connected components. While it is simple to check in this particular case that one can pass from one of these connected components to the other by a quasi-cluster transformation, our general setup does not prove statements of this sort. 
\end{example}

\begin{example} We analyze in greater detail the cluster variables in the four quasi-equivalent cluster structures on $\tPio_\pi$ coming from the relabeled graphs in \cref{fig:fourplabics}. Each of these is a finite type cluster algebra of type $A_2$.

The leftmost graph is the the target structure. Three of the five clusters come from plabic graphs. The cluster variables are
\begin{equation}\label{eq:listallfive}
\Delta_{124},\Delta_{246},\Delta_{236},\Delta_{356},\Delta_{346}\Delta_{125},
\end{equation}
listed so that adjacent cluster variables form a cluster. 
The last cluster variable in \eqref{eq:listallfive} is a product of two Pl\"ucker coordinates hence is not the target label of a plabic graph.

A similar story holds for the rightmost graph, i.e. the source structure, with cluster variables 
$\Delta_{236},$ $\Delta_{246},$ $\Delta_{124},$ $\Delta_{145},$ and $\Delta_{146}\Delta_{235}.$ 

For the two intermediate cluster structures, every cluster comes from a relabeled plabic graph and every cluster variable is a Pl\"ucker coordinate. The cluster variables for the top center graph are
$\Delta_{124},$ $\Delta_{246}$, $\Delta_{236}$, $\Delta_{235}$, and $\Delta_{125}$. Those for the bottom center graph are  
$\Delta_{124}$, $\Delta_{246}$, $\Delta_{236}$, $\Delta_{136}$, and $\Delta_{134}.$

The following 8 Pl\"ucker coordinates appear as a boundary face label in one of the four plabic graphs in \cref{fig:fourplabics}:
$$\Delta_{123},\Delta_{234},\Delta_{346},\Delta_{456},\Delta_{256},\Delta_{126},\Delta_{146},\Delta_{245}.$$ 
Thus each of these Pl\"ucker coordinates is a unit in $\tPio_\pi$.

The following 9 Pl\"ucker coordinates appear as mutable variables in one of the cluster structures:
$$\Delta_{124},\Delta_{246},\Delta_{236},\Delta_{356} \equiv \Delta_{136} \equiv \Delta_{235}, \Delta_{134} \equiv \Delta_{125} \equiv \Delta_{145}.$$ 
Here we use the notation $\equiv$ to denote equality up to multiplication by an element of $\bbp_\pi$, working inside $\mathbb{C}[\tPio_\pi]$. One may check that $\Delta_{135} = \frac{\Delta_{356}\Delta_{125}}{\Delta_{256}} \in \mathbb{C}[\tPio_\pi]$ is a nontrivial cluster monomial (in any of the four cluster structures). Altogether, this accounts for 18 Pl\"ucker coordinates. The remaining two Pl\"ucker coordinates are $345,156 \notin \mcm_\pi$. 

Thus, at least in this small example, we can find every Pl\"ucker coordinate which is a mutable variable up to units as a mutable variable in {\sl some} seed arising from a relabeled plabic graph. 
\end{example}


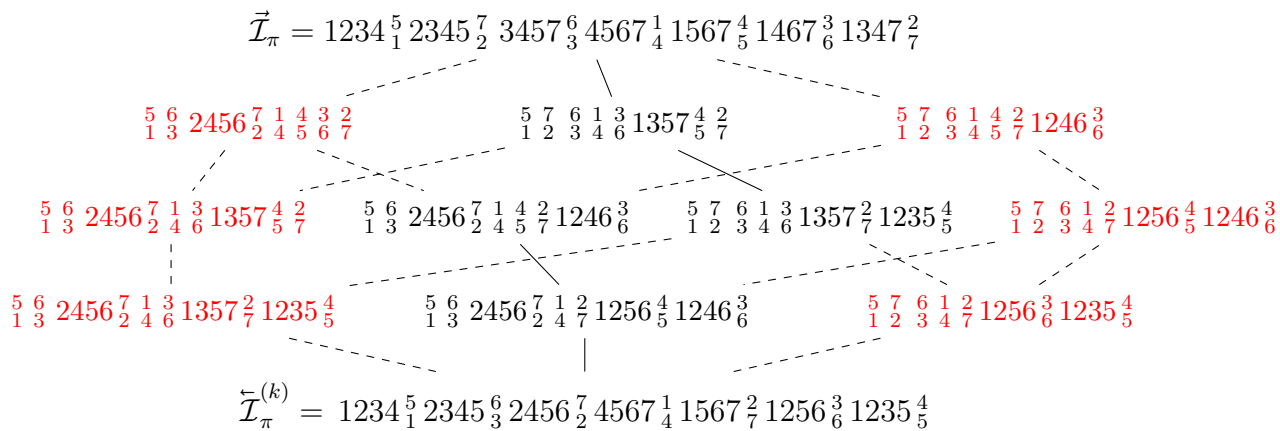
\begin{figure}
	\begin{tikzpicture}
	\node (A) at (-2.5,0) { $\rmci_\pi = 1234 \,^5_1 \, 2345\,^7_2\, \,3457\,^6_3 \,4567\,^1_4 \,1567 \,^4_5 \,1467\,^{3}_6\,1347 \,^{2}_7 $};
	
	\node (B) at (-7,-1.25) {\small \textcolor{red}{$ \,^5_1 \, \,^6_3\, \,2456\,^7_2 \,\,^1_4 \, \,^{4}_5 \,\,^{3}_6\, \,^{2}_7 $}};

	\node (C) at (-2,-1.25) {\small $ \,^5_1 \, \,^7_2\, \, \,^6_3 \,\,^1_4 \, \,^{3}_6 \,1357\,^{4}_5\, \,^{2}_7 $};
	
	\node (D) at (3,-1.25) {\small \textcolor{red}{$ \,^5_1 \, \,^7_2\, \,\,^6_3 \,\,^1_4 \,\,^{4}_5 \,\,^{2}_7\,1246 \,^{3}_6 $}};

	\node (E) at  (-8,-2.5) {\small \textcolor{red}{$ \,^5_1 \, \,^6_3\, \,2456\,^7_2 \,\,^1_4 \, \,^{3}_6 \, 1357 \,^{4}_5\, \,^{2}_7 $}};
	
	\node (F) at (-3.7,-2.5) {\small $ \,^5_1 \, \,^6_3\, \,2456\,^7_2 \,\,^1_4 \, \,^{4}_5 \,  \,^{2}_7\, 1246 \,^{3}_6 $};
	
	\node (H) at (4.9,-2.5) {\small \textcolor{red}{$ \,^5_1 \, \,^7_2\, \,\,^6_3 \,\,^1_4 \, \,^{2}_7 \, 1256 \,^{4}_5\, 1246 \,^{3}_6 $}};
	
	\node (G) at (.6,-2.5) {\small $ \,^5_1 \, \,^7_2\, \,\,^6_3 \,\,^1_4 \, \,^{3}_6 \, 1357 \,^{2}_7\, 1235 \,^{4}_5 $};
	
	\node (I) at (-8,-3.75) {\small \textcolor{red}{$ \,^5_1 \, \,^6_3\, \,2456\,^7_2 \,\,^1_4 \, \,^{3}_6 \, 1357 \,^{2}_7\,1235 \,^{4}_5 $}};
	
	\node (J) at (-2.5,-3.75) {\small $ \,^5_1 \, \,^6_3\, \,2456\,^7_2 \,\,^1_4 \, \,^{2}_7 \, 1256 \,^{4}_5\, 1246 \,^{3}_6 $};
	
	\node (K) at (3,-3.75) {\small \textcolor{red}{$ \,^5_1 \, \,^7_2\, \,\,^6_3 \,\,^1_4 \, \,^{2}_7 \, 1256 \,^{3}_6\, 1235 \,^{4}_5 $}};
	
	\node (L) at (-2.5,-5) { $\lmci_\pi^{(k)} = \,1234 \,^5_1 \,2345 \,^6_3\, 2456 \,^7_2 \, 4567\,^1_4 \,1567 \,^{2}_7 \, 1256 \,^{3}_6\, 1235 \,^{4}_5 $};
	
	\draw[dashed] (A)--(B);
	\draw (A)--(C);
	\draw[dashed] (A)--(D);
	\draw[dashed] (B)--(E);
	\draw[dashed] (B)--(F);
	\draw[dashed] (E)--(I);
	\draw[dashed] (C)--(E);
	\draw[dashed] (D)--(F);
	\draw (C)--(G);
	\draw[dashed] (D)--(H);
	\draw (F)--(J);
	\draw[dashed] (K)--(H);
	\draw[dashed] (K)--(G);
	\draw[dashed] (K)--(L);
	\draw[dashed] (I)--(L);
	\draw (J)--(L);
	\draw[dashed] (G)--(I);
	\draw[dashed] (H)--(J);

	\end{tikzpicture}
	\caption{The $\ler$ lower order ideal of $\pi = 5761 4 3 2$, a permutation of type $(4, 7)$. For each $\iota \ler \pi$, the Grassmannlike unit necklaces $\gnec_{\bullet,\iota,\pi}$ is displayed (to save space, elements of $\rmci_\pi$ are omitted from intermediate necklaces). The weakly separated necklaces, which have insertion permutation $\iota \in \sep_\pi$, are in black. For example, any necklace containing $2456,1347$ is not weakly separated. Edges are cover relations in $\ler$: solid edges are edges in $TG_\pi$, while dashed edges are not. Since there is no solid path from the top to the bottom, $\pi$ is not toggle-connected.
	}\label{fig:notreachable}
\end{figure}

We saw in \cref{rmk:sourceisrelabeled} that the source cluster structure on $\tPio_\pi$ corresponds to the vertex $\eps_k$ of $TG_\pi$. If $\pi$ is toggle-connected, then $\pi$ and $\eps_k$ are in the same connected component of $TG_\pi$, which by \cref{rmk:connectedComp} immediately implies the following corollary.

\begin{cor}[Source and target are quasi-equivalent]\label{cor:quasiCoinciding}
If $\pi$ is toggle-connected, then the source and target cluster structures on $\tPio_\pi$ are quasi-equivalent.
\end{cor}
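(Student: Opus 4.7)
The plan is to string together the aligned toggles along a path in the toggle graph. By Remark \ref{rmk:sourceisrelabeled}, the source seed $\Sigma^S_H$ of any reduced plabic graph $H$ with trip permutation $\pi$ can be realized (up to rotation of the underlying graph, which does not change the seed pattern) as a relabeled target seed $\Sigma^T_{H^{\rho_0}}$, where $\rho_0 = \pi^{-1}\eps_k$ and the corresponding Grassmannlike necklace $\gnec_{\rho_0,\eps_k,\pi}$ is the shifted reverse Grassmann necklace $\lmci_\pi[k]$; this corresponds to the vertex $\eps_k \in TG_\pi$. On the other hand, the usual target seed $\Sigma^T_H$ corresponds to the vertex $\pi \in TG_\pi$ (with $\rho$ the identity and insertion permutation $\pi$). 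Thus the statement reduces to showing that the two seeds attached to the vertices $\eps_k$ and $\pi$ of $TG_\pi$ are quasi-cluster equivalent.

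Assuming $\pi$ is toggle-connected, fix a path $\eps_k = \iota_0, \iota_1, \dots, \iota_N = \pi$ in $TG_\pi$. By the definition of the edges of $TG_\pi$ (Definition \ref{defn:TransitiveClosure}) and by Lemma \ref{lem:alignedMovesDown}, each consecutive pair $\iota_{j}, \iota_{j+1}$ differs by right multiplication by a simple transposition and the corresponding chords are aligned. Hence the Grassmannlike necklaces $\gnec_{\bullet,\iota_j,\pi}$ and $\gnec_{\bullet,\iota_{j+1},\pi}$ are related by an aligned toggle, and by the definition of $\sep_\pi$ each such necklace satisfies condition (2) of Theorem \ref{thm:main}, so each determines a cluster structure on $\tPio_\pi$.

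Next, for each $j$, choose reduced plabic graphs $G_j^{\rho_j}$ and $G_{j+1}^{\rho_{j+1}}$ realizing the necklaces $\gnec_{\bullet,\iota_j,\pi}$ and $\gnec_{\bullet,\iota_{j+1},\pi}$ as boundary face labels (where $\rho_j = \pi^{-1}\iota_j$). These graphs both have trip permutation $\pi$, and by construction their boundary necklaces differ by a single aligned toggle. Applying Theorem \ref{thm:alignedIsQuasiEquiv}, the seeds $\Sigma^T_{G_j^{\rho_j}}$ and $\Sigma^T_{G_{j+1}^{\rho_{j+1}}}$ are related by a quasi-cluster transformation. Composing these $N$ quasi-cluster transformations and using that quasi-cluster equivalence is transitive (cf.\ Definition \ref{defn:seedorbit}) yields a quasi-cluster transformation from the seed at $\eps_k$ (the source seed $\Sigma^S_H$) to the seed at $\pi$ (the target seed $\Sigma^T_H$).

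There is essentially no obstacle: all the heavy lifting has already been done in Theorem \ref{thm:alignedIsQuasiEquiv} (which shows that a single aligned toggle implements a quasi-cluster transformation) and in Remark \ref{rmk:sourceisrelabeled} (which reframes the source seed as a relabeled target seed corresponding to the minimal vertex $\eps_k$ of $TG_\pi$). The only minor bookkeeping concern is to verify that passing from a relabeled graph $G^\rho$ to a different relabeled graph $G'^{\rho}$ realizing the same necklace does not affect the quasi-equivalence class of the seed; but this follows because such graphs are related by square moves at interior faces, which are ordinary cluster mutations, hence trivially quasi-cluster transformations.
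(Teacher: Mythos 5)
Your proof is correct and follows essentially the same route as the paper: the paper deduces the corollary immediately from \cref{rmk:connectedComp} (vertices of $TG_\pi$ give cluster structures, edges give aligned toggles, so \cref{thm:alignedIsQuasiEquiv} propagates quasi-equivalence along connected components) together with \cref{rmk:sourceisrelabeled} (identifying the source structure with vertex $\eps_k$). You have unpacked the same chain of reasoning — the path in $TG_\pi$, one application of \cref{thm:alignedIsQuasiEquiv} per edge, transitivity of quasi-equivalence, and the observation that the choice of plabic graph realizing a given necklace is immaterial because such graphs differ by interior square moves — with somewhat more explicit bookkeeping than the paper, but with no new ideas and no gaps.
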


\begin{rmk}
It is an unfortunate fact of life that not every $\pi \in {\rm Bound}(k,n)$ is toggle-connected. We do not see a way of constructing a quasi-cluster transformation from the target structure to the source structure purely within the world of Pl\"ucker coordinates and square moves. On the other hand, our results break up this problem into smaller subproblems (namely, the subproblem of finding a sequence of quasi-cluster transformations between connected components of $TG_\pi$). 
\end{rmk} 

We end this section by investigating $TG_\pi$ for open Schubert and opposite Schubert varieties, and showing that \cref{conj:samepattern} holds for these classes.

\begin{defn}
	A (loopless) open positroid variety $\tPio_\pi \subseteq \tGr(k, n)$ is an \emph{open Schubert variety} if $\pi$ has a single descent and no fixed points before the descent. It is an \emph{open opposite Schubert variety} if the numbers $1, \dots, k$ and $k+1, \dots, n$ appear in increasing order in $\pi$ and none of $k+1, \dots, n$ are fixed points.\footnote{Open Schubert varieties correspond to Le-diagrams that are filled entirely with pluses. Open opposite Schubert varieties correspond to Le-diagrams whose shape is a $k \times (n-k)$ rectangle and whose zeros form a partition.}
\end{defn}

\begin{prop}\label{prop:SchubWholeInterval}
	Let $\tPio_\pi$ be an open Schubert or opposite Schubert variety. Then for all $\iota \ler \pi$, $\iota \in \sep_\pi$.
\end{prop}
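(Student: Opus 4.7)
Proof plan: The statement is equivalent, via the equivalence of conditions in \cref{thm:main}, to the Coxeter-theoretic claim that $\ell(i^{-1}fi) = \ell(f)$ for every $i \leq_R f$, where $f, i \in \bd(k,n)$ are the lifts of $\pi, \iota$. Since we always have $\ell(i^{-1}fi) = \ell(wi) \leq \ell(w) + \ell(i) = \ell(f)$ (with $w := i^{-1}f$) as noted in \cref{rmk:mshorterf}, what needs to be shown is that the factorization $(i^{-1}f) \cdot i$ is length-additive, or by \cref{lem:lengthAddAndReflections}, that $T_R(w) \cap T_L(i) = \emptyset$.

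First I would establish a key structural property of the lift $f$: for both Schubert and opposite Schubert $\pi$, the affine permutation $f$ has a unique simple right descent (i.e., $f$ is affine Grassmannian). In the Schubert case, this amounts to showing $f(1) < f(2) < \cdots < f(n)$, since the only remaining descent must then be the wrap-around at $s_0$. To prove this, I would use that the Schubert hypothesis ``no fixed points before the descent at $d$'' forces $\pi(a) > a$ for $a \leq d$, which by an easy counting argument (using $\pi$ increasing on $[1,d]$) forces $\pi_{d+1} = 1$, and then a ``no large gap'' argument based on $|A| = d$ where $A = \{\pi_1, \dots, \pi_d\}$ rules out the only remaining obstruction (a position $a > d$ with $\pi(a) < a$ and $\pi(a+1) > a+1$). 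For opposite Schubert, an analogous argument (applied after a suitable cyclic/inverse symmetry) shows $f$ is again affine Grassmannian, though the descent position is no longer forced to be $s_0$.

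Next I would reduce the proposition to the following general lemma: if $f \in \bd(k,n)$ has a unique simple right descent, then $\sep_f = [e_k, f]_R$. I would prove this by induction on $\ell(i)$. The base $i = e_k$ is immediate since $e_k$ is a translation and conjugation by $e_k$ is length-preserving. For the inductive step at a cover $i' = i s_a \leq_R f$, letting $g = i^{-1}fi$, I would use the length-additive factorization $g = wi$ together with the formula $T_L(g) = T_L(w) \sqcup w\, T_L(i) \,w^{-1}$ to show that $s_a \in T_L(g)$, using the fact that $i' > i$ and $i' \leq_R f$ force $s_a \in T_L(w)$. I would then show $s_a \notin T_R(g)$: by the dual formula $T_R(g) = T_R(i) \sqcup i^{-1} T_R(w) i$, this reduces to showing the reflection $t := is_a i^{-1}$ lies outside $T_R(w)$. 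Here I would exploit the Grassmannian-ness of $w$ (since $T_R(w) \subseteq T_R(f)$ and $f$ has only one simple descent, $w$ inherits a single-descent structure) to argue that its right inversions all ``cross'' the unique descent position, and combine this with the constraint $is_a \leq_R f$ to exclude $t$. The conclusion that exactly one of $s_a \in T_L(g)$, $s_a \in T_R(g)$ holds then gives $\ell(s_a g s_a) = \ell(g)$.

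The main obstacle will be the last step of the induction, which requires translating the affine Grassmannian structure of $w$ into a concrete combinatorial exclusion of the reflection $t_{i(a),i(a+1)}$ from $T_R(w)$. I expect this to be the most delicate part, particularly for opposite Schubert where the descent index varies; one should keep careful track of affine indexing (e.g., working in the window and using that $w \in \tilde{S}_n^0$ inherits Grassmannianness from $f$) and use the length-additivity $f = iw$ to translate ``$is_a \leq_R f$'' into a constraint on how $w$ acts on the specific values $i(a), i(a+1)$.
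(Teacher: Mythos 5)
Your reduction of the statement to showing $T_R(w) \cap T_L(i) = \emptyset$ (for $w := i^{-1}f$) is correct, and your first structural observation---that for Schubert $\pi$ the lift $f$ has increasing window notation---is exactly the paper's starting point. From there, however, you set off on a considerably longer and incomplete path, and in doing so you miss the observation that makes the argument essentially one line.

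The key fact (which the paper extracts directly from the increasing window) is that $T_L(f) \cap T_R(f) = \emptyset$. Concretely, any inversion $(a,b)$ of $f$ with $a \in [n]$ must have $b > n$ (since the window is increasing), and boundedness then forces both $f(a)$ and $f(b)$ to lie in $[n+1,2n]$; thus every element of $T_R(f)$ has its canonical representative $t_{a,b}$ wrapping around ($a \in [n]$, $b > n$), while every element of $T_L(f) = f\,T_R(f)\,f^{-1}$ has a representative $t_{f(b)-n,\,f(a)-n}$ with both entries in $[n]$. Once $T_L(f) \cap T_R(f) = \emptyset$ is known, the length-additivity of $f = iw$ gives the standard containments $T_L(i) \subseteq T_L(f)$ and $T_R(w) \subseteq T_R(f)$, so $T_R(w) \cap T_L(i) = \emptyset$ follows immediately, with no induction whatsoever. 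The opposite Schubert case is handled by a parallel structural analysis.

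Your proposed route instead attempts to prove a ``general lemma'' for arbitrary affine Grassmannian $f \in \bd(k,n)$ by induction on $\ell(i)$, and you acknowledge yourself that the inductive step requires a ``delicate'' combinatorial exclusion you have not carried out. That step is a genuine gap: you never verify that the reflection $t_{i(a),i(a+1)}$ lies outside $T_R(w)$, and the plan to exploit a vague ``single-descent structure'' of $w$ is not close to a proof. Even setting the gap aside, the whole inductive framework is unnecessary: the single disjointness fact $T_L(f) \cap T_R(f) = \emptyset$, plus the containments from length-additivity, does all the work uniformly for every $i \leq_R f$ at once. I also caution that your intermediate claim (that merely being affine Grassmannian is enough) is stated without proof; the paper's argument explicitly uses where the window values sit relative to $[n]$, which is a consequence of the specific Schubert or opposite Schubert hypotheses, not just of having one descent.
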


\begin{proof}
	First, suppose $\tPio_\pi \subset \tGr(k, n)$ is an open Schubert variety, so $\pi$ has a single descent. That is, there is a single $a$ such that $\pi(a) > \pi(a+1)$. Since $\pi$ has no fixed points in $[a]$, all of $\pi(1), \dots, \pi(a)$ are not anti-excedences of $\pi$. On the other hand, all of $\pi(a+1), \dots, \pi(n)$ are anti-excedences of $\pi$, so $a=n-k$.

	 The bounded affine permutation $f$ corresponding to $\pi$ satisfies $f(b)=\pi(b)$ for $b\in [n-k]$ and $f(b)=\pi(b)+n$ for $b=n-k+1, \dots, n$. If $a<b$ and $f(a)>f(b)$ with $a \in [n]$, then $b> n$, since the window notation of $f$ consists of an increasing sequence. Additionally, we have $f(a), f(b) \in [n+1, 2n]$. This means that the right associated reflections of $f$ all have the form $t_{ab}$ where $a \in [n]$ and $b>n$ and the left associated reflections of $f$ all have the form $t_{ab}$ where $a, b \in [n]$. Thus, $T_L(f) \cap T_R(f)= \emptyset$.
	 
	 Now, consider any $i \leq_R f$, so $f=iw$ is length-additive. It is not hard to see that $T_L(i) \subseteq T_L(f)$ and $T_R(w) \subseteq T_R(f)$, so in particular, $T_L(i) \cap T_R(w)$ is empty. By \cref{lem:lengthAddAndReflections}, this means that $wi$ is length-additive. Since $i^{-1}fi=wi$, we have that $\ell(i^{-1}fi)=\ell(w)+\ell(i)= \ell(f)$, so $i \in \sep_f$. This implies that $\iota := \overline{i}$ is in $\sep_\pi$. Since the choice of $i$ was arbitrary, this completes the proof.
	 
	 The proof for opposite Schubert varieties is similar.
\end{proof}

As an immediate corollary, we obtain the following. 

\begin{thm}[Quasi-equivalence for Schuberts]\label{thm:SchubGood}
	Let $\tPio_\pi$ be an open Schubert or opposite Schubert variety. Then each relabeled plabic graph $G^\rho$ with trip permutation $\pi$ whose boundary satisfies $\pi \rho \ler \pi$ gives rise to a cluster structure on $\tPio_\pi$. Moreover, all of these cluster structures are quasi-equivalent. In particular, \cref{conj:sourceistarget} holds for $\tPio_\pi$.
\end{thm}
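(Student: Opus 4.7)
The plan is to combine three ingredients already in the paper: \cref{prop:SchubWholeInterval}, which says $\sep_\pi$ equals the entire $\leq_\circ$-order ideal below $\pi$; \cref{thm:main}, which translates the existence of a cluster structure into a Coxeter-theoretic condition; and \cref{thm:alignedIsQuasiEquiv}, which promotes aligned toggles to quasi-cluster transformations. The argument is essentially an application of these results, given that the heavy lifting on $\sep_\pi$ has already been done in \cref{prop:SchubWholeInterval}.

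First I would verify the existence of a cluster structure for each relabeled plabic graph $G^\rho$ with $\pi\rho \ler \pi$. Setting $\iota := \pi\rho$ gives $\iota \ler \pi$, so \cref{thm:rightweak} provides the $k$-subset and unit conditions (P0) and (P1). By \cref{prop:SchubWholeInterval}, $\iota \in \sep_\pi$, i.e.\ the lifts satisfy $\ell(i^{-1} f i) = \ell(f)$; this is the seed-size condition, which is condition (2) of \cref{thm:main}. Hence \cref{thm:main} applies and yields the cluster structure $\Sigma^T_{G^\rho}$ on $\tPio_\pi$.

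Next I would show that the toggle graph $TG_\pi$ is connected. Because $\sep_\pi$ is the entire interval $[\eps_k, \pi]$ in the circular weak order and the edges of $TG_\pi$ are precisely the $\leq_\circ$-cover relations among elements of $\sep_\pi$, the graph $TG_\pi$ is just the Hasse diagram of $[\eps_k, \pi]$, which is connected (ascend a saturated chain from any $\iota$ up to $\pi$). Along any such chain, consecutive vertices differ by a simple transposition; by \cref{lem:alignedMovesDown}, each cover corresponds to an aligned toggle between the associated Grassmannlike necklaces. Then \cref{thm:alignedIsQuasiEquiv} promotes each aligned toggle to a quasi-cluster transformation between the corresponding seeds $\Sigma^T_{G^\rho}$. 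Transitivity of quasi-equivalence along the chain gives quasi-equivalence between all the cluster structures from \cref{thm:main}, proving \cref{conj:samepattern} for this class.

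Finally, \cref{conj:sourceistarget} follows as a special case: the target seed $\Sigma^T_H$ of a usual plabic graph $H$ with trip permutation $\pi$ corresponds to the vertex $\iota = \pi$ of $TG_\pi$, while by \cref{rmk:sourceisrelabeled} the source seed $\Sigma^S_H$ corresponds to the vertex $\iota = \eps_k$; both lie in the (now connected) graph $TG_\pi$, so they are related by a quasi-cluster transformation. I do not expect any serious obstacle here: \cref{prop:SchubWholeInterval} already reduces the problem to a statement about Hasse-diagram connectedness, and \cref{lem:alignedMovesDown} together with \cref{thm:alignedIsQuasiEquiv} bridges the Coxeter-theoretic and cluster-theoretic sides. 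The only thing to take care of is the bookkeeping between $\rho$ and $\iota = \pi\rho$ when invoking \cref{thm:main} and \cref{thm:alignedIsQuasiEquiv}, which is routine.
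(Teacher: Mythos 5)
Your argument is correct and is exactly what the paper intends by calling this an ``immediate corollary'' of \cref{prop:SchubWholeInterval}: since $\sep_\pi$ is the whole lower order ideal, the toggle graph $TG_\pi$ is the full Hasse diagram of $[\eps_k,\pi]$ (hence connected), and \cref{thm:alignedIsQuasiEquiv} applied along a path gives all the stated quasi-equivalences. Your bookkeeping of $\iota=\pi\rho$ and the identification of the vertices $\iota=\pi$ and $\iota=\eps_k$ with the target and source seeds (via \cref{rmk:sourceisrelabeled}) is also exactly right.
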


\begin{rmk}
	Open skew-Schubert varieties are a subclass of positroid varieties indexed by skew-shapes contained in a rectangle. Relabeled plabic graphs with a particular boundary were shown to give a cluster structure on open skew-Schubert varieties in \cite{SBSW}. In fact, it is not difficult to show that open skew-Schubert varieties are toggle-connected, and moreover that the cluster structure given in \cite{SBSW} is quasi-equivalent to the target and source cluster structures.
\end{rmk}

\section{Proofs}
\label{secn:proofs}
We prove the weak analogue \cref{lem:notinpositroid} of Oh's theorem used in the proof of the unit necklace theorem. Then we prove the equivalence of the Coxeter-theoretic condition (2) and the weak-separation theoretic condition (3) from \cref{thm:main}.  Finally, we prove 
\cref{thm:alignedIsQuasiEquiv} which says that aligned toggles on Grassmannlike necklaces induce quasi-equivalences of cluster structures.

\subsection{Proof of {\cref{lem:notinpositroid}}}\label{secn:rightweakproof}
Recall the definition of noncrossing and aligned chords and toggles from \cref{defn:noncrossing}.

By Remark~\ref{rmk:togglePluckerRel}, if we perform an aligned toggle at a necklace $\gnec$ satisfying $\gnec \subset \mcm_\pi$ then the new necklace $\gnec' \subset \mcm_\pi$ as well.

Since $\rho$ and $\iota$ determine $\gnec$, we will frequently omit the subsets $I_i$, writing the removal and insertion values in the following {\sl two-line notation}:
\begin{equation}\label{eq:chordsnotationii}
\gnec = \begin{matrix}\iota_1 \\ \rho_1\end{matrix}
\begin{matrix}\iota_2 \\ \rho_2\end{matrix} \cdots 
\begin{matrix}\iota_{n-1} \\ \rho_{n-1}\end{matrix}
\begin{matrix}\iota_n \\ \rho_n\end{matrix}.
\end{equation}

Now we prove Lemma~\ref{lem:notinpositroid} which is needed in the proof of Theorem~\ref{thm:rightweak}.

\begin{proof}[Proof of Lemma~\ref{lem:notinpositroid}]

 Let $\rmci_{\pi}$ be a forward Grassmann necklace. Suppose $\gnec=(I_1, \dots, I_n)$ is a Grassmannlike necklace which can be obtained from $\rmci_\pi$ by a sequence of noncrossing toggles. 
 
 We prove the following more specific claim which readily implies the desired statement. We abbreviate $L = I_{\rho^{-1}(a)}$ and $R = I_{\rho^{-1}(a)+1}$.

\textit{Claim: There exist sets 
$\mcs,\mct \subset [n] \setminus \{\pi(a),a\}$, 
with 
\begin{align}\label{eq:LtRt}
L&= \rightI_{a} \setminus (\pi^{-1}\mct \cup \mcs)  \coprod \left(\mct  \cup \pi^{-1}\mcs \right) \\
R &= \rightI_{a+1} \setminus (\pi^{-1}\mct \cup \mcs)  \coprod \left(\mct  \cup \pi^{-1}\mcs \right) 
\end{align}
such that the pair of chords $\pi^{-1}(s) \mapsto s$ and $a \mapsto \pi(a)$ are noncrossing for all $s \in \mcs$, and likewise the chords $\pi^{-1}(t) \mapsto t$ and $a \mapsto \pi(a)$ are noncrossing for all $t \in \mct$.
}

In \eqref{eq:LtRt}, let us clarify that the use of $\setminus$ implies that the second set is contained in the first. (We do not adopt that convention in most other parts of the paper.) However, it not important that the sets $\pi^{-1}(\mct), \mcs$ are disjoint, and it is also not important that the sets $\mcs$ and $\mct$ are disjoint (i.e., we allow for removing an element that is in $\mcs$ and then adding it back in if it is in $\mct$).

We will establish this claim by induction on $\ell(f)-\ell(i)$ and then explain why it implies the statement in the lemma. 

The base case of \eqref{eq:LtRt} holds with $\mcs = \mct = \emptyset$. The subsets $L$ and $R$ only change meaning when we toggle at either $L$ or $R$. If we toggle at $R$, things look locally like
\begin{equation}
L \hspace{.3cm} \substack{\pi(a) \\ \rightleftarrows \\ a} \hspace{.3cm}  R \hspace{.3cm}  \substack{t \\ \rightleftarrows \\ \pi^{-1}(t)} \hspace{.3cm}  X.
\end{equation}

Let us denote by $L',R'$ the new versions of $L$ and $R$ after the toggle. Then $R' = X = R \setminus \pi^{-1}(t) \cup t$. The subset $L'$ is obtained by toggling at $R$; we clearly also have $L' = L\setminus \pi^{-1}(t)\cup t$. So $L',R'$ both evolve according to the formula \eqref{eq:LtRt} in this case. The claimed statement that these chords are noncrossing holds by assumption. Note also that $t \neq \pi(a)$. The argument in the case that we toggle at $L$ rather than at $R$ is similar, with the local picture looking like 
\begin{equation}
X \hspace{.3cm} \substack{s \\ \rightleftarrows \\ \pi^{-1}(s)} \hspace{.3cm}  L \hspace{.3cm}  \substack{\pi(a) \\ \rightleftarrows \\ a} \hspace{.3cm}  R
\end{equation}
and the subsets evolving according to the formula $L' = L \setminus s \cup \pi^{-1}(s)$ and  
$R' = R \setminus s \cup \pi^{-1}(s)$. Note again that $s \neq a,\pi(a)$. The claim holds by induction.

Since the chords $\pi^{-1}(s) \mapsto s$ and $a \mapsto \pi(a)$ are noncrossing, we either have that  $\{s,\pi^{-1}(s)\} \subset (a,\pi(a))$ or that 
$\{s,\pi^{-1}(s)\} \subset (\pi(a),a)$ (with both of these considered as cyclic subintervals of $[n]$). And similarly for $\{\pi^{-1}(t),t\}$. From \eqref{eq:LtRt}, it follows that 
$$\#\left(R \cap (a,\pi(a))\right) = \#\rightI_{a+1} \cap (a,\pi(a)).$$
If $J = L \setminus a \cup y= R \setminus \pi(a) \cup y$ where $y<_a \pi(a)$, then 
$$\#\left(J \cap (a,\pi(a))\right) = \#\left(R \cap (a,\pi(a))\right)+1 >\#(\rightI_{a+1} \cap (a,\pi(a)))$$
so that $J \notin \mcm_\pi$ using Oh's Theorem. 

Likewise, let $J = L \setminus y \cup \pi(a)$ in the situation $\pi(a)<_a y$. Then 
$$\#\rightI_{a} \cap [a,\pi(a)] = \#\left(L \cap [a,\pi(a)]\right) = \#(J \cap [a,\pi(a)])-1 ,$$
so that $J \notin \mcm_\pi$ using Oh's Theorem.

\end{proof}

\subsection{Proof of \cref{thm:main}: (1) $\iff$ (2) }\label{secn:conjugationproof} 
By \cref{rmk:togglePluckerRel}, aligned toggles correspond to those in which the Pl\"ucker relation has signs $I_iI_i'=I_{i-1}I_{i+1}+S_1S_2$. Such a relation ``looks like'' 
the three-term Pl\"ucker relation encoding a square move on weakly separated collections. However, such a Pl\"ucker relation does not necessarily correspond to a square move on weakly separated collections; that is, performing an aligned toggle does not preserve weak separation. So not all of the necklaces $\gnec_{\bullet, \iota, \pi}$ with $\iota \ler \pi$ are weakly separated.

Recall our general setup: we have $i \leq_R f$ which are lifts of permutations $\iota,\pi$ of type $(k,n)$. We define $\mu = \iota^{-1}\pi\iota$ which is a permutation of type $(k,n)$ with lift $m \in {\rm Bound}(k,n)$. By \cref{lem:conjugationLift}, $m=i^{-1}fi$. We always have $\ell(m) \leq \ell(f)$ and we want to characterize when $\ell(m) = \ell(f)$. 

To simplify statements, let $r=f^{-1}i$, so $m=r^{-1}i$.

\begin{lem}\label{lem:conjugation}
We have $\ell(m) < \ell(f)$ if and only if there exists a transposition $t \in T$ satisfying both $\ell(tr) < \ell(r)$ and 
$\ell(ti) < \ell(i)$.
\end{lem}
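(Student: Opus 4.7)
The plan is to reduce the statement to \cref{lem:lengthAddAndReflections} by rewriting both sides in terms of associated reflection sets.

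First I would record the two key identities among $f$, $i$, $r$, $m$. Since $i \leq_R f$, \cref{defn:rightweakAffine} gives the length-additive factorization $f = ir^{-1}$, and in particular
\[
\ell(f) = \ell(i) + \ell(r^{-1}) = \ell(i) + \ell(r).
\]
On the other hand, \cref{lem:conjugationLift} (together with the definition $r = f^{-1}i$) yields $m = r^{-1}i$, and \cref{rmk:mshorterf} already tells us that $\ell(m) \leq \ell(r^{-1}) + \ell(i) = \ell(f)$. Hence $\ell(m) < \ell(f)$ is equivalent to the failure of length-additivity in the factorization $m = r^{-1} \cdot i$.

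Next I would invoke \cref{lem:lengthAddAndReflections}: the factorization $r^{-1}\cdot i$ fails to be length-additive if and only if $T_R(r^{-1}) \cap T_L(i) \neq \emptyset$. It therefore remains to translate membership in these two reflection sets into the inequalities appearing in the lemma statement. For any transposition $t \in T$ (so $t = t^{-1}$), the equalities $\ell(r^{-1}t) = \ell((r^{-1}t)^{-1}) = \ell(t r)$ give the equivalence
\[
t \in T_R(r^{-1}) \iff \ell(r^{-1}t) < \ell(r^{-1}) \iff \ell(tr) < \ell(r),
\]
while directly from the definition $t \in T_L(i) \iff \ell(ti) < \ell(i)$. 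Combining these two equivalences with the preceding paragraph finishes the proof.

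I do not anticipate any real obstacle here: the argument is essentially bookkeeping with length-additivity and inversion, the only nontrivial inputs being \cref{lem:lengthAddAndReflections} and \cref{lem:conjugationLift}, both of which are already available. The one point worth being careful about is that $r$ lies in $\tilde{S}_n^0$, so that the usual Coxeter-theoretic description of $T_R$ and $T_L$ applies unchanged to $r$ and $r^{-1}$, while $i$ and $m$ lie in the coset $\tilde{S}_n^k$; multiplying through by $e_k$ if desired shows that this coset issue does not interfere with applying \cref{lem:lengthAddAndReflections} to the product $r^{-1}\cdot i$.
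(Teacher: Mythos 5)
Your proof is correct and follows the same route as the paper's: use length-additivity of $f = ir^{-1}$ from $i \leq_R f$, translate failure of length-additivity of $m = r^{-1}i$ into $T_R(r^{-1}) \cap T_L(i) \neq \emptyset$ via \cref{lem:lengthAddAndReflections}, and observe $T_R(r^{-1}) = T_L(r)$. You merely spell out the last identity (inversion preserves length, transpositions are involutions), which the paper states without justification.
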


In other words, if $t = t_{ij}$, then the values $i$ and $j$ are ``out of order'' in both $\rho$ and $\iota$ (when both permutations are appropriately upgraded to affine permutations).  

\begin{proof}
Since $i \leq_R f$, the factorization $f = ir^{-1}$ is length-additive: $ \ell(f) = \ell(i)+
\ell(r^{-1})$. So $\ell(m) = \ell(f)$ if and only if the factorization $m = r^{-1}i$ is length-additive:
$\ell(r^{-1}i) = \ell(r^{-1})+\ell(i)$. 

By \cref{lem:lengthAddAndReflections}, $\ell(r^{-1}i) = \ell(r^{-1})+\ell(i)$ if and only if $T_R(r^{-1}) \cap T_L(i)= \emptyset$. Also $T_R(r^{-1})=T_L(r)$, so we are done.

\end{proof}

We now prove (1) implies (2) in Theorem~\ref{thm:main}, namely that $\ell(m) = \ell(f)$ is sufficient to guarantee that $\gnec_{\bullet,\iota,\pi}$ is weakly separated.
\begin{proof}[Proof of sufficiency]
We will suppose that $\gnec$ is not weakly separated and show that there exists a transposition $t \in T$ as in the statement of Lemma~\ref{lem:conjugation}. 

First we rephrase weak separation of $\gnec$ as a condition on the removal and insertion permutations $\rho$ and $\iota$. A 4-tuple of circularly ordered numbers $a < b < c < d$ are a witness for nonseparation of $\gnec$ if and only if there are values $x,y \in [n]$, such that 
\begin{align}
\{a,c\} \subset \iota([y,x)) &\text{ and } \{b,d\} \subset \rho([y,x))  \label{eq:wxyz}\\ 
\{a,c\} \subset \rho([x,y)) &\text{ and } \{b,d\} \subset \iota([x,y)). \label{eq:wxyzb}
\end{align}

Visually, we can ``chop'' $\gnec$ in the positions $I_x$ and $I_y$, breaking $[n] = \bb_1 \coprod \bb_2$ in two cyclic intervals $\bb_i$. In $\bb_1$ we see 
$\{a,c\}$ in the insertion permutation and $\{b,d\}$ in the removal permutation while in $\bb_2$ we see the opposite.

Now we switch from thinking about permutations $\rho,\iota$ to thinking about affine permutations.  We consider the two-line notation \eqref{eq:chordsnotationii} (extended bi-infinitely and $n$-periodically in both directions) whose numerator is $i$ and whose denominator is $r=f^{-1}i$. Reducing values modulo $n$ yields the permutations $\iota,\rho$ respectively.

As in the proof of Lemma~\ref{lem:conjugationLift}, we can reach this two-line notation by starting with the two-line notation whose numerator is $f$ and whose denominator is the identity $e_0 \in \tilde{S}^0_n$, and $n$-periodicallly performing swaps of adjacent columns. In particular, 
any column vector $\begin{matrix} \bb \\ \aa \end{matrix}$ appearing in the two-line notation satisfies $\aa \leq \bb \leq \aa+n $.

As in the proof of Lemma~\ref{lem:conjugationLift}, the appearance of any $x \in \bbz$ in the top row is weakly to the left of $x$ in the bottom row.

Suppose $a<b<c<d$ are a witness against weak separation as 
in \eqref{eq:wxyz}. We can uniquely lift these to linearly ordered numbers $a<b'<c'<d' \in \bbn$ such that $b' \in \{b,b+n\}$ and so on. Initially, the numbers $a,b',c',d'$ appear sorted in the order $a,\dots,b',\dots,c',\dots,d'$ in the denominator of the two-line notation. To reach $\gnec$, we perform a sequence of column swaps so that $\{a,c'\}$ and $\{b',d'\}$ are adjacent to each other in the bottom row.

For example, this might happen by starting with $a,\dots,b',\dots,c',\dots,d'$ in the bottom row, performing swaps until we reach $a,\dots,b',c',\dots,d$ with $b',c'$ adjacent, and then performing the swap that switches $b',c'$ yielding $a,\dots,c',b',\dots,d$. Once we have done this, the values $c',b'$ henceforth remain out of order in the bottom row, and in particular we would have $\ell(t_{(b',c')}r) < \ell(r)$. If we perform 
a further sequence of swaps and arrive at the picture 
$$\begin{matrix}\{b,d\} \\ \{a,c\}
\end{matrix} \cdots \begin{matrix}\{a,c\} \\ \{b,d\}
\end{matrix} $$
modulo $n$, we conclude that the picture in fact looks like
$$\begin{matrix}\{b',d'\} \\ \{a,c'\}
\end{matrix} \cdots \begin{matrix}\{a+n,c'+n\} \\ \{b',d'\}
\end{matrix} \cdots \begin{matrix}\{b'+n,d'+n\} \\ \{a'+n,c'+n\}
\end{matrix}, 
$$
using the fact $a$ in the top row appears left of $a$ in the bottom row, etc. The values $b',c'$ are also out of order in the numerator, i.e. we have $\ell(t_{b'c'}i) < \ell(i)$, as desired. 

We have been considering the special case where $b'$ swaps past $c'$, but it is straightforward to see that it is necessary to perform at least one of the swaps ($d$ past $a$, $a$ past $b'$, $b'$ past $c'$, or $c'$ past $d'$) and the argument is identical.  
\end{proof}

Now we prove (2) implies (1) in Theorem~\ref{thm:main}, i.e. that the condition $\ell(m) = \ell(f)$ is necessary for the necklace $\gnec$ to be weakly separated. 
  
\begin{proof}[Proof of necessity]

We will show that if there exists $a<b$ such that $\ell(t_{ab}r) < \ell(r)$ and $\ell(t_{ab}i) < \ell(i)$, then we can chop the two-line notation
\eqref{eq:chordsnotationii} in two pieces as in \eqref{eq:wxyz} and \eqref{eq:wxyzb}. As in the above proof of sufficiency, we work with two-line notation for affine permutations. By assumption, the two-line notations 
looks like 
\begin{equation}
\begin{matrix}\cdots b \cdots a \cdots \cdots 
\cdots \\
\cdots \cdots \cdots \cdots b \cdots a \cdots  \end{matrix}.
\end{equation}
The relative positions of the $a$ in the numerator and the $b$ in the denominator are not important for our argument.

We chop the necklace 
as indicated by vertical bars
\begin{equation}
\begin{matrix}
\cdots b  \\
\cdots 
\end{matrix}
\begin{vmatrix} \cdots a \cdots \cdots \\\ \cdots \cdots b \cdots 
\end{vmatrix}
\begin{matrix}  \cdots \\
a \cdots
\end{matrix}
\end{equation}
(that is, just {\sl after} the $b$ in the top row and just before the $a$ in the bottom row). 
Let $\mcb^- \subseteq (-\infty,a)$ be those values lying in within the vertical bars and in the bottom row of the two-line notation. Similarly, we let 
$\mct^- \subseteq (-\infty,a)$ be those values lying within the vertical bars and in the top row of the two-line notation. 

We claim that $\mcb^- \setminus \mct^-$ is nonempty. We have that $f^{-1}(a) \in \mcb^-$. Then the claim follows from noting that if $z \in \mct^-$ then there exists an element of $\mcb^-$ which is strictly less than $z$ (namely, the element $f^{-1}(z)$). 

We can likewise set $\mcb^+:= (b,\infty) \cap \textnormal{ bottom row}$ and 
$\mct^+:= (b,\infty) \cap \textnormal{ top row}$ (again, only considering those values within the vertical bars). Then $f(b) \in \mct^+$ and we claim that $\mct^+ \setminus \mcb^+$ is nonempty. This follows similarly from as above, noting that 
if $z \in \mcb^+$ then there exists an element of $\mct^+$ which is strictly greater than $z$ (namely, the element $f(z)$).

Letting $z_- \in \mcb^- \setminus \mct^-$, 
$z_+ \in \mct^+ \setminus \mcb^+$, we have 
$z_- < a < b < z_+$ are a witness against weak separation. 
\end{proof}

\subsection{Proof of \cref{thm:alignedIsQuasiEquiv}} \label{sec:reachablepf}
We prove \cref{thm:alignedIsQuasiEquiv} using the technology of plabic tilings introduced in \cite[Section 9]{OPS}. We briefly review the definition here.

Let $p_1, \dots, p_n \in \mathbb{R}^2$ be the vertices of a regular $n$-gon listed in clockwise order. For $I \subset [n]$, we use the notation $p(I):=\sum_{i \in I} p_i$ (where the summation here is summation of vectors in $\mathbb{R}^2$). As usual, we abbreviate $X \cup \{a\}$ as $Xa$.

Given a weakly separated collection $\mcc \subset \binom{[n]}{k}$, the associated plabic tiling $\mct(\mcc)$ is a 2-dimensional CW-complex embedded in $\bbr^2$. The vertices are $\{p(I): I \in \mcc\}$. Faces correspond to nontrivial black and white cliques in $\mcc$. For $X \in \binom{[n]}{k-1}$, the white clique $\mcw(X)$ consists of all $I \in \mcc$ which contain $X$. Similarly, for $X \in \binom{[n]}{k+1}$, the black clique $\mcb(X)$ consists of all $I \in \mcc$ which are contained in $X$. A clique is nontrivial if it has more than two elements. The elements of a white clique, for example, are $Xa_1, \dots, Xa_r$, where $a_1, \dots, a_r$ are cyclically ordered; we have edges between $p(Xa_i)$ and $p(Xa_{i+1})$ in $\mct(\mcc)$. The edges between vertices in a black clique are similar.

\begin{lem} \label{lem:dkkCondition}
Consider a pair of permutations $\iota \ler \pi$ and let 
$\gnec = \gnec_{\bullet, \iota, \pi}$ be a Grassmannlike necklace with terms $(I_1, \dots, I_n)$. Let $u \to \pi(u)$ and $v \to \pi(v)$ be a pair of crossing chords and set $w:=\pi(u)$ and $x:=\pi(v)$. Then $\gnec$ does not contain a quadruple $I_a, I_{a+1}, I_b, I_{b+1}$ such that either $(I_a, I_{a+1})=(Xu, Xw)$ and $(I_b, I_{b+1})=(Xv, Xx)$, or $(I_a, I_{a+1})=(X\setminus u, X\setminus  w)$ and $(I_b, I_{b+1})=(X \setminus v, X \setminus x)$. \end{lem}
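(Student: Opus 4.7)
The plan is to prove the lemma by induction on $\ell(f) - \ell(i)$, where $f, i \in \bd(k, n)$ are the lifts of $\pi$ and $\iota$; by \cref{lem:alignedMovesDown}, this is equivalent to induction on the number of aligned toggles needed to reach $\gnec$ from the forward Grassmann necklace $\rmci_\pi$.

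For the base case $\gnec = \rmci_\pi$, the chord at each position $a$ is $a \to \pi(a)$, so a white configuration at positions $a, b$ (WLOG $a < b$ in $[n]$) forces the shared $(k-1)$-subset to be $X = \rightI_a \setminus \{a\} = \rightI_b \setminus \{b\}$. Iterating the explicit transition $\rightI_{c+1} = \rightI_c \setminus \{c\} \cup \{\pi(c)\}$ through $c \in [a, b-1]$ and requiring the net change from $\rightI_a$ to $\rightI_b$ to be the exchange $\{a\} \mapsto \{b\}$ forces $\pi$ to restrict to a bijection from $[a, b-1]$ onto $[a+1, b]$. The crossing of the chords $a \to \pi(a)$ and $b \to \pi(b)$ then places $\pi(b)$ in $[a+1, \pi(a) - 1]$, which is contained in $[a+1, b-1]$; but this value is already in the image of $\pi|_{[a, b-1]}$, contradicting injectivity of $\pi$. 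The black case in the base case is vacuous unless $\pi$ has 2-cycles on both $\{u, w\}$ and $\{v, x\}$; when nonvacuous, a parallel argument tracking the necklace evolution between positions $w$ and $x$ in $\rmci_\pi$ yields the contradiction.

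For the inductive step, consider an aligned toggle at position $c$ taking $\gnec$ to $\gnec'$, which modifies only the subset $I_c$ to $I'_c$ and swaps the chords at positions $c - 1$ and $c$. A forbidden configuration in $\gnec'$ that involves neither the modified subset nor the swapped chord positions persists in $\gnec$, and one applies the inductive hypothesis. Otherwise, the configuration involves the local toggle: the shared $(k-1)$- or $(k+1)$-subset $X$ in $\gnec'$ transforms into an altered shared subset $X \setminus \{p\} \cup \{q\}$ in $\gnec$, where $p \to q$ is the other toggled chord. Combining the required crossing of $u \to w$ with $v \to x$ (from the assumption on $\gnec'$) with the alignment condition (that $u \to w$ and $p \to q$ are noncrossing) yields either an alternative forbidden configuration in $\gnec$ that contradicts the induction hypothesis, or a direct contradiction with the alignment.

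The main obstacle is the case analysis in the inductive step: carefully tracking how the local modification by the aligned toggle propagates through the shared-subset condition defining a forbidden configuration, and verifying that the alignment (noncrossing) property of the two swapped chords is incompatible with any crossing that would arise in the corresponding configuration in $\gnec$.
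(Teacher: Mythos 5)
The paper's own proof is a short, direct application of \cref{lem:notinpositroid} (the weak Oh's theorem established in \cref{secn:rightweakproof}): from $(I_a,I_{a+1})=(Xu,Xw)$ one reads off $a=\rho^{-1}(u)$, and crossing places one of $v,x$ (say $x$) in $(u,\pi(u))$, so \cref{lem:notinpositroid} says $Xu\setminus u\cup x = Xx\notin\mcm_\pi$, contradicting $Xx=I_{b+1}\in\gnec\subset\mcm_\pi$. You instead attempt a fresh induction on the number of aligned toggles, in effect re-deriving a special case of the machinery already packaged in \cref{lem:notinpositroid}. This is a genuinely different route, and it is considerably longer.

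Your base case is essentially correct. Taking $a=u$, $b=v$ (with $a<b$), the multiset telescoping of $\rightI_{c+1}=\rightI_c\setminus c\cup\pi(c)$ over $c\in[a,b-1]$ together with $\rightI_b=\rightI_a\setminus a\cup b$ indeed forces $\pi([a,b-1])=[a+1,b]$, and the crossing hypothesis then forces $\pi(b)\in(a,\pi(a))\subset[a+1,b-1]=\pi([a,b-1])\setminus\{b\}$, contradicting injectivity. The black case reduces to $\pi$ having $2$-cycles on $\{u,w\}$ and $\{v,x\}$ as you say and runs in parallel.

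The inductive step, however, has a real gap. Suppose the alleged forbidden configuration in $\gnec'$ has $a'\in\{c-1,c\}$ but $b'\notin\{c-1,c\}$. You correctly observe that the shared subset at the near position transforms to $X'=X\setminus p\cup q$ in $\gnec$. But the shared subset at the far position $b'$ is still $X$, and since $p\neq q$ (looplessness) we have $X'\neq X$. Hence the pair of positions in $\gnec$ no longer exhibits a forbidden configuration for the induction hypothesis to bite on. The only case where alignment directly contradicts crossing is when \emph{both} $a',b'\in\{c-1,c\}$, i.e.\ when $v\to x$ is exactly the other swapped chord $p\to q$; in the mixed case you get neither a transferred forbidden configuration nor an immediate contradiction, and the assertion ``yields either an alternative forbidden configuration $\ldots$ or a direct contradiction with the alignment'' is unsupported. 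Closing this gap seems to require tracking the cumulative effect of all preceding toggles on the necklace term at position $\rho^{-1}(u)$ relative to $\rightI_u$ (which is precisely the ``Claim'' inside the proof of \cref{lem:notinpositroid}), not just the effect of the single most recent toggle. In short: you should use \cref{lem:notinpositroid} as a black box rather than trying to re-prove it one toggle at a time.
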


\begin{proof}
	We argue by contradiction. Suppose we have $(I_a, I_{a+1})=(Xu, Xw)$ and $(I_b, I_{b+1})=(Xv, Xx)$ for crossing chords $u \to w$ and $v \to x$. Since $u$ is removed from $I_a$, we have that $a=\rho^{-1}(u)$. We also have that $w=\pi(u)$. Since the chords $u \to w$ and the chords $v \to x$ cross, we know that one of $v, x$ is in the cyclic interval from $u$ to $\pi(u)$. Let's say it's $x$ (the other case is identical). That is, we have $x <_u \pi(u)$. By \cref{lem:notinpositroid}, this means that $I_{\rho^{-1}(u)} \setminus u \cup x= Xx$ is not in the matroid $\mcm_\pi$. But by \cref{thm:rightweak}, $\gnec$ is a unit necklace and so in particular, all terms are in $\mcm_\pi$, a contradiction.
	
	The other case is identical.
\end{proof}

For the remainder of this section, we fix a Grassmannlike necklace $\gnec=\gnec_{\bullet,\iota, \pi}$ with $\iota \ler \pi$. To this necklace $\gnec = (I_1,\dots,I_n)$ we associate the polygonal curve $\zeta(\gnec)$ with vertices $p(I_1),\dots,p(I_n)$ in that order. Since $\gnec$ can have repeated terms, the curve $\zeta(\gnec)$ can have self-intersections at vertices. However, the conclusion of \cref{lem:dkkCondition} says that $\zeta(\gnec)$ has no self-intersections involving a pair of crossing edges. Thus,  $\zeta(\gnec)$ is a union of simple closed polygonal curves meeting only at their vertices. 

We have moreover that none of these simple closed curves encloses another. Indeed, if this were true, then using \cref{rmk:FGinside}, one can 
replace each $p(I)$ on these curves by its corresponding $p(\rho^{-1}(I))$ to obtain a similar pair of nested simple curves in the polygonal curve for the Grassmann necklace $\rmci_\mu$. One knows that Grassmann necklaces do not have such nested polygonal curves, see \cite[Section 9]{OPS}. 

Denote by $D^{\rm in}_\gnec$ (resp. $D^{\rm out}_\gnec$) those $k$-element subsets $I$ which are weakly separated from $\gnec$ and whose corresponding point~$p(I)$ are weakly enclosed by (resp. weakly outside of) the curve $\zeta(\gnec)$. 

\begin{lemma}\label{lem:FGintersect}
If $\mathcal{C}$ is a maximal weakly separated collection containing $\gnec$, then 
$\mathcal{C} \cap D^{\rm in}_\gnec$ coincides with $\targetF(G^\rho)$ for 
(reduced) relabeled plabic graph $G^\rho$ whose trip permutation is $\pi$.
In particular, if $I \in D^{\rm in}_\gnec$, then $I \in \mcm_\pi$.
\end{lemma}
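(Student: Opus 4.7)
The plan is to deduce the first claim from the Farber--Galashin bijection recalled in \cref{rmk:FGinside}, combined with \cref{thm:OPS0}. The strategy is to show that $\mcc' := \mcc \cap D^{\rm in}_\gnec$ is a maximal weakly separated collection inside $D^{\rm in}_\gnec$ containing $\gnec$, i.e.\ an element of the set $\mathscr{C}'$ from \cref{rmk:FGinside}. Granted this, the bijection of \cref{rmk:FGinside} (induced by $\rho$) sends $\rho^{-1}(\mcc')$ to an element of $\mathscr{C}$, which by \cref{thm:OPS0} equals $\targetF(G)$ for some reduced plabic graph $G$ with trip permutation $\mu = \rho^{-1}\pi\rho$. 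By \cref{rmk:relationships} this yields $\mcc' = \rho(\targetF(G)) = \targetF(G^\rho)$ with $G^\rho$ of trip permutation $\pi$, as required.

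I expect the main obstacle to be the maximality of $\mcc'$ within $D^{\rm in}_\gnec$. Weak separation of $\mcc'$ and the containment $\gnec \subseteq \mcc'$ are immediate. For maximality I would argue using the plabic tiling picture of \cite[Section 9]{OPS}: the polygonal curve $\zeta(\gnec)$ embeds in the tiling $\mct(\mcc)$ of the $n$-gon and, by \cref{lem:dkkCondition} together with the non-nesting observation made immediately after it, is a union of simple closed polygonal curves meeting only at vertices. This curve separates the vertices of $\mct(\mcc)$ into the interior set $\mcc'$ and the exterior set $\mcc \cap D^{\rm out}_\gnec$. The technical heart is then the claim that any $J \in D^{\rm in}_\gnec$ weakly separated from $\mcc'$ is automatically weakly separated from $\mcc \cap D^{\rm out}_\gnec$: a hypothetical nonseparation witness $a < b < c < d$ between $J$ and some $I \in \mcc \cap D^{\rm out}_\gnec$ would produce crossing chords whose endpoints straddle $\zeta(\gnec)$, incompatible with the weak separation of $\gnec$ from both $I$ and $J$ when chased through the tiling. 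Granted this, maximality of $\mcc$ forces $J \in \mcc$, hence $J \in \mcc'$.

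The final assertion then follows from the first. Fix $I \in D^{\rm in}_\gnec$; since $I$ is weakly separated from $\gnec$, we may extend $\gnec \cup \{I\}$ to a maximal weakly separated collection $\mcc$ containing $\gnec$. By the first part, $I \in \mcc \cap D^{\rm in}_\gnec = \targetF(G^\rho)$ for some relabeled plabic graph $G^\rho$ with trip permutation $\pi$. Since $\gnec$ is weakly separated, condition (3) of \cref{thm:main} holds; by the equivalence with (1), $\Sigma^T_{G^\rho}$ is a seed in $\bbc(\tPio_\pi)$, whose cluster variables $\Delta(\targetF(G^\rho))$ are in particular regular functions on $\tPio_\pi$ that do not vanish identically. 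Since $\Delta_J$ vanishes identically on $\tPio_\pi$ if and only if $J \notin \mcm_\pi$, we conclude $I \in \mcm_\pi$.
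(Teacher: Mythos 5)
Your route matches the paper's: the paper's own proof of the first assertion is a one-line appeal to the Farber--Galashin bijection of \cref{rmk:FGinside} (``working one simple curve at a time''), and its proof of the second assertion invokes the commutative diagram of \cref{prop:faceLabelsTorus} --- which is precisely what your appeal to \cref{thm:main}~(3)$\Rightarrow$(1) unpacks to. You also correctly isolate the step the paper elides: to feed $\mcc' := \mcc \cap D^{\rm in}_\gnec$ into the bijection of \cref{rmk:FGinside}, one must know that $\mcc'$ is \emph{maximal} among weakly separated collections with $\gnec \subset \mcc' \subset D^{\rm in}_\gnec$. The remaining bookkeeping (passing through $\mathscr{C}$, \cref{thm:OPS0}, \cref{rmk:relationships}; then, for the second assertion, extending $\gnec \cup \{I\}$ to a maximal $\mcc$ and using that a seed's cluster variables do not vanish identically on $\tPio_\pi$) is sound.

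The gap is in the ``technical heart'' claim itself: that any $J \in D^{\rm in}_\gnec$ weakly separated from $\mcc'$ is automatically weakly separated from every $I \in \mcc \cap D^{\rm out}_\gnec$. You assert this would follow because a nonseparation witness $a<b<c<d$ would ``produce crossing chords whose endpoints straddle $\zeta(\gnec)$ \dots when chased through the tiling,'' but that phrase is doing all the work and is not an argument. What is actually needed is a statement of the form: if $\gnec$ is weakly separated and $\zeta(\gnec)$ separates $p(J)$ from $p(I)$ in the ambient plane, and both $J$ and $I$ are weakly separated from $\gnec$, then $J$ and $I$ are weakly separated from each other. This is plausible and is presumably what the paper's citation of \cite{FG} (per simple curve) is relying on, but as written your pivotal step is asserted rather than proved; you should either give the separation argument in detail or locate and cite the precise statement in \cite{OPS} or \cite{FG} that yields it.
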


\begin{proof}The first assertion follows from the results mentioned in \cref{rmk:FGinside}, working one simple curve at a time. The second statement about positroids follows from our main commutative diagram \cref{prop:faceLabelsTorus}. 
\end{proof}

\begin{proof}[Proof of \cref{thm:alignedIsQuasiEquiv}]
We abbreviate $\gnec = \gnec_{\rho,\bullet,\pi} = (I_1,\dots,I_n)$ and  $\gnec' = \gnec_{\sigma,\bullet,\pi}$. We let $\gnec = (I_1,\dots,I_n)$ and suppose the toggle takes place in position~$j$, with $I_j' \in \gnec'$ the new $k$-subset that arises by performing the toggle.

We first rule out the case that $I_{j-1} = I_{j+1}$. If $\iota_j=\rho_j$, then every relabeled plabic graph with boundary $\rho$ has a white lollipop at $\rho_j$. In this case, it is easy to find relabeled plabic graphs $G^\rho$ and $H^{\rho'}$ whose target seeds are identical (just move the white lollipop appropriately). Similarly, the $\iota_{j-1}=\rho_{j-1}$ case is easy, so we may assume $\iota_{j-1}, \rho_{j-1}, \iota_{j}, \rho_{j}$ are distinct.

For appropriate cyclically ordered indices $u < v < w < x$ and an appropriate subset $S \in \binom{[n]}{k-2}$ we have $I_{j-1}=Suv$, $I_{j}=Svx$, $I_{j+1}=Swx$, and $I'_j = Suw$. As in \cref{rmk:togglePluckerRel}, there is a three-term Pl\"ucker relation involving the Pl\"ucker coordinates $I_j$, $I'_j$, $I_{j-1}$,$I_{j+1}$, and the two ``extra'' terms $Sux, Svw$. By \cite[Lemma 5.1]{OS}, because both $I_j$ and $I'_j$ are weakly separated with $\gnec \setminus \{I_j,I'_j\}$, the collection $\gnec \cup Sux,Svw$ is weakly separated.

We next observe that $I_j$ appears only once in $\gnec$. This follows from the assumption that $\gnec'$ is weakly separated because $I_j$ and $I'_j$ are not weakly separated. Thus the vertex $p(I_j)$ has exactly two neighbors on the polygonal curve $\zeta(\gnec)$ and is not the location of a self-intersection of this curve. It also implies that neither $Sux$ nor $Svw$ resides in $\gnec$.

Next let $\mathcal{C}$ be a maximal weakly separated collection containing $\gnec \cup Sux,Svw$. Note that $p(I_j)$ has exactly four neighbors in the plabic tiling for $\mathcal{C}$. Three of these are included in the picture \eqref{eq:localpicture} below. The missing neighbor is $p(Svw)$ sitting ``above'' $p(I_j)$ and connected to it by a vertical edge. For brevity we replace the symbol $p(I)$ by the symbol $I$ in this picture.

Let $\mathcal{C}' = \mathcal{C} \setminus I_j \cup I'_j$ be the result of performing a square move at $I_j \in \mathcal{C}$. Note that $\gnec' \subset \mathcal{C}'$. By \cref{lem:FGintersect}, one has reduced plabic graphs $(G')^\rho$ and 
$(H')^\sigma$, defined by $D^{\rm in}_\gnec \cap \mathcal{C} = \targetF((G')^\rho)$ and $D^{\rm in}_{\gnec'} \cap \mathcal{C}' = \targetF((H')^\sigma)$. 

We make the following {\sl key claim}: the seeds $\Sigma^T_{(G')^\rho}$ and 
$\Sigma^T_{(H')^\sigma}$ are quasi-equivalent. Since $\Sigma^T_{(G')^\rho}$ is mutation-equivalent to $\Sigma^T_{G^\rho}$ and $\Sigma^T_{(H')^\sigma}$ is mutation-equivalent to  $\Sigma^T_{H^\sigma}$, we would then have that 
$\Sigma^T_{G^\rho}$ and $\Sigma^T_{H^\sigma}$ are related by a quasi-cluster transformation, completing the proof. 

Let us establish the key claim. The coefficients group of these two seeds coincide by the unit necklace theorem. Their sets of mutable variables coincide, since the two plabic graphs differ only in their $j$th boundary face. What needs to be checked is the equality of exchange ratios in the two seeds.

It is not hard to see that every edges of $\zeta(\gnec)$ is either an edge of the plabic tiling for~$\mathcal{C}$ or cuts across a face of this tiling. For topological reasons, it follows that exactly one of the two terms in $Sux$ and $Svw$ resides in $D^{\rm in}_\gnec$. Suppose for concreteness that this is true of $Sux$. By the proof of \cref{prop:monomialtransform}, the other term $Svw \notin \mcm_\pi$. Recall that $Sux \notin \gnec$, thus it is a mutable variable in $\Sigma^T_{(G')^\rho}$.

Performing the square move at $I_j$ only changes the edges of the tiling involving $p(I_j)$ and its four neighbors. Of these, only $Sux$ is a mutable variable and has an exchange ratio. We obtain the seed $\Sigma^T_{(G')^\rho}$ by restricting the edges from the plabic tiling for $\mathcal{C}$ to the elements of $D^{\rm in}_\gnec $ and ignoring any arrows between elements in $\gnec$. The equality of exchange ratios is now a simple local check using the change of coordinates $\Delta_{I_j'} = \frac{\Delta_{I_{j-1}} \Delta_{I_{j+1}}}{\Delta_{I_j}}$ \eqref{eq:monomialtransform} which arises when performing the toggle, see the picture below. 
	
	\begin{equation}\label{eq:localpicture}
		\begin{tikzpicture}
		\node (a)at (0,0) {$I_j$};
		\node (c) at (0,-1.5) {$Sux$};
		\node (d) at (1.5,0) {$I_{j+1}$};
		\node (e) at (-1.5,0) {$I_{j-1}$};
		\draw [->] (e)--(a);
		\draw [->] (d)--(a);
		\draw [->] (a)--(c);
		\draw[dotted] [->] (c)--(d);
		\draw[dashed] [->] (c)--(e);
		\begin{scope}[xshift = 5cm]
		\node (a)at (0,0) {$\frac{I_{j-1}I_{j+1}}{I_j}$};
		\node (c) at (0,-1.5) {$Sux$};
		\node (d) at (1.5,0) {$I_{j+1}$};
		\node (e) at (-1.5,0) {$I_{j-1}$};
		\draw [->] (a)--(e);
		\draw [->] (a)--(d);
		\draw [->] (c)--(a);
		\draw[dotted] [->] (d)--(c);
		\draw[dashed] [->] (e)--(c);
		\end{scope}
		\end{tikzpicture}
	\end{equation}
Here, the dotted ( resp. dashed) arrow is present on the left if and only if it is not present on the right. There may be other arrows between $Sux$ and other vertices, but these arrows in the same in both seeds. The calculation is not affected by simultaneously reversing all arrows in both pictures. 
\end{proof}

\bibliographystyle{alpha}
\bibliography{bibliography}

\begin{thebibliography}{SSBW19}

\bibitem[BB05]{BjornerBrenti}
Anders Bj\"{o}rner and Francesco Brenti.
\newblock {\em Combinatorics of {C}oxeter groups}, volume 231 of {\em Graduate
  Texts in Mathematics}.
\newblock Springer, New York, 2005.

\bibitem[BFZ96]{BFZ}
Arkady Berenstein, Sergey Fomin, and Andrei Zelevinsky.
\newblock Parametrizations of canonical bases and totally positive matrices.
\newblock {\em Adv. Math.}, 122(1):49--149, 1996.

\bibitem[DKK17]{DKK}
V.~I. Danilov, A.~V. Karzanov, and G.~A. Koshevoy.
\newblock Combined tilings and separated set-systems.
\newblock {\em Selecta Math. (N.S.)}, 23(2):1175--1203, 2017.

\bibitem[FG18]{FG}
Miriam Farber and Pavel Galashin.
\newblock Weak separation, pure domains and cluster distance.
\newblock {\em Selecta Math. (N.S.)}, 24(3):2093--2127, 2018.

\bibitem[Fra16]{Fra}
Chris Fraser.
\newblock Quasi-homomorphisms of cluster algebras.
\newblock {\em Adv. in Appl. Math.}, 81:40--77, 2016.

\bibitem[FZ02]{CAI}
Sergey Fomin and Andrei Zelevinsky.
\newblock Cluster algebras. {I}. {F}oundations.
\newblock {\em J. Amer. Math. Soc.}, 15(2):497--529, 2002.

\bibitem[GL19]{GalashinLam}
Pavel Galashin and Thomas Lam.
\newblock Positroid varieties and cluster algebras.
\newblock {\em {\tt arXiv:1906.03501}}, 2019.

\bibitem[KLS13]{KLS}
Allen Knutson, Thomas Lam, and David~E. Speyer.
\newblock Positroid varieties: juggling and geometry.
\newblock {\em Compos. Math.}, 149(10):1710--1752, 2013.

\bibitem[Lec16]{Leclerc}
B.~Leclerc.
\newblock Cluster structures on strata of flag varieties.
\newblock {\em Adv. Math.}, 300:190--228, 2016.

\bibitem[Lus98]{Lus98}
G.~Lusztig.
\newblock Total positivity in partial flag manifolds.
\newblock {\em Represent. Theory}, 2:70--78, 1998.

\bibitem[MS16a]{MarshScott}
R.~J. Marsh and J.~S. Scott.
\newblock Twists of {P}l\"{u}cker coordinates as dimer partition functions.
\newblock {\em Comm. Math. Phys.}, 341(3):821--884, 2016.

\bibitem[MS16b]{MSAcyclic}
Greg Muller and David~E. Speyer.
\newblock Cluster algebras of {G}rassmannians are locally acyclic.
\newblock {\em Proc. Amer. Math. Soc.}, 144(8):3267--3281, 2016.

\bibitem[MS17]{MSTwist}
Greg Muller and David~E. Speyer.
\newblock The twist for positroid varieties.
\newblock {\em Proc. Lond. Math. Soc. (3)}, 115(5):1014--1071, 2017.

\bibitem[Oh11]{Suho}
Suho Oh.
\newblock Positroids and {S}chubert matroids.
\newblock {\em J. Combin. Theory Ser. A}, 118(8):2426--2435, 2011.

\bibitem[OPS15]{OPS}
Suho Oh, Alexander Postnikov, and David~E. Speyer.
\newblock Weak separation and plabic graphs.
\newblock {\em Proc. Lond. Math. Soc. (3)}, 110(3):721--754, 2015.

\bibitem[OS17]{OS}
Su~Ho Oh and David~E. Speyer.
\newblock Links in the complex of weakly separated collections.
\newblock {\em J. Comb.}, 8(4):581--592, 2017.

\bibitem[Pos06]{Postnikov}
Alexander Postnikov.
\newblock Total positivity, {G}rassmannians, and networks.
\newblock {\em {\tt arXiv:math/0609764}}, 2006.

\bibitem[Rie06]{RieClosure}
K.~Rietsch.
\newblock Closure relations for totally nonnegative cells in {$G/P$}.
\newblock {\em Math. Res. Lett.}, 13(5-6):775--786, 2006.

\bibitem[Sco06]{Scott}
J.~S. Scott.
\newblock Grassmannians and cluster algebras.
\newblock {\em Proc. London Math. Soc. (3)}, 92(2):345--380, 2006.

\bibitem[SSBW19]{SBSW}
Khrystna Serhiyenko, Melissa Sherman-Bennett, and Lauren Williams.
\newblock Cluster structures in {S}chubert varieties in the {G}rassmannian.
\newblock {\em Proc. London Math. Soc. (3)}, 119(6):1694--1744, 2019.

\bibitem[Zho20]{Zhou}
Yan Zhou.
\newblock Cluster structures and subfans in scattering diagrams.
\newblock {\em Symmetry, Integrability and Geometry: Methods and Applications},
  Mar 2020.

\end{thebibliography}

\end{document}